\def\R{{\mathbb{R}}}
\def\Z{{\mathbb{Z}}}
\def \p{{\bf P1}} 
\def \pp{{\bf P2}} 
\def \ppp{{\bf P3}} 
\def \s{{\bf S1}} 
\def \sss{{\bf S2}}
\def \GG{{\mathbb G}} 
\def \ballZ{{\mathrm B}} 
\def \atom{\omega} 
\def \set{{\mathcal S}} 
\def \ballS{{\ballZ_{\scriptscriptstyle \set}}}
\def \constP{{\chi_{\scriptscriptstyle {\mathrm P}}}}
\def \constS{{\Delta_{\scriptscriptstyle {\mathrm S}}}}
\def \funcP{f_{\scriptscriptstyle {\mathrm P}}}
\def \funcS{f_{\scriptscriptstyle {\mathrm S}}}
\def \RP{R_{\scriptscriptstyle {\mathrm P}}}
\def \RS{R_{\scriptscriptstyle {\mathrm S}}}
\def \LP{L_{\scriptscriptstyle {\mathrm P}}}
\def \epsP{{\varepsilon_{\scriptscriptstyle {\mathrm P}}}}
\def \scexp{{\theta_{\scriptscriptstyle {\mathrm sc}}}}
\def \setg{{\mathcal A}}
\def \setd{{A'}}
\def \cmax{{\mathcal C}}
\def \emax{{\mathcal E}}
\def \cemax{{\widetilde {\mathcal C}}}
\def\seedin{{\overline {\mathrm I}}}
\def\seedde{{\overline {\mathrm D}}}
\def\seed{{\overline {\mathrm E}}}
\def\seedcascade{{\overline {\mathrm G}}}
\def\pl{{\mathcal Q}}
\def\dist{{\mathrm d}}
\def\vgb{{\theta_{\scriptscriptstyle {\mathrm{vgb}}}}}
\def\Rvgb{{R_{\scriptscriptstyle {\mathrm{vgb}}}}}
\def \ballG{{\ballZ_{\scriptscriptstyle G}}}
\def\distG{{\dist_{\scriptscriptstyle G}}}
\def\Rchd{{R_{\scriptscriptstyle {\mathrm{chd}}}}}
\def\chd{{\theta_{\scriptscriptstyle {\mathrm{chd}}}}}
\def\den{{\eta}} 
\newtheorem{theorem}{Theorem}[section]
\newtheorem{corollary}[theorem]{Corollary}
\newtheorem{lemma}[theorem]{Lemma}
\newtheorem{proposition}[theorem]{Proposition}
\newtheoremstyle{likedef}
  {}%
  {}%
  {}%
  {}
  {\bfseries}%
  {.}%
  {.5em}%
  {}%
\theoremstyle{likedef}
\newtheorem{definition}[theorem]{Definition}
\newtheorem{remark}[theorem]{Remark}
\newtheorem{claim}[theorem]{Claim}
\numberwithin{equation}{section}
\begin{document}
\title{Random walks on infinite percolation clusters in models with long-range correlations}

\author{
Artem Sapozhnikov\thanks{
University of Leipzig, Department of Mathematics, Room A409, 
Augustusplatz 10, 04109 Leipzig, Germany.
email: artem.sapozhnikov@math.uni-leipzig.de}
}

\maketitle

\footnotetext{MSC2000: Primary 60K37, 58J35.}
\footnotetext{Keywords: Percolation, random walk, heat kernel, Harnack inequality, harmonic function, local limit theorem, Poincar\'e inequality, isoperimetric inequality, 
long-range correlations, random interlacements, Gaussian free field.}

\begin{abstract}
For a general class of percolation models with long-range correlations on $\Z^d$, $d\geq 2$, introduced in \cite{DRS12}, 
we establish regularity conditions of Barlow \cite{Barlow} that mesoscopic subballs of all large enough balls in the unique infinite percolation cluster 
have regular volume growth and satisfy a weak Poincar\'e inequality. 
As immediate corollaries, we deduce quenched heat kernel bounds, parabolic Harnack inequality, and finiteness of the dimension of harmonic functions with at most polynomial growth. 
Heat kernel bounds and the quenched invariance principle of \cite{PRS} allow to extend various other known results about Bernoulli percolation by mimicking their proofs, 
for instance, the local central limit theorem of \cite{BH09} or the result of \cite{BDCKY14} that the dimension of at most linear harmonic functions on the infinite cluster is $d+1$. 

In terms of specific models, all these results are new for random interlacements at every level in any dimension $d\geq 3$, 
as well as for the vacant set of random interlacements \cite{SznitmanAM,SidoraviciusSznitman_RI} and the level sets of the Gaussian free field \cite{RodSz} 
in the regime of the so-called local uniqueness (which is believed to coincide with the whole supercritical regime for these models).
\end{abstract}

\section{Introduction}

Delmotte \cite{Delmotte99} proved that the transition density of the simple random walk on a graph satisfies 
Gaussian bounds and the parabolic Harnack inequality holds if all the balls have regular volume growth and satisfy a Poincar\'e inequality. 
Barlow \cite{Barlow} relaxed these conditions by imposing them only on all {\it large enough} balls, and showed that they 
imply large time Gaussian bounds and the elliptic Harnack inequality for large enough balls. 
Later, Barlow and Hambly \cite{BH09} proved that the parabolic Harnack inequality also follows from Barlow's conditions. 
Barlow \cite{Barlow} verified these conditions for the supercritical cluster of Bernoulli percolation on $\Z^d$,
which lead to the almost sure Gaussian heat kernel bounds and parabolic Harnack inequality. 
By using stationarity and heat kernel bounds, the quenched invariance principle was proved in \cite{SS04,BergerBiskup,MathieuPiatnitski}, 
which lead to many further results about supercritical Bernoulli percolation, 
including the local central limit theorem \cite{BH09} and the fact that the dimension of harmonic functions of at most linear growth is $d+1$ \cite{BDCKY14}. 

The independence property of Bernoulli percolation was essential in verifying Barlow's conditions, 
and up to now it has been the only example of percolation model for which the conditions were verified. 
On the other hand, once the conditions are verified, the derivation of all the further results 
uses rather robust methods and allows for extension to other stationary percolation models. 

The aim of this paper is to develop an approach to verifying Barlow's conditions for infinite clusters of percolation models, which
on the one hand, applies to supercritical Bernoulli percolation, but on the other, does not rely on independence 
and extends beyond models which are in any stochastic relation with Bernoulli percolation. 
Motivating examples for us are random interlacements, vacant set of random interlacements, and the level sets of the Gaussian free field \cite{SznitmanAM,SidoraviciusSznitman_RI,RodSz}. 
In all these models, the spatial correlations decay only polynomially with distance, 
and classical Peierls-type arguments do not apply. 
A unified framework to study percolation models with strong correlations was proposed in \cite{DRS12}, 
within which the shape theorem for balls \cite{DRS12} and the quenched invariance principle \cite{PRS} were proved. 
In this paper we prove that Barlow's conditions are satisfied by infinite percolation clusters in the general setting of \cite{DRS12}. 
In particular, all the above mentioned properties of supercritical Bernoulli percolation extend to all the models satisfying assumptions from \cite{DRS12}, 
which include supercritical Bernoulli percolation, 
random interlacements at every level in any dimension $d\geq 3$, the vacant set of random interlacements and the level sets of the Gaussian free field in the regime of local uniqueness.

\bigskip

\subsection{General graphs}

Let $G$ be an infinite connected graph with the vertex set $V(G)$ and the edge set $E(G)$. 
For $x,y\in V(G)$, define the weights
\[
\nu_{xy} = \left\{\begin{array}{ll} 1,&\{x,y\}\in E(G),\\ 0,&\text{otherwise,}\end{array}\right.
\qquad \mu_x = \sum_y\nu_{xy},
\]
and extend $\nu$ to the measure on $E(G)$ and $\mu$ to the measure on $V(G)$. 

For functions $f:V(G)\to \R$ and $g : E(G)\to\R$, let
$\int f d\mu = \sum_{x\in V(G)}f(x)\mu_x$ and $\int g d\nu = \sum_{e\in E(G)}g(e)\nu_e$, and define 
$|\nabla f|:E(G)\to\R$ by $|\nabla f|(\{x,y\}) = |f(x) - f(y)|$ for $\{x,y\}\in E(G)$. 

Let $\distG$ be the graph distance on $G$, and define $\ballG(x,r) = \{y\in V(G):\distG(x,y)\leq r\}$. 
We assume that $\mu(\ballG(x,r))\leq C_0 r^d$ for all $x\in V(G)$ and $r\geq 1$. 
In particular, this implies that the maximal degree in $G$ is bounded by $C_0$. 

We say that a graph $G$ satisfies the {\it volume regularity} and the {\it Poincar\'e inequality}
if for all $x\in V(G)$ and $r>0$, $\mu(\ballG(x,2r))\leq C_1\cdot \mu(\ballG(x,r))$ and, respectively, 
$\min_a\int_{\ballG(x,r)}(f-a)^2 d\mu \leq C_2\cdot r^2\cdot \int_{E(\ballG(x,r))} |\nabla f|^2 d\nu$,  
with some constants $C_1$ and $C_2$. Graphs satisfying these conditions are very well understood. 
Delmotte proved in \cite{Delmotte99} the equivalence of such conditions to Gaussian bounds on the transition density of the simple random walk 
and to the parabolic Harnack inequality for solution to the corresponding heat equation, extending results of 
Grigoryan \cite{Grigoryan} and Saloff-Coste \cite{SC92} for manifolds. Under the same assumptions, he also obtained in \cite{Delmotte98} 
explicit bounds on the dimension of harmonic functions on $G$ of at most polynomial growth. 
Results of this flavor are classical in geometric analysis, with seminal ideas going back 
to the work of De Giorgi \cite{DeGiorgi}, Nash \cite{Nash}, and Moser \cite{Moser61,Moser64} on the regularity of solutions of uniformly elliptic second order equations in divergence form.

The main focus of this paper is on random graphs, and more specifically on random subgraphs of $\Z^d$, $d\geq2$. 
Because of local defects in such graphs caused by randomness, it is too restrictive to expect that various properties 
(e.g., Poincar\'e inequality, Gaussian bounds, or Harnack inequality) should hold globally. 
An illustrative example is the infinite cluster $\mathcal C_\infty$ of supercritical Bernoulli percolation \cite{Grimmett} defined as follows. 
For $p\in[0,1]$, remove vertices of $\Z^d$ independently with probability $(1-p)$. The graph induced by the retained vertices 
almost surely contains an infinite connected component (which is unique) if $p>p_c(d)\in(0,1)$, and contains only finite components 
if $p<p_c(d)$. It is easy to see that for any $p>p_c(d)$ with probability $1$, $\mathcal C_\infty$ contains copies of any finite connected subgraph of $\Z^d$, 
and thus, none of the above global properties can hold. 

Barlow \cite{Barlow} proposed the following relaxed assumption which takes into account possible exceptional behavior on microscopic scales. 

\begin{definition}\label{def:vgb}(\cite[Definition~1.7]{Barlow})
Let $C_V$, $C_P$, and $C_W\geq 1$ be fixed constants. 
For $r\geq 1$ integer and $x\in V(G)$, we say that $\ballG(x,r)$ is $(C_V,C_P,C_W)$-{\it good} if 
$\mu(\ballG(x,r))\geq C_V r^d$ and the weak Poincar\'e inequality
\[
\min_a\int_{\ballG(x,r)}(f-a)^2 d\mu \leq C_P\cdot r^2\cdot \int_{E(\ballG(x,C_Wr))} |\nabla f|^2 d\nu.
\]
holds for all $f:\ballG(x,C_Wr) \to \R$.

We say $\ballG(x,R)$ is $(C_V,C_P,C_W)$-{\it very good} if there exists $N_{\ballG(x,R)}\leq R^{\frac{1}{d+2}}$ such that 
$\ballG(y,r)$ is $(C_V,C_P,C_W)$-good whenever $\ballG(y,r)\subseteq \ballG(x,R)$, and $N_{\ballG(x,R)} \leq r\leq R$. 
\end{definition}
\begin{remark}\label{rem:wpi:minimum}
For any finite $H\subset V(G)$ and $f:H\to\R$, the minimum $\min_a\int_H(f-a)^2 d\mu$ is attained by 
the value $a = \overline f_H = \frac{1}{\mu(H)}\int_H f d\mu$. 
\end{remark}

\medskip

For a very good ball, the conditions of volume growth and Poincar\'e inequality are allowed to fail on microscopic scales. 
Thus, if all large enough balls are very good, the graph can still have rather irregular local behavior. 
Despite that, on large enough scales it looks as if it was regular on all scales, as the following results from \cite{Barlow, BH09, BDCKY14} illustrate. 

\medskip

Let $X = (X_n)_{n\geq 0}$ and $Y = (Y_t)_{t\geq 0}$ be the discrete and continuous time simple random walks on $G$. 
$X$ is a Markov chain with transition probabilities $\frac{\nu_{xy}}{\mu_x}$, and $Y$ is the Markov process with generator 
$\mathcal L_G f(x) = \frac{1}{\mu_x}\sum_y\nu_{xy}(f(y) - f(x))$. 
In words, the walker $X$ (resp., $Y$) waits a unit time (resp., an exponential time with mean $1$) at each vertex $x$, and then jumps to a uniformly chosen neighbor of $x$ in $G$.
For $x\in V(G)$, we denote by $\mathrm P_x = \mathrm P_{G,x}$ (resp., $\mathrm Q_x = \mathrm Q_{G,x}$) the law of $X$ (resp., $Y$) started from $x$. 
The transition density of $X$ (resp., $Y$) with respect to $\mu$ is denoted by $p_n(x,y) = p_{G,n}(x,y) = \frac{\mathrm P_{G,x}[X_n = y]}{\mu_y}$ 
(resp., $q_t(x,y) = q_{G,t}(x,y) = \frac{\mathrm Q_{G,x}[Y_t = y]}{\mu_y}$).

\medskip

The first implications of Definition~\ref{def:vgb} are large time Gaussian bounds for $q_t$ and $p_n$. 
\begin{theorem}\label{thm:hk:vgb}(\cite[Theorem~5.7(a)]{Barlow} and \cite[Theorem~2.2]{BH09})
Let $x\in V(G)$. If there exists $R_0 = R_0(x,G)$ such that $\ballG(x,R)$ is $(C_V,C_P,C_W)$-very good with $N_{\ballG(x,R)}^{3(d+2)}\leq R$ for each $R\geq R_0$, 
then there exist constants $C_i = C_i(d,C_0,C_V,C_P,C_W)$ such that for all $t\geq R_0^{3/2}$ and $y\in V(G)$, 
\begin{equation}\label{eq:hk:vgb:ub}
F_t(x,y)\leq C_1\cdot t^{-\frac d2}\cdot e^{-C_2\cdot \frac{\distG(x,y)^2}{t}},\qquad \text{if $t\geq \distG(x,y)$,}
\end{equation}
\begin{equation}\label{eq:hk:vgb:lb}
F_t(x,y)\geq C_3\cdot t^{-\frac d2}\cdot e^{-C_4\cdot \frac{\distG(x,y)^2}{t}},\qquad \text{if $t\geq \distG(x,y)^{\frac 32}$,}
\end{equation}
where $F_t$ stands for either $q_t$ or $p_{\lfloor t\rfloor} + p_{\lfloor t\rfloor +1}$. 
\end{theorem}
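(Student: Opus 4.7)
The plan is to follow the Nash--Moser--Davies template, adapted to tolerate the microscopic exceptional behaviour on scales below $N_{\ballG(x,R)}$. The proof splits into three conceptual blocks: upgrade the volume and weak Poincar\'e control on good balls to a local Sobolev (Faber--Krahn) inequality; deduce Gaussian upper bounds via Nash's iteration plus Davies' perturbation; deduce matching Gaussian lower bounds by chaining.

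For the upper bound, note that on any good ball the two-sided volume estimate $C_V r^d\leq \mu(\ballG(y,r))\leq C_0 r^d$ together with weak Poincar\'e and the standard Jerison/Saloff-Coste argument yields a local Sobolev inequality for functions supported in $\ballG(y,r/2)$, with constants depending only on $d,C_0,C_V,C_P,C_W$. Feeding this into Nash's ODE for the Dirichlet heat kernel on the very good ball $\ballG(x,R)$ at scale $r\asymp\sqrt{t}$ produces an on-diagonal bound $q_t^{\ballG(x,R)}(y,y)\leq C t^{-d/2}$ as long as $r\geq N_{\ballG(x,R)}$. Davies' method -- conjugating $\mathcal L_G$ by $e^{\psi}$ for Lipschitz $\psi$ and optimising $\|\nabla\psi\|_\infty$ afterwards -- then promotes this to the full Gaussian bound \eqref{eq:hk:vgb:ub}. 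Replacing the Dirichlet kernel by the free kernel is cheap because taking $R\asymp t^{2/3}$ makes the exit probability $\exp(-cR^2/t)$ super-polynomially small under the standing assumption $N_{\ballG(x,R)}^{3(d+2)}\leq R$.

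The lower bound \eqref{eq:hk:vgb:lb} I would first establish in its near-diagonal form $q_t(y,z)\geq c\,t^{-d/2}$ for $\distG(y,z)\leq c\sqrt{t}$, combining the on-diagonal upper bound with the good-ball volume lower bound (a second-moment/hitting argument) and the elliptic Harnack inequality on good balls, itself produced by Moser iteration from the Sobolev and Poincar\'e ingredients above. The full Gaussian lower bound then follows by a Fabes--Stroock chain: connect $x$ to $y$ through $k\asymp \distG(x,y)^2/t$ consecutive good balls of radius $\asymp \sqrt{t/k}$ and iterate the near-diagonal estimate. The asymmetric condition $t\geq \distG(x,y)^{3/2}$ (rather than $t\geq \distG(x,y)$ as in the upper bound) appears precisely as the threshold beyond which every intermediate ball still has radius at least $N_{\ballG(x,R)}\leq R^{\frac{1}{d+2}}$, so that the near-diagonal estimate applies to each link of the chain.

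The principal difficulty is not any single ingredient but the synchronisation of scales: the constraint $N_{\ballG(x,R)}^{3(d+2)}\leq R$ must be used compatibly with (i) stopping Nash's iteration before it reaches the bad microscopic regime, (ii) keeping every chain link in the lower-bound argument at radius above $N_{\ballG(x,R)}$, and (iii) ensuring that all constants that emerge depend only on $d, C_0, C_V, C_P, C_W$, and not on $x$, $R$, or the bad scale itself. Tracking these quantitative dependencies through Moser iteration and Davies' optimisation -- in particular carrying the error coming from bad mesoscopic fluctuations all the way through the Gaussian exponent -- is where the technical effort of Barlow and of Barlow--Hambly is concentrated.
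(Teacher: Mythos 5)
The paper does not prove Theorem~\ref{thm:hk:vgb} at all; it is quoted verbatim from Barlow~\cite[Theorem~5.7(a)]{Barlow} and Barlow--Hambly~\cite[Theorem~2.2]{BH09}, so there is no ``paper's own proof'' to compare against. Your sketch is a faithful high-level account of the route taken in those references: weak Poincar\'e and volume regularity on good balls $\Rightarrow$ Nash/Faber--Krahn inequality $\Rightarrow$ on-diagonal upper bound $\Rightarrow$ Davies-type perturbation for the Gaussian upper bound, and near-diagonal lower bound plus chaining for the Gaussian lower bound, with the stricter threshold $t\geq \distG(x,y)^{3/2}$ emerging from the requirement that every ball in the chain (and the box $\ballG(x,R)$ with $R\asymp t^{2/3}$, as noted in Remark~\ref{rem:vgb}(1)) stay above the exceptional scale $N_{\ballG(x,R)}$; this is consistent with the present paper's remark that Barlow's argument reduces to verifying the hypotheses of~\cite[Theorem~5.3]{Barlow} with the choice $R=t^{2/3}$.
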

The next result gives an elliptic Harnack inequality.
\begin{theorem}\label{thm:ehi:vgb}(\cite[Theorem~5.11]{Barlow})
There exists $C_{\scriptscriptstyle {\mathrm{ehi}}} = C_{\scriptscriptstyle {\mathrm{ehi}}}(d,C_0,C_V,C_P,C_W)$ such that for any $x\in V(G)$ and $R\geq 1$, if $\ballG(x,R\log R)$ is $(C_V,C_P,C_W)$-very good with $N_{\ballG(x,R\log R)}^{4(d+2)}\leq R$, 
then for any $y\in\ballG(x,\frac 13 R\log R)$, and $h:\ballG(y,R+1)\to\R$ nonnegative and harmonic in $\ballG(y,R)$, 
\begin{equation}\label{eq:ehi:vgb}
\sup_{\ballG(y,\frac12 R)} h\leq C_{\scriptscriptstyle {\mathrm{ehi}}}\cdot \inf_{\ballG(y,\frac12 R)} h.
\end{equation}
\end{theorem}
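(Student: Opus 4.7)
The plan is to deduce the elliptic Harnack inequality from the Gaussian heat kernel bounds of Theorem~\ref{thm:hk:vgb}, by first upgrading them to a near-diagonal lower bound on the transition density of the walk killed on exit from $\ballG(y,R)$, and then running a standard balayage argument. The scale hypothesis $N^{4(d+2)}\leq R$, with $N:=N_{\ballG(x,R\log R)}$, is tailored so that both ingredients are accessible on the relevant time scale $t\asymp R^2$.

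The first step is to verify that Theorem~\ref{thm:hk:vgb} applies at the center $y$. Since $y\in \ballG(x,\tfrac13 R\log R)$, the inclusion $\ballG(y,R')\subseteq \ballG(x,R\log R)$ holds for every $R'\leq \tfrac23 R\log R$, so every sub-ball of $\ballG(y,\tfrac23 R\log R)$ of radius at least $N$ is $(C_V,C_P,C_W)$-good, and $\ballG(y,R')$ is itself very good with $N_{\ballG(y,R')}\leq N$. Since $N^{3(d+2)}\leq N^{4(d+2)}\leq R\leq R'$, Theorem~\ref{thm:hk:vgb} yields the two-sided Gaussian bounds for $q_t(z,w)$ on the ambient graph for all $t\gtrsim N^3$, and in particular at $t\asymp R^2$.

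The second step is to pass from the free to the killed kernel. Applying the Gaussian upper bound to estimate the exit probability $\mathrm Q_z[\tau_{\ballG(y,R)}\leq t]$ for $z\in\ballG(y,R/2)$, one shows that it is bounded away from $1$ for some $t\asymp R^2$. Combined with the lower bound of Theorem~\ref{thm:hk:vgb}, this yields the near-diagonal Dirichlet kernel estimate
\[
q_t^{\ballG(y,R)}(z_1,z_2)\;\geq\; c\, R^{-d}\qquad \text{for all } z_1,z_2\in\ballG(y,R/2).
\]
The EHI then follows by the classical balayage argument: for nonnegative $h$ harmonic on $\ballG(y,R)$ (and finite on $\ballG(y,R+1)$), optional stopping at $t\wedge\tau_{\ballG(y,R)}$ gives $h(z)\geq c R^{-d}\int_{\ballG(y,R/2)} h\, d\mu$ pointwise in $z\in\ballG(y,R/2)$, while the same identity together with the \emph{upper} Gaussian bound on the unkilled kernel and the exit-time estimate produces a matching upper bound $h(z)\leq C R^{-d}\int_{\ballG(y,R/2)} h\,d\mu$ up to a controlled error, yielding~\eqref{eq:ehi:vgb}.

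The main obstacle is this passage from the free to the killed kernel: one needs a uniform exit-time estimate from $\ballG(y,R)$ that draws only on scales above $N$, since nothing is known about the geometry of the cluster at smaller scales. The extra margin in the exponent $4(d+2)$, as opposed to $3(d+2)$ in Theorem~\ref{thm:hk:vgb}, is precisely what makes this quantitatively possible, by guaranteeing the Gaussian bounds at time scales slightly below $R^2$ needed to estimate the exit tail.
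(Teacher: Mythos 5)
The statement is quoted verbatim from Barlow and the paper gives no proof of its own, so your proposal must be assessed against Barlow's argument directly. The overall route you sketch --- upgrade the free-kernel Gaussian bounds to two-sided near-diagonal bounds for the killed kernel $q_t^{\ballG(y,R)}$ at $t\asymp R^2$ via an exit-time estimate, then deduce the EHI --- is indeed Barlow's. But the last step as you describe it does not close. Your lower bound $h(z)\geq c\,R^{-d}\int_{\ballG(y,R/2)}h\,d\mu$ is fine: one drops the boundary term in $h(z)=\sum_w q_t^{\ballG(y,R)}(z,w)h(w)\mu_w+\mathrm{Q}_z[h(Y_\tau);\,\tau\leq t]$. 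To get the matching \emph{upper} bound from ``the same identity'', however, you must control that boundary term, and it is an average of $h$ over $\partial\ballG(y,R)$, where $h$ can be arbitrarily large compared with its values on $\ballG(y,R/2)$. The exit-time estimate controls the probability $\mathrm{Q}_z[\tau\leq t]$, not the size of $h(Y_\tau)$, so the term cannot be absorbed into $R^{-d}\int_{\ballG(y,R/2)}h\,d\mu$; likewise the first term sums $h$ over all of $\ballG(y,R)$, not $\ballG(y,R/2)$. Asking for an unconditional mean-value \emph{upper} bound for nonnegative harmonic functions is essentially asking for the Harnack inequality itself, so this is circular rather than a technicality.

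The way Barlow (following Fabes--Stroock) closes the argument uses only the one-sided representation $h(z)\geq\sum_w q_t^D(z,w)h(w)\mu_w$, sub-stochasticity $\sum_w q_t^D(z,w)\mu_w\leq 1$, and the near-diagonal \emph{upper} bound $q_t^D\leq C R^{-d}$ inherited from the free kernel. Normalize $h(z_1)=1$ for some $z_1\in\ballG(y,R/2)$ and suppose $h(z_2)\leq\varepsilon$ for some $z_2\in\ballG(y,R/2)$. The lower bound at $z_2$ gives $\int_{\ballG(y,R/2)}h\,d\mu\leq C\varepsilon R^d$, so $A=\{w\in\ballG(y,R/2):h(w)\geq\tfrac12\}$ has $\mu(A)\leq 2C\varepsilon R^d$. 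Splitting $1=h(z_1)\geq\sum_{w\in A}q_t^D(z_1,w)h(w)\mu_w+\sum_{w\in\ballG(y,R/2)\setminus A}q_t^D(z_1,w)h(w)\mu_w$, the second sum is at most $\tfrac12$, forcing the first to exceed $\tfrac12$; but the near-diagonal upper bound makes it at most $C R^{-d}\int_A h\,d\mu\leq C^2\varepsilon$, a contradiction for $\varepsilon$ small. Without this contradiction device the balayage identity only proves half the EHI. A secondary issue worth flagging: Theorem~\ref{thm:hk:vgb} as stated requires very-goodness for \emph{all} $R'\geq R_0$, which is not in the hypothesis here (only $\ballG(x,R\log R)$ is assumed very good), so one must use the finitary heat-kernel estimates from Barlow's Theorem~5.3ff.\ on a single very-good ball rather than citing Theorem~\ref{thm:hk:vgb} itself.
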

In fact, more general parabolic Harnack inequality also takes place. 
(For the definition of parabolic Harnack inequality see, e.g., \cite[Section~3]{BH09}.)
\begin{theorem}\label{thm:phi:vgb}(\cite[Theorem~3.1]{BH09})
There exists $C_{\scriptscriptstyle {\mathrm{phi}}} = C_{\scriptscriptstyle {\mathrm{phi}}}(d,C_0,C_V,C_P,C_W)$ such that 
for any $x\in V(G)$, $R\geq 1$, and $R_1 = R\log R\geq 16$, if $\ballG(x,R_1)$ is $(C_V,C_P,C_W)$-very good with 
$N_{\ballG(x,R_1)}^{2(d+2)}\leq \frac{R_1}{2\log R_1}$, then for any $y\in\ballG(x,\frac 13 R_1)$, 
the parabolic Harnack inequality (in both discrete and continuous time settings) holds with constant $C_{\scriptscriptstyle {\mathrm{phi}}}$ 
for $(0,R^2]\times\ballG(y,R)$.
In particular, the elliptic Harnack inequality \eqref{eq:ehi:vgb} also holds.
\end{theorem}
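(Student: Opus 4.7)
The plan is to derive the parabolic Harnack inequality from the two-sided Gaussian heat kernel estimates already provided by Theorem~\ref{thm:hk:vgb}, specializing the classical Fabes--Stroock / Grigoryan--Saloff-Coste scheme (and its discrete counterpart due to Delmotte) to the localized very-good-ball setting. First, I would note that the hypothesis $N_{\ballG(x,R_1)}^{2(d+2)}\leq R_1/(2\log R_1)$ at scale $R_1 = R\log R$ guarantees that every sub-ball $\ballG(z,r)\subseteq \ballG(x,R_1)$ whose radius lies in the wide dyadic range $N_{\ballG(x,R_1)}\leq r\leq R_1$ is $(C_V,C_P,C_W)$-good, so weak Poincar\'e, volume doubling, and (applying Theorem~\ref{thm:hk:vgb} to these sub-balls) two-sided Gaussian bounds for $q_t$ hold uniformly on all such intermediate scales inside $\ballG(x,R_1)$.

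Next I would run Moser's iteration on caloric functions restricted to space-time cylinders $(s,s+r^2]\times\ballG(z,r)$ lying well inside $\ballG(x,R_1)$. Combined with the weak Poincar\'e inequality on $\ballG(z,C_W r)$, this produces a mean-value estimate for non-negative subcaloric functions and a complementary $L^1$-type estimate for supercaloric functions, both valid for radii $r\geq N_{\ballG(x,R_1)}$. These cylinders together with their $C_W$-enlargements must fit inside $\ballG(x,R_1)$, and the logarithmic cushion between $R$ and $R_1$ provides exactly the needed room.

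The final step is the Fabes--Stroock chaining: starting from the near-diagonal lower bound \eqref{eq:hk:vgb:lb} on $q_t(y_1,y_2)$, I would chain through $O(\log R)$ intermediate balls of radius comparable to $R/\log R$ to propagate the lower bound to arbitrary pairs of points in $\ballG(y,R/2)$, then combine with the mean-value upper estimate to obtain PHI on $(0,R^2]\times\ballG(y,R)$ with a constant depending only on $d,C_0,C_V,C_P,C_W$. Applying PHI to time-independent $u$ recovers \eqref{eq:ehi:vgb}, and the discrete-time case follows by comparing $p_{\lfloor t\rfloor}+p_{\lfloor t\rfloor +1}$ with $q_t$ via Theorem~\ref{thm:hk:vgb}. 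The main obstacle throughout is the bookkeeping of scales: each sub-ball used in the iteration and chaining must simultaneously have radius at least $N_{\ballG(x,R_1)}$ (so that Gaussian bounds and Poincar\'e apply) and stay with its $C_W$-enlargement inside $\ballG(x,R_1)$. The exponent $2(d+2)$ and the factor $2\log R_1$ in the hypothesis are dictated by the Moser-iteration loss and the number of chaining steps, respectively.
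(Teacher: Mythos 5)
The paper does not actually prove this theorem: it is quoted verbatim as \cite[Theorem~3.1]{BH09}, and the present paper's role is merely to invoke it via Corollary~\ref{cor:vgb:main} once Theorem~\ref{thm:vgb:main} is established. So there is no in-paper proof to compare against, and you should be aware that writing one out is optional for the paper's logic.

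That said, comparing your sketch with what Barlow--Hambly actually do is still instructive, and your plan differs from theirs. Their proof does \emph{not} go through Moser iteration on caloric functions. Instead they first establish upper and (near-diagonal) lower bounds on the heat kernel of the walk \emph{killed on exiting a ball} (their Theorem~2.1), which live on exactly the cylinders appearing in PHI; they then deduce PHI directly from those killed-kernel bounds by a representation/balayage argument in the spirit of Fabes--Stroock, Grigoryan and Saloff-Coste. The precise exponents $2(d+2)$ and the factor $2\log R_1$ in the hypothesis come out of that construction (in particular from the need to fit the $C_W$-dilated Poincar\'e cylinder and the chaining balls inside $\ballG(x,R_1)$ while keeping every sub-ball radius above $N_{\ballG(x,R_1)}$), not from a Moser-iteration loss. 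Your sketch mixes two usually-alternative routes: Moser iteration (including the $L^1$ estimate for supercaloric functions and, implicitly, a log-lemma to bridge the two mean-value inequalities) is the machinery one uses \emph{in the absence} of heat kernel bounds, while the Fabes--Stroock chaining you describe is the machinery used \emph{given} two-sided Gaussian bounds, which Theorem~\ref{thm:hk:vgb} already supplies. Running both is redundant, and the Moser step in particular would require reproving a weighted log-Sobolev or log-lemma input that you do not mention. A cleaner version of your idea is to drop the Moser step entirely: derive the $L^\infty$ mean-value estimate for subcaloric functions directly from the upper Gaussian bound \eqref{eq:hk:vgb:ub}, and then chain the near-diagonal lower bound \eqref{eq:hk:vgb:lb} exactly as you propose. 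One also has to check that the restriction $t\ge \distG(x,y)^{3/2}$ in \eqref{eq:hk:vgb:lb} is harmless along the chain, which it is since each chaining step is near-diagonal ($\distG \lesssim \sqrt t$), but this deserves an explicit line. Finally, the remark ``each sub-ball used in the iteration and chaining must \ldots stay with its $C_W$-enlargement inside $\ballG(x,R_1)$'' is precisely where the nontrivial bookkeeping lives; in BH09 it is what forces the factor $\log R$ between $R$ and $R_1$, so it should not be treated as a mere obstacle to wave away.
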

Next result is about the dimension of the space of harmonic functions on $G$ with at most polynomial growth. 
\begin{theorem}\label{thm:vgb:hf:poligrowth}(\cite[Theorem~4]{BDCKY14})
Let $x\in V(G)$. If there exists $R_0 = R_0(x,G)$ such that $\ballG(x,R)$ is $(C_V,C_P,C_W)$-very good for each $R\geq R_0$, 
then for any positive $k$, the space of harmonic functions $h$ with $\limsup_{\distG(x,y)\to\infty}\frac{h(y)}{\distG(x,y)^k}<\infty$ is finite dimensional, 
and the bound on the dimension only depends on $k$, $d$, $C_0$, $C_V$, $C_P$, and $C_W$. 
\end{theorem}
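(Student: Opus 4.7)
The plan is to adapt the Colding--Minicozzi dimension-counting strategy, in the graph-theoretic form of Kleiner and the very-good-ball variant carried out in \cite{BDCKY14}. The argument needs three ingredients, all available under the hypothesis and the preceding theorems: (i) volume doubling on sufficiently large balls, which is immediate from $C_V r^d \leq \mu(\ballG(y, r)) \leq C_0 r^d$ valid whenever $\ballG(y, r) \subseteq \ballG(x, R)$ and $r \geq N_{\ballG(x, R)}$; (ii) an $L^2$-to-$L^\infty$ mean value inequality for harmonic functions on sufficiently large balls; and (iii) the polynomial growth hypothesis.

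Ingredient (ii) I would extract from the Gaussian heat kernel upper bound of Theorem~\ref{thm:hk:vgb}. Since every $h$ in the relevant space is harmonic on the whole of $G$ with polynomial growth, $h(y) = \sum_z q_t(y, z) h(z) \mu_z$ holds for all $t \geq 0$, and Jensen's inequality gives
\[
h(y)^2 \leq \sum_z q_t(y, z) h(z)^2 \mu_z .
\]
Taking $t \asymp r^2$ for $r$ above the exceptional scale $N_{\ballG(x, R)}$ and splitting the sum into $\ballG(y, C r)$ and its complement, the Gaussian tail controls the complement against the polynomial growth of $h$, while the main term yields $h(y)^2 \leq C \mu(\ballG(y, r))^{-1} \int_{\ballG(y, r)} h^2 \, d\mu$ after using the on-diagonal bound $q_t(y, z) \leq C t^{-d/2}$ and volume doubling.

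With (i)--(iii) in place, one runs the Colding--Minicozzi/Kleiner dimension count of \cite{BDCKY14} essentially verbatim. Given $N$ linearly independent harmonic functions $u_1, \ldots, u_N$ of polynomial growth of degree $\leq k$ spanning $V$, form $A(R)_{ij} = \int_{\ballG(x, R)} u_i u_j \, d\mu$. Applying the mean value inequality to arbitrary $v \in V$ and integrating over $\ballG(x, R)$ yields a trace-comparison estimate of the form $\mathrm{tr}(A(R)) \leq C (R/R_{\mathrm{ref}})^{2k} \mathrm{tr}(A(R_{\mathrm{ref}}))$ for $R \geq R_{\mathrm{ref}}$, while the a priori polynomial-growth bound combined with the volume estimate produces $\mathrm{tr}(A(R)) \leq C(V) R^{2k + d}$. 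Matching these two inequalities through the standard dimension-counting argument forces $N \leq C(k, d, C_0, C_V, C_P, C_W)$. The main technical obstacle is that (i) and (ii) hold only from the microscopic scale $N_{\ballG(x, R)}$ upward, whereas the Colding--Minicozzi integration runs over full balls; the built-in bound $N_{\ballG(x, R)} \leq R^{1/(d+2)}$ in the definition of a very good ball, combined with the polynomial growth of the $u_i$'s, is precisely what ensures that the contribution from the exceptional microscopic region is of lower order and can be absorbed as $R \to \infty$.
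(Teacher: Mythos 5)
This theorem is not proved in the paper; it is cited as \cite[Theorem~4]{BDCKY14} and used as a black box. Your overall plan — a Colding--Minicozzi/Kleiner dimension count built on volume doubling, a Poincar\'e-type ingredient, and polynomial growth — is the right framework and is what \cite{BDCKY14} does. But there is a genuine gap in how you obtain the mean value inequality.

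You propose to extract the $L^2$-to-$L^\infty$ mean value inequality from the Gaussian heat-kernel upper bound of Theorem~\ref{thm:hk:vgb}. That theorem, however, is only available under the \emph{stronger} hypothesis that $N_{\ballG(x,R)}^{3(d+2)}\leq R$, i.e.\ $N_{\ballG(x,R)}\leq R^{1/(3(d+2))}$. The hypothesis of Theorem~\ref{thm:vgb:hf:poligrowth} only gives the very-good-ball bound $N_{\ballG(x,R)}\leq R^{1/(d+2)}$, which is weaker. So under the stated hypothesis you are not entitled to invoke Theorem~\ref{thm:hk:vgb}, and the heat-kernel route breaks at the very first step. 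This is not a cosmetic issue: it is exactly why the proof of \cite[Theorem~4]{BDCKY14} goes \emph{directly} from volume comparability and the weak Poincar\'e inequality to the dimension count, via a Kleiner-type argument (reverse Poincar\'e/Caccioppoli for harmonic functions combined with the Poincar\'e inequality and a pigeonhole at a well-chosen scale), bypassing heat kernels entirely. That is also what allows the much weaker hypothesis recorded in Remark~\ref{rem:vgb}(4) — conditions only on macroscopic subballs, with $N_{\ballG(x,R)}$ merely sublinear in $R$ — a regime where Gaussian bounds certainly cannot hold. To repair the proposal, you would either have to strengthen the hypothesis to include $N_{\ballG(x,R)}^{3(d+2)}\leq R$ (thereby proving a strictly weaker statement, not the one quoted), or replace the heat-kernel detour with the direct Caccioppoli $+$ Poincar\'e argument. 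As a secondary point, your semigroup derivation $h(y)^2\leq\sum_z q_t(y,z)h(z)^2\mu_z$ with a near/far split produces an additive far-field term governed by the \emph{global} growth of $h$, rather than a clean local mean value inequality; absorbing this uniformly over the many balls used in the Colding--Minicozzi integration requires additional care that the sketch does not supply, and which the local Caccioppoli route avoids.
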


\medskip

The notion of very good balls is most useful in studying random subgraphs of $\Z^d$. 
Up to now, it was only applied to the unique infinite connected component of supercritical Bernoulli percolation, see \cite{Barlow, BH09}. 
Barlow \cite[Section 2]{Barlow} showed that on an event of probability $1$, 
for every vertex of the infinite cluster, all large enough balls centered at it are very good. 
Thus, all the above results are immediately transfered into the almost sure statements for all 
vertices of the infinite cluster. 

Despite the conditions of Definition~\ref{def:vgb} are rather general, 
their validity up to now has only been shown for the independent percolation. 
The reason is that most of the analysis developed for percolation is tied very sensitively with 
the independence property of Bernoulli percolation.
One usually first reduces combinatorial complexity of patterns by a coarse graining, 
and then balances the complexity out by exponential bounds coming from the independence, 
see, e.g., \cite[Section 2]{Barlow}.

The main purpose of this paper is to develop an approach to verifying properties of Definition~\ref{def:vgb} for random graphs 
which does not rely on independence or any comparison with Bernoulli percolation, and, 
as a result, extending the known results about Bernoulli percolation to models with strong correlations. 
Our primal motivation comes from percolation models with strong correlations, 
such as random interlacements, vacant set of random interlacements, or the level sets of the Gaussian free field, 
see, e.g., \cite{SznitmanAM,SidoraviciusSznitman_RI,RodSz}. 

\medskip

\begin{remark}\label{rem:vgb}
\begin{itemize}\itemsep0pt
\item[(1)]
The lower bound of Theorem~\ref{thm:hk:vgb} can be slightly generalized by following the proof of \cite[Theorem~5.7(a)]{Barlow}. 
Let $\epsilon\in(0,\frac 12]$ and $K>\frac{1}{\epsilon}$. 
If there exists $R_0 = R_0(x,G)$ such that $\ballG(x,R)$ is $(C_V,C_P,C_W)$-very good with $N_{\ballG(x,R)}^{K(d+2)}\leq R$ for each $R\geq R_0$, 
then for all $t\geq R_0^{1+\epsilon}$, 
\begin{equation}\label{eq:hk:vgb:lb:epsilon}
F_t(x,y)\geq C_3\cdot t^{-\frac d2}\cdot e^{-C_4\cdot \frac{\distG(x,y)^2}{t}},\qquad \text{if $t\geq \distG(x,y)^{1+\epsilon}$.}
\end{equation}
The constants $C_3$ and $C_4$ are the same as in \eqref{eq:hk:vgb:lb}, in particular, they do not depend on $K$ and $\epsilon$.
For $\epsilon=\frac 12$ and $K=3$, we recover \eqref{eq:hk:vgb:lb}.
(There is a small typo in the statements of \cite[Theorem~5.7(a)]{Barlow} and \cite[Theorem~2.2]{BH09}: $R_0^{2/3}$ should be replaced by $R_0^{3/2}$.)

Indeed, the proof of \cite[Theorem~5.7(a)]{Barlow} is reduced to verifying assumptions of \cite[Theorem~5.3]{Barlow} for some choice of $R$. 
The original choice of Barlow is $R = t^{\frac23}$, and it implies \eqref{eq:hk:vgb:lb}. 
By restricting the choice of $N_{\ballG(x,R)}$ as above, one notices that 
all the conditions of \cite[Theorem~5.3]{Barlow} are satisfied by $R = t^{\frac{1}{1+\epsilon}}$, implying \eqref{eq:hk:vgb:lb:epsilon}.
\item[(2)]
In order to prove the lower bound of \eqref{eq:hk:vgb:lb} for the same range of $t$'s as in the upper bound \eqref{eq:hk:vgb:ub}, one needs to impose 
a stronger assumption on the regularity of the balls $\ballG(x,R)$ (see, for instance, \cite[Definition~5.4]{Barlow} of the exceedingly good ball 
and \cite[Theorem~5.7(b)]{Barlow}).
In fact, the recent result of \cite[Theorem~1.10]{BarlowChen14} states that the volume doubling property and the Poincar\'e inequality 
satisfied by large enough balls are equivalent to certain partial Gaussian bounds (and also to the parabolic Harnack inequality in large balls).
\item[(3)]
Under the assumptions of Theorem~\ref{thm:phi:vgb}, various estimates of the heat kernels for the processes $X$ and $Y$ killed on exiting from a box 
are given in \cite[Theorem~2.1]{BH09}. 
\item[(4)]
Theorem~\ref{thm:vgb:hf:poligrowth} holds under much weaker assumptions, although reminiscent of the ones of Definition~\ref{def:vgb} (see \cite[Theorem~4]{BDCKY14}).  
Roughly speaking, one assumes that the conditions from Definition~\ref{def:vgb} hold with $N_{\ballG(x,R)}$ only sublinear in $R$, i.e., a volume growth condition and 
the weak Poincar\'e inequality should hold only for macroscopic subballs of $\ballG(x,R)$.  
\end{itemize}
\end{remark}

\subsection{The model}

We consider the measurable space $\Omega = \{0,1\}^{\Z^d}$, $d\geq 2$, equipped with 
the sigma-algebra $\mathcal F$ generated by the coordinate maps $\{\atom\mapsto\atom(x)\}_{x\in\Z^d}$.
For any $\atom\in\{0,1\}^{\Z^d}$, we denote the induced subset of $\Z^d$ by 
\[
\set = \set(\atom) = \{x\in\Z^d~:~\atom(x) = 1\} \subseteq \Z^d .\
\]
We view $\set$ as a subgraph of $\Z^d$ in which the edges are drawn between any two vertices of $\set$ within $\ell^1$-distance $1$ from each other, 
where the $\ell^1$ and $\ell^\infty$ norms of $x=(x(1),\dots,x(d))\in \R^d$ are defined in the usual way by 
$|x|_1 = \sum_{i=1}^d|x(i)|$ and $|x|_\infty = \max\{|x(1)|,\ldots|x(d)|\}$, respectively.
For $x \in \Z^d$ and $r \in \R_+$, we denote by 
$\ballZ(x,r) = \{y\in\Z^d~:~|x-y|_\infty\leq \lfloor r \rfloor \}$ the closed $\ell^{\infty}$-ball in $\Z^d$ with 
radius $\lfloor r \rfloor$ and center at $x$.

\begin{definition}\label{def:setr}
For $r\in [0,\infty]$, we denote by $\set_r$, the set of vertices of $\set$ 
which are in connected components of $\set$ of $\ell^1$-diameter $\geq r$. 
In particular, $\set_\infty$ is the subset of vertices of $\set$ which are in infinite connected components of $\set$. 
\end{definition}

\subsubsection{Assumptions}

On $(\Omega,\mathcal F)$ we consider a family of probability measures $(\mathbb P^u)_{u\in(a,b)}$ with $0<a<b<\infty$, 
satisfying the following assumptions \p{} -- \ppp{} and \s{} -- \sss{} from \cite{DRS12}. 
Parameters $d$, $a$, and $b$ are considered fixed throughout the paper, and dependence of various constants on them is omitted.

\medskip

An event $G \in \mathcal F$ is called \emph{increasing} (respectively, \emph{decreasing}), if
for all $\atom\in G$ and $\atom' \in \{0,1\}^{\Z^d}$ with $\atom(y) \leq \atom(y')$ 
(respectively, $\atom(y) \geq \atom(y')$) for all $y\in\Z^d$, one has $\atom' \in G$.

\begin{itemize}
\item[\p{}] {\it (Ergodicity)}
For each $u\in(a,b)$, every lattice shift is measure preserving and ergodic on $(\Omega,\mathcal F,\mathbb P^u)$.
\item[\pp{}] {\it (Monotonicity)}
For any $u,u'\in(a,b)$ with $u<u'$, and any increasing event $G\in\mathcal F$, 
$\mathbb P^u[G] \leq \mathbb P^{u'}[G]$.
\item[\ppp{}] {\it (Decoupling)}
Let $L\geq 1$ be an integer and $x_1,x_2\in\Z^d$. 
For $i\in\{1,2\}$, let $A_i\in\sigma(\{\atom\mapsto\atom(y)\}_{y\in \ballZ(x_i,10L)})$ be decreasing events, and 
$B_i\in\sigma(\{\atom\mapsto\atom(y)\}_{y\in \ballZ(x_i,10L)})$ increasing events. 
There exist $\RP,\LP <\infty$ and $\epsP,\constP>0$ such that for any integer $R\geq \RP$ and $a<\widehat u<u<b$ satisfying 
\[
u\geq \left(1 + R^{-\constP}\right)\cdot \widehat u ,\
\]
if $|x_1 - x_2|_\infty \geq R\cdot L$, then 
\[
\mathbb P^u\left[A_1\cap A_2\right] \leq 
\mathbb P^{\widehat u}\left[A_1\right] \cdot
\mathbb P^{\widehat u}\left[A_2\right] 
+ e^{-\funcP(L)} ,\
\]
and
\[
\mathbb P^{\widehat u}\left[B_1\cap B_2\right] \leq 
\mathbb P^u\left[B_1\right] \cdot
\mathbb P^u\left[B_2\right] 
+ e^{-\funcP(L)} ,\
\]
where $\funcP$ is a real valued function satisfying $\funcP(L) \geq e^{(\log L)^\epsP}$ for all $L\geq \LP$.
\item[\s{}] {\it (Local uniqueness)}
There exists a function $\funcS:(a,b)\times\Z_+\to \mathbb R$ such that for each $u\in(a,b)$,
\begin{equation}\label{eq:funcS}
\begin{array}{c}
\text{there exist $\constS = \constS(u)>0$ and $\RS = \RS(u)<\infty$}\\
\text{such that $\funcS(u,R) \geq (\log R)^{1+\constS}$ for all $R\geq \RS$,}
\end{array}
\end{equation}
and for all $u\in(a,b)$ and $R\geq 1$, the following inequalities are satisfied:
\begin{equation*}
\mathbb P^u\left[ \, 
\set_R\cap\ballZ(0,R) \neq \emptyset \,
\right]
\geq 
1 - e^{-\funcS(u,R)} , 
\end{equation*}
and
\begin{equation*}
\mathbb P^u\left[
\begin{array}{c}
\text{for all $x,y\in\set_{\scriptscriptstyle{R/10}}\cap\ballZ(0,R)$,}\\
\text{$x$ is connected to $y$ in $\set\cap\ballZ(0,2R)$}
\end{array}
\right]
\geq 1 - e^{-\funcS(u,R)} . 
\end{equation*}
\item[\sss{}] {\it (Continuity)}
Let $\eta(u) = \mathbb P^u\left[0\in\set_\infty\right]$. The function $\eta(\cdot)$ is positive and continuous on $(a,b)$. 
\end{itemize}

\medskip

\begin{remark}
\begin{itemize}\itemsep0pt
\item[(1)]
The use of assumptions \pp{}, \ppp{}, and \sss{} will not be explicit in this paper. 
They are only used to prove likeliness of certain patterns in $\set_\infty$ produced by 
a multi-scale renormalization, see \eqref{eq:nbad:proba}. 
(Of course, they are also used in already known results of Theorems~\ref{thm:chd} and \ref{thm:qip}). 
Roughly speaking, we use \ppp{} repeatedly on multiple scales for a convergent sequence of parameters $u_k$ and use \pp{} and \sss{} 
to establish convergence of iterations.
\item[(2)]
If the family $\mathbb P^u$, $u\in(a,b)$, satisfies \s{}, then a union bound argument gives that for any $u\in(a,b)$, 
$\mathbb P^u$-a.s., the set $\set_\infty$ is non-empty and connected, 
and there exist constants $C_i = C_i(u)$ such that for all $R\geq 1$, 
\begin{equation}\label{eq:C1:infty}
\mathbb P^u\left[ \,
\set_\infty\cap\ballZ(0,R) \neq \emptyset \, 
\right]
\geq 
1 - C_1 \cdot e^{-C_2\cdot(\log R)^{1+\constS}} . 
\end{equation}
\end{itemize}
\end{remark} 

\medskip

\subsubsection{Examples}\label{sec:examples}

Here we briefly list some motivating examples (already announced earlier in the paper) of families of probability measures satisfying assumptions \p{} -- \ppp{} and \s{} -- \sss{}. 
All these examples were considered in details in \cite{DRS12}, and 
we refer the interested reader to \cite[Section~2]{DRS12} for the proofs and further details. 

\begin{itemize}\itemsep0pt
\item[(1)]
Bernoulli percolation with parameter $u\in[0,1]$ corresponds to the product measure $\mathbb P^u$ with $\mathbb P^u[\atom(x) = 1] = 1 - \mathbb P^u[\atom(x) = 0] = u$. 
The family $\mathbb P^u$, $u\in(a,b)$, satisfies assumptions \p{} -- \ppp{} and \s{} -- \sss{} for any $d\geq 2$ and $p_c(d)<a<b\leq 1$, see \cite{Grimmett}.
\item[(2)]
Random interlacements at level $u>0$ is the random subgraph of $\Z^d$, $d\geq 3$, corresponding to the measure $\mathbb P^u$ defined by the equations 
\[
\mathbb P^u[\set\cap K=\emptyset] = e^{-u\cdot \mathrm{cap}(K)},\qquad \text{for all finite }K\subset\Z^d,
\]
where $\mathrm{cap}(\cdot)$ is the discrete capacity. 
It follows from \cite{RS:Transience,SznitmanAM,Sznitman:Decoupling} that the family $\mathbb P^u$, $u\in(a,b)$, satisfies 
assumptions \p{} -- \ppp{} and \s{} -- \sss{} for any $0<a<b<\infty$.
Curiously, for any $u>0$, $\set$ is $\mathbb P^u$-almost surely connected \cite{SznitmanAM}, i.e., $\set_\infty=\set$.
\item[(3)]
Vacant set of random interlacements at level $u>0$ is the complement of the random interlacements at level $u$ in $\Z^d$. 
It corresponds to the measure $\mathbb P^u$ defined by the equations 
\[
\mathbb P^u[K\subseteq \set] = e^{-u\cdot \mathrm{cap}(K)},\qquad \text{for all finite }K\subset\Z^d.
\]
Unlike random interlacements, the vacant set undergoes a percolation phase transition in $u$ \cite{SznitmanAM,SidoraviciusSznitman_RI}.
If $u<u_*(d)\in(0,\infty)$ then $\mathbb P^u$-almost surely $\set_\infty$ is non-empty and connected, and if $u>u_*(d)$, $\set_\infty$ is $\mathbb P^u$-almost surely empty. 
It is known that the family $\mathbb P^{\frac1u}$, $u\in(a,b)$, satisfies 
assumptions \p{} -- \ppp{} for any $0<a<b<\infty$ \cite{SznitmanAM,Sznitman:Decoupling}, \sss{} for any $\frac{1}{u_*(d)}<a<b<\infty$ \cite{Teixeira09}, 
and \s{} for some $\frac{1}{u_*(d)}<a<b<\infty$ \cite{DRS}.
\item[(4)]
The Gaussian free field on $\Z^d$, $d \geq 3$, is a centered Gaussian field with covariances given by the Green function of 
the simple random walk on $\Z^d$. The excursion set above level $h\in\R$ is the random subset of $\Z^d$ where the fields exceeds $h$. 
Let $\mathbb P^h$ be the measure on $\Omega$ for which $\set$ has the law of the excursion set above level $h$. 
The model exhibits a non-trivial percolation phase transition \cite{BLM,RodSz}. 
If $h<h_*(d)\in[0,\infty)$ then $\mathbb P^h$-almost surely $\set_\infty$ is non-empty and connected, and if $h>h_*(d)$, $\set_\infty$ is $\mathbb P^h$-almost surely empty. 
It was proved in \cite{DRS12,RodSz} that the family $\mathbb P^{h_*(d)-h}$, $h\in(a,b)$, satisfies 
assumptions \p{} -- \ppp{} and \sss{} for any $0<a<b<\infty$, and \s{} for some $0<a<b<\infty$.
\end{itemize}

\medskip

The last three examples are particularly interesting, since they have {\it polynomial decay of spatial correlations} 
and cannot be studied by comparison with Bernoulli percolation on any scale. 
In particular, many of the methods developed for Bernoulli percolation do not apply. 
As we see from the examples, assumptions \p{} -- \ppp{} and \sss{} are satisfied by all the $4$ models through their {\it whole} 
supercritical phases. However, assumption \s{} is currently verified for the whole range of interesting parameters only 
in the cases of Bernoulli percolation and random interlacements, and only for a non-empty subset of interesting parameters in the last two examples. 
We call all the parameters $u$ for which $\mathbb P^u$ satisfies \s{} the regime of {\it local uniqueness} 
(since under \s{}, there is a unique giant cluster in each large box). 
It is a challenging open problem to verify if the regime of local uniqueness coincides with 
the supercritical phase for the vacant set of random interlacements and the level sets of the Gaussian free field. 
A positive answer to this question will imply that all the results of this paper hold unconditionally also for the last two considered examples through their whole 
supercritical phases. 

\medskip

\subsubsection{Known results}

Below we recall some results from \cite{DRS12,PRS} about the large scale behavior of graph distances in $\set_\infty$ 
and the quenched invariance principle for the simple random walk on $\set_\infty$. 
Both results are formulated in the form suitable for our applications. 

\begin{theorem}\label{thm:chd}(\cite[Theorem~1.3]{DRS12})
Let $d\geq 2$ and $\chd\in(0,1)$. 
Assume that the family of measures $\mathbb P^u$, $u\in(a,b)$, satisfies assumptions \p{} -- \ppp{} and \s{} -- \sss{}. 
Let $u\in(a,b)$. There exist $\Omega_{\scriptscriptstyle {\mathrm{chd}}}\in\mathcal F$ with $\mathbb P^u[\Omega_{\scriptscriptstyle {\mathrm{chd}}}] = 1$, 
constants $C_{\scriptscriptstyle {\mathrm{chd}}}$, $c_{\scriptscriptstyle {\ref{thm:chd}}}$, and $C_{\scriptscriptstyle {\ref{thm:chd}}}$ all dependent on $u$ and $\chd$,  
and random variables $\Rchd(x)$, $x\in\Z^d$, such that 
for all $\omega\in\Omega_{\scriptscriptstyle {\mathrm{chd}}}\cap\{0\in\set_\infty\}$ and $x\in\set_\infty(\omega)$, 
\begin{itemize}\itemsep0pt
\item[(a)]
$\Rchd(x,\omega)<\infty$,
\item[(b)]
for all $R\geq \Rchd(x,\omega)$ and $y,z\in\ballZ_{\Z^d}(x,R)\cap\set_\infty(\omega)$, 
\[
\dist_{\set_\infty(\omega)}(y,z)\leq C_{\scriptscriptstyle {\mathrm{chd}}} \cdot \max\left\{\dist_{\Z^d}(y,z),~R^{\chd}\right\},
\]
\item[(c)]
for all $z\in\Z^d$ and $r\geq 1$, 
\[
\mathbb P^u[\Rchd(z)\geq r]\leq C_{\scriptscriptstyle {\ref{thm:chd}}}\cdot e^{-c_{\scriptscriptstyle {\ref{thm:chd}}}\cdot (\log r)^{1 + \constS}},
\]
where $\constS$ is defined in \eqref{eq:funcS}.
\end{itemize}
\end{theorem}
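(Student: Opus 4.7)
The strategy is a multi-scale renormalization together with a sprinkling sequence of parameters $u=u_0>u_1>\cdots>u_\infty>a$, in the spirit of the decoupling arguments available in this framework. The plan is to encode the failure of the local uniqueness supplied by \s{} as a family of ``bad boxes'' on every scale, propagate their smallness up the scales by iterating \ppp{}, and then convert the resulting structural picture into the claimed chemical-distance bound.

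First, fix a large scale ratio $\ell\in\N$ (depending on $\chd$) and set $L_k=\ell^k L_0$. Call $\ballZ(z,L_k)$ \emph{good} if both events in \s{} hold at radius $L_k$ around $z$, namely that $\set_{L_k/10}$ meets every sub-box of side $L_k/10$ inside $\ballZ(z,L_k)$ and any two of its points are joined in $\set\cap\ballZ(z,2L_k)$. These are increasing events with $\mathbb{P}^u$-probability at least $1-2e^{-(\log L_k)^{1+\constS}}$. Declare $\ballZ(z,L_k)$ to be \emph{cascade-bad at level $k$} recursively: at level $0$, if it is not good; at level $k\ge 1$, if it contains two disjoint sub-boxes cascade-bad at level $k-1$ whose centers are at $\ell^\infty$-distance $\ge\ell^{-1}L_k$. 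Choose $\{u_k\}$ with $u_k\ge(1+L_k^{-\constP})u_{k+1}$; taking $L_0$ large makes $\prod_k(1+L_k^{-\constP})$ as close to $1$ as needed to keep $u_\infty>a$. Since cascade-badness is decreasing, iterating \ppp{} over $k$ steps gives
\[
\mathbb{P}^u[\,\ballZ(z,L_k)\text{ is cascade-bad at level }k\,]\le C\,e^{-c(\log L_k)^{1+\constS}}.
\]

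Next, set $\Rchd(x)$ to be (a constant multiple of) the largest $L_k$ for which some cascade-bad level-$k$ box lies within $\Z^d$-distance $L_k^{1/\chd}$ of $x$, with $\Rchd(x)=L_0$ if no such scale exists. Parts (a) and (c) follow from a union bound over scales and positions, matching the stretched-exponential tail prescribed by $\constS$. For (b), given $y,z\in\ballZ(x,R)\cap\set_\infty$ with $R\ge\Rchd(x)$, I would pick the scale $k$ with $L_k\asymp\max\{\dist_{\Z^d}(y,z)^{\chd},R^{\chd}\}$ and tile a tube around the segment from $y$ to $z$ by level-$k$ boxes. The definition of $\Rchd(x)$ ensures that any cascade-bad level-$k$ box inside $\ballZ(x,2R)$ is spatially isolated, so it can be skirted by an $O(L_k)$-length detour through surrounding good boxes; inside each good box, \s{} supplies a unique macroscopic $\set$-component that chains to the giant of its neighbors. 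Connecting $y$ and $z$ to the giants of their enclosing good boxes via the uniqueness clause of \s{}, one obtains a path in $\set_\infty$ of combinatorial length $O(\dist_{\Z^d}(y,z))+O(L_k)=O(\max\{\dist_{\Z^d}(y,z),R^{\chd}\})$, as required.

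The hard part is the multi-scale induction itself: the scale ratio $\ell$, the sprinkling schedule $\{u_k\}$, the separation exponent in the cascade definition, and the exponent $1/\chd$ in $\Rchd(x)$ must all be jointly tuned so that (i)~the separation between the two decoupled sub-boxes exceeds what \ppp{} demands relative to $L_k^{\constP}$, (ii)~$u_\infty$ stays inside $(a,b)$ so \s{} keeps supplying base-level estimates, and (iii)~the $O(\ell^{kd})$ combinatorics from choosing sibling pairs at level $k$ is beaten by the iterated squaring of the base probability, preserving the exponent $1+\constS$ rather than degrading it. The delicate point is (iii): it forces the simultaneous use of the stretched-exponential strength of both $\funcP$ and $\funcS$ and is where the quantitative assumptions on $\epsP$ and $\constS$ are essential.
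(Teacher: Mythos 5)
The paper does not prove Theorem~\ref{thm:chd} at all; it quotes it from \cite[Theorem~1.3]{DRS12}, so there is no internal proof to compare against. Your blind proposal does follow the general blueprint of \cite{DRS12} and of the renormalization in Sections~2--3 of this paper (multi-scale cascades of bad boxes with sprinkling, coarse graining, detours), but it contains two genuine gaps that would break the argument.

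First, the geometric schedule $L_k=\ell^k L_0$ is incompatible with \ppp{}. The sprinkling factor in \ppp{} is $(1+R^{-\constP})$ where $R=|x_1-x_2|_\infty/L$ is the \emph{dimensionless} ratio of the separation to the localization scale $L$ of the two events; it is not $(1+L_k^{-\constP})$, so taking $L_0$ large does not help. In your cascade, the level-$(k-1)$ sub-events are localized in boxes of size $\sim L_{k-1}$ and you require separation $\ge \ell^{-1}L_k=L_{k-1}$, which gives $R\approx 1$ (even below $R_P$ as written); even if you widen the separation to $\sim L_k$ you only get $R\approx\ell$, a constant, so $\prod_k(1+R^{-\constP})$ diverges and the sprinkled parameter exits $(a,b)$ after finitely many levels. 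To make the product converge you need $R_k\to\infty$, which forces a super-geometric scale schedule, e.g.\ $l_n=l_0\cdot 4^{n^{\scexp}}$ and $r_n=r_0\cdot 2^{n^{\scexp}}$ as in \eqref{def:scales} (and in \cite{DRS12}), with separation $r_{n-1}L_{n-1}$ at level $n$. Second, the base event you declare ``good'' (both clauses of \s{} holding) is not monotone: the second clause of \s{} is neither increasing nor decreasing, since adding open vertices can create a new large-diameter component inside $\ballZ(0,R)$ that is not linked within $\ballZ(0,2R)$. Consequently \ppp{} cannot be applied to the negation of this bundled event as your cascade requires. One must instead replace it by two \emph{monotone} surrogate events, a decreasing one $\seedde^\den$ and an increasing one $\seedin^\den$ as in Section~3.1, run two separate cascades (one using each half of \ppp{}), and declare a box bad if either cascade fires. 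With these two corrections, your detour argument for part~(b) and the union bound for (a),(c) are the right idea and in line with \cite{DRS12}.
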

For $T>0$, let $C[0,T]$ be the space of continuous functions from $[0,T]$ to $\R^d$, 
and $\mathcal W_T$ the Borel sigma-algebra on it. 
Let 
\begin{equation}\label{def:widetildeBn}
\widetilde B_n(t) = \frac{1}{\sqrt n}\left(X_{\lfloor tn\rfloor} + (tn-\lfloor tn\rfloor)\cdot(X_{\lfloor tn\rfloor + 1} - X_{\lfloor tn\rfloor})\right).
\end{equation}
\begin{theorem}\label{thm:qip} (\cite[Theorem~1.1, Lemma~A.1, and Section~5]{PRS})
Let $d\geq 2$. 
Assume that the family of measures $\mathbb P^u$, $u\in(a,b)$, satisfies assumptions \p{} -- \ppp{} and \s{} -- \sss{}. 
Let $u\in(a,b)$ and $T>0$. There exist $\Omega_{\scriptscriptstyle {\mathrm{qip}}}\in\mathcal F$ with $\mathbb P^u[\Omega_{\scriptscriptstyle {\mathrm{qip}}}] = 1$ 
and a non-degenerate matrix $\Sigma = \Sigma(u)$, such that for all $\omega\in\Omega_{\scriptscriptstyle {\mathrm{qip}}}\cap\{0\in\set_\infty\}$, 
\begin{itemize}\itemsep0pt
\item[(a)]
there exists $\chi:\set_\infty(\omega)\to\R^d$ such that $x\mapsto x+\chi(x)$ is harmonic on $\set_\infty(\omega)$, 
and $\lim_{n\to\infty}\frac1n\max_{x\in\set_\infty\cap\ballZ(0,n)}|\chi(x)| = 0$,
\item[(b)]
the law of $\left(\widetilde B_n(t)\right)_{0\leq t\leq T}$ on $(C[0,T],\mathcal W_T)$ converges weakly (as $n\to\infty$) to the law of Brownian motion 
with zero drift and covariance matrix $\Sigma$.
\end{itemize}
In addition, if reflections and rotations of $\Z^d$ by $\frac\pi2$ preserve $\mathbb P^u$, then the limiting Brownian motion isotropic, i.e., 
$\Sigma = \sigma^2\cdot \mathrm{I}_d$ with $\sigma^2>0$. 
\end{theorem}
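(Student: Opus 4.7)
The plan is to use the corrector / harmonic deformation method of Kipnis--Varadhan and Kozlov, adapted to the correlated percolation setting. First I would work with the ``environment viewed from the particle'' process $(\tau_{X_n}\omega)_{n\geq 0}$ on the state space of pointed environments $\{(\omega,x):x\in\set_\infty(\omega)\}$. Under the Palm-type measure $\mathbb P^u[\,\cdot\mid 0\in\set_\infty]$ reweighted by $\mu_0$, this is a stationary reversible Markov chain. The ergodicity of spatial shifts (assumption \p{}) together with the almost sure uniqueness of $\set_\infty$ (a consequence of \s{}) transfers to ergodicity of the environment chain by a standard argument. For each coordinate direction, I would then orthogonally project the coordinate cocycle onto the closure in $L^2$ of gradient fields under the Dirichlet form, yielding a cocycle $\chi=(\chi^{(1)},\ldots,\chi^{(d)})$ on $\set_\infty$ such that $\phi(x):=x+\chi(x)$ is harmonic on $\set_\infty$.

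With $\chi$ in hand, $M_n:=\phi(X_n)$ is a martingale under $\mathrm P_{\set_\infty(\omega),0}$. Birkhoff's theorem on the environment chain identifies its quenched quadratic variation as asymptotically $n\Sigma(u)$, and the martingale functional CLT (Lindeberg--Feller, with the bounded-jump condition automatic) gives that $(\widetilde B_n(t))_{0\leq t\leq T}$, with $X_n$ replaced by $\phi(X_n)$, converges weakly in $(C[0,T],\mathcal W_T)$ to Brownian motion with covariance $\Sigma$. To transfer this convergence from $\phi(X_n)$ to the random walk $X_n$ itself, it remains to establish the sublinearity of the corrector, $\lim_{n\to\infty}\frac{1}{n}\max_{x\in\set_\infty\cap\ballZ(0,n)}|\chi(x)|=0$ almost surely, which is exactly the conclusion stated in part~(a).

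The sublinearity step is the main obstacle in the correlated setting. I would first establish sublinearity on average by combining the $L^2$-boundedness of $\nabla\chi$ with the multidimensional ergodic theorem, giving $n^{-d-1}\sum_{x\in\set_\infty\cap\ballZ(0,n)}|\chi(x)|\to 0$. The upgrade to the pointwise maximum is the delicate point: in the Bernoulli case Berger--Biskup used exponential concentration from independence, which is unavailable here. Instead I would truncate the corrector at height $K$ and run a sweep along chemical geodesics furnished by Theorem~\ref{thm:chd}: since $\dist_{\set_\infty}(y,z)\leq C_{\scriptscriptstyle{\mathrm{chd}}}\max(|y-z|_1,R^{\chd})$ for $y,z\in\ballZ(0,R)\cap\set_\infty$ once $R\geq \Rchd(x)$, the value $|\chi(x)|$ can be sandwiched between local averages along such geodesics plus a boundary error term that vanishes thanks to the polylogarithmic tail bound on $\Rchd$. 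Assumption \ppp{} enters only indirectly, through Theorem~\ref{thm:chd}.

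For the identification of the limit, non-degeneracy of $\Sigma$ reduces to strict positivity of the Dirichlet energy of the projected coordinate functions, which follows from the positive density $\eta(u)>0$ (assumption \sss{}) and a simple variational argument showing $\chi$ cannot cancel the identity on any direction. If $\mathbb P^u$ is invariant under reflections and $\pi/2$-rotations, the same symmetries are inherited by $\Sigma$, forcing $\Sigma=\sigma^2 I_d$. The hard part throughout is the absence of independence: every step that in the Bernoulli proof used exponential concentration from product structure must here be rerouted through the deterministic comparability of chemical and Euclidean distances encoded in Theorem~\ref{thm:chd}.
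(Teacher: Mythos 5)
Theorem~\ref{thm:qip} is not proved in this paper: it is a cited result from \cite{PRS} (Theorem~1.1, Lemma~A.1, and Section~5 there), so there is no ``paper's own proof'' to compare against line by line. That said, the overall architecture you sketch---environment viewed from the particle, Kipnis--Varadhan projection to build $\chi$, martingale FCLT for $\phi(X_n)=X_n+\chi(X_n)$, then sublinearity of the corrector to transfer the limit to $X_n$---is indeed the architecture of \cite{PRS} and of the earlier Bernoulli proofs \cite{BergerBiskup,MathieuPiatnitski,BiskupPrescott}, and your treatment of non-degeneracy and of the symmetry reduction $\Sigma=\sigma^2\mathrm{I}_d$ is fine.

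The gap is in the pointwise sublinearity step. You propose to upgrade $L^1$-sublinearity to $\max_{x\in\set_\infty\cap\ballZ(0,n)}|\chi(x)|=o(n)$ by ``running a sweep along chemical geodesics'' furnished by Theorem~\ref{thm:chd}, and assert that assumption~\ppp{} enters only through that theorem. This does not work as stated, and it is not what \cite{PRS} does. The bound $\dist_{\set_\infty}(y,z)\leq C\max\{|y-z|_1,R^\chd\}$ controls the \emph{length} of paths in the cluster; it gives no control over the oscillation of $\chi$ along any particular path, since $\nabla\chi$ is bounded only in $L^2$ of the environment and a single geodesic could traverse a thin set where the gradient is atypically large. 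The ingredient that actually closes the gap---which is exactly what Remark~\ref{rem:results}(4) of the present paper flags---is a quenched on-diagonal heat kernel upper bound. \cite{PRS} follows the route of Biskup--Prescott \cite{BiskupPrescott}: one proves an isoperimetric inequality for the cluster (this is where \ppp{} enters substantively, through a multiscale renormalization, not merely via Theorem~\ref{thm:chd}), deduces a Nash-type diagonal estimate $p_n(x,x)\leq Cn^{-d/2}$, and then rules out large corrector values by observing that if $|\chi(x)|\gtrsim\varepsilon n$ for some $x\in\ballZ(0,n)\cap\set_\infty$, the walk started at $0$ reaches $x$ at time $\asymp n^2$ with probability $\gtrsim n^{-d}$, contradicting the Gaussian fluctuations of the martingale $\phi(X_{n^2})$. (Alternatively, with the full off-diagonal bounds of Theorem~\ref{thm:hk:dZd} now available, one can adapt \cite[Theorem~1.1]{BergerBiskup} directly---but either way a quenched heat kernel estimate is unavoidable.) Your proposal does not supply this estimate and cannot substitute chemical-distance control for it.
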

\begin{remark}\label{rem:qip}
\cite[Theorem~1.1]{PRS} is stated for the (``blind'') random walk which jumps to a neighbor with probability $\frac{1}{2d}$ and stays put with probability $1 - \frac{1}{2d}\cdot(\text{number of neighbors})$. 
Since the blind walk and the simple random walk are time changes of each other, 
the invariance principle for one process implies the one for the other (see, for instance, \cite[Lemma~6.4]{BergerBiskup}).
\end{remark}

\medskip

\subsection{Main results}

The main contribution of this paper is Theorem~\ref{thm:vgb:main}, where we prove that 
under the assumptions \p{} -- \ppp{} and \s{} -- \sss{}, 
all large enough balls in $\set_\infty$ are very good in the sense of Definition~\ref{def:vgb}. 
This result has many immediate applications, including Gaussian heat kernel bounds, Harnack inequalities, and finiteness of the dimension of harmonic functions on $\set_\infty$ with 
prescribed polynomial growth, see Theorems~\ref{thm:hk:vgb}, \ref{thm:phi:vgb}, \ref{thm:ehi:vgb}, \ref{thm:vgb:hf:poligrowth}. 
In fact, all the results from \cite{BH09,BDCKY14} can be easily translated from Bernoulli percolation to our setting, since (as also pointed out by the authors) 
their proofs only rely on (some combinations of) stationarity, Gaussian heat kernel bounds, and the invariance principle. 
Among such results are estimates on the gradient of the heat kernel (Theorem~\ref{thm:hk:grad}) and on the Green function (Theorem~\ref{thm:gf:bounds}), 
which will be deduced from the heat kernel bounds by replicating the proofs of \cite[Theorem~6]{BDCKY14} and \cite[Theorem~1.2(a)]{BH09}, 
the fact that the dimension of at most linear harmonic functions on $\set_\infty$ is $d+1$ (Theorem~\ref{thm:hf:d+1}), 
the local central limit theorem (Theorem~\ref{thm:localclt}), and the asymptotic for the Green function (Theorem~\ref{thm:gf:asymp}), 
which we derive from the heat kernel bounds and the quenched invariance principle by mimicking the proofs of \cite[Theorem~5]{BDCKY14}, \cite[Theorem~1.1]{BH09}, and \cite[Theorem~1.2(b,c)]{BH09}.

We begin by stating the main result of this paper. 

\begin{theorem}\label{thm:vgb:main}
Let $d\geq 2$ and $\vgb\in(0,\frac{1}{d+2})$. 
Assume that the family of measures $\mathbb P^u$, $u\in(a,b)$, satisfies assumptions \p{} -- \ppp{} and \s{} -- \sss{}. 
Let $u\in(a,b)$. There exist $\Omega_{\scriptscriptstyle {\mathrm{vgb}}}\in\mathcal F$ with $\mathbb P^u[\Omega_{\scriptscriptstyle {\mathrm{vgb}}}] = 1$, 
constants $C_V$, $C_P$, $C_W$, $c_{\scriptscriptstyle {\ref{thm:vgb:main}}}$, and $C_{\scriptscriptstyle {\ref{thm:vgb:main}}}$ all dependent on $u$ and $\vgb$, 
and random variables $\Rvgb(x)$, $x\in\Z^d$, such that 
for all $\omega\in\Omega_{\scriptscriptstyle {\mathrm{vgb}}}\cap\{0\in\set_\infty\}$ and $x\in\set_\infty(\omega)$, 
\begin{itemize}\itemsep0pt
\item[(a)]
$\Rvgb(x,\omega)<\infty$,
\item[(b)]
for all $R\geq \Rvgb(x,\omega)$, $\ballZ_{\set_\infty(\omega)}(x,R)$ is $(C_V,C_P,C_W)$-very good 
with $N_{\ballZ_{\set_\infty(\omega)}(x,R)}\leq R^{\vgb}$,
\item[(c)]
for all $z\in\Z^d$ and $r\geq 1$, 
\begin{equation}\label{eq:vgb:main:R}
\mathbb P^u[\Rvgb(z)\geq r]\leq C_{\scriptscriptstyle {\ref{thm:vgb:main}}}\cdot e^{-c_{\scriptscriptstyle {\ref{thm:vgb:main}}}\cdot (\log r)^{1 + \constS}},
\end{equation}
where $\constS$ is defined in \eqref{eq:funcS}.
\end{itemize}
\end{theorem}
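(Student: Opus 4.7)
The plan is to verify, for each subball $\ballZ_{\set_\infty}(y,r) \subseteq \ballZ_{\set_\infty}(x,R)$ with $r \in [R^{\vgb},R]$, that both the volume lower bound and the weak Poincar\'e inequality of Definition~\ref{def:vgb} hold, and then to argue that the bad events accumulated in this check are rare enough for a union bound over the $O(R^{2d})$ choices of $(y,r)$ and Borel--Cantelli to deliver a deterministic exceptional radius $\Rvgb(x)$ with the tail in \eqref{eq:vgb:main:R}. Throughout, I fix a mesoscopic scale $L\ll r$, tile $\Z^d$ by $L$-boxes, and call a box centered at $z$ \emph{good} if the local-uniqueness events of \s{} hold on $\ballZ(z,10L)$, so that $\set\cap\ballZ(z,L)$ contains a unique giant component, this component lies in $\set_\infty$, and it is joined to the giants of its $2d$ lattice-neighboring $L$-boxes inside $\set\cap\ballZ(z,2L)$. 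By \s{} together with \eqref{eq:C1:infty}, a fixed box is good with probability at least $1-e^{-c(\log L)^{1+\constS}}$.

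\textbf{Sparseness of bad boxes.} For the tail \eqref{eq:vgb:main:R} to come out with exponent $1+\constS$, I need that, off an event of probability at most $Ce^{-c(\log r)^{1+\constS}}$, every connected cluster of bad $L$-boxes meeting $\ballZ(y,C_W r)$ has $\ell^\infty$-diameter much smaller than $r^{\vgb}/L$. This is where \pp{}, \ppp{}, and \sss{} come in. One fixes a sequence $u_k\downarrow u$ in $(a,b)$ (possible by \sss{}), uses \pp{} and \ppp{} to cascade the decoupling inequality through a geometric sequence of scales as in \cite[Section~3]{DRS12}, and stochastically dominates the bad-box field by a Bernoulli site percolation of arbitrarily small density. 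Since the single-box bad probability already decays like $e^{-c(\log L)^{1+\constS}}$ and the decoupling rate $\funcP$ in \ppp{} is itself stretched-exponential in $\log L$, the tail for the diameter of bad clusters retains the same form.

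\textbf{Volume bound and weak Poincar\'e inequality.} The volume bound is the easier part: in each good $L$-box the giant contributes of order $\eta(u)\,L^d$ vertices to $\set_\infty$, as follows from \s{} combined with the ergodic theorem under \p{}; summed over the positive fraction of $L$-boxes in $\ballZ(y,r)$ that are good, this yields $\mu(\ballZ_{\set_\infty}(y,r))\geq C_V r^d$. For the weak Poincar\'e inequality I would use the canonical-paths/flow technique, in the form used in \cite[Section~2]{Barlow} for Bernoulli percolation and in the Mathieu--Remy and Benjamini--Mossel arguments: for a function $f$ on $\ballZ_{\set_\infty}(y,C_W r)$ one writes
\[
\sum_{p,q \in \ballZ_{\set_\infty}(y,r)}\bigl(f(p)-f(q)\bigr)^2 \leq \Bigl(\max_{e}\#\{(p,q):e\in\gamma_{p,q}\}\cdot\max_{p,q}|\gamma_{p,q}|\Bigr)\sum_{e\in E(\ballZ_{\set_\infty}(y,C_W r))}|\nabla f|^2(e)\,\nu_e,
\]
with paths $\gamma_{p,q}$ built in three stages: first route $p$ and $q$ to the giant of their containing good $L$-box by \s{}; next follow a coarse-grained $\Z^d$-geodesic in the renormalized good-box lattice, replacing each coarse edge by a length-$O(L)$ chemical segment between the giants of adjacent good boxes; finally, whenever the coarse path meets a bad cluster of diameter $D$, detour using Theorem~\ref{thm:chd} at a cost of $O(D\vee D^{\chd})$. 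A standard congestion count combined with the $\Z^d$-Poincar\'e inequality on the coarse good-box lattice then yields the inequality with constants depending only on $d$, $u$, and $\vgb$.

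\textbf{Main obstacle.} The delicate step will be balancing the scales: $L$ must be small enough, say $L=r^{\beta}$ with $\beta<\vgb$, for $N_{\ballZ_{\set_\infty}(x,R)}\leq R^{\vgb}$ to hold, yet large enough that both the renormalization above and the chemical-distance bound of Theorem~\ref{thm:chd} succeed with failure probability at most $Ce^{-c(\log r)^{1+\constS}}$. Preserving the stretched-exponential exponent $1+\constS$ through the multi-scale cascade is the main technical obstacle and is what will dictate the precise inductive formulation of the renormalization; once this is in place, Borel--Cantelli converts the per-subball estimate into the almost-sure statement (b), and uniformity in $x\in\Z^d$ of the tail (c) follows from the stationarity in \p{}.
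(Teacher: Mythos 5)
Your route diverges substantively from the paper's, and while the general scaffolding (multiscale renormalization, coarse-grained lattice of good boxes, detours via the chemical-distance bound) is correct, the proposal glosses over precisely the step the paper identifies as the central technical obstruction.

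\textbf{Different route, same bottleneck.} The paper does \emph{not} prove the weak Poincar\'e inequality directly by a flow or canonical-paths argument. Instead it proves a $d$-dimensional \emph{isoperimetric inequality} for subsets of a suitable set $\widetilde{\mathcal C}(y,r)$ sandwiched between $\ballS(y,r)$ and $\ballS(y,C_W r)$ (Theorem~\ref{thm:isop:cemax}, Corollary~\ref{cor:tildeC:existence}), and then converts this to the weak Poincar\'e inequality by the standard implication (Claim~\ref{cl:rb-gb}). Your proposal to bound the congestion constant $\max_e\#\{(p,q):e\in\gamma_{p,q}\}\cdot\max|\gamma_{p,q}|$ is an \emph{equivalent} task, not an easier one: low congestion forces the absence of bottlenecks, which is exactly the strong isoperimetric inequality. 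The paper explicitly states it cannot prove the strong isoperimetric inequality for the largest cluster $\mathcal C(y,r)$ (edges counted only inside $\mathcal C(y,r)$), and that the PRS inequality (edges counted in all of $\set_\infty$) is too weak for a Poincar\'e inequality. Your routing through $\set_\infty\cap\ballZ(y,C_Wr)$ is morally the right move --- it mirrors the paper's enlargement from $\mathcal C(y,r)$ to $\widetilde{\mathcal C}(y,r)$ --- but calling the congestion count ``standard'' hides the bulk of the work: the paper devotes Sections~2, 3, and 5 (perforated lattices, selection lemma, the isoperimetric inequality on perforations in $d=2$ and then for macroscopic/general subsets) to exactly the combinatorics you would need. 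Until the congestion estimate is written out, the proof does not go through.

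\textbf{Unjustified stochastic domination.} Assumption~\ppp{} gives a \emph{sprinkled} decoupling for monotone events, not a finite-range dependence structure, so a Liggett--Schonmann--Stacey type argument does not produce stochastic domination of the bad-box field by low-density Bernoulli. The paper (following \cite{DRS12}) instead works with the explicit cascading events $\seedcascade_{x,n,L_0}$ of \eqref{def:seedcascade} and proves the probability estimate \eqref{eq:nbad:proba} by iterating \ppp{} along a double-exponentially increasing sequence of scales, with the sprinkling absorbed by taking a convergent sequence of parameters $u_k$ (using \pp{} and \sss{}). Replace the domination claim with this cascade; it is what actually preserves the $(\log r)^{1+\constS}$ exponent in \eqref{eq:vgb:main:R}.

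\textbf{Definition of good boxes.} You say that \s{} makes $\set\cap\ballZ(z,L)$ contain a ``unique giant component.'' That is not what \s{} gives: it says that the clusters of $\ell^1$-diameter $\geq L/10$ in $\ballZ(0,L)$ are all connected within $\ballZ(0,2L)$. There may be several clusters of large diameter inside the box. This is precisely why the paper defines $0$-goodness via \emph{volume}, not diameter --- the largest-in-volume cluster event factors as an intersection of one increasing and one decreasing event and is therefore amenable to \ppp{}, whereas ``unique large-diameter cluster'' is not monotone. If your canonical-paths argument assumes a unique local giant by diameter, that assumption must be replaced by the volume-based one (Section~\ref{sec:localevents} and Lemma~\ref{l:fromG0toZd}).

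\textbf{Minor issues.} The volume lower bound for a fixed non-random ball does not follow directly from an ergodic-theorem argument (which gives asymptotics along one center); the paper obtains it from the coarse-grained perforation and the chemical-distance control (Corollary~\ref{cor:chemdist}). Also, when $p$ or $q$ lies in a bad box it cannot be ``routed to the giant of its containing good $L$-box''; your paths need a preliminary step handling such vertices.

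In short, the general plan is reasonable and some ingredients (Theorem~\ref{thm:chd} for detours, Borel--Cantelli for the quenched statement) are the right ones, but as written the proposal assumes away the hard part: the quantitative absence of bottlenecks. The paper's perforated-lattice isoperimetric inequality is the machinery that supplies this, and an equivalent congestion bound would have to be built from scratch.
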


Theorem~\ref{thm:vgb:main} will immediately follow from a certain isoperimetric inequality, see Definition~\ref{def:vrb}, Claim~\ref{cl:rb-gb}, and Proposition~\ref{prop:verygoodbox}. 
This isoperimetric inequality is more than enough to imply the weak Poincar\'e inequality that we need. 
In fact, as we learned from a discussion with Jean-Dominique Deuschel, it implies stronger Sobolev inequalities,
and may be useful in situations beyond the goals of this paper (see, e.g., \cite[Section~3]{Nguyen}). 

\medskip

\begin{corollary}\label{cor:vgb:main}
Theorem~\ref{thm:vgb:main} immediately implies that all the results of Theorems~\ref{thm:hk:vgb}, \ref{thm:phi:vgb}, \ref{thm:ehi:vgb}, and \ref{thm:vgb:hf:poligrowth}
hold almost surely for $G = \set_\infty$. Since the constants $C_V$, $C_P$, and $C_W$ in the statement of Theorem~\ref{thm:vgb:main} are deterministic, 
all the constants in Theorems~\ref{thm:hk:vgb}, \ref{thm:phi:vgb}, \ref{thm:ehi:vgb}, and \ref{thm:vgb:hf:poligrowth} are also deterministic.
\end{corollary}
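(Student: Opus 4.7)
The plan is to apply Theorem~\ref{thm:vgb:main} with a sufficiently small choice of the parameter $\vgb$ and then read off each of the four cited theorems directly.

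Fix $\vgb = \frac{1}{5(d+2)} \in (0,\frac{1}{d+2})$ and invoke Theorem~\ref{thm:vgb:main} with this $\vgb$. This yields the full-measure event $\Omega_{\mathrm{vgb}}$, the deterministic constants $C_V, C_P, C_W$, and the $\mathbb P^u$-a.s.\ finite radii $\Rvgb(x,\omega)$. Since $\set_\infty(\omega) \subset \Z^d$ with unit edge weights, the ambient volume bound $\mu(\ballG(x,r)) \leq C_0 r^d$ holds with the deterministic constant $C_0 = 2d \cdot 3^d$, so every constant in Theorems~\ref{thm:hk:vgb}, \ref{thm:phi:vgb}, \ref{thm:ehi:vgb}, \ref{thm:vgb:hf:poligrowth} that depends on the tuple $(d,C_0,C_V,C_P,C_W)$ is already deterministic.

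For any $x\in\set_\infty(\omega)$ and $R\geq \Rvgb(x,\omega)$, Theorem~\ref{thm:vgb:main}(b) gives that $\ballZ_{\set_\infty(\omega)}(x,R)$ is $(C_V,C_P,C_W)$-very good with
\[
N_{\ballZ_{\set_\infty(\omega)}(x,R)} \leq R^{\vgb} = R^{1/(5(d+2))},
\qquad\text{hence}\qquad N^{4(d+2)} \leq R^{4/5}.
\]
For $R$ larger than some explicit polynomial in $\Rvgb(x,\omega)$, the bound $N^{4(d+2)}\leq R^{4/5}$ dominates every polynomial-in-$N$ smallness condition required by the target theorems: $N^{3(d+2)}\leq R$ for Theorem~\ref{thm:hk:vgb}, $N^{4(d+2)}\leq R$ for Theorem~\ref{thm:ehi:vgb}, and $N^{2(d+2)}\leq R_1/(2\log R_1)$ with $R_1 = R\log R$ for Theorem~\ref{thm:phi:vgb}; Theorem~\ref{thm:vgb:hf:poligrowth} needs no extra condition beyond the bare very-good property. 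The slightly larger radius $R\log R$ appearing in Theorems~\ref{thm:phi:vgb}--\ref{thm:ehi:vgb} is handled by feeding $R\log R$ (which still exceeds $\Rvgb(x,\omega)$ for $R$ large) into Theorem~\ref{thm:vgb:main}(b).

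Hence on $\Omega_{\mathrm{vgb}}\cap\{0\in\set_\infty\}$, each of the four cited theorems applies verbatim at every $x\in\set_\infty(\omega)$ with $R_0(x,G) = R_0(x,\omega)$ chosen as a suitable polynomial in $\Rvgb(x,\omega)$, and all output constants depend only on the deterministic tuple $(d,C_0,C_V,C_P,C_W)$. The event ``the conclusions hold at every $x\in\set_\infty$'' is translation invariant, so applying Theorem~\ref{thm:vgb:main} shifted to each vertex and invoking stationarity/ergodicity under \p{} upgrades positive probability to full $\mathbb P^u$-probability. There is no real obstacle here; the only thing to verify is that $\vgb < \frac{1}{d+2}$ leaves enough room to dominate the three polynomial exponents $2(d+2),\,3(d+2),\,4(d+2)$, and the choice $\vgb = 1/(5(d+2))$ absorbs all of them at once.
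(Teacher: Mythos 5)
Your proposal is correct and follows essentially the same (intended) route as the paper, which treats the corollary as immediate: take $\vgb$ small enough (any $\vgb<\tfrac{1}{4(d+2)}$, e.g.\ your $\tfrac{1}{5(d+2)}$) so that $N\leq R^{\vgb}$ dominates the conditions $N^{3(d+2)}\leq R$, $N^{4(d+2)}\leq R$, and $N^{2(d+2)}\leq \tfrac{R_1}{2\log R_1}$ for all large $R$, and observe that $C_0$, $C_V$, $C_P$, $C_W$ are deterministic, so all constants produced by Theorems~\ref{thm:hk:vgb}, \ref{thm:phi:vgb}, \ref{thm:ehi:vgb}, and \ref{thm:vgb:hf:poligrowth} are deterministic. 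Your closing stationarity/ergodicity step is superfluous, since Theorem~\ref{thm:vgb:main} already gives the conclusion simultaneously for all $x\in\set_\infty(\omega)$ on a full-measure event.
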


Combining Corollary~\ref{cor:vgb:main} with Theorem~\ref{thm:chd} and Remark~\ref{rem:vgb}(1), we notice that the quenched heat kernel bounds of Theorem~\ref{thm:hk:vgb} hold almost surely for $G=\set_\infty$ 
with $\dist_G$ replaced by $\dist_{\Z^d}$ in \eqref{eq:hk:vgb:ub}, \eqref{eq:hk:vgb:lb}, and \eqref{eq:hk:vgb:lb:epsilon}. 
Since we will use the quenched heat kernel bounds often in the paper, we give a precise statement here. 
\begin{theorem}\label{thm:hk:dZd}
Let $d\geq 2$. 
Assume that the family of measures $\mathbb P^u$, $u\in(a,b)$, satisfies assumptions \p{} -- \ppp{} and \s{} -- \sss{}. 
Let $u\in(a,b)$ and $\epsilon>0$. There exist $\Omega_{\scriptscriptstyle {\mathrm{hk}}}\in\mathcal F$ with $\mathbb P^u[\Omega_{\scriptscriptstyle {\mathrm{hk}}}] = 1$, 
constants $C_i = C_i(u)$, $C_{\scriptscriptstyle {\ref{thm:hk:dZd}}}=C_{\scriptscriptstyle {\ref{thm:hk:dZd}}}(u,\epsilon)$, 
and $c_{\scriptscriptstyle {\ref{thm:hk:dZd}}}=c_{\scriptscriptstyle {\ref{thm:hk:dZd}}}(u,\epsilon)$, and random variables $T_{\scriptscriptstyle {\mathrm{hk}}}(x,\epsilon)$, $x\in\Z^d$, such that 
for all $\omega\in\Omega_{\scriptscriptstyle {\mathrm{hk}}}\cap\{0\in\set_\infty\}$ and $x\in\set_\infty(\omega)$, 
\begin{itemize}\itemsep0pt
\item[(a)]
$T_{\scriptscriptstyle {\mathrm{hk}}}(x,\epsilon,\omega)<\infty$,
\item[(b)]
for all $t\geq T_{\scriptscriptstyle {\mathrm{hk}}}(x,\epsilon,\omega)$ and $y\in\set_\infty(\omega)$, 
\begin{equation}\label{eq:hk:ub}
F_t(x,y)\leq C_1\cdot t^{-\frac d2}\cdot e^{-C_2\cdot \frac{\mathrm{D}(x,y)^2}{t}},\qquad \text{if $t\geq \mathrm{D}(x,y)$,}
\end{equation}
\begin{equation}\label{eq:hk:lb}
F_t(x,y)\geq C_3\cdot t^{-\frac d2}\cdot e^{-C_4\cdot \frac{\mathrm{D}(x,y)^2}{t}},\qquad \text{if $t\geq \mathrm{D}(x,y)^{1+\epsilon}$,}
\end{equation}
where $F_t$ stands for either $q_t$ or $p_{\lfloor t\rfloor} + p_{\lfloor t\rfloor +1}$, and $\mathrm{D}$ for either $\dist_{\set_\infty(\omega)}$ or $\dist_{\Z^d}$,
\item[(c)]
for all $z\in\Z^d$ and $r\geq 1$, 
\begin{equation}\label{eq:hk:T0}
\mathbb P^u[T_{\scriptscriptstyle {\mathrm{hk}}}(z,\epsilon)\geq r]\leq C_{\scriptscriptstyle {\ref{thm:hk:dZd}}}\cdot e^{-c_{\scriptscriptstyle {\ref{thm:hk:dZd}}}\cdot (\log r)^{1 + \constS}},
\end{equation}
where $\constS$ is defined in \eqref{eq:funcS}.
\end{itemize}
\end{theorem}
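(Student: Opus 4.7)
Combine Theorem~\ref{thm:vgb:main} with Theorem~\ref{thm:hk:vgb} (and its refinement in Remark~\ref{rem:vgb}(1)) to get Gaussian heat-kernel bounds on $\set_\infty(\omega)$ with respect to the intrinsic graph distance $\dist_{\set_\infty(\omega)}$, and then use the chemical-distance comparison of Theorem~\ref{thm:chd} to pass from $\dist_{\set_\infty}$ to $\dist_{\Z^d}$. Given $\epsilon>0$, I would fix an auxiliary exponent $\epsilon''\in(0,\epsilon)$, a parameter $\vgb\in(0,\tfrac{1}{d+2})$ small enough that both the condition $N^{3(d+2)}\le R$ of Theorem~\ref{thm:hk:vgb} and the condition $N^{K(d+2)}\le R$ of Remark~\ref{rem:vgb}(1) with some $K>1/\epsilon''$ are forced when $N\le R^{\vgb}$, and any $\chd\in(0,1)$. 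Applying Theorem~\ref{thm:vgb:main} and Theorem~\ref{thm:chd} produces almost sure events $\Omega_{\scriptscriptstyle{\mathrm{vgb}}}$ and $\Omega_{\scriptscriptstyle{\mathrm{chd}}}$; put $\Omega_{\scriptscriptstyle\mathrm{hk}}:=\Omega_{\scriptscriptstyle{\mathrm{vgb}}}\cap\Omega_{\scriptscriptstyle{\mathrm{chd}}}$ and set
\[
T_{\scriptscriptstyle\mathrm{hk}}(x,\epsilon):=K(\epsilon)\cdot\max\bigl(\Rvgb(x)^{1+\epsilon},\;\Rchd(x)^{1+\epsilon}\bigr),
\]
for a deterministic constant $K(\epsilon)$ depending on $C_{\scriptscriptstyle\mathrm{chd}}$ and $\epsilon$ (chosen large enough below). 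For $t\ge T_{\scriptscriptstyle\mathrm{hk}}(x,\epsilon)$, the ball $\ballZ_{\set_\infty}(x,t^{1/(1+\epsilon)})$ is $(C_V,C_P,C_W)$-very good with the required regularity, so Theorem~\ref{thm:hk:vgb} and Remark~\ref{rem:vgb}(1) supply \eqref{eq:hk:ub} for $t\ge \dist_{\set_\infty}(x,y)$ and \eqref{eq:hk:lb} for $t\ge \dist_{\set_\infty}(x,y)^{1+\epsilon''}$, in both cases with $\mathrm D=\dist_{\set_\infty(\omega)}$, which settles the intrinsic-distance version.

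The passage to $\mathrm D=\dist_{\Z^d}$ rests on Theorem~\ref{thm:chd}(b) applied with $R=\dist_{\Z^d}(x,y)\vee \Rchd(x)$: it yields $\dist_{\set_\infty}(x,y)\le C_{\scriptscriptstyle\mathrm{chd}}\cdot \dist_{\Z^d}(x,y)$ as soon as $\dist_{\Z^d}(x,y)$ exceeds a constant multiple of $\Rchd(x)^{\chd}$, a condition that we absorb into $K(\epsilon)$. For the upper bound the estimate $\dist_{\Z^d}\le\dist_{\set_\infty}$ gives $e^{-C_2\dist_{\set_\infty}^2/t}\le e^{-C_2\dist_{\Z^d}^2/t}$ and hence \eqref{eq:hk:ub} throughout the range $t\ge\dist_{\set_\infty}$; in the awkward strip $\dist_{\Z^d}(x,y)\le t<\dist_{\set_\infty}(x,y)$ the comparison forces $t\asymp\dist_{\set_\infty}\asymp\dist_{\Z^d}$, and $F_t(x,y)$ is controlled either by the vanishing $p_n(x,y)=0$ for $n<\dist_{\set_\infty}(x,y)$ (discrete time) or by a Chernoff-type bound on the Poisson$(t)$ jump count of $Y$ (continuous time), both producing exponential decay in $t$ that, in this strip, dominates the target factor $e^{-C_2\dist_{\Z^d}^2/t}$. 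For the lower bound, the reverse comparison $\dist_{\set_\infty}^2\le C_{\scriptscriptstyle\mathrm{chd}}^2\dist_{\Z^d}^2$ gives $e^{-C_4\dist_{\set_\infty}^2/t}\ge e^{-C_4 C_{\scriptscriptstyle\mathrm{chd}}^2\dist_{\Z^d}^2/t}$, while the range condition $t\ge\dist_{\Z^d}^{1+\epsilon}$ implies $t\ge\dist_{\set_\infty}^{1+\epsilon''}$ once $\dist_{\Z^d}$ exceeds the explicit threshold $C_{\scriptscriptstyle\mathrm{chd}}^{(1+\epsilon'')/(\epsilon-\epsilon'')}$, again absorbed into $K(\epsilon)$.

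The tail bound \eqref{eq:hk:T0} on $T_{\scriptscriptstyle\mathrm{hk}}(z,\epsilon)$ follows from the subexponential tails of $\Rvgb(z)$ and $\Rchd(z)$ in Theorems~\ref{thm:vgb:main}(c) and \ref{thm:chd}(c): both are of the common form $C\exp(-c(\log r)^{1+\constS})$, a bound that is stable under replacing $r$ by $r^{1/(1+\epsilon)}$ and under the union bound associated to the $\max$ defining $T_{\scriptscriptstyle\mathrm{hk}}$. The main technical obstacle in the argument is the upper bound in the strip $\dist_{\Z^d}(x,y)\le t<\dist_{\set_\infty}(x,y)$: there Theorem~\ref{thm:hk:vgb} cannot be invoked as a black box, and one must supply the exponential decay directly, either by Poisson-jump-count concentration for $Y$ in continuous time or by the discrete-time vanishing of $p_n$.
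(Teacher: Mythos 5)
Your argument follows precisely the route the paper intends: Theorem~\ref{thm:vgb:main} supplies very good balls, Theorem~\ref{thm:hk:vgb} together with Remark~\ref{rem:vgb}(1) gives the Gaussian bounds in the intrinsic distance $\dist_{\set_\infty}$, and Theorem~\ref{thm:chd} transfers them to $\dist_{\Z^d}$; the paper compresses all this into the one sentence preceding the theorem statement, and your write-up is simply an honest expansion, including the point it glosses over (the upper bound in the strip $\dist_{\Z^d}(x,y)\le t<\dist_{\set_\infty}(x,y)$, which you correctly dispatch via the off-diagonal Davies/Chernoff-type bounds that are independent of ball regularity).

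One small slip: with $T_{\scriptscriptstyle\mathrm{hk}}(x,\epsilon):=K(\epsilon)\max\bigl(\Rvgb(x)^{1+\epsilon},\Rchd(x)^{1+\epsilon}\bigr)$, the exponent $1+\epsilon$ on $\Rvgb$ only covers the lower bound \eqref{eq:hk:lb}. The upper bound \eqref{eq:hk:vgb:ub} from Theorem~\ref{thm:hk:vgb} requires $t\ge R_0^{3/2}$ (and Remark~\ref{rem:vgb}(1) only improves the range for the lower bound), so for $\epsilon<\tfrac12$ your $T_{\scriptscriptstyle\mathrm{hk}}$ may lie strictly below $\Rvgb(x)^{3/2}$ and the upper bound would not yet be available. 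The fix is cosmetic — take $\Rvgb(x)^{\max(3/2,\,1+\epsilon)}$ in the definition — and does not affect the tail bound \eqref{eq:hk:T0}, since the double-logarithmic loss is again absorbed by the $(\log r)^{1+\constS}$ decay.
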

In the applications of Theorem~\ref{thm:hk:dZd} in this paper, {\it we always take} $\epsilon = \frac 12$ (the original choice of Barlow) and omit the dependence on $\epsilon$ from the notation. 
For instance, we will always write $T_{\scriptscriptstyle {\mathrm{hk}}}(x)$ meaning $T_{\scriptscriptstyle {\mathrm{hk}}}(x,\frac 12)$. Any other choice of $\epsilon$ would also do. 

\medskip

It is well known that the parabolic Harnack inequality of Theorem~\ref{thm:phi:vgb} implies H\"older continuity of caloric functions (e.g., $q_t$ and $p_n$), 
see \cite[Proposition~3.2]{BH09}, in particular, by Corollary~\ref{cor:vgb:main} this is true almost surely for $G=\set_\infty$. 
The next result is a sharp bound on the discrete gradient of the heat kernel, proved in \cite[Theorem~6]{BDCKY14} for supercritical Bernoulli percolation using an elegant entropy argument.
\begin{theorem}\label{thm:hk:grad}
Let $d\geq 2$. 
Assume that the family of measures $\mathbb P^u$, $u\in(a,b)$, satisfies assumptions \p{} -- \ppp{} and \s{} -- \sss{}. 
Let $u\in(a,b)$. There exist constants $C_i = C_i(u)$, such that 
for all $x,x',y\in\Z^d$ and $n>\max\left\{\dist_{\Z^d}(x,y),\dist_{\Z^d}(x',y)\right\}$, 
\[
\mathbb E^u\left[\left(p_n(x,y) - p_{n-1}(x',y)\right)^2\cdot \mathds{1}_{\{y\in\set_\infty\}}\cdot\mathds{1}_{\{\text{$x$ and $x'$ are neighbors in $\set_\infty$}\}}\right]
\leq \frac{C_1}{n^{d+1}}\cdot e^{-C_2\cdot \frac{\dist_{\Z^d}(x,y)^2}{n}}.
\]
\end{theorem}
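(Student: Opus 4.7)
The plan is to follow the entropy-dissipation argument of \cite[Theorem~6]{BDCKY14} essentially line by line. A careful inspection of their proof reveals that the only environmental inputs used are (i) translation invariance of the percolation measure under lattice shifts and (ii) the quenched Gaussian upper bound on $p_n$. Property (i) is provided in our setting by \p{}, while (ii) is now available through the upper bound \eqref{eq:hk:ub} of Theorem~\ref{thm:hk:dZd}. In particular, independence of the coordinates $\omega(x)$, which underlies most other estimates for supercritical Bernoulli percolation, plays no role here.

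The core of the argument runs as follows. Fix $y\in\Z^d$ and, on the event $\{y\in\set_\infty\}$, set $v_n(x):=p_n(x,y)$. A short computation using reversibility of the discrete-time walk gives the identity
\[
\tfrac12\sum_{\{x,z\}}\nu_{xz}\bigl(v_n(x)-v_n(z)\bigr)^2 \;=\; p_{2n}(y,y) - p_{2n+1}(y,y),
\]
so that the Dirichlet form of $v_n$ is nothing but the discrete time-derivative of the on-diagonal heat kernel. Since $n\mapsto p_{2n}(y,y)$ is convex (spectral decomposition of the reversible Markov operator), these one-sided differences are monotone decreasing; summing a dyadic block and invoking the on-diagonal bound $p_{2n}(y,y)\le Cn^{-d/2}$ from \eqref{eq:hk:ub} yields
\[
\tfrac12\sum_{\{x,z\}}\nu_{xz}\bigl(v_n(x)-v_n(z)\bigr)^2 \;\le\; \frac{C}{n^{\,d/2+1}} \qquad \text{for } n\ge T_{\scriptscriptstyle{\mathrm{hk}}}(y,\omega).
\]
Rewriting the mixed gradient via Chapman-Kolmogorov, $p_n(x,y)=\mu_x^{-1}\sum_{z\sim x}\nu_{xz}p_{n-1}(z,y)$, and applying Cauchy-Schwarz plus a one-step telescoping reduces $\bigl(p_n(x,y)-p_{n-1}(x',y)\bigr)^2$ to a sum of squared edge gradients of $v_{n-1}$ at edges incident to $x$. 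Taking $\mathbb P^u$-expectation on $\{y\in\set_\infty\}$ and invoking stationarity \p{} distributes the total Dirichlet energy evenly across edges of each fixed direction, providing the baseline bound $Cn^{-(d+1)}$ per edge; the Gaussian off-diagonal factor $e^{-C_2\,\dist_{\Z^d}(x,y)^2/n}$ is then produced by the standard Davies exponential-tilting trick, tilting by $e^{\lambda\,\dist_{\Z^d}(\cdot,y)}$ and optimizing $\lambda$, exactly as in \cite{Barlow,BH09}.

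The only genuine obstacle, compared to the Bernoulli setting in \cite{BDCKY14}, is that the on-diagonal bound is available in our framework only for $n\ge T_{\scriptscriptstyle{\mathrm{hk}}}(y,\omega)$, where $T_{\scriptscriptstyle{\mathrm{hk}}}$ is a random crossover time with only the stretched-exponential log-tail \eqref{eq:hk:T0}. I will handle this by splitting the $\mathbb P^u$-expectation along the event $\{T_{\scriptscriptstyle{\mathrm{hk}}}(y)\le n/2\}$: on this event the BDCKY argument applies directly, while on its complement the crude estimate $\bigl(p_n(x,y)-p_{n-1}(x',y)\bigr)^2\le 4$ combined with \eqref{eq:hk:T0} easily absorbs the contribution into the polynomial prefactor $C_1 n^{-(d+1)}$ (the stretched exponential $\exp(-c(\log n)^{1+\constS})$ beats any polynomial). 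With this single modification in place, the remainder of the argument follows \cite{BDCKY14} verbatim, yielding the claim.
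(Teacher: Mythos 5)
The overall strategy of replicating the BDCKY entropy argument and controlling the random crossover time by splitting is correct in spirit, and the final splitting step of your last paragraph is close to what the paper does. However, the ``core of the argument'' you describe is \emph{not} the entropy argument of \cite[Theorem~6]{BDCKY14} and does not yield the stated bound. The quantity you compute,
\[
\tfrac12\sum_{\{x,z\}}\nu_{xz}\bigl(p_n(x,y)-p_n(z,y)\bigr)^2 = p_{2n}(y,y)-p_{2n+1}(y,y) \;\le\; \frac{C}{n^{d/2+1}},
\]
is the \emph{total} Dirichlet energy of $p_n(\cdot,y)$, summed over all $\sim n^{d/2}$ edges carrying significant mass. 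Extracting from this a per-edge bound of order $n^{-(d+1)}$ requires knowing that the energy is \emph{equidistributed} over those edges, and stationarity does not give this: under a lattice shift both the environment and the reference point $y$ translate simultaneously, so the translated version of the Dirichlet sum is just a relabelling of the same sum — it provides no pointwise localisation. The only information the Dirichlet-form computation yields for a fixed edge is the crude bound $n^{-d/2-1}$, which is off by a factor of $n^{d/2}$ from what is needed. The ``Davies exponential tilting'' remark does not repair this: tilting is used to convert on-diagonal Nash-type information into off-diagonal estimates, it does not retroactively supply a missing polynomial factor.

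The mechanism that actually closes this gap in \cite{BDCKY14}, and which the paper reproduces, is the \emph{entropy} $\mathbf H_n = \sum_x\phi(p_{\set_\infty,n}(0,x))$, $\phi(t)=-t\log t$, whose mean $H_n=\mathbb E^u[\mathbf H_n]$ has \emph{monotone decreasing} increments (\cite[Corollary~10]{BDCKY14}). The key inequality is
\[
\mathbb E^u\left[\bigl(p_{2n}(0,x)-p_{2n-1}(X_1,x)\bigr)^2\,\mathds{1}_{x\in\set_\infty}\,\mathds{1}_{N\le n}\right] \;\le\; (H_n-H_{n-1})\cdot\frac{C}{n^d}\,e^{-c\,\dist_{\Z^d}(0,x)^2/n},
\]
so the per-point bound $n^{-(d+1)}e^{-c\dist^2/n}$ follows once one shows $H_n-H_{n-1}\le C/n$. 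Monotonicity makes this last step elementary: it is enough to prove $H_n-H_{\lfloor n/2\rfloor}\le C$, which follows from the two-sided asymptotics $\mathbf H_n = \tfrac d2\log n + O(1)$ for $n\ge T_{\scriptscriptstyle{\mathrm{hk}}}(0)$. This is a further (smaller) gap in your proposal: pinning down the entropy to $O(1)$ accuracy requires \emph{both} the upper bound \eqref{eq:hk:ub} and the lower bound \eqref{eq:hk:lb} of Theorem~\ref{thm:hk:dZd}, not just the upper bound, contrary to your claim that only translation invariance and the Gaussian upper bound are used. Finally, a small remark on your splitting: the paper works with $N(\omega)=\max\{T_{\scriptscriptstyle{\mathrm{hk}}}(y):y\in\ballZ_{\Z^d}(0,n)\}$ rather than a single $T_{\scriptscriptstyle{\mathrm{hk}}}(y)$, because the heat-kernel bounds are needed uniformly over the relevant region; the union bound you take absorbs the extra $n^d$ factor into the stretched-exponential tail, exactly as you say, but the event you split on should be the ball-maximum.
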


\medskip

The heat kernel bounds of Theorem~\ref{thm:hk:dZd} imply also the following quenched estimates on the Green function 
$g_G(x,y) = \int_0^\infty q_{G,t}(x,y) dt=\sum_{n\geq 0}p_{G,n}(x,y)$ for almost all $G=\set_\infty$.
It is proved in \cite[Theorem~1.2]{BH09} for supercritical Bernoulli percolation, 
but extension to our setting is rather straightforward.
\begin{theorem}\label{thm:gf:bounds}
Let $d\geq 3$. 
Assume that the family of measures $\mathbb P^u$, $u\in(a,b)$, satisfies assumptions \p{} -- \ppp{} and \s{} -- \sss{}. 
Let $u\in(a,b)$. There exist constants $C_i=C_i(u)$ such that 
for all $\omega\in\Omega_{\scriptscriptstyle {\mathrm{hk}}}$ and distinct $x,y\in\set_\infty(\omega)$, if 
$\dist_{\Z^d}(x,y)^2 \geq \min\left\{T_{\scriptscriptstyle {\mathrm{hk}}}(x),T_{\scriptscriptstyle {\mathrm{hk}}}(y)\right\}\cdot\left(1+C_3\cdot \log\dist_{\Z^d}(x,y)\right)$,
then
\[
C_1\cdot \dist_{\Z^d}(x,y)^{2-d}\leq g_{\set_\infty(\omega)}(x,y) \leq C_2\cdot \dist_{\Z^d}(x,y)^{2-d}.
\]
\end{theorem}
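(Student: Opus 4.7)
The plan is to derive both inequalities by integrating the Gaussian heat kernel estimates of Theorem~\ref{thm:hk:dZd} along the time axis, in the spirit of \cite[Theorem~1.2(a)]{BH09}. Fix $\omega\in\Omega_{\scriptscriptstyle {\mathrm{hk}}}$ and distinct $x,y\in\set_\infty(\omega)$, and set $D=\dist_{\Z^d}(x,y)$ and $T_*=\min(T_{\scriptscriptstyle {\mathrm{hk}}}(x),T_{\scriptscriptstyle {\mathrm{hk}}}(y))$. By the reversibility $q_t(x,y)=q_t(y,x)$, applying Theorem~\ref{thm:hk:dZd}(b) at whichever of $x,y$ attains $T_*$ will give the Gaussian upper bound $q_t(x,y)\le C_1 t^{-d/2}e^{-C_2 D^2/t}$ for all $t\ge\max(T_*,D)$, while the refined lower bound from Remark~\ref{rem:vgb}(1) (with $\epsilon=\tfrac12$) applies for $t\ge\max(T_*,D^{3/2})$. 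One then writes $g_{\set_\infty(\omega)}(x,y)=\int_0^\infty q_t(x,y)\,dt$ and controls each piece of a suitable decomposition of the time axis.

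For the upper bound I would split the integral into $(0,D/2]$, $(D/2,\max(T_*,D))$, and $[\max(T_*,D),\infty)$. The first piece is controlled by Poisson large deviations: the continuous-time walk $Y$ makes a $\mathrm{Poisson}(t)$ number of jumps by time $t$ and at least $D$ of them are required to reach $y$, so
\[
q_t(x,y)\mu_y = \mathrm{Q}_x[Y_t=y] \le \mathrm{Q}_x[\mathrm{Poisson}(t)\ge D] \le e^{-cD}\qquad \text{for } t\le D/2,
\]
giving a contribution of order $De^{-cD}=o(D^{2-d})$. On the tail piece, the substitution $s=D^2/t$ converts the Gaussian upper bound into
\[
\int_{\max(T_*,D)}^{\infty} C_1 t^{-d/2}e^{-C_2D^2/t}\,dt = C_1 D^{2-d}\int_0^{D^2/\max(T_*,D)} s^{d/2-2}e^{-C_2 s}\,ds \le CD^{2-d},
\]
since for $d\ge3$ the integrand $s^{d/2-2}e^{-C_2 s}$ is integrable on $(0,\infty)$. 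The intermediate piece is empty when $T_*\le D$; when $T_*>D$ (a case permitted by the hypothesis, which only forces $T_*\le D^2/(1+C_3\log D)$) I would control it by combining the Cauchy--Schwarz estimate $q_t(x,y)\le(q_{2t}(x,x)/\mu_y)^{1/2}$ with the on-diagonal bound $q_{2t}(x,x)\le Ct^{-d/2}$, valid as soon as $2t\ge T_{\scriptscriptstyle{\mathrm{hk}}}(x)$; the logarithmic slack in the hypothesis, with $C_3$ taken sufficiently large in terms of $d$, then absorbs the resulting contribution into $CD^{2-d}$.

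For the lower bound, the hypothesis $D^2\ge T_*(1+C_3\log D)$ places $[D^2,2D^2]$ inside the domain $t\ge\max(T_*,D^{3/2})$ of the refined Gaussian lower bound (for $D$ larger than some absolute constant), so
\[
g_{\set_\infty(\omega)}(x,y) \ge \int_{D^2}^{2D^2} C_3 t^{-d/2}e^{-C_4 D^2/t}\,dt \ge C_3 e^{-C_4}\int_{D^2}^{2D^2}t^{-d/2}\,dt \ge c\,D^{2-d}.
\]

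The main technical hurdle will be the intermediate range $(D/2,T_*]$ of the upper bound in the case $T_*>D$: the off-diagonal Gaussian upper bound is not directly available there, and the trivial bound $q_t\le\mu_y^{-1}$ alone would produce a contribution of size $T_*$, overwhelming the target $D^{2-d}$. Overcoming this is precisely the role of the factor $1+C_3\log D$ in the hypothesis: it forces $T_*$ to be a small enough fraction of $D^2$ that the iterated Cauchy--Schwarz plus on-diagonal estimate closes with a loss absorbable into $CD^{2-d}$. Once this point is settled, the remaining steps are essentially identical to the proof of \cite[Theorem~1.2]{BH09} for supercritical Bernoulli percolation.
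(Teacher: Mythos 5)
Your overall blueprint --- integrate the Gaussian heat-kernel bounds of Theorem~\ref{thm:hk:dZd} along the time axis, treating short, intermediate, and long times separately --- is the right one, and your handling of the lower bound and of the tail $\int_{\max(T_*,D)}^\infty$ matches the argument the paper defers to in \cite[Theorem~1.2(a)]{BH09}. You have also correctly isolated where the real issue lives: the intermediate range, when $T_*:=\min\{T_{\scriptscriptstyle{\mathrm{hk}}}(x),T_{\scriptscriptstyle{\mathrm{hk}}}(y)\}$ may be as large as $D^2/(1+C_3\log D)$ with $D=\dist_{\Z^d}(x,y)$.

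However, the Cauchy--Schwarz-plus-on-diagonal step you propose for that range does not close the gap, because it discards the Gaussian factor. After passing to $q_t(x,y)\le\sqrt{q_t(x,x)\,q_t(y,y)}\le C t^{-d/2}$ there is no exponential term left, and since $t\mapsto t^{-d/2}$ is decreasing,
\[
\int_D^{T_*} t^{-d/2}\,dt\ \asymp\ D^{1-d/2}\qquad\text{whenever } T_*\ge 2D,
\]
which for $d\ge 3$ strictly \emph{exceeds} the target $D^{2-d}$ no matter how large $C_3$ is taken. The logarithmic slack $D^2\ge T_*(1+C_3\log D)$ has nothing to act on here: it only pays off once the estimate on $(D,T_*]$ already carries $e^{-cD^2/t}$, because it is precisely $e^{-cD^2/T_*}\le e^{-c}D^{-cC_3}$ that turns the trivial length bound $T_*\le D^2$ into $D^{2-cC_3}\le D^{2-d}$. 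Two smaller slips compound this: the on-diagonal bound $q_{2t}(x,x)\le Ct^{-d/2}$ requires $2t\ge T_{\scriptscriptstyle{\mathrm{hk}}}(x)\ge T_*$, so it gives nothing on $(D/2,T_*/2)$; and when $T_*\le D$ your ``intermediate piece'' is $(D/2,D)$, not empty, where the Poisson tail is $\Theta(1)$ as $t\uparrow D$ and the trivial bound $q_t\le\mu_y^{-1}$ alone would contribute $\approx D/2\gg D^{2-d}$.

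What the paper's two-line sketch is actually invoking --- its reference to (6.4)--(6.5) in \cite{BH09}, equivalently (1.5) in \cite{Barlow} --- is the universal Carne--Varopoulos heat-kernel bound, which holds for all $t>0$ on any bounded-degree graph with no very-good-ball hypotheses: roughly $q_t(x,y)\le C\exp\bigl(-cD(1+\log^+(D/t))\bigr)$ for $t\le D$, and $q_t(x,y)\le C\exp(-cD^2/t)$ for $t\ge D$. With this, $\int_0^D$ is exponentially small in $D$, while $\int_D^{T_*}\le T_*\,e^{-cD^2/T_*}\le D^2\,e^{-c}D^{-cC_3}\le C'D^{2-d}$ once $C_3\ge d/c$. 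Replacing your Cauchy--Schwarz/on-diagonal step (and the Poisson tail near $t=D$) with this Carne--Varopoulos bound over all of $(0,\max(T_*,D))$ is the missing ingredient; the remainder of your argument is fine and coincides with the paper's.
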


\bigskip

The remaining results are derived from the Gaussian heat kernel bounds and the quenched invariance principle. 
In the setting of supercritical Bernoulli percolation, all of them were obtained in \cite{BH09,BDCKY14}, 
but all the proofs extend directly to our setting. 

We begin with results about harmonic functions on $\set_\infty$. 
It is well known that Theorems~\ref{thm:vgb:main} and Theorem~\ref{thm:ehi:vgb} imply the almost sure Liouville property for positive harmonic functions on $\set_\infty$. 
The absence of non-constant sublinear harmonic functions on $\set_\infty$ is even known assuming just stationary of $\set$ (see \cite[Theorem~3 and discussion below]{BDCKY14}). 
In particular, it implies the uniqueness of the function $\chi$ in Theorem~\ref{thm:qip}(a).
The following result about the dimension of at most linear harmonic functions is classical on $\Z^d$. 
It was extended to supercritical Bernoulli percolation on $\Z^d$ in \cite[Theorem~5]{BDCKY14}.
\begin{theorem}\label{thm:hf:d+1}
Let $d\geq 2$. 
Assume that the family of measures $\mathbb P^u$, $u\in(a,b)$, satisfies assumptions \p{} -- \ppp{} and \s{} -- \sss{}. 
Let $u\in(a,b)$. There exist $\Omega_{\scriptscriptstyle {\mathrm{hf}}}\in\mathcal F$ with $\mathbb P^u[\Omega_{\scriptscriptstyle {\mathrm{hf}}}] = 1$ such that 
for all $\omega\in\Omega_{\scriptscriptstyle {\mathrm{hf}}}\cap\{0\in\set_\infty\}$, 
the dimension of the vector space of harmonic functions on $\set_\infty(\omega)$ with at most linear growth equals $d+1$. 
\end{theorem}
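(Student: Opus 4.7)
The plan is to follow the strategy of \cite[Theorem~5]{BDCKY14}, verifying both a lower and an upper bound of $d+1$ on the dimension, and to argue that every ingredient used there transfers to our setting thanks to the results already collected in this introduction.

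For the lower bound, the corrector $\chi:\set_\infty\to\R^d$ supplied by Theorem~\ref{thm:qip}(a) gives rise to $d$ harmonic functions $\phi_i(x)=x_i+\chi_i(x)$, $i=1,\ldots,d$, each of at most linear growth on $\set_\infty$ because $\max_{\set_\infty\cap \ballZ(0,n)}|\chi|=o(n)$. Together with the constant $\phi_0\equiv 1$, these exhibit $d+1$ candidates. To see linear independence on a set of full $\mathbb P^u$-measure, suppose $c_0+\sum_{i=1}^d c_i\phi_i\equiv 0$ on $\set_\infty$. Then $|c_0+c\cdot x|=|c\cdot\chi(x)|=o(|x|_\infty)$ along $\set_\infty$; since $\set_\infty$ intersects $\ballZ(0,n)$ with density bounded below in every direction (by ergodicity and \eqref{eq:C1:infty}), this forces $c=0$ and hence $c_0=0$.

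For the upper bound, let $h$ be harmonic on $\set_\infty(\omega)$ with at most linear growth. By Theorem~\ref{thm:vgb:hf:poligrowth} (available here via Corollary~\ref{cor:vgb:main}) the space $\mathcal H_1$ of such functions is finite dimensional, so we may work with a single $h$. I would find constants $a_0,a_1,\ldots,a_d$ for which
\[
\psi(x)\;:=\;h(x)-a_0-\sum_{i=1}^d a_i\phi_i(x)
\]
is sublinear, and then invoke the sublinear Liouville principle (which holds on any stationary random subgraph of $\Z^d$, see \cite[Theorem~3 and subsequent discussion]{BDCKY14}) to conclude $\psi\equiv 0$. Thus $h$ lies in the span of $\{\phi_0,\phi_1,\ldots,\phi_d\}$ and the dimension is exactly $d+1$.

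The core technical step is the identification of the vector $a=(a_1,\ldots,a_d)$. Consider the martingale $M_n=h(X_n)$ under $\mathrm P_{\set_\infty(\omega),x}$; its increments are bounded in absolute value by the linear-growth constant of $h$ times the uniform bound $C_0$ on the degree. Using the Gaussian heat kernel bounds of Theorem~\ref{thm:hk:dZd} one can compute the quadratic variation and show that $M_n/\sqrt n$ is tight. By the quenched invariance principle Theorem~\ref{thm:qip}(b), the rescaled walk, viewed through the corrector as $(X_n+\chi(X_n))/\sqrt n$, converges to a Brownian motion with covariance $\Sigma$. Because $h$ has linear growth and $\chi$ is sublinear, the joint distributional limit of $\bigl(M_n/\sqrt n,\,(X_n+\chi(X_n))/\sqrt n\bigr)$ is concentrated on the graph of a linear map, whose slope defines the coefficients $a_i$. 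The constant $a_0$ is then fixed by the value of $h$ at the origin. Showing that $\psi$ is genuinely sublinear on $\set_\infty$—not merely along the random walk path—uses the heat kernel bounds together with a mean-value representation $\psi(x)=\mathbb E[\psi(X_n)]$ and the fact that $p_n(x,\cdot)$ spreads uniformly over $\ballZ(x,\sqrt{n\log n})\cap\set_\infty$.

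The main obstacle will be this last transfer: turning the probabilistic convergence $M_n/\sqrt n\to a\cdot \Sigma^{1/2}W_1$ into a deterministic sublinear bound for $\psi(x)$ along every sequence $x\in\set_\infty$ with $|x|_\infty\to\infty$. Here one has to combine the pointwise heat kernel bounds \eqref{eq:hk:ub}–\eqref{eq:hk:lb} (to exclude anomalously slow or concentrated convergence), the chemical-distance control of Theorem~\ref{thm:chd} (to pass between $\dist_{\set_\infty}$ and $\dist_{\Z^d}$), and the finite-dimensionality of $\mathcal H_1$ (to rule out a ``wandering'' linear part). Once these are in place, the argument of BDCKY14 is entirely robust: it uses only stationarity, Gaussian heat kernel bounds, a sublinear corrector, and the quenched invariance principle, all of which are now available to us through Theorems~\ref{thm:vgb:main}, \ref{thm:hk:dZd}, and \ref{thm:qip}.
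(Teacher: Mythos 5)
Your lower bound is correct and identical to the paper's: the constant function together with the $d$ coordinates of $x\mapsto x+\chi(x)$ from Theorem~\ref{thm:qip}(a) give $d+1$ independent at-most-linear harmonic functions.

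For the upper bound, however, the proposal has a genuine gap precisely at the place you flag as ``the main obstacle''. You plan to produce coefficients $a_0,\ldots,a_d$ such that $\psi=h-a_0-\sum a_i\phi_i$ is \emph{genuinely} sublinear on $\set_\infty$, and then invoke the sublinear Liouville theorem \cite[Theorem~3]{BDCKY14}. But neither your martingale argument nor the compactness argument used in the paper actually delivers genuine sublinearity of $\psi$. The subsequential compactness step (via Arzel\`a--Ascoli applied to $h_n(\cdot)=\frac1n h(n\cdot)$) only yields a sequence of scales $n_k$ along which, for every fixed $\varepsilon>0$ and $k$ large, one has $|f(x)|\leq\varepsilon n_k$ for $x\in\ballZ_{\set_\infty}(0,n_k/\varepsilon)$, where $f(x)=h(x)-\widetilde h(x+\chi(x))$. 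Between the scales $n_k$ and $n_{k+1}$ there is no control on $|f(x)|/|x|$, since the ratio $n_{k+1}/n_k$ is not bounded, so $f$ need not be sublinear. For this reason the sublinear Liouville of \cite[Theorem~3]{BDCKY14} cannot be applied.

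The paper resolves this using the entropy argument of \cite[Corollary~21]{BDCKY14}: from the subsequential smallness and the upper heat kernel bound \eqref{eq:hk:ub} one deduces $\mathrm{E}_{\set_\infty,0}[f(X_{n_k^2})^2]\leq\varepsilon n_k^2$, and Corollary~21 then forces $f$ to be constant, \emph{provided} the mean entropy satisfies $H_n-H_{n-1}\leq C/n$. That entropy bound is precisely what the paper establishes in the proof of Theorem~\ref{thm:hk:grad} (from the two-sided heat kernel estimates of Theorem~\ref{thm:hk:dZd} and the tail bound \eqref{eq:hk:T0}, with the fast decay in \eqref{eq:hk:T0} — faster than any polynomial — being crucial to integrate $\mathbf H_n$ for small $n$). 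The entropy step is a separate, essential ingredient; your heuristic of combining heat kernel bounds, chemical distance control, and finite-dimensionality of $\mathcal H_1$ does not replace it. Also, your martingale route for identifying the slope (showing the joint limit of $(M_n/\sqrt n,(X_n+\chi(X_n))/\sqrt n)$ is supported on the graph of a linear map) is not justified as stated and is in any case unnecessary: the paper identifies the linear part directly as the subsequential limit $\widetilde h$ of $h_n$, which is harmonic for the Brownian generator by Theorem~\ref{thm:qip}(b) and hence linear.
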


\medskip

Since the parabolic Harnack inequality for solutions to the heat equation on $\set_\infty$ implies 
H\"older continuity of $p_n$ and $q_t$, it is possible to replace the weak convergence of Theorem~\ref{thm:qip} by pointwise convergence. 
\cite[Theorems~4.5 and 4.6]{BH09} give general sufficient conditions for the local central limit theorem on general graphs. 
They were verified in \cite[Theorem~1.1]{BH09} for supercritical Bernoulli percolation. 
Theorems~\ref{thm:qip} and \ref{thm:hk:dZd} allow to check these conditions in our setting leading to the following (same as for Bernoulli percolation) result. 
For $x\in\R^d$, $t>0$, the Gaussian heat kernel with covariance matrix $\Sigma$ is defined as 
\[
k_{\Sigma,t}(x) = (2\pi \mathrm{det}(\Sigma) t)^{-\frac d2}\cdot \exp\left(-\frac{x'\Sigma^{-1}x}{2 t}\right),
\]
where $x'$ is the transpose of $x$.
\begin{theorem}\label{thm:localclt}
Let $d\geq 2$. 
Assume that the family of measures $\mathbb P^u$, $u\in(a,b)$, satisfies assumptions \p{} -- \ppp{} and \s{} -- \sss{}. 
Let $u\in(a,b)$, $m=\mathbb E^u[\mu_0\cdot\mathds{1}_{0\in\set_\infty}]$, and $T>0$. 
There exist $\Omega_{\scriptscriptstyle {\mathrm{lclt}}}\in\mathcal F$ with $\mathbb P^u[\Omega_{\scriptscriptstyle {\mathrm{lclt}}}] = 1$, 
and a non-degenerate covariance matrix $\Sigma = \Sigma(u)$ such that for all $\omega\in\Omega_{\scriptscriptstyle {\mathrm{lclt}}}\cap\{0\in\set_\infty\}$, 
\begin{equation}\label{eq:localclt}
\lim_{n\to\infty}\sup_{x\in\R^d}\sup_{t\geq T}\left|n^{\frac d2}\cdot F_{nt}(0,g_n(x)) - \frac{C(F)}{m}\cdot k_{\Sigma,t}(x) \right| = 0,
\end{equation}
where $F_s$ stands for $q_s$ or $p_{\lfloor s\rfloor} + p_{\lfloor s\rfloor +1}$, $C(F)$ is $1$ if $F=q$ and $2$ otherwise, and 
$g_n(x)$ is the closest point in $\set_\infty$ to $\sqrt n x$. 
\end{theorem}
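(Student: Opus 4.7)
The plan is to run the Barlow--Hambly abstract local CLT machinery from \cite[Theorems~4.5 and 4.6]{BH09}, exactly as was done for supercritical Bernoulli percolation in \cite[Theorem~1.1]{BH09}. That machinery takes as inputs: (i) the quenched invariance principle for the random walk; (ii) Gaussian upper and lower bounds on the transition density at large times; (iii) parabolic H\"older continuity of caloric functions on large balls; and (iv) almost-sure convergence of the $\mu$-density of $\set_\infty$ in large boxes to the constant $m$. In our setting (i) is supplied by Theorem~\ref{thm:qip}, (ii) by Theorem~\ref{thm:hk:dZd} with $\epsilon = \frac{1}{2}$, (iii) follows from the parabolic Harnack inequality of Theorem~\ref{thm:phi:vgb} via Corollary~\ref{cor:vgb:main} applied to the caloric function $(s,y) \mapsto F_s(0,y)$ after one verifies by Theorem~\ref{thm:vgb:main}, the tail bound \eqref{eq:vgb:main:R}, and a Borel--Cantelli argument that on a full-measure event all relevant balls $\ballZ_{\set_\infty}(y, R\log R)$ are $(C_V, C_P, C_W)$-very good at scale $R$; and (iv) follows from Birkhoff's ergodic theorem applied to the bounded stationary field $y \mapsto \mu_y \mathds{1}_{y\in\set_\infty}$ (using ergodicity \p{}), together with the $\mathbb P^u$-a.s.\ uniqueness of the infinite cluster which follows from \s{}.

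The core argument proceeds in two steps. Fix $t \geq T$, $x \in \R^d$, and a small mesoscopic parameter $\delta > 0$. Applying Theorem~\ref{thm:qip} to continuous test functions that approximate the indicator of $\{z \in \R^d : |z - x|_\infty \leq \delta\}$ shows that, as $n \to \infty$,
\[
\sum_{\substack{y \in \set_\infty \\ |y - \sqrt n x|_\infty \leq \delta \sqrt n}} F_{nt}(0,y) \, \mu_y \; \longrightarrow \; C(F) \cdot \int_{|z - x|_\infty \leq \delta} k_{\Sigma, t}(z)\, dz.
\]
The parabolic H\"older estimate of (iii) then shows that $F_{nt}(0, \cdot)$ varies over the ball $\ballZ_{\set_\infty}(g_n(x), \delta \sqrt n)$ by at most a factor $1 + \varepsilon(\delta)$ with $\varepsilon(\delta) \to 0$ as $\delta \downarrow 0$, while (iv) ensures that the $\mu$-volume of that ball equals $m \cdot (2 \delta \sqrt n)^d \cdot (1 + o_n(1))$ almost surely. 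Dividing the display above by this volume, multiplying by $n^{d/2}$, and sending first $n \to \infty$ and then $\delta \downarrow 0$ yields the pointwise limit $n^{d/2} F_{nt}(0, g_n(x)) \to \frac{C(F)}{m}\, k_{\Sigma, t}(x)$, and the same H\"older estimate delivers equicontinuity in $(t, x)$, upgrading this to uniform convergence on every compact subset of $[T, \infty) \times \R^d$.

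The main obstacle lies in passing from convergence on compacts to the global supremum in \eqref{eq:localclt}. For $|x|$ large relative to $\sqrt{t}$, the Gaussian upper bound \eqref{eq:hk:ub} forces $n^{d/2} F_{nt}(0, g_n(x)) \leq C\, t^{-d/2}\, e^{-c\, |x|^2 / t}$ for all large $n$ on the event $\Omega_{\scriptscriptstyle \mathrm{hk}}$, and the Gaussian kernel $k_{\Sigma, t}(x)$ satisfies the same bound; choosing the truncation scale large enough makes both quantities smaller than any prescribed threshold. For large $t$, the $t^{-d/2}$ prefactor controls both terms uniformly. Combined with the previous step this yields \eqref{eq:localclt}. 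The delicate technical point throughout is the matching of three scales: the mesoscopic scale $\delta \sqrt n$ on which H\"older continuity is invoked, the diffusive scale $\sqrt n$ at which the invariance principle operates, and the scale at which volume regularity of $\set_\infty$ stabilizes. These three requirements are precisely what the very-good property of Theorem~\ref{thm:vgb:main}, together with the tail bound \eqref{eq:vgb:main:R} and Birkhoff's theorem, provide uniformly over the base point $g_n(x)$.
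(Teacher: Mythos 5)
Your proposal is correct and follows essentially the same route as the paper: both reduce Theorem~\ref{thm:localclt} to the Barlow--Hambly abstract local central limit theorem (\cite[Theorems~4.5 and 4.6]{BH09}) and then verify its hypotheses from the quenched invariance principle (Theorem~\ref{thm:qip}), the Gaussian heat-kernel bounds (Theorem~\ref{thm:hk:dZd}), the parabolic Harnack inequality via Theorems~\ref{thm:phi:vgb} and~\ref{thm:vgb:main}, the chemical-distance control of Theorem~\ref{thm:chd}, and an ergodic density statement. One technical imprecision is worth flagging: the density hypothesis of the Barlow--Hambly conditions requires, for every fixed $x\in\Z^d$ and $r>0$, that $\mu(\set_\infty\cap(\sqrt n\,x + [-\sqrt n\,r,\sqrt n\,r]^d))/(2\sqrt n\,r)^d\to m$ as $n\to\infty$. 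For $x\neq 0$ these boxes drift to infinity, so Birkhoff's pointwise ergodic theorem along a single shift does not directly give this; one needs a spatial (Wiener-type) multiparameter ergodic theorem valid for regular sequences of sets, which is what the paper invokes (\cite[Theorem~2.8 in Chapter~6]{Krengel}, after verifying regularity of the boxes via \cite[Lemma~5.1]{ADS14}). The rest of your sketch, including the truncation argument to pass from compacts to the global supremum via the Gaussian tail, matches the paper's strategy.
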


\medskip

Theorems~\ref{thm:hk:dZd} and \ref{thm:localclt} imply the following asymptotic for the Green function, 
extending results of \cite[Theorem~1.2(b,c)]{BH09} to our setting. 
For a covariance matrix $\Sigma$, let $\mathrm{G}_{\Sigma}(x) = \int_0^\infty k_{\Sigma,t}(x)dt$
be the Green function of a Brownian motion with covariance matrix $\Sigma$. In particular, if $\Sigma = \sigma^2\cdot \mathrm{I}_d$, then 
$\mathrm{G}_\Sigma(x) = (2\sigma^2\pi^{\frac d2})^{-1}\Gamma(\frac d2 - 1)|x|^{2-d}$ for all $x\neq 0$, where $|\cdot |$ stands for the Euclidean norm on $\R^d$.
\begin{theorem}\label{thm:gf:asymp}
Let $d\geq 3$. 
Assume that the family of measures $\mathbb P^u$, $u\in(a,b)$, satisfies assumptions \p{} -- \ppp{} and \s{} -- \sss{}. 
Let $u\in(a,b)$, $m$ and $\Sigma$ as in Theorem~\ref{thm:localclt}, and $\varepsilon>0$. 
There exist $\Omega_{\scriptscriptstyle {\mathrm{gf}}}\in\mathcal F$ with $\mathbb P^u[\Omega_{\scriptscriptstyle {\mathrm{gf}}}] = 1$ and a proper random variable $M = M(\varepsilon)$, such that 
for all $\omega\in\Omega_{\scriptscriptstyle {\mathrm{gf}}}\cap\{0\in\set_\infty\}$,
\begin{itemize}\itemsep0pt
\item[(a)]
for all $x\in\set_\infty(\omega)$ with $|x|\geq M$, 
\[
\frac{(1-\varepsilon)\mathrm{G}_\Sigma(x)}{m}\leq g_{\set_\infty(\omega)}(0,x) \leq \frac{(1+\varepsilon)\mathrm{G}_\Sigma(x)}{m},
\]
\item[(b)]
for all $y\in\R^d$, 
$\lim_{k\to\infty}k^{2-d}\cdot \mathbb E^u\left[g_{\set_\infty(\omega)}(0,\lfloor ky\rfloor)~\Big|~0\in\set_\infty\right] = \frac{\mathrm{G}_\Sigma(y)}{m}$.
\end{itemize}
\end{theorem}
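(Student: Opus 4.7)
Following \cite[Theorem~1.2(b,c)]{BH09}, the plan is to combine the quenched heat-kernel bounds of Theorem~\ref{thm:hk:dZd} with the quenched local central limit theorem of Theorem~\ref{thm:localclt}, exploiting the scaling identity $k_{\Sigma,|x|^2 s}(x)=|x|^{-d}k_{\Sigma,s}(x/|x|)$, which after integration yields $\mathrm{G}_\Sigma(x)=|x|^{2-d}\int_0^\infty k_{\Sigma,s}(x/|x|)\,ds$ and in particular the $(2-d)$-homogeneity of $\mathrm{G}_\Sigma$.

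For part (a), I would write $g_{\set_\infty(\omega)}(0,x)=\int_0^\infty q_t(0,x)\,dt$ and split the integration range at $t=\alpha|x|^2$ and $t=A|x|^2$ with $\alpha$ small and $A$ large. The large-time piece $[A|x|^2,\infty)$ is bounded using \eqref{eq:hk:ub} and integrates to at most $C\cdot A^{1-d/2}|x|^{2-d}$. The small-time piece $[0,\alpha|x|^2]$ splits further at $t=\dist_{\Z^d}(0,x)\approx|x|$: for $t<|x|$, the Poisson expansion $q_t(0,x)=e^{-t}\sum_{n\geq \dist_{\Z^d}(0,x)}t^n p_n(0,x)/n!$, together with the fact that $p_n(0,x)=0$ for $n<\dist_{\Z^d}(0,x)$, yields super-exponential decay; for $|x|\leq t\leq\alpha|x|^2$, \eqref{eq:hk:ub} applies and after the change of variable $t=|x|^2/s$ it bounds the piece by $|x|^{2-d}\int_{1/\alpha}^\infty s^{d/2-2}e^{-C_2 s}\,ds$, which vanishes as $\alpha\downarrow 0$. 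On the middle range $[\alpha|x|^2,A|x|^2]$, I substitute $t=|x|^2 s$ and apply Theorem~\ref{thm:localclt} with $n=|x|^2$, $y=x/|x|$, and $T=\alpha$: since $x\in\set_\infty(\omega)$, the nearest point $g_n(y)$ equals $x$, and the local CLT gives $n^{d/2}q_{ns}(0,x)=k_{\Sigma,s}(x/|x|)/m+o_\omega(1)$ uniformly in $s\in[\alpha,A]$. Integrating in $s$ and undoing the substitution matches the middle piece to $\int_{\alpha|x|^2}^{A|x|^2} k_{\Sigma,t}(x)/m\,dt$. Sending $|x|\to\infty$ first and then $\alpha\downarrow 0$, $A\uparrow\infty$ (with identical tail bounds on the integral of $k_{\Sigma,t}(x)$) yields $g_{\set_\infty(\omega)}(0,x)=\mathrm{G}_\Sigma(x)/m+o(|x|^{2-d})$; since $\mathrm{G}_\Sigma(x)/|x|^{2-d}$ is bounded and bounded away from $0$ on the unit sphere, this delivers the $(1\pm\varepsilon)$ multiplicative bounds for $|x|\geq M(\omega,\varepsilon)$.

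For part (b), part (a) provides almost sure convergence of $k^{d-2}g_{\set_\infty(\omega)}(0,\lfloor ky\rfloor)$ to $\mathrm{G}_\Sigma(y)/m$ on the event $\{0,\lfloor ky\rfloor\in\set_\infty\}$, using $(2-d)$-homogeneity of $\mathrm{G}_\Sigma$ together with $\mathrm{G}_\Sigma(\lfloor ky\rfloor)/\mathrm{G}_\Sigma(ky)\to 1$. To pass from the a.s.\ statement to convergence of the conditional expectation given $\{0\in\set_\infty\}$, I would apply Theorem~\ref{thm:gf:bounds} to obtain a uniform upper bound $k^{d-2}g_{\set_\infty(\omega)}(0,\lfloor ky\rfloor)\leq C|y|^{2-d}$ on the typical event where $\min\{T_{\mathrm{hk}}(0),T_{\mathrm{hk}}(\lfloor ky\rfloor)\}\leq c|ky|^2/\log|ky|$, whose complement has stretched-exponential probability by \eqref{eq:hk:T0}. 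A Cauchy--Schwarz bound together with a moment bound on $g_{\set_\infty(\omega)}(0,0)$ (finite in $d\geq 3$ by integrating \eqref{eq:hk:ub} over $t\geq T_{\mathrm{hk}}(0)$ plus a trivial bound for small $t$) handles the exceptional event, and dominated convergence concludes.

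The hardest step is the small-time tail in part (a): the local CLT provides uniform control only for $s\geq T>0$, so the contribution from $t\in[0,\alpha|x|^2]$ must be handled entirely by heat-kernel estimates, and in particular the sub-range $t<\dist_{\Z^d}(0,x)$ is not covered by \eqref{eq:hk:ub} and requires the Poisson-expansion argument mentioned above. In part (b), the analogous difficulty is establishing uniform integrability to upgrade the a.s.\ convergence to convergence of the expectation; the stretched-exponential tail of $T_{\mathrm{hk}}$ from Theorem~\ref{thm:hk:dZd}(c) is exactly what is needed to control the exceptional event.
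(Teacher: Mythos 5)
Your proposal matches the paper's proof, which for both parts is a direct citation to \cite[Theorem~1.2(b,c)]{BH09} with the Bernoulli-specific inputs swapped for the present setting. For part~(a) the paper simply says it follows from Theorem~\ref{thm:localclt} and \eqref{eq:hk:ub} ``by repeating the proof of \cite[Theorem~1.2(b)]{BH09} without any changes''; you have reconstructed that argument in detail, and the decomposition you use (split the Green-function integral at $\alpha|x|^2$ and $A|x|^2$, handle the local-CLT window in the middle, control the large-time tail via \eqref{eq:hk:ub}, and dispose of the sub-ballistic range $t<\dist_{\Z^d}(0,x)$ by the Poisson-tail bound that the heat-kernel estimates do not cover) is exactly the BH09 mechanism the paper invokes. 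For part~(b), the paper uses the bounds \cite[(6.30),(6.31)]{BH09}, the tail bound \eqref{eq:hk:T0}, and the estimate $g_{\set_\infty}(0,0)\leq CT_{\scriptscriptstyle{\mathrm{hk}}}(0)$ (obtained from \eqref{eq:hk:ub}) to get a finite second moment for $g$ and then passes to the limit; your version replaces the paper's $\{M\le|x|\}$ versus $\{M>|x|\}$ split by an explicit $T_{\scriptscriptstyle{\mathrm{hk}}}$-threshold split, but the workhorse ingredients --- Cauchy--Schwarz, the moment bound on $g(0,0)$ coming from $T_{\scriptscriptstyle{\mathrm{hk}}}$, and the super-polynomial tail of $T_{\scriptscriptstyle{\mathrm{hk}}}$ from \eqref{eq:hk:T0} --- are identical, so this is the same argument phrased differently rather than a genuinely different route.
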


\bigskip

\begin{remark}\label{rem:results}
\begin{itemize}\itemsep0pt
\item[(1)]
Let us emphasize that our method does not allow to replace $(\log r)^{1 + \constS}$ in \eqref{eq:vgb:main:R} by $\funcS(u,R)$ from \s{}. 
In particular, even if $\funcS(u,R)$ growth polynomially with $R$, we are not able to improve the bound in \eqref{eq:vgb:main:R} to stretched exponential. 
In the case of independent Bernoulli percolation, it is known from \cite[Section~2]{Barlow} that the result of Theorem~\ref{thm:vgb:main} holds with 
a stretched exponential bound in \eqref{eq:vgb:main:R}. 
\item[(2)]
The fact that the right hand side of \eqref{eq:hk:T0} decays faster than any polynomial 
will be crucially used in the proofs of Theorems~\ref{thm:hk:grad}, \ref{thm:hf:d+1}, and \ref{thm:gf:asymp}. 
Quenched bounds on the diagonal $p_n(x,x)$ under the assumptions \p{} -- \ppp{} and \s{} -- \sss{} were obtained in \cite{PRS} (see Remarks 1.3 (4) and (5) there) for all $n\geq n_0(\omega)$, 
although without any control on the tail of $n_0(\omega)$.  
\item[(3)]
In the case of supercritical Bernoulli percolation, Barlow showed in \cite[Theorem~1]{Barlow} that the bound \eqref{eq:hk:lb} holds for all 
$t\geq\max\{T_{\scriptscriptstyle {\mathrm{hk}}}(x),\mathrm{D}(x,y)\}$. 
The step ``from $\epsilon>0$ to $\epsilon=0$'' is highly nontrivial and follows from the fact that very good boxes on \emph{microscopic} scales are dense, 
see \cite[Definition~5.4 and Theorem~5.7(b)]{Barlow}. 
We do not know if such property can be deduced from the assumptions \p{} -- \ppp{} and \s{} -- \sss{} or proved for any of the specific models  
considered in Section~\ref{sec:examples} (except for Bernoulli percolation). 
Our renormalization does not exclude the possibility of dense mesoscopic traps in $\set_\infty$, but we do not have a counterexample either. 
For comparison, let us mention that the heat kernel bounds \eqref{eq:hk:ub} and \eqref{eq:hk:lb} were obtained in \cite{BD10,ABDH13} for the random conductance model with i.i.d. weights,
where it is also stated in \cite[Remark~3.4]{BD10} and \cite[Remark~4.12]{ABDH13} 
that the lower bound for times comparable with $\mathrm{D}(x,y)$ can likely be obtained by adapting Barlow's proof, 
but omitted there because of a considerable amount of extra work and few applications. 
\item[(4)]
The first proofs of the quenched invariance principle for random walk on the infinite cluster of Bernoulli percolation \cite{SS04,BergerBiskup,MathieuPiatnitski} 
relied significantly on the quenched upper bound on the heat kernel. 
It was then observed in \cite{BiskupPrescott} that it is sufficient to control only the diagonal of the heat kernel 
(proved for Bernoulli percolation in \cite{MathieuRemy}). 
This observation was essential in proving the quenched invariance principle for percolation models satisfying \p{} -- \ppp{} and \s{} -- \sss{} in \cite{PRS}, 
where the desired upper bound on the diagonal of the heat kernel was obtained by means of an isoperimetric inequality (see \cite[Theorem~1.2]{PRS}). 
Theorem~\ref{thm:hk:dZd} allows now to prove the quenched invariance principle of \cite{PRS} by following the original path, for instance, 
by a direct adaptation of the proof of \cite[Theorem~1.1]{BergerBiskup}.
\item[(5)]
Our proof of Theorem~\ref{thm:localclt} follows the approach of \cite{BH09} in the setting of supercritical Bernoulli percolation, namely, it is deduced from 
the quenched invariance principle, parabolic Harnack inequality, and the upper bound on the heat kernel. 
If we replace in \eqref{eq:localclt} $\sup_x$ by $\sup_{|x|<K}$ for any fixed $K>0$, then 
it is not necessary to assume the upper bound on the heat kernel, see \cite[Theorem~1]{CH08}.
\item[(6)]
A new approach to limit theorems and Harnack inequalities for the elliptic random conductance model 
under assumptions on moments of the weights and their reciprocals has been recently developed in \cite{ADS13,ADS14}. 
It relies on Moser's iteration and new weighted Sobolev and Poincar\'e inequalities, and 
is applicable on general graphs 
satisfying globally conditions of regular volume growth and an isoperimetric inequality (see \cite[Assumption~1.1]{ADS14}).
We will comment more on these conditions in Remark~\ref{rem:ADS}. 
The method of \cite{ADS13} was recently used in \cite{Nguyen} to prove the quenched invariance principle for 
the random conductance model on the infinite cluster of supercritical Bernoulli percolation under the same assumptions on moments of the weights as in \cite{ADS13}.
\end{itemize}
\end{remark}

\medskip

\subsection{Some words about the proof of Theorem~\ref{thm:vgb:main}}

Theorem~\ref{thm:chd} is enough to control the volume growth, thus we only discuss here the weak Poincar\'e inequality. 
A finite subset $H$ of $V(G)$ satisfies the (strong) Poincar\'e inequality $P(C,r)$, if for any function $f:H\to\R$, 
$\min_a\int_H(f-a)^2 d\mu \leq C\cdot r^2\cdot \int_{E(H)} |\nabla f|^2 d\nu$. 
The well known sufficient condition for $P(C,r)$ is the following isoperimetric inequality for subsets of $H$ (see, e.g., \cite[Proposition~3.3.10]{Kumagai} or \cite[Lemma~3.3.7]{SC97}): 
there exists $c>0$ such that for all $A\subset H$ with $|A|\leq \frac12|H|$, the number of edges between $A$ and $H\setminus A$ is at least $\frac cr|A|$. 
Thus, if the ball $\ballG(y,r)$ is contained in a subset $\mathcal C(y,r)$ of $V(G)$ such that $\mathcal C(y,r)\subseteq \ballG(y,C'r)$ and 
the above isoperimetric inequality holds for subsets of $\mathcal C(y,r)$, then it is easy to see that 
the weak Poincar\'e inequality with constants $C$ and $C'$ holds for $\ballG(y,r)$ (see Claim~\ref{cl:rb-gb}). 
In the case $G=\set_\infty\subset \Z^d$, the natural choice is to take $\mathcal C(y,r)$ to be the cluster of $y$ in $\set_\infty\cap\ballZ(y,r)$, 
which turns out to be also the largest cluster in $\set\cap\ballZ(y,r)$ (here and below, we implicitly assume that $r$ is large enough). 
In the setting of Bernoulli percolation, it is known that subsets of $\mathcal C(y,r)$ satisfy the above isoperimetric inequality (see \cite[Proposition~2.11]{Barlow}).
In our setting, Theorem~\ref{thm:chd} implies that $\mathcal C(y,r)\subseteq\ballG(y,C'r)$, thus we only need to prove the isoperimetric inequality. 
The first isoperimetric inequality for subsets of $\mathcal C(y,r)$ was proved in \cite[Theorem~1.2]{PRS}. 
It states that for any $A\subset \mathcal C(y,r)$ with $|A|\geq r^{\delta}$, the number of edges between $A$ and $\set_\infty\setminus A$ is at least $c|A|^{\frac{d-1}{d}}$ 
(thus, also at least $\frac{c'}{r}|A|$).
Note the key difference, the edges are taken between $A$ and $\set_\infty\setminus A$, not just between $A$ and $\mathcal C(y,r)\setminus A$. 
The above isoperimetric inequality implies certain Nash-type inequalities sufficient to prove a diffusive upper bound on the heat kernel 
(see \cite[Theorem~2]{MorrisPeres}, \cite[Proposition~6.1]{BiskupPrescott}, \cite[Lemma~3.2]{BBHK}, \cite[(A.4)]{PRS}), 
but it is too weak to imply the Poincar\'e inequality (see, e.g., \cite[Sections~3.2 and 3.3]{Kumagai} for an overview of the two isoperimetric inequalities and 
their relation to various functional inequalities). 
Let us also mention that in the setting of Bernoulli percolation, the ``weak'' isoperimetric inequality admits a simple proof (\cite[Theorem~A.1]{BBHK}), 
but the proof of the ``strong'' one is significantly more involved (\cite[Proposition~2.11]{Barlow}). 
After all said, we have to admit that we are not able to prove the strong isoperimetric inequality for subsets of $\mathcal C(y,r)$, 
and do not know if it holds in our setting. 
Nevertheless, we can rescue the situation by proving that a certain {\it enlarged} set $\widetilde {\mathcal C}(y,r)$, 
obtained from $\mathcal C(y,r)$ by adding to it all vertices from $\set_\infty$ to which it is {\it locally} connected, 
satisfies the desired strong isoperimetric inequality (see Proposition~\ref{prop:verygoodbox}, Theorem~\ref{thm:isop:cemax}, and Corollaries~\ref{cor:isop:cemax} and \ref{cor:tildeC:existence}).
The general outline of the proof of our isoperimetric inequality for $\widetilde{\mathcal C}(y,r)$ is similar to the one 
of the proof of the weak isoperimetric inequality for $\mathcal C(y,r)$ in \cite{PRS}, 
but we have to modify renormalization and coarse graining of subsets of $\widetilde{\mathcal C}(y,r)$ and rework some arguments 
to get good control of the boundary and the volume of subsets of $\widetilde{\mathcal C}(y,r)$ in terms of the boundary and the volume of 
the corresponding coarse grainings. For instance, it is crucial for us (but not for \cite{PRS}) that the coarse graining of a big set (say, of size $\frac12 |\widetilde{\mathcal C}(y,r)|$) 
should not be too big (see, e.g., the proof of Claim~\ref{cl:isopmain2}). 

We partition the lattice $\Z^d$ into large boxes of equal size. For each configuration $\omega\in\Omega$, we 
subdivide all the boxes into {\it good} and {\it bad}. Restriction of $\set$ to a good box contains a unique largest {\it in volume} cluster, 
and the largest clusters in two adjacent good boxes are connected in $\set$ in the union of the two boxes. 
Traditionally in the study of Bernoulli percolation, the good boxes 
are defined to contain a unique cluster of large diameter. 
In our case, the existence of several clusters of large diameter in good boxes is not excluded. 
The reason to work with volumes is that the existence of a unique giant cluster in a box can be expressed 
as an intersection of two events, an increasing (existence of cluster with big volume) and decreasing (smallness of the total volume of large clusters). 
Assumption \ppp{} gives us control of correlations between monotone events, which is sufficient to set up two multi-scale renormalization schemes with scales $L_n$
(one for increasing and one for decreasing events) and conclude that bad boxes tend to organize in blobs on multiple scales, 
so that the majority of boxes of size $L_n$ contain at most $2$ blobs of diameter bigger than $L_{n-1}$ each, but even their diameters are much smaller than 
the actual scale $L_n$. By removing two boxes of size $r_{n-1}L_{n-1}\ll L_n$ containing the biggest blobs of an $L_n$-box, 
then by removing from each of the remaining $L_{n-1}$-boxes two boxes of size $r_{n-2}L_{n-2}\ll L_{n-1}$ containing its biggest blobs, 
and so on, we end up with a subset of good boxes, 
which is a dense in $\Z^d$, locally well connected, and well structured coarse graining of $\set_\infty$. 
Similar renormalization has been used in \cite{RS:Disordered,DRS12,PRS}.
By reworking some arguments from \cite{PRS}, 
we prove that large subsets of the restriction of the coarse graining to any large box satisfy a $d$-dimensional isoperimetric inequality, 
if the scales $L_n$ grow sufficiently fast (Theorem~\ref{thm:isop:pl}). 
We deduce from it the desired isoperimetric inequality for large subsets $A$ of $\widetilde{\mathcal C}(y,r)$ (Theorem~\ref{thm:isop:cemax}) as follows. 
If $A$ is spread out in $\widetilde{\mathcal C}(y,r)$, then it has large boundary, otherwise, 
we associate with it a set of those good boxes from the coarse graining, the unique largest cluster of which is entirely contained in $A$. 
It turns out that the boundary and the volume of the resulting set are comparable with those of $A$. Moreover, if $|A|\leq \frac 12|\widetilde{\mathcal C}(y,r)|$, 
then the volume of its coarse graining is also only a fraction of the total volume of the coarse graining of $\widetilde{\mathcal C}(y,r)$. 
The isoperimetric inequality then follows from the one for subsets of the coarse graining.

\subsection{Structure of the paper}

In Section~\ref{sec:perforatedlattices} we define perforated sublattices of $\Z^d$ and state an isoperimetric inequality 
for subsets of perforations. The main definition there is \eqref{def:pl}, and the main result is Theorem~\ref{thm:isop:pl}.
The proof of Theorem~\ref{thm:isop:pl} is given in Section~\ref{sec:pl:isop:proof}.  
In Section~\ref{sec:propertiesofclusters} we define a coarse graining of $\set_\infty$ 
and 
study certain extensions of largest clusters of $\set_\infty$ in boxes (Definition~\ref{def:cemax}). 
Particularly, we prove that they satisfy the desired isoperimetric inequality (Theorem~\ref{thm:isop:cemax}) and  
the volume growth (Corollary~\ref{cor:chemdist}). 
In Section~\ref{sec:proof:main} we introduce the notions of {\it regular} and {\it very regular} balls, so that 
a (very) regular ball is always (very) good, and use it to prove the main result of the paper. 
In fact, in Proposition~\ref{prop:verygoodbox} we prove that large balls are very likely to be very regular, which is stronger than Theorem~\ref{thm:vgb:main}. 
In Section~\ref{sec:proofs}, we sketch the proofs of Theorems~\ref{thm:hk:grad} -- \ref{thm:gf:asymp}. 

\bigskip

Finally, let us make a convention about constants. 
As already said, we omit from the notation dependence of constants on $a$, $b$, and $d$.
We usually also omit the dependence on $\epsP$, $\constP$, and $\constS$.
Dependence on other parameters is reflected in the notation, for example, as $c(u,\vgb)$.
Sometimes we use $C$, $C'$, $c$, etc., to denote ``intermediate'' constants, their values may change 
from line to line, and even within a line.

\section{Perforated lattices}\label{sec:perforatedlattices}

In this section we define lattices perforated on multiple scales and study their isoperimetric properties. 
Informally, for a sequence of scales $L_n = l_{n-1}\cdot L_{n-1}$, we define a perforation of the box $[0,L_n)^d$ 
by removing small rectangular regions of $L_{n-1}$-boxes from it, then removing small rectangular regions of $L_{n-2}$-boxes from 
each of the remaining $L_{n-1}$-boxes, and so on down to scale $L_0$. The precise definition is given in \eqref{def:pl}. 
Such perforated lattices will be used in Section~\ref{sec:propertiesofclusters} as coarse approximations of largest connected components of $\set$ in boxes.
The main result of this section is an isoperimetric inequality for subsets of perforations, see Theorem~\ref{thm:isop:pl}.

The rules for perforation (the shape and location of removed regions) are determined by certain cascading events, which we define first, see \eqref{def:seedcascade} and Definition~\ref{def:dinbad}. 
The recursive construction of the perforated lattice is given in Section~\ref{sec:perforatedlattice:construction}, where the main definition is \eqref{def:pl}. 

\medskip

Let $l_n,r_n,L_n$, $n\geq 0$ be sequences of positive integers such that $l_n>r_n$ and $L_n = l_{n-1}\cdot L_{n-1}$, for $n \geq 1$. 
To each $L_n$ we associate the rescaled lattice
\[
\GG_n = L_n\cdot \Z^d = \left\{L_n\cdot x ~:~ x\in\Z^d\right\} ,\
\]
with edges between any pair of ($\ell^1$-)nearest neighbor vertices of $\GG_n$.

%

\subsection{Cascading events}

Let $\seed=(\seed_{x,L_0}~:~L_0\geq 1,x\in\GG_0)$ be a family of events from some sigma-algebra. 
For each $L_0\geq 1, n\geq 0$, $x\in\GG_n$, define recursively the events $\seedcascade_{x,n,L_0}(\seed)$ by 
$\seedcascade_{x,0,L_0} (\seed) = \seed_{x,L_0}$ and 
\begin{equation}\label{def:seedcascade}
\seedcascade_{x,n,L_0}(\seed)= 
\bigcup_{\begin{array}{c}\scriptscriptstyle{x_1,x_2\in \GG_{n-1}\cap(x + [0,L_n)^d)} \\ \scriptscriptstyle{|x_1-x_2|_\infty \geq r_{n-1} \cdot L_{n-1}}\end{array}}
\seedcascade_{x_1,n-1,L_0}(\seed) \cap \seedcascade_{x_2,n-1,L_0}(\seed) ~.
\end{equation}
The events in \eqref{def:seedcascade} also depend on the scales $l_n$ and $r_n$, 
but we omit this dependence from the notation, since these sequences will be properly chosen and fixed later.

\begin{definition}\label{def:dinbad}
Given sequences $l_n, r_n, L_n$, $n\geq 0$, as above, and two families of events $\seedde$ and $\seedin$, 
we say that for $n\geq 0$, $x\in\GG_n$ is $(\seedde,\seedin,n)$-{\it bad} (resp., $(\seedde,\seedin,n)$-{\it good}), if the event $\seedcascade_{x,n,L_0}(\seedde)\cup \seedcascade_{x,n,L_0}(\seedin)$ occurs 
(resp., does not occur).
\end{definition}

Good vertices give rise to certain geometrical structures on $\Z^d$ (perforated lattices), which we define in the next subsection.

\medskip

The choice of the families $\seedde$ and $\seedin$ throughout the paper is either irrelevant for the result (as in Sections~\ref{sec:perforatedlattices} and \ref{sec:pl:isop:proof}) 
or fixed (as in Section~\ref{sec:localevents}). 
Thus, from now on we write $n$-bad (resp., $n$-good) instead of $(\seedde,\seedin,n)$-{\it bad} (resp., $(\seedde,\seedin,n)$-{\it good}), hopefully without causing any confusions.

\begin{remark}
Definition~\ref{def:dinbad} can be naturally generalized to $k$ families of events $\seed_1,\dots,\seed_k$, for any fixed $k$, 
and all the results of Sections~\ref{sec:perforatedlattices} and \ref{sec:pl:isop:proof} still hold (with suitable changes of constants). 
For our applications, it suffices to consider only two families of events (see Section~\ref{sec:localevents}). 
Thus, for simplicity of notation, we restrict to this special case. 
\end{remark}

\subsection{Recursive construction}\label{sec:perforatedlattice:construction}

Throughout this subsection, we fix sequences $l_n, r_n, L_n$, $n\geq 0$, such that $l_n>8r_n$ and $l_n$ is divisible by $r_n$ for all $n$. 
We also fix two local families of events $\seedde$ and $\seedin$, and integers $s\geq 0$ and $K\geq 1$. 
Recall Definition~\ref{def:dinbad} of $n$-good vertices in $\GG_n$. 
For $x\in\Z^d$, define
\begin{equation}\label{def:QKs}
Q_{K,s}(x) = x + \Z^d\cap [0,KL_s)^d ~,
\end{equation}
and write $Q_{K,s}$ for $Q_{K,s}(0)$. 
We also fix $x_s\in\GG_s$ and assume that 
\begin{equation}\label{eq:assumptionframe}
\text{all the vertices in $\GG_s\cap Q_{K,s}(x_s)$ are $s$-good}.
\end{equation}
Our aim is to construct a subset of $0$-good vertices in the lattice box $\GG_0\cap Q_{K,s}(x_s)$  
by recursively perforating it on scales $L_s, L_{s-1},\ldots, L_1$. 
We use Definition~\ref{def:dinbad} to determine the rules of perforation on each scale. 


\medskip

We first recursively define certain subsets of $i$-good vertices in $\GG_i\cap Q_{K,s}(x_s)$ for $i\leq s$, see \eqref{def:GKss} and \eqref{def:GKsi-1}. Let
\begin{equation}\label{def:GKss}
\mathcal G_{K,s,s}(x_s) = \GG_s\cap Q_{K,s}(x_s).
\end{equation}
By \eqref{eq:assumptionframe}, all $z_s\in\mathcal G_{K,s,s}(x_s)$ are $s$-good.

Assume that $\mathcal G_{K,s,i}(x_s)\subset \GG_i$ 
is defined for some $i\leq s$ so that all $z_i\in\mathcal G_{K,s,i}(x_s)$ are $i$-good. 
By Definition~\ref{def:dinbad}, for each $z_i\in\mathcal G_{K,s,i}(x_s)$, there exist 
\[
a_{z_i},b_{z_i}\in (r_{i-1}L_{i-1})\cdot \Z^d\cap(z_i + [0,L_i)^d)
\]
such that the boxes $(a_{z_i}+[0,2r_{i-1}L_{i-1})^d)$ and $(b_{z_i}+[0,2r_{i-1}L_{i-1})^d)$ are contained in $(z_i + [0,L_i)^d)$, and 
all the vertices in 
\[
\left(\GG_{i-1}\cap(z_i + [0,L_i)^d)\right)\setminus\left(\left(a_{z_i}+[0,2r_{i-1}L_{i-1})^d\right)\cup\left(b_{z_i}+[0,2r_{i-1}L_{i-1})^d\right)\right)
\]
are $(i-1)$-good. If the choice is not unique, we choose the pair arbitrarily. 
All the results below hold for any allowed choice of $a_{z_i}$ and $b_{z_i}$. 
To save notation, we will not mention it in the statements.

Define $\mathcal R_{z_i}\subseteq \GG_{i-1}$ to be 
\begin{itemize}
\item[(a)]
the empty set, if all the vertices in $\GG_{i-1}\cap(z_i + [0,L_i)^d)$ are $(i-1)$-good, or
\item[(b)]
$\GG_{i-1}\cap((a_{z_i}+[0,2r_{i-1}L_{i-1})^d)\cup(b_{z_i}+[0,2r_{i-1}L_{i-1})^d))$ 
if $|a_{z_i} - b_{z_i}|_\infty> 2r_{i-1}L_{i-1}$, or 
\item[(c)]
a box $\GG_{i-1}\cap(c_{z_i} + [4r_{i-1}L_{i-1})^d)$ in $\GG_{i-1}\cap (z_i + [0,L_i)^d)$, with $c_{z_i}\in (r_{i-1}L_{i-1})\cdot \Z^d$, which contains 
$\GG_{i-1}\cap((a_{z_i}+[0,2r_{i-1}L_{i-1})^d)\cup(b_{z_i}+[0,2r_{i-1}L_{i-1})^d))$.
\end{itemize}
Possible outcomes (b) and (c) of $\mathcal R_{z_i}$ are illustrated on Figure~\ref{fig:Rzi}.

\begin{figure}[!tp]
\centering
\resizebox{15cm}{!}{\input 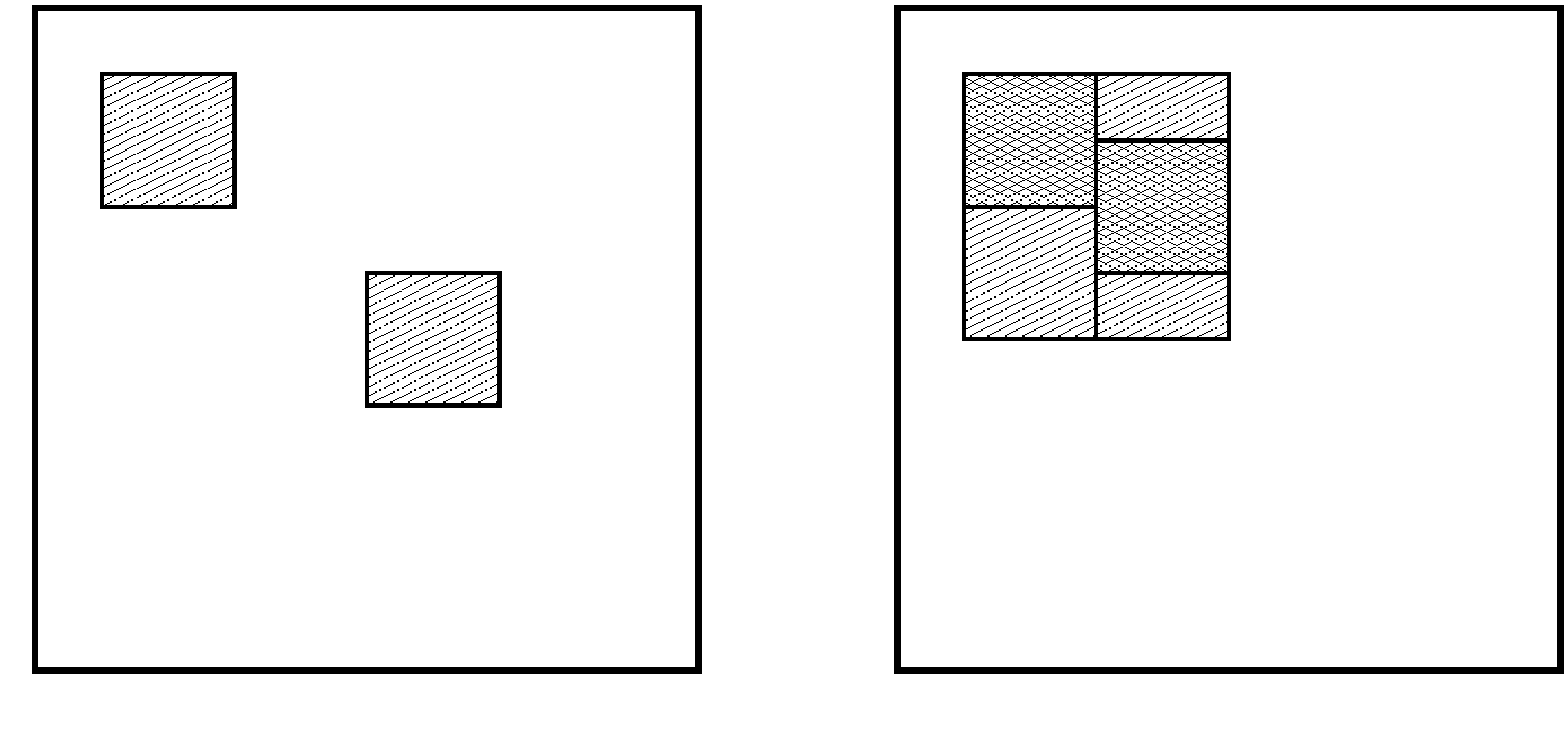_t}
\caption{Two possible outcomes of $\mathcal R_{z_i}$. On the left, the points $a_{z_i}$ and $b_{z_i}$ are far from each other, 
on the right, they are close.}
\label{fig:Rzi}
\end{figure}

\begin{remark}\label{rem:propRzi}
By construction, the set $\mathcal R_{z_i}$ is the disjoint union of $0$, $2$, or $2^d$ boxes $\GG_{i-1}\cap(x+[0,2r_{i-1}L_{i-1})^d)$ with 
$x\in (r_{i-1}L_{i-1})\cdot \Z^d$.
\end{remark}
To complete the construction, let
\begin{equation}\label{def:GKsi-1}
\mathcal G_{K,s,i-1}(x_s) = \GG_{i-1}\cap\bigcup_{z_i\in\mathcal G_{K,s,i}(x_s)}\left((z_i+[0,L_i)^d)\setminus\mathcal R_{z_i}\right).
\end{equation}
Note that all $z_{i-1}\in\mathcal G_{K,s,i-1}(x_s)$ are $(i-1)$-good.

\medskip

Now that the sets $(\mathcal G_{K,s,j}(x_s))_{j\leq s}$, are constructed by \eqref{def:GKss} and \eqref{def:GKsi-1}, 
we define the multiscale perforations of $\GG_0\cap Q_{K,s}(x_s)$ by 
\begin{equation}\label{def:pl}
\pl_{K,s,j}(x_s) =\GG_0\cap\bigcup_{z_j\in\mathcal G_{K,s,j}(x_s)}(z_j + [0,L_j)^d), \quad j\leq s.
\end{equation}
See Figure~\ref{fig:QKs0} for an illustration.
\begin{figure}[!tp]
\centering
\resizebox{16cm}{!}{\includegraphics{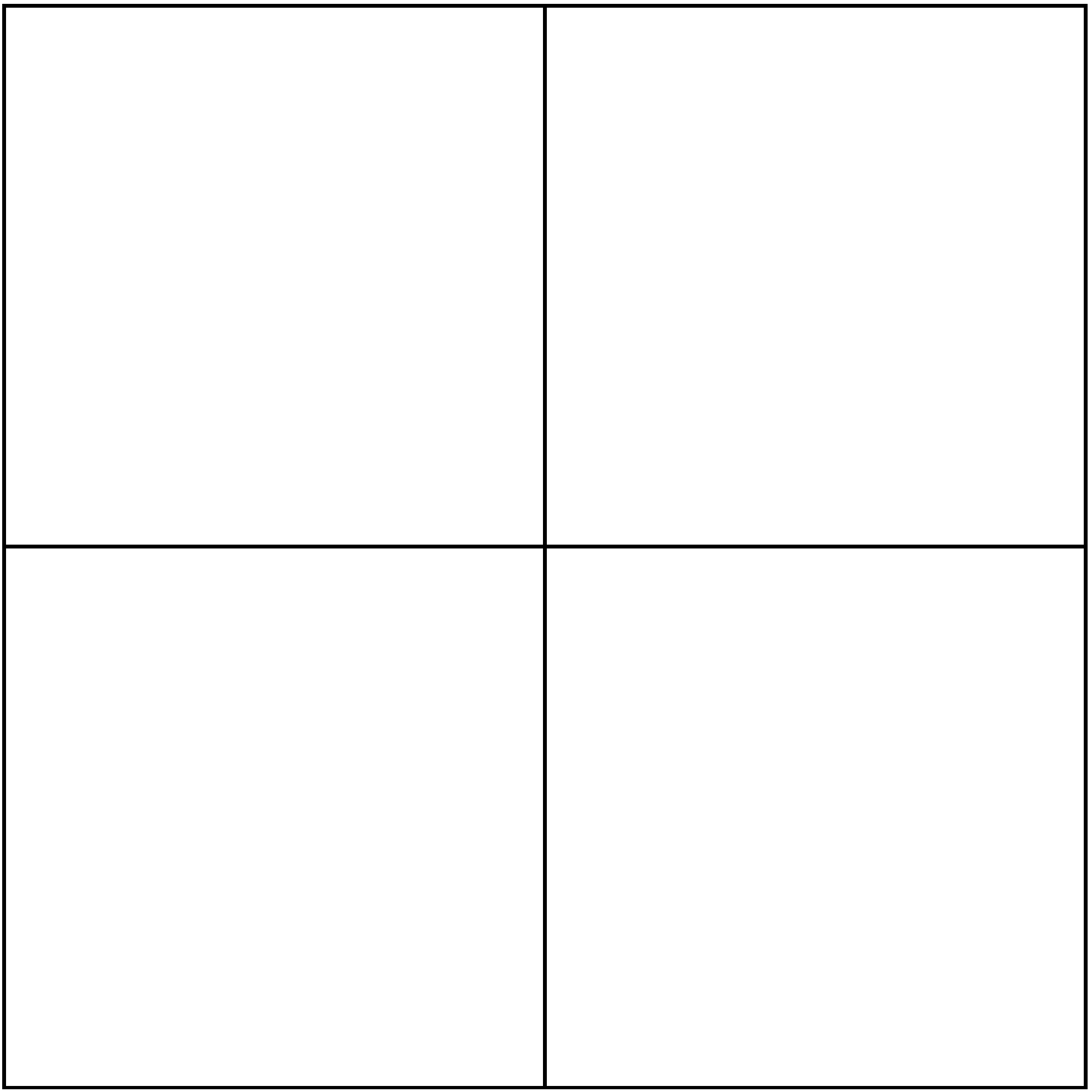}~\includegraphics{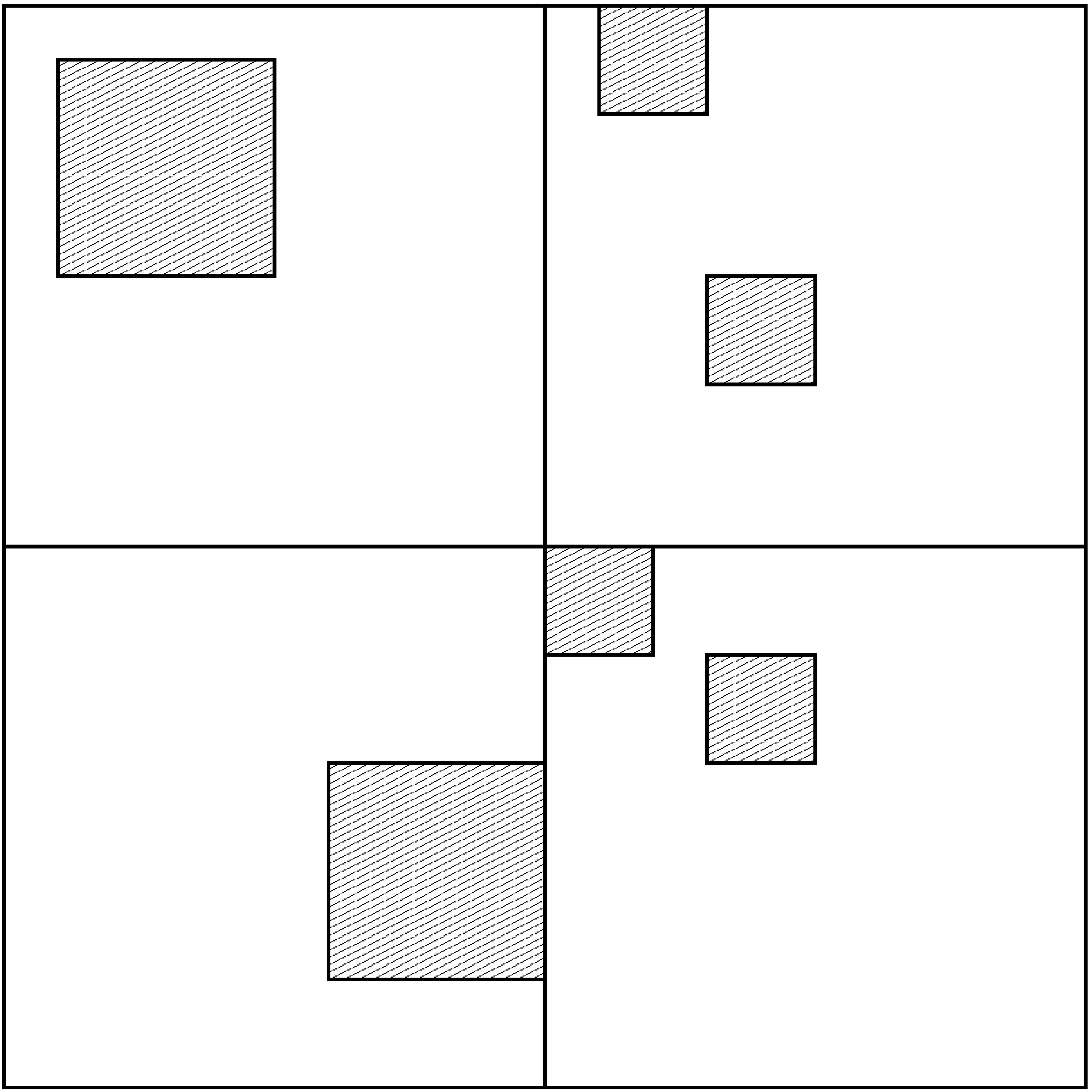}~\includegraphics{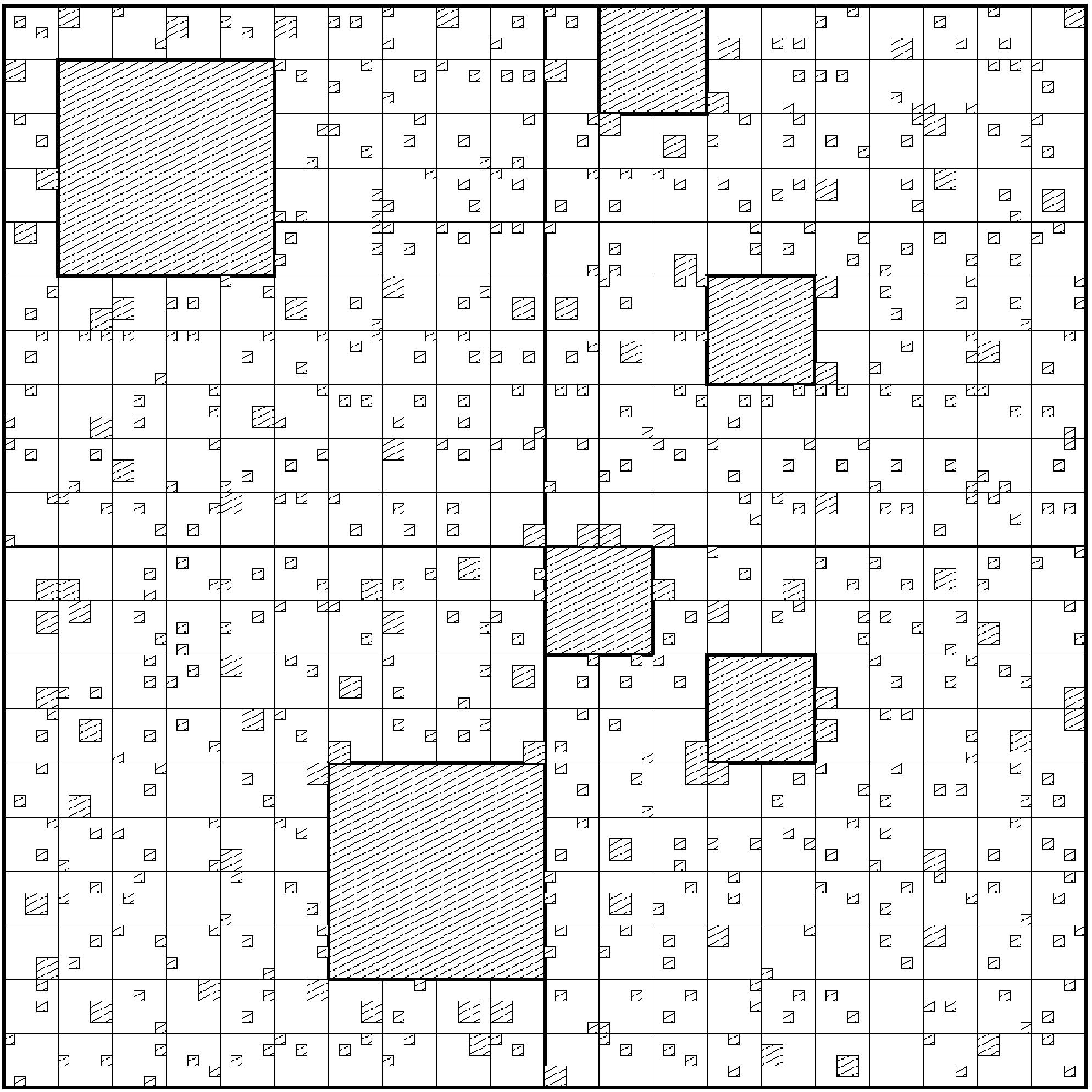}}
\caption{Perforations $\pl_{2,s,s}$, $\pl_{2,s,s-1}$, and $\pl_{2,s,s-2}$ of $Q_{2,s}$.}
\label{fig:QKs0}
\end{figure}
By construction, 
\begin{itemize}\itemsep0pt
\item[(a)]
for all $j$, $\pl_{K,s,j-1}(x_s)\subseteq\pl_{K,s,j}(x_s)$,
\item[(b)]
all the vertices of $\pl_{K,s,0}(x_s)$ are $0$-good.
\end{itemize}
We will view the sets $\pl_{K,s,j}(x_s)$ as subgraphs of $\GG_0$ with edges drawn between any two vertices of the set which are at $\ell^1$ distance $L_0$ from each other. 
The next lemma summarizes some basic properties of $\pl_{K,s,0}(x_s)$'s, which are immediate from the construction. 
\begin{lemma}\label{l:rect}
Let $d\geq 2$, $K\geq 1$, and $s\geq 0$. 
For any choice of scales $l_n, r_n, L_n$, $n\geq 0$, such that $l_n>8r_n$ and $l_n$ is divisible by $r_n$ for all $n$, 
and for any admissible choice of $a_{z_i},b_{z_i}$ or $c_{z_i}$ in the construction of $\pl_{K,s,0}(x_s)$, 
\begin{itemize}\itemsep0pt
\item[(a)]
$\pl_{K,s,0}(x_s)$ is connected in $\GG_0$, 
\item[(b)]
$|\pl_{K,s,0}(x_s)| \geq \prod_{j=0}^\infty\left(1 - \left(\frac{4r_j}{l_j}\right)^d\right)\cdot |Q_{K,s}|$.
\end{itemize}
\end{lemma}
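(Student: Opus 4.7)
The plan is to establish both claims by downward induction on $j \in \{s, s-1, \ldots, 0\}$, leveraging the construction in \eqref{def:GKsi-1}--\eqref{def:pl} and the geometric constraint $l_n > 8 r_n$, which keeps each removed region $\mathcal R_{z_j}$ strictly smaller than half the side of its enclosing $L_j$-block.

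\textbf{Connectedness (a).} The base case $j = s$ is immediate since $\pl_{K,s,s}(x_s) = \GG_0 \cap Q_{K,s}(x_s)$ is a rectangular box of $\GG_0$-vertices. For the inductive step, note that $\pl_{K,s,j-1}(x_s)$ is obtained from $\pl_{K,s,j}(x_s)$ by excising, inside each $L_j$-block $z_j + [0,L_j)^d$ with $z_j \in \mathcal G_{K,s,j}(x_s)$, the set $\mathcal R_{z_j}$. By Remark~\ref{rem:propRzi} and the three cases in the construction, $\mathcal R_{z_j}$ is contained in a sub-cube of side length at most $4 r_{j-1} L_{j-1}$; since $l_{j-1} > 8 r_{j-1}$, this is strictly less than $L_j/2$. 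I would then verify two elementary lattice facts: (i) removing a sub-cube of side $<M/2$ from a $\GG_0$-cube of side $M$ leaves a connected subset of $\GG_0$, since one can route any lattice path around the hole; and (ii) for any two $L_j$-blocks of $\pl_{K,s,j}(x_s)$ sharing a $(d-1)$-face, each hole intersects that face in a $(d-1)$-cube of side $<L_j/2$, so pairs of neighboring $\GG_0$-vertices on opposite sides of the face survive in both blocks. Combined with the inductive hypothesis, this forces $\pl_{K,s,j-1}(x_s)$ to be connected.

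\textbf{Volume bound (b).} The recursion \eqref{def:GKsi-1} shows that, passing from $\pl_{K,s,j}(x_s)$ to $\pl_{K,s,j-1}(x_s)$, we delete only the $\GG_0$-vertices in $\bigcup_{z_j} \mathcal R_{z_j}$. Each $\mathcal R_{z_j}$ contains at most $(4 r_{j-1} L_{j-1}/L_0)^d$ vertices of $\GG_0$, while its enclosing $L_j$-block contains $(L_j/L_0)^d = l_{j-1}^d (L_{j-1}/L_0)^d$ such vertices, so the fraction removed in each block is at most $(4r_{j-1}/l_{j-1})^d$. Hence
\[
|\pl_{K,s,j-1}(x_s)| \geq \left(1 - (4r_{j-1}/l_{j-1})^d\right) \cdot |\pl_{K,s,j}(x_s)|.
\]
Iterating from $j=s$ down to $j=1$ and using $|\pl_{K,s,s}(x_s)|=|Q_{K,s}|$ yields
\[
|\pl_{K,s,0}(x_s)| \geq \prod_{k=0}^{s-1}\left(1 - (4r_k/l_k)^d\right)\cdot |Q_{K,s}|,
\]
and since every factor lies in $(0,1]$, the finite product dominates the infinite one, giving (b).

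\textbf{Main obstacle.} The only genuinely non-mechanical point is the lattice connectedness step (i)--(ii) in part (a). Morally it is obvious, but writing it cleanly must cover all three cases of $\mathcal R_{z_j}$ in Remark~\ref{rem:propRzi} (empty, two disjoint small cubes, or a single merged cube of side $4r_{j-1}L_{j-1}$) and must simultaneously control both intra-block connectivity and connectivity across shared faces. The inequality $l_n > 8 r_n$ is exactly what guarantees $4 r_{j-1} L_{j-1} < L_j/2$, so no hole can span a full cross-section of its ambient $L_j$-block; once this is observed, the connectivity argument is straightforward.
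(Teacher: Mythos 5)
The paper states this lemma without proof, calling it ``immediate from the construction,'' so there is no paper argument to compare line by line; your reconstruction by downward induction on the scale index $j$ is the natural route and gives a correct skeleton. Part (b) is entirely correct: within each $L_j$-block the removed region $\mathcal R_{z_j}$ occupies at most $(4r_{j-1})^d$ of the $l_{j-1}^d$ vertices of $\GG_{j-1}$ (case~(c) gives exactly $(4r_{j-1})^d$, case~(b) gives $2(2r_{j-1})^d \le (4r_{j-1})^d$), so the fraction surviving is at least $1-(4r_{j-1}/l_{j-1})^d$; telescoping and extending the finite product to an infinite one gives the claim.

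For part (a), however, your write-up contains a genuine error. You assert that ``$\mathcal R_{z_j}$ is contained in a sub-cube of side length at most $4r_{j-1}L_{j-1}$.'' This is true in case~(c), but false in case~(b): there $\mathcal R_{z_j}$ is the disjoint union of \emph{two} boxes of side $2r_{j-1}L_{j-1}$ whose bottom-left corners are at $\ell^\infty$-distance $>2r_{j-1}L_{j-1}$, and this distance can be as large as nearly $L_j$, so the bounding box of $\mathcal R_{z_j}$ need not be small at all. Consequently, your fact~(i) (``removing a sub-cube of side $<M/2$ leaves a connected set'') does not apply directly; you need the separate and slightly subtler statement that removing two disjoint boxes, each of side $<L_j/4$, whose $\ell^\infty$-distance is at least $2$ (which follows here since $a_{z_j},b_{z_j}\in(r_{j-1}L_{j-1})\cdot\Z^d$ and $|a_{z_j}-b_{z_j}|_\infty>2r_{j-1}L_{j-1}$ force a gap of at least $r_{j-1}L_{j-1}+1$ in some coordinate), leaves the $L_j$-block connected. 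The $*$-disconnection of the two pieces is what rules out a corner being pinched off, and it is exactly why the construction merges the two boxes into case~(c) when they are close. Your final paragraph does acknowledge that ``all three cases'' must be treated, so you are aware of the issue, but as written the argument for case~(b) is missing rather than merely unelaborated. The face-crossing step~(ii) is fine once one observes that the four (at most) $(d-1)$-boxes traced by the two neighbouring blocks' holes have total $(d-1)$-volume at most $2(4r_{j-1}L_{j-1})^{d-1}<L_j^{d-1}$, so some $\GG_0$-column across the face survives in both blocks. Finally, a small notational remark: the paper uses $|Q_{K,s}|$ in this lemma loosely for $|Q_{K,s}\cap\GG_0|=(KL_s/L_0)^d$, which is what makes your identity $|\pl_{K,s,s}(x_s)|=|Q_{K,s}|$ correct; this is consistent with how the lemma is invoked later, e.g.\ in the proof of Claim~\ref{cl:isopmain}.
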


\subsection{Isoperimetric inequality}\label{sec:pl:isopineq}

For a graph $G$ and a subset $A$ of $G$, the boundary of $A$ in $G$ is the subset of edges of $G$, $E(G)$, defined as 
\[
\partial_G A = \{\{x,y\}\in E(G)~:~x\in A,~y\in G\setminus A\}.
\]

\medskip

The next theorem states that under assumption \eqref{eq:assumptionframe} 
and some assumptions on $l_n$ and $r_n$ (basically that $\sum_{n\geq 0}\frac{r_n}{l_n}$ is sufficiently small), 
there exist $\gamma>0$ such that for all large enough $A\subset \pl_{K,s,0}(x_s)$ with $|A|\leq \frac12\cdot |\pl_{K,s,0}(x_s)|$, 
$|\partial_{\pl_{K,s,0}(x_s)}A| \geq \gamma\cdot |A|^{\frac{d-1}{d}}$. 

\medskip

\begin{theorem}\label{thm:isop:pl}
Let $d\geq 2$.
Let $l_n$ and $r_n$, $n\geq 0$, be integer sequences such that for all $n$, $l_n>8r_n$, $l_n$ is divisible by $r_n$, and 
\begin{equation}\label{eq:isop:pl}
\prod_{j=0}^\infty \left(1 - \left(\frac{4r_j}{l_j}\right)^2\right) \geq 
\max\left\{\frac{15}{16}, e^{-\frac{1}{16(d-1)}}, \frac{1 - \frac{1}{2^{d+2}}}{1 - \frac{1}{2^{d+3}}}\right\}
\quad\text{and}\quad
3456\cdot \sum_{j=0}^\infty\frac{r_j}{l_j}\leq \frac{1}{10^6}. 
\end{equation}
Then for any integers $s\geq 0$, $L_0\geq 1$, and $K\geq 1$, $x_s\in\GG_s$, 
and two families of events $\seedde$ and $\seedin$,  
if all the vertices in $\GG_s\cap Q_{K,s}(x_s)$ are $s$-good, then
any $\mathcal A\subseteq \pl_{K,s,0}(x_s)$ with
\[
\left(\frac{L_s}{L_0}\right)^{d^2} \leq |\mathcal A|\leq \frac 12\cdot |Q_{K,s}\cap \GG_0|
\]
satisfies
\[
|\partial_{\pl_{K,s,0}(x_s)} \mathcal A|\geq 
\frac{1}{2d\cdot 32^d\cdot 27^d\cdot 10^6}\cdot \left(1 - \left(\frac 23\right)^{\frac 1d}\right)\cdot \left(1 - e^{-\frac{1}{16(d-1)}}\right)\cdot |\mathcal A|^{\frac{d-1}{d}}.
\]
\end{theorem}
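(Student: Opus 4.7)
The plan is to prove Theorem~\ref{thm:isop:pl} by induction on $s$, the inductive step relying on a coarse-graining at scale $L_{s-1}$. The quantitative conditions \eqref{eq:isop:pl} are engineered precisely so that the losses accumulated in passing from one scale to the next (both in volume and along $(d{-}1)$-dimensional interfaces) are strictly absorbed, leaving the isoperimetric constant bounded below uniformly in $s$.

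\textbf{Base case} ($s=0$). Here $\pl_{K,0,0}(x_0)=\GG_0\cap Q_{K,0}(x_0)$ is, after rescaling by $L_0$, a discrete cube of side $K$ in $\Z^d$, and the classical edge-isoperimetric inequality for cubes yields the bound with a purely dimensional constant, larger than the one stated in the theorem.

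\textbf{Inductive step.} For $\mathcal A\subseteq\pl_{K,s,0}(x_s)$ in the admissible range, decompose
\[
\mathcal A=\bigsqcup_{z\in\mathcal G_{K,s,s-1}(x_s)}\mathcal A_z,\qquad \mathcal A_z=\mathcal A\cap (z+[0,L_{s-1})^d)\subseteq \pl_{1,s-1,0}(z),
\]
fix a threshold $\tau=\tau(d)\in(0,\tfrac12)$ matched to \eqref{eq:isop:pl}, and split the coarse indices into \emph{full} ($|\mathcal A_z|>(1-\tau)|\pl_{1,s-1,0}(z)|$), \emph{empty} ($|\mathcal A_z|<\tau|\pl_{1,s-1,0}(z)|$), and \emph{intermediate} classes $\mathcal F,\mathcal E,\mathcal I$. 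Each intermediate box contributes, via the inductive hypothesis applied to $\mathcal A_z$ or its complement in $\pl_{1,s-1,0}(z)$ (whichever is smaller), at least $c\,(L_{s-1}/L_0)^{d-1}$ edges to $|\partial_{\pl_{K,s,0}(x_s)}\mathcal A|$. Along each $\GG_{s-1}$-edge joining a full box to a non-full one, the shared $(d{-}1)$-face carries $\Theta((L_{s-1}/L_0)^{d-1})$ atomic edges of $\pl_{K,s,0}(x_s)$, a positive fraction of which --- by the choice of $\tau$ --- separate $\mathcal A$ from its complement. Summing,
\[
|\partial_{\pl_{K,s,0}(x_s)}\mathcal A|\gtrsim \bigl(|\mathcal I|+|\partial_{\GG_{s-1}}\mathcal F\cap\mathcal G_{K,s,s-1}(x_s)|\bigr)\,(L_{s-1}/L_0)^{d-1}.
\]
Since $\mathcal G_{K,s,s-1}(x_s)$ is a one-level perforation of the $\GG_{s-1}$-cube $Q_{K,s}(x_s)$ (only the small sets $\mathcal R_{z_s}$ are absent), the classical cube isoperimetric inequality on $\GG_{s-1}\cap Q_{K,s}(x_s)$, adjusted for boundary edges possibly lost to $\mathcal R_{z_s}$, gives $|\partial_{\GG_{s-1}}\mathcal F|\gtrsim|\mathcal F|^{(d-1)/d}$ whenever $|\mathcal F|\leq\tfrac12|\mathcal G_{K,s,s-1}(x_s)|$ (guaranteed, after arithmetic using \eqref{eq:isop:pl}, by the upper bound on $|\mathcal A|$). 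A case analysis based on whether the mass of $\mathcal A$ sits mainly on $\mathcal F$ or on $\mathcal I$ then completes the induction.

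\textbf{Main obstacle.} The most delicate technical issue is the faithful transfer of boundary between scales: a single coarse boundary edge of $\mathcal F$ in $\GG_{s-1}$ represents a $(d{-}1)$-face of $\Theta((L_{s-1}/L_0)^{d-1})$ atomic edges, but some of these are lost because one endpoint lies inside a deeply nested removed rectangle $\mathcal R_{z_j}$ with $j<s$, and a further fraction may be lost because the full/empty dichotomy fluctuates within a single face due to the sub-perforations. Quantifying both effects across all nested scales and showing they are dominated by the surviving interface is precisely the role of the two conditions in \eqref{eq:isop:pl}. A secondary issue is propagating the volume floor $(L_s/L_0)^{d^2}$ through the induction: in an intermediate box at scale $s{-}1$, the piece $|\mathcal A_z|\geq\tau|\pl_{1,s-1,0}(z)|$ may fall short of the scale-$(s{-}1)$ threshold $(L_{s-1}/L_0)^{d^2}$, in which case one bypasses the inductive hypothesis by a direct classical cube-isoperimetric argument on $\pl_{1,s-1,0}(z)$ --- available because $\pl_{1,s-1,0}(z)$ occupies a definite density inside its cube by Lemma~\ref{l:rect}(b) together with \eqref{eq:isop:pl}.
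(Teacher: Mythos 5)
Your proposal takes a genuinely different route (induction on the number of perforation scales $s$, coarse-graining one scale at a time) from the paper's strategy (reduction to a two-dimensional isoperimetric inequality via a selection lemma, then a single coarse-graining from scale $L_0$ to scale $L_s$ for the general case). Unfortunately there are two gaps in your plan that I do not see how to close.

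\textbf{The face-crossing claim is false.} You assert that along each $\GG_{s-1}$-edge joining a full box $z\in\mathcal F$ to a non-full one, ``a positive fraction'' of the $\Theta((L_{s-1}/L_0)^{d-1})$ atomic edges on the shared face separate $\mathcal A$ from its complement, and that this can be ensured ``by the choice of $\tau$.'' This cannot be ensured by any $\tau>0$: take $z$ full and its neighbor $w$ entirely empty, and place the missing $\tau$-fraction of $\mathcal A_z$ as a thin slab of $\pl_{1,s-1,0}(z)$ adjacent to the shared face. Then every atomic edge on that face has both endpoints outside $\mathcal A$, so contributes nothing. The actual boundary of $\mathcal A$ inside $z$ does not vanish, of course, but it lies strictly interior to $z$, and detecting it requires an isoperimetric inequality \emph{inside} $\pl_{1,s-1,0}(z)$ for a subset of density close to $1$ --- precisely the type of statement one is trying to prove. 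The paper avoids this by zooming into the $2L_s$-cube containing the densely--sparsely occupied pair, where $\mathcal A$ has nontrivial density bounded away from $0$ and $1$, and applying the \emph{macroscopic} case of the theorem there; but the macroscopic case is itself established independently via the two-dimensional argument, not by induction on scale.

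\textbf{The bypass for small pieces is circular.} You observe, correctly, that the piece $\mathcal A_z$ in an intermediate box may have size $\Theta((L_{s-1}/L_0)^d)\ll(L_{s-1}/L_0)^{d^2}$ and hence falls below the volume floor of the inductive hypothesis. Your proposed fix is a ``direct classical cube-isoperimetric argument on $\pl_{1,s-1,0}(z)$,'' justified by the density lower bound of Lemma~\ref{l:rect}(b). But density close to $1$ in a cube does not imply isoperimetry: a cube minus a thin slab that nearly bisects it has density arbitrarily close to $1$ and an arbitrarily small isoperimetric constant. Whether $\pl_{1,s-1,0}(z)$, which is a cube with a hierarchy of nested rectangular holes removed, satisfies a $d$-dimensional isoperimetric inequality for \emph{all} of its subsets is exactly the open issue the paper flags in Remark~\ref{rem:isopframe:conditions}; the paper only establishes it for subsets above the $(L_s/L_0)^{d^2}$ threshold, and even that takes the full machinery (selection lemma plus a bespoke two-dimensional argument exploiting $*$-connectivity of the exterior boundary). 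So the ``bypass'' assumes the very result being proved. In short, the base case and the transfer-of-boundary step of your induction are not available, and without them the induction does not close.
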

\begin{remark}\label{rem:isop:pl}
In the setting of Theorem~\ref{thm:isop:pl}, if $\mathcal A\subseteq \pl_{K,s,0}(x_s)$ satisfies 
\[
\left(\frac{L_s}{L_0}\right)^{d^2} \leq |\mathcal A|\leq C\cdot |\pl_{K,s,0}(x_s)|, 
\]
for some $\frac12<C<1$, then 
\[
|\partial_{\pl_{K,s,0}(x_s)} \mathcal A|\geq 
(1-C)\cdot \frac{1}{2d\cdot 32^d\cdot 27^d\cdot 10^6}\cdot \left(1 - \left(\frac 23\right)^{\frac 1d}\right)\cdot \left(1 - e^{-\frac{1}{16(d-1)}}\right)\cdot |\mathcal A|^{\frac{d-1}{d}}.
\]
This easily follows from Theorem~\ref{thm:isop:pl} by passing, if necessary, to the complement of $\mathcal A$ in $\pl_{K,s,0}(x_s)$, 
see, for instance, Remark~\ref{rem:isop}.
\end{remark}

\medskip

We postpone the proof of Theorem~\ref{thm:isop:pl} to Section~\ref{sec:pl:isop:proof}. 
In fact, in two dimensions, we are able to prove the analogue of Theorem~\ref{thm:isop:pl} for {\it all} subsets $\mathcal A\subseteq \pl_{K,s,0}(x_s)$ with
$1\leq |\mathcal A|\leq \frac 12\cdot |Q_{K,s}\cap \GG_0|$, see Lemma~\ref{l:isopineqG:2d}. 
We believe that also in any dimension $d\geq 3$, the isoperimetric inequality of Theorem~\ref{thm:isop:pl} holds for all 
subsets $\mathcal A\subseteq \pl_{K,s,0}(x_s)$ with $1\leq |\mathcal A|\leq \frac 12\cdot |Q_{K,s}\cap \GG_0|$, 
but cannot prove it. Theorem~\ref{thm:isop:pl} follows immediately from a more general isoperimetric inequality in Theorem~\ref{thm:isopframe}.

\bigskip

\section{Properties of the largest clusters}\label{sec:propertiesofclusters}

In this section we study properties of the largest subset of $\set\cap Q_{K,s}$ (where $Q_{K,s}$ is defined in \eqref{def:QKs}). 
We first define two families of events such that the corresponding perforated lattices defined in \eqref{def:pl} 
serve as a ``skeleton'' of the largest subset of $\set\cap Q_{K,s}$. 
Then, we provide sufficient conditions for the uniqueness of the largest subset of $\set\cap Q_{K,s}$ (Lemma~\ref{l:cmax:uniqueness}).
To avoid problems, which may be caused by roughness of the boundary of the largest subset of $\set\cap Q_{K,s}$, 
we {\it enlarge} it by adding to it all the points of $\set$ which are {\it locally} connected to it (Definition~\ref{def:cemax}). 
For the enlarged set we prove under some general conditions (Definition~\ref{def:mainevent}) that 
its subsets satisfy an isoperimetric inequality (Theorem~\ref{thm:isop:cemax} and Corollary~\ref{cor:isop:cemax}). 
Under the same condition we prove that the graph distance is controlled by that on $\Z^d$ (Lemma~\ref{l:chemdist}), 
large enough balls have regular volume growth (Corollary~\ref{cor:chemdist}) and 
have local extensions satisfying an isoperimetric inequality (Corollary~\ref{cor:tildeC:existence}).

\medskip

\subsection{Special sequences of events}\label{sec:localevents}

Recall Definition~\ref{def:setr} of $\set_r$. Consider an ordered pair of real numbers
\begin{equation}\label{def:eta}
\den = (\den_1,\den_2),\quad \text{with }\den_1\in(0,1)\text{ and }\den_1\leq \den_2<2\den_1.
\end{equation}
Two families of events $\seedde^\den = (\seedde^\den_{x,L_0}, L_0\geq 1, x\in \GG_0)$ and $\seedin^\den = (\seedin^\den_{x,L_0}, L_0\geq 1, x\in \GG_0)$ are defined as follows. 
\begin{itemize}
\item
The complement of $\seedde^\den_{x,L_0}$ is the event that 
for each $y\in\GG_0$ with $|y-x|_1 \leq L_0$, the set $\set_{L_0}\cap(y+[0,L_0)^d)$ contains a connected component $\mathcal C_y$ with at least $\den_1 L_0^d$ vertices 
such that for all $y\in\GG_0$ with $|y-x|_1 \leq L_0$, $\mathcal C_y$ and $\mathcal C_x$ are connected in $\set \cap((x+[0,L_0)^d)\cup(y+[0,L_0)^d))$. 
\item
The event $\seedin^\den_{x,L_0}$ occurs if 
$\left|\set_{L_0}\cap(x+[0,L_0)^d)\right| > \den_2 L_0^d$. 
\end{itemize}
Note that $\seedde^\den_{x,L_0}$ are decreasing and $\seedin^\den_{x,L_0}$ increasing events. 
From now on we fix these two local families, and say that $x\in\GG_n$ is $n$-bad / $n$-good, if 
it is $n$-bad / $n$-good for the two local families $\seedde^\den$ and $\seedin^\den$ in the sense of Definition~\ref{def:dinbad}. 
In particular, $x\in\GG_0$ is $0$-good if both $\seedde^\den_{x,L_0}$ and $\seedin^\den_{x,L_0}$ do not occur, see Figure~\ref{fig:0good}. 

\begin{figure}[!tp]
\centering
\resizebox{6cm}{!}{\input 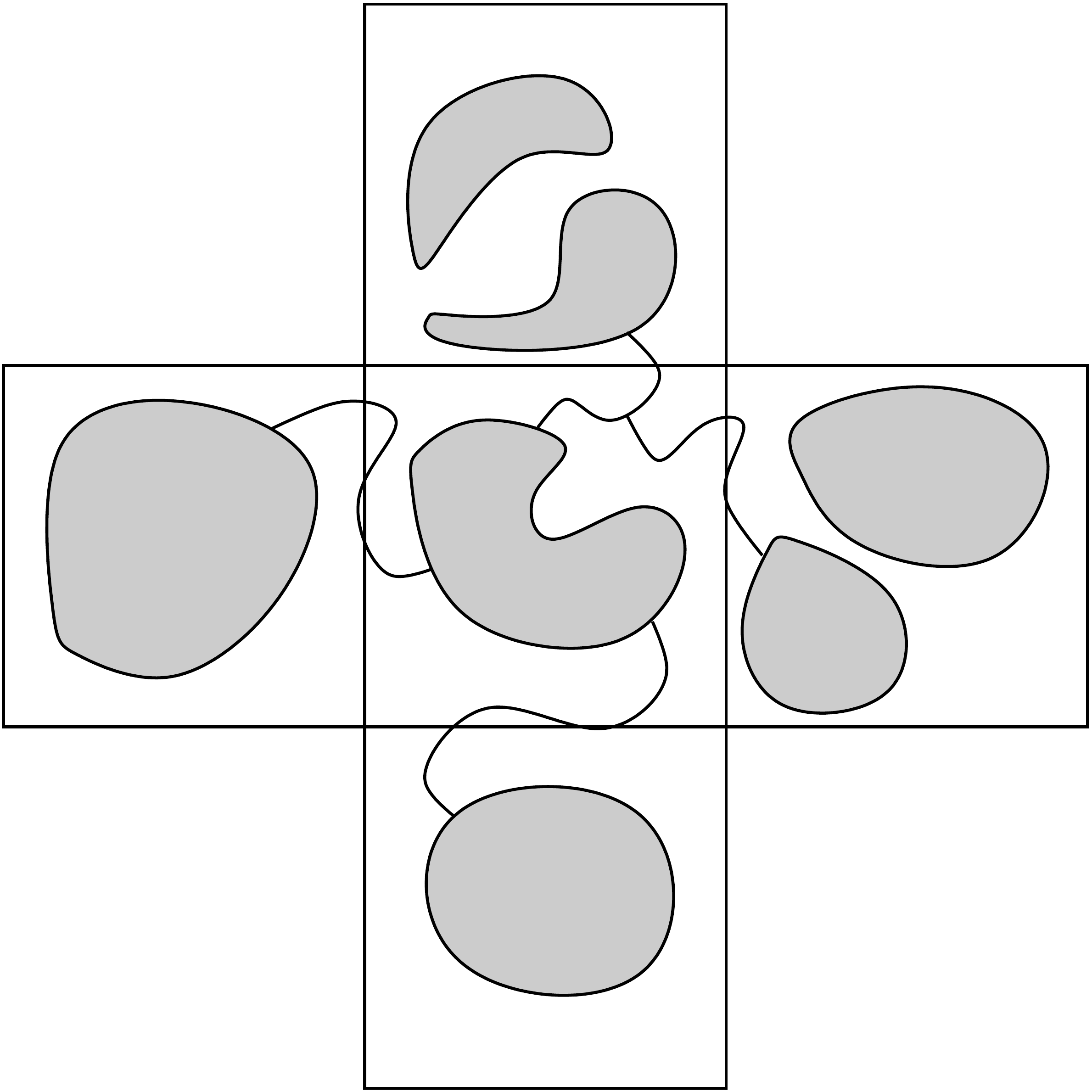_t}
\caption{A $0$-good vertex $x$. A unique connected component $\mathcal C_x$ of size $\geq \den_1 L_0^d$ in $(x + [0,L_0)^d)$ 
is connected to a connected component of size $\geq \den_1 L_0^d$ in each of the adjacent boxes.}
\label{fig:0good}
\end{figure}

The following lemma is immediate from the definition of $0$-good vertex and the conditions \eqref{def:eta} on $\den$.
(See, e.g., \cite[Lemma~5.2]{DRS12} for a similar result.)
\begin{lemma}\label{l:fromG0toZd}
Let $L_0\geq 1$ and $\den$ as in \eqref{def:eta}.
\begin{itemize}\itemsep0pt
\item[(a)]
For any $0$-good vertex $x\in\GG_0$, connected component $\mathcal C_x$ in $\set_{L_0}\cap(x+[0,L_0)^d)$ 
with at least $\den_1 L_0^d$ vertices is defined uniquely.
\item[(b)]
For any $0$-good $x,y\in\GG_0$ with $|x-y|_1 = L_0$, 
(uniquely chosen) $\mathcal C_x$ and $\mathcal C_y$ are connected in the graph $\set\cap ((x+[0,L_0)^d)\cup(y+[0,L_0)^d))$. 
\end{itemize}
\end{lemma}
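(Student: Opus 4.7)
The plan is to unwind the definition of a $0$-good vertex (Definition~\ref{def:dinbad} at $n=0$ applied to the two families $\seedde^\den$ and $\seedin^\den$ introduced in Section~\ref{sec:localevents}) and combine it with the numerical constraint $\den_2<2\den_1$ from \eqref{def:eta}. Recall that $x$ being $0$-good means that neither $\seedde^\den_{x,L_0}$ nor $\seedin^\den_{x,L_0}$ occurs; the former yields, for every $y\in\GG_0$ with $|y-x|_1\leq L_0$, existence of a component $\widetilde{\mathcal C}_y$ of $\set_{L_0}\cap(y+[0,L_0)^d)$ of size at least $\den_1 L_0^d$, all of which are connected to $\widetilde{\mathcal C}_x$ inside $\set\cap((x+[0,L_0)^d)\cup(y+[0,L_0)^d))$; the latter supplies the volume bound $|\set_{L_0}\cap(x+[0,L_0)^d)|\leq \den_2 L_0^d$.

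For part~(a), I would first get existence for free by applying the complement of $\seedde^\den_{x,L_0}$ with $y=x$. For uniqueness I would argue by contradiction: two distinct connected components of $\set_{L_0}\cap(x+[0,L_0)^d)$ of sizes at least $\den_1 L_0^d$ each would be disjoint subsets of $\set_{L_0}\cap(x+[0,L_0)^d)$, so summing their sizes gives $|\set_{L_0}\cap(x+[0,L_0)^d)|\geq 2\den_1 L_0^d>\den_2 L_0^d$ by \eqref{def:eta}, contradicting the non-occurrence of $\seedin^\den_{x,L_0}$. Hence the $\mathcal C_x$ of the statement is well defined and necessarily equals $\widetilde{\mathcal C}_x$.

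For part~(b), given $0$-good $x,y\in\GG_0$ with $|x-y|_1=L_0$, the complement of $\seedde^\den_{x,L_0}$ supplies a component $\widetilde{\mathcal C}_y$ of $\set_{L_0}\cap(y+[0,L_0)^d)$ of size at least $\den_1 L_0^d$ that is already connected to $\mathcal C_x=\widetilde{\mathcal C}_x$ inside $\set\cap((x+[0,L_0)^d)\cup(y+[0,L_0)^d))$. Applying part~(a) to the $0$-good vertex $y$ forces $\widetilde{\mathcal C}_y=\mathcal C_y$, which delivers the claimed connection.

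I do not anticipate any real obstacle here; the lemma is essentially a definition chase driven by the strict inequality $\den_2<2\den_1$. The only point requiring slight care is that uniqueness in~(a) should be read relative to components of $\set_{L_0}$ restricted to the box $x+[0,L_0)^d$, matching the object guaranteed by the complement of $\seedde^\den_{x,L_0}$, so that the two occurrences of ``$\mathcal C_x$'' in parts~(a) and~(b) refer to the same set.
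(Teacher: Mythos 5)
Your proposal is correct and matches the paper's (implicit) proof: the paper states the lemma ``is immediate from the definition of $0$-good vertex and the conditions \eqref{def:eta}'' without further argument, and what you have written out is exactly that definition chase — existence from non-occurrence of $\seedde^\den_{x,L_0}$, uniqueness from non-occurrence of $\seedin^\den_{x,L_0}$ plus $2\den_1>\den_2$, and part~(b) by combining the connectivity in the complement of $\seedde^\den_{x,L_0}$ with the uniqueness of part~(a) applied at $y$.
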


\subsection{Uniqueness of the largest cluster}

\begin{definition}\label{def:cmax}
Let $(L_n)_{n\geq 0}$ be an increasing sequence of scales. 
For $x\in \Z^d$ and $r\geq 1$, let $\cmax_{K,s,r}(x)$ be the largest connected component in $\set_r\cap Q_{K,s}(x)$ (with ties broken arbitrarily), 
and write $\cmax_{K,s,r} = \cmax_{K,s,r}(0)$. 
\end{definition}
Fix $\den$ as in \eqref{def:eta} and two families of events $\seedde^\den$ and $\seedin^\den$ as in Section~\ref{sec:localevents}. 
\begin{lemma}\label{l:cmax:uniqueness}
Let $l_n$ and $r_n$ be integer sequences such that for all $n$, $l_n$ is divisible by $r_n$, $l_n > 8r_n$, and 
\begin{equation}\label{eq:cmax:uniqueness:ratio}
\prod_{i=0}^\infty\left(1 - \left(\frac{4r_i}{l_i}\right)^d\right)> \frac{1 + \den_2}{1+2\den_1}.
\end{equation}
Let $L_0\geq 1$, $K\geq 1$, and $s\geq 0$ integers, $x_s\in\GG_s$. 
If all the vertices in $\GG_s\cap Q_{K,s}(x_s)$ are $s$-good, then $\cmax_{K,s,L_0}(x_s)$ is uniquely defined and 
\begin{equation}\label{eq:cmax:uniqueness:size}
|\cmax_{K,s,L_0}(x_s)| \geq \frac12\den_2\cdot |Q_{K,s}|.
\end{equation}
\end{lemma}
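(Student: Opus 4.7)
The plan is to use the perforated lattice $\pl_{K,s,0}(x_s)$ as a skeleton. Since every vertex of $\GG_s \cap Q_{K,s}(x_s)$ is $s$-good, the construction in \eqref{def:pl} produces a set $\pl = \pl_{K,s,0}(x_s) \subseteq \GG_0 \cap Q_{K,s}(x_s)$ consisting only of $0$-good vertices, which by Lemma~\ref{l:rect}(a) is connected in $\GG_0$ and by Lemma~\ref{l:rect}(b) satisfies $|\pl| \geq \alpha \cdot |\GG_0 \cap Q_{K,s}|$ with $\alpha = \prod_{j\geq 0}(1 - (4r_j/l_j)^d)$. For each $x \in \pl$, Lemma~\ref{l:fromG0toZd}(a) yields a unique connected component $\mathcal{C}_x$ of $\set_{L_0} \cap (x+[0,L_0)^d)$ of size $\geq \den_1 L_0^d$, and Lemma~\ref{l:fromG0toZd}(b) guarantees that $\mathcal{C}_x$ and $\mathcal{C}_y$ are joined through $\set \cap ((x+[0,L_0)^d) \cup (y+[0,L_0)^d))$ whenever $x,y \in \pl$ are $\GG_0$-neighbors. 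Since such unions lie inside $Q_{K,s}(x_s)$ and each $\mathcal{C}_x$ has $\ell^1$-diameter $\geq L_0$, the set $\mathcal{M} := \bigcup_{x\in\pl}\mathcal{C}_x$ lies entirely in $\set_{L_0}$ and is contained in a single connected component $\mathcal{C}^\star$ of $\set_{L_0} \cap Q_{K,s}(x_s)$.

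For the size estimate, the boxes $x + [0,L_0)^d$ indexed by $x \in \pl$ are pairwise disjoint, so
\[
|\mathcal{C}^\star| \geq |\mathcal{M}| \geq \den_1 L_0^d \cdot |\pl| \geq \den_1 \alpha \cdot |Q_{K,s}|.
\]
Hypothesis \eqref{eq:cmax:uniqueness:ratio} combined with $\den_2 \leq 2\den_1$ gives $\den_1 \alpha > \den_1(1+\den_2)/(1+2\den_1) \geq \den_2/2$, which yields \eqref{eq:cmax:uniqueness:size}.

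For uniqueness, I would compare $|\mathcal{C}^\star|$ with the size of any other connected component $\mathcal{C}'$ of $\set_{L_0} \cap Q_{K,s}(x_s)$. Since $\mathcal{C}' \cap \mathcal{C}^\star = \emptyset$ and $x \in \pl$ is $0$-good (so $\seedin^\den_{x,L_0}$ does not occur), the box $x+[0,L_0)^d$ contains at most $\den_2 L_0^d$ points of $\set_{L_0}$, of which at least $\den_1 L_0^d$ already belong to $\mathcal{C}_x \subseteq \mathcal{C}^\star$. Hence $|\mathcal{C}' \cap (x+[0,L_0)^d)| \leq (\den_2 - \den_1) L_0^d$ for each $x \in \pl$, while the trivial bound $L_0^d$ applies in each of the remaining $|\GG_0 \cap Q_{K,s}| - |\pl|$ boxes. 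Writing $N = |\GG_0 \cap Q_{K,s}|$, this gives
\[
|\mathcal{C}'| \leq L_0^d \bigl[N - (1 + \den_1 - \den_2)|\pl|\bigr],
\]
so $|\mathcal{C}^\star| > |\mathcal{C}'|$ would follow from $|\pl|/N > 1/(1 + 2\den_1 - \den_2)$. Since $|\pl|/N \geq \alpha$, it suffices to verify $(1+\den_2)/(1+2\den_1) \geq 1/(1+2\den_1-\den_2)$, and this rearranges to $\den_2(2\den_1 - \den_2) \geq 0$, which holds strictly under $\den_2 < 2\den_1$. Thus $\mathcal{C}^\star$ is strictly larger than every other component, proving uniqueness.

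There is no real obstacle here; the argument is essentially bookkeeping, and the main point is to observe that the quantitative hypothesis \eqref{eq:cmax:uniqueness:ratio} is tuned so that a single inequality on $\alpha$ simultaneously delivers the volume lower bound via $\den_2 \leq 2\den_1$ and the uniqueness comparison via $\den_2(2\den_1 - \den_2) \geq 0$. The only subtlety I would be careful about is checking that all connecting paths between the $\mathcal{C}_x$'s for neighbouring $x,y \in \pl$ remain inside $Q_{K,s}(x_s)$, which follows because each such $x \in \GG_0 \cap Q_{K,s}(x_s)$ has $x+[0,L_0)^d \subseteq Q_{K,s}(x_s)$.
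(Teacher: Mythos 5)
Your argument is correct and follows essentially the same route as the paper's proof: build the perforated lattice $\pl_{K,s,0}(x_s)$ from the $s$-goodness hypothesis, use Lemmas~\ref{l:rect} and \ref{l:fromG0toZd} to produce the connected ``backbone'' $\bigcup_x \mathcal C_x$ with volume $\geq \den_1\alpha|Q_{K,s}|$, and use $0$-goodness to cap the density of $\set_{L_0}$ in each box of $\pl$. The one cosmetic difference is at the very end: the paper shows $|\mathcal C^\star| > \tfrac12|\set_{L_0}\cap Q_{K,s}|$ (so no other component can be as large), whereas you compare $|\mathcal C^\star|$ directly against any competing component $\mathcal C'$ via the bound $|\mathcal C'| \leq L_0^d[N-(1+\den_1-\den_2)|\pl|]$. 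The two are arithmetically equivalent given the same ingredients — your $\mathcal C'$-bound is exactly the paper's total-mass bound minus the $\mathcal C^\star$-lower-bound — and both invoke \eqref{eq:cmax:uniqueness:ratio} in the same way; the paper's ``more than half the mass'' phrasing is slightly tidier, while yours makes the pairwise comparison explicit and separately records the condition $\den_2(2\den_1-\den_2)\geq 0$ as the reason the hypothesis is strong enough.
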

\begin{proof}
Without loss of generality we assume that $x_s = 0$. 
Since all vertices in $\GG_s\cap Q_{K,s}$ are $s$-good, we can define the perforation $\pl_{K,s,0}$ by \eqref{def:pl}.
By definition, all the vertices of $\pl_{K,s,0}$ are $0$-good, and by Lemma~\ref{l:rect}, $\pl_{K,s,0}$ is connected in $\GG_0$.

By Lemma~\ref{l:fromG0toZd}, for any $x\in\pl_{K,s,0}$, 
there is a uniquely defined connected subset $\mathcal C_x$ of $\set_{L_0}\cap(x + [0,L_0)^d)$ with at least $\den_1 L_0^d$ vertices. 
Since $\pl_{K,s,0}$ is connected in $\GG_0$, by Lemma~\ref{l:fromG0toZd}, 
the set $\bigcup_{x\in\pl_{K,s,0}} \mathcal C_x$ is contained in a connected component of $\set_{L_0}\cap Q_{K,s}$ and 
\begin{equation}\label{eq:sizecmax1}
\left|\bigcup_{x\in\pl_{K,s,0}} \mathcal C_x\right|
\geq \den_1\cdot|\pl_{K,s,0}|
\geq 
\den_1\cdot  \prod_{i=0}^\infty\left(1 - \left(\frac{4r_i}{l_i}\right)^d\right)\cdot |Q_{K,s}|,
\end{equation}
where the second inequality follows from Lemma~\ref{l:rect}. 

On the other hand, since for any $0$-good vertex $x$, the set $x + [0,L_0)^d$ contains at most $\den_2 L_0^d$ vertices from $\set_{L_0}$,
\begin{eqnarray}
\left|\set_{L_0}\cap Q_{K,s}\right|
&\leq 
&\den_2 L_0^d\cdot |\pl_{K,s,0}| + L_0^d\cdot \left(|Q_{K,s}\cap\GG_0| - |\pl_{K,s,0}|\right)\nonumber\\
&\leq &\left(\den_2 + 1 - \prod_{i=0}^\infty\left(1 - \left(\frac{4r_i}{l_i}\right)^d\right)\right)\cdot |Q_{K,s}|\nonumber\\
&< &2\den_1\cdot  \prod_{i=0}^\infty\left(1 - \left(\frac{4r_i}{l_i}\right)^d\right)\cdot |Q_{K,s}|,\label{eq:sizecmax2}
\end{eqnarray}
where the second inequality follows from 
the inequality $|\pl_{K,s,0}|\leq |\pl_{K,s,s}| = \frac{|Q_{K,s}|}{L_0^d}$ and Lemma~\ref{l:rect}, 
and the third inequality follows from the assumption \eqref{eq:cmax:uniqueness:ratio}.

We have shown that the connected component of $\set_{L_0}\cap Q_{K,s}$ which contains $\bigcup_{x\in\pl_{K,s,0}} \mathcal C_x$ has volume 
$>\frac 12\cdot |\set_{L_0}\cap Q_{K,s}|$. 
In particular, it is the unique largest in volume connected component of $\set_{L_0}\cap Q_{K,s}$. 
Moreover, by \eqref{eq:sizecmax1}, its volume is 
\[
\geq \den_1\cdot  \prod_{i=0}^\infty\left(1 - \left(\frac{4r_i}{l_i}\right)^d\right)\cdot |Q_{K,s}|
\stackrel{\eqref{eq:cmax:uniqueness:ratio}}\geq \den_1\cdot\frac{1 + \den_2}{1+2\den_1}\cdot |Q_{K,s}| 
\stackrel{\eqref{def:eta}}\geq  \frac12\den_2\cdot |Q_{K,s}|,
\]
which proves \eqref{eq:cmax:uniqueness:size}.
%
%
\end{proof}
\begin{corollary}\label{cor:cmax:uniqueness}
From the proof of Lemma~\ref{l:cmax:uniqueness}, if the conditions of Lemma~\ref{l:cmax:uniqueness} are satisfied, then 
\begin{equation}\label{eq:framecmax}
\bigcup_{x\in\pl_{K,s,0}} \mathcal C_x \subseteq \cmax_{K,s,L_0}.
\end{equation}
In particular, for any $1\leq K'\leq K''\leq K$ and $x',x''\in\GG_s\cap Q_{K,s}$ such that $Q_{K',s}(x')\subseteq Q_{K'',s}(x'') \subseteq Q_{K,s}$,
$\cmax_{K',s,L_0}(x') \subseteq \cmax_{K'',s,L_0}(x'')\subseteq \cmax_{K,s,L_0}$.
\end{corollary}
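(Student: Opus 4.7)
The first assertion is essentially read off from the proof of Lemma~\ref{l:cmax:uniqueness}. That proof shows that $\bigcup_{x\in\pl_{K,s,0}}\mathcal C_x$ sits inside a single connected component of $\set_{L_0}\cap Q_{K,s}$ (via the connectedness of $\pl_{K,s,0}$ in $\GG_0$ and Lemma~\ref{l:fromG0toZd}), while the volume comparison \eqref{eq:sizecmax1}--\eqref{eq:sizecmax2} forces any such component of $\set_{L_0}\cap Q_{K,s}$ that meets $\bigcup_{x\in\pl_{K,s,0}}\mathcal C_x$ to carry more than half of $|\set_{L_0}\cap Q_{K,s}|$. By uniqueness, this component must be $\cmax_{K,s,L_0}$, giving the claimed inclusion. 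I would just cite this portion of the previous proof rather than repeat it.

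For the monotonicity, my plan is first to verify that Lemma~\ref{l:cmax:uniqueness} applies separately to $Q_{K',s}(x')$ and $Q_{K'',s}(x'')$. Since $s$-goodness at $z\in\GG_s$ is a property of the event $\seedcascade_{z,s,L_0}(\seedde^\den)\cup\seedcascade_{z,s,L_0}(\seedin^\den)$, i.e.\ depends only on $z$, the hypothesis that all vertices in $\GG_s\cap Q_{K,s}$ are $s$-good passes to $\GG_s\cap Q_{K',s}(x')$ and $\GG_s\cap Q_{K'',s}(x'')$. Hence $\cmax_{K',s,L_0}(x')$, $\cmax_{K'',s,L_0}(x'')$, and $\cmax_{K,s,L_0}$ are all uniquely defined and each contains the corresponding $\bigcup_{x\in\pl}\mathcal C_x$ by the first assertion. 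Next, I would fix once and for all a deterministic tie-breaking rule for the choice of $a_{z_i},b_{z_i},c_{z_i}$ in the recursion \eqref{def:GKsi-1}, e.g.\ the lexicographically smallest admissible pair. With such a rule, $\mathcal R_{z_i}$ depends only on $z_i$ and on the configuration inside $z_i+[0,L_i)^d$, not on the ambient box. A straightforward downward induction on $i$ from $s$ to $0$ then yields nested perforations
\[
\pl_{K',s,0}(x')\subseteq \pl_{K'',s,0}(x'')\subseteq \pl_{K,s,0}.
\]

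With these nestings in hand, the conclusion follows from a short connectivity argument. The set $\cmax_{K',s,L_0}(x')$ is a connected subset of $\set_{L_0}\cap Q_{K',s}(x')\subseteq \set_{L_0}\cap Q_{K'',s}(x'')$, so it is contained in a unique connected component $C^*$ of $\set_{L_0}\cap Q_{K'',s}(x'')$. By the first assertion applied to $Q_{K',s}(x')$, $C^*\supseteq \bigcup_{x\in\pl_{K',s,0}(x')}\mathcal C_x$, which is nonempty by Lemma~\ref{l:rect}(b). By the nesting of perforations and the first assertion applied to $Q_{K'',s}(x'')$, these same vertices also lie in $\cmax_{K'',s,L_0}(x'')$. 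Thus $C^*$ and $\cmax_{K'',s,L_0}(x'')$ are two connected components of $\set_{L_0}\cap Q_{K'',s}(x'')$ sharing a vertex, hence identical. This gives $\cmax_{K',s,L_0}(x')\subseteq \cmax_{K'',s,L_0}(x'')$, and the inclusion $\cmax_{K'',s,L_0}(x'')\subseteq \cmax_{K,s,L_0}$ is entirely analogous.

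The only mild obstacle is the consistency of the auxiliary choices across nested boxes; once one commits to a single deterministic selection rule, the perforations embed naturally and the rest is purely combinatorial.
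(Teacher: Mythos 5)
Your proof is correct, and it fills in the argument the paper only alludes to. The first assertion is indeed just \eqref{eq:framecmax} from the proof of Lemma~\ref{l:cmax:uniqueness}, and your connectivity argument for the monotonicity (both $C^*$ and $\cmax_{K'',s,L_0}(x'')$ contain some $\mathcal C_x$ with $x$ in the nested perforations, so they coincide as connected components) is exactly the right mechanism. One small remark: the deterministic tie-breaking rule is not strictly necessary, and the paper implicitly handles this a different, slightly cleaner way. Since \eqref{eq:framecmax} holds for \emph{any} admissible perforation of a given box, one can simply take the perforation of the smaller box to be the restriction of the perforation of the larger box, i.e.\ $\pl_{K',s,0}(x') := \pl_{K,s,0}\cap Q_{K',s}(x')$; the recursive construction makes this restriction an admissible perforation of $Q_{K',s}(x')$ automatically, because $\mathcal R_{z_i}$ depends only on $z_i+[0,L_i)^d$. (This is the same device the paper uses inside the proof of Claim~\ref{cl:cemax:localconnected}, where $\pl_{8,s,0}(z)$ is identified with $\pl_{K,s,0}\cap(z+[0,8L_s)^d)$.) Either route gives the nesting, and since the conclusion concerns the intrinsic objects $\cmax_{\cdot,s,L_0}$, which do not depend on the choice of perforation, both are equally valid.
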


\subsection{Isoperimetric inequality}

In this section we prove an isoperimetric inequality for subsets of a certain extension $\cemax_{K,s,L_0}(x)$ of $\cmax_{K,s,L_0}(x)$ 
obtained by adding to $\cmax_{K,s,L_0}(x)$ all the vertices to which it is locally connected.
\begin{definition}\label{def:cemax}
Let $\emax_{K,s,r}(x)$ be the set of vertices $y'\in\set$ 
such that for some $y\in\cmax_{K,s,r}(x)$, $y'$ is connected to $y$ in $\set\cap\ballZ(y,2L_s)$, and define
\[
\cemax_{K,s,r}(x) = \cmax_{K,s,r}(x)\cup\emax_{K,s,r}(x).
\]
\end{definition}
\begin{remark}
Mind that $\cemax_{K,s,r}(x)$ is contained in $x + [-2L_s,(K+2)L_s)^d$, but it is different from the largest cluster of $\set_r\cap(x + [-2L_s,(K+2)L_s)^d)$.
\end{remark}

We study isoperimetric properties of $\cemax_{K,s,L_0}(x)$ for configurations from the following event. 
\begin{definition}\label{def:mainevent}
Let $\den$ be as in \eqref{def:eta}, $K\geq 1$ and $s\geq 0$ integers, $x_s\in\GG_s$. 
The event $\mathcal H^\den_{K,s}(x_s)\in\mathcal F$ occurs if 
\begin{itemize}\itemsep0pt
\item[(a)]
all the vertices in $\GG_s\cap (x_s + [-2L_s,(K+2)L_s)^d)$ are $s$-good, 
\item[(b)]
any $x,y\in\set_{L_s}\cap Q_{K,s}(x_s)$ with $|x-y|_\infty\leq L_s$ 
are connected in $\set\cap\ballZ(x,2L_s)$. 
\end{itemize}
We write $\mathcal H^\den_{K,s}$ for $\mathcal H^\den_{K,s}(0)$. 
\end{definition}

Here is the main result of this section.

\begin{theorem}\label{thm:isop:cemax}
Let $\den$ be as in \eqref{def:eta}.
Assume that the sequences $l_n$ and $r_n$ satisfy the conditions of Theorem~\ref{thm:isop:pl} and 
\begin{equation}\label{eq:isop:cemax:ratio}
\prod_{i=0}^\infty\left(1 - \left(\frac{4r_i}{l_i}\right)^d\right)\geq \frac{1 + \den_2}{1+\frac{\den_2 + 2\den_1}{2}}.
\end{equation}
Let $L_0\geq 1$, $K\geq 1$, and $s\geq 0$ integers, $x_s\in\GG_s$. 
If $\mathcal H^\den_{K,s}(x_s)$ occurs, then $\cmax_{K,s,L_0}(x_s)$ is uniquely defined and 
there exists $\gamma_{\scriptscriptstyle \ref{thm:isop:cemax}} = \gamma_{\scriptscriptstyle \ref{thm:isop:cemax}}(\den,L_0)\in(0,1)$ 
such that 
for any $A\subseteq\cemax_{K,s,L_0}(x_s)$ with $L_s^{d(d+1)}\leq |A|\leq \frac 12\cdot |\cemax_{K,s,L_0}(x_s)|$, 
\[
|\partial_{\cemax_{K,s,L_0}(x_s)}A| \geq \gamma_{\scriptscriptstyle \ref{thm:isop:cemax}}\cdot |A|^{\frac{d-1}{d}}.
\]
\end{theorem}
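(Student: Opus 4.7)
The plan is to reduce the isoperimetric inequality on $\cemax_{K,s,L_0}(x_s)$ to the one on the perforated lattice $\pl := \pl_{K,s,0}(x_s)$ furnished by Theorem~\ref{thm:isop:pl}, through a coarse-graining argument combined with volume and boundary comparisons.

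First, condition (a) of $\mathcal{H}^\den_{K,s}(x_s)$ provides the $s$-goodness needed to invoke Lemma~\ref{l:cmax:uniqueness}; hypothesis~\eqref{eq:isop:cemax:ratio} is strictly stronger than~\eqref{eq:cmax:uniqueness:ratio} because $\frac{\den_2+2\den_1}{2} < 2\den_1$. Hence $\cmax_{K,s,L_0}(x_s)$ is uniquely defined with $|\cmax_{K,s,L_0}(x_s)| \geq \frac{\den_2}{2}|Q_{K,s}|$. Write $B_z := z + [0,L_0)^d$ and, for each $z \in \pl$, let $\mathcal{C}_z$ denote the unique large subcluster of $\set_{L_0} \cap B_z$ produced by Lemma~\ref{l:fromG0toZd}, so that $\den_1 L_0^d \leq |\mathcal{C}_z| \leq \den_2 L_0^d$; Corollary~\ref{cor:cmax:uniqueness} yields $\bigsqcup_{z \in \pl} \mathcal{C}_z \subseteq \cmax_{K,s,L_0}(x_s) \subseteq \cemax_{K,s,L_0}(x_s)$.

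Given $A \subseteq \cemax_{K,s,L_0}(x_s)$, define the coarse graining
\[
\mathcal{A} := \{z \in \pl : |\mathcal{C}_z \cap A| \geq |\mathcal{C}_z|/2\}.
\]
For the \emph{boundary comparison}, each $z \in \pl$ with $0 < |\mathcal{C}_z \cap A| < |\mathcal{C}_z|$ contributes at least one edge of $\partial_{\cemax_{K,s,L_0}(x_s)} A$ inside $B_z$ by connectedness of $\mathcal{C}_z$, and these contributions are disjoint across $z$. For each $\{z,z'\} \in \partial_\pl \mathcal{A}$, Lemma~\ref{l:fromG0toZd}(b) supplies a $\set$-path joining $\mathcal{C}_z$ and $\mathcal{C}_{z'}$ inside $B_z \cup B_{z'}$; every vertex of this path lies in $\set$ and is connected to some $y \in \mathcal{C}_z \subseteq \cmax$ by a path contained in $\ballZ(y, 2L_s)$, so Definition~\ref{def:cemax} places it in $\cemax_{K,s,L_0}(x_s)$. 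The path must traverse an edge of $\partial_{\cemax_{K,s,L_0}(x_s)} A$, and since each edge of $\Z^d$ lies in at most $2d$ unions $B_z \cup B_{z'}$, this yields $|\partial_{\cemax_{K,s,L_0}(x_s)} A| \geq c_1 |\partial_\pl \mathcal{A}|$. For the \emph{volume comparison}, the threshold gives $|A| \geq \frac{\den_1 L_0^d}{2}|\mathcal{A}|$ and $|\cemax_{K,s,L_0}(x_s) \setminus A| \geq \frac{\den_1 L_0^d}{2}|\pl \setminus \mathcal{A}|$; combined with $|A| \leq \frac{1}{2}|\cemax_{K,s,L_0}(x_s)|$, the bound $|\cemax_{K,s,L_0}(x_s)| \leq ((K+4)L_s)^d$, and the estimate $|\pl| \geq \prod_i(1-(4r_i/l_i)^d)(KL_s/L_0)^d$ from Lemma~\ref{l:rect}, one verifies $|\mathcal{A}| \leq C_0 |\pl|$ for some $C_0 < 1$ depending only on $\den$, with crucial use of~\eqref{eq:isop:cemax:ratio}.

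In the regime $|\mathcal{A}| \geq (L_s/L_0)^{d^2}$, Theorem~\ref{thm:isop:pl} together with Remark~\ref{rem:isop:pl} applied to $\mathcal{A} \subseteq \pl$ gives $|\partial_\pl \mathcal{A}| \geq c_2 |\mathcal{A}|^{(d-1)/d}$, and the two comparisons combine to produce $|\partial_{\cemax_{K,s,L_0}(x_s)} A| \geq \gamma_{\scriptscriptstyle\ref{thm:isop:cemax}}|A|^{(d-1)/d}$ with $\gamma_{\scriptscriptstyle\ref{thm:isop:cemax}}$ depending only on $\den$ and $L_0$. In the complementary regime, the lower bound $|A| \geq L_s^{d(d+1)}$ forces the mass of $A$ to concentrate in mixed boxes or among the peripheral vertices of $\cemax_{K,s,L_0}(x_s) \setminus \bigsqcup_z \mathcal{C}_z$ (in particular in $\emax \setminus \cmax$), and the required number of boundary edges is extracted directly from these contributions. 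The principal obstacle is precisely this peripheral accounting: vertices of $A$ lying outside $\bigsqcup_z \mathcal{C}_z$ are invisible to $\mathcal{A}$, and ensuring that they are either controlled by $|\mathcal{A}|$ up to multiplicative constants or generate enough boundary themselves is what forces the strengthened ratio hypothesis~\eqref{eq:isop:cemax:ratio} and requires the most delicate bookkeeping in the proof.
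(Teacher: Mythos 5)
Your overall strategy---coarse-grain $A$ onto the perforated lattice, then transfer the isoperimetric inequality of Theorem~\ref{thm:isop:pl} back via volume and boundary comparisons---is the right one and matches the paper's structure. However, there is a genuine gap in the volume comparison, and you explicitly flag but do not resolve it.

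The problem is the direction of the volume estimate. Your threshold-based coarse graining $\mathcal{A}=\{z:|\mathcal{C}_z\cap A|\geq|\mathcal{C}_z|/2\}$ gives $|A|\geq\frac{\den_1 L_0^d}{2}|\mathcal{A}|$, which bounds $|\mathcal{A}|$ \emph{above} by a multiple of $|A|$. But the chain $|\partial_{\cemax}A|\gtrsim|\partial_{\pl}\mathcal{A}|\gtrsim|\mathcal{A}|^{(d-1)/d}$ needs the \emph{opposite} inequality, $|\mathcal{A}|\gtrsim|A|$, and this fails in general: mass of $A$ sitting in $\emax\setminus\cmax$, in $\cmax\setminus\bigcup_z\mathcal{C}_z$, or in boxes where $|\mathcal{C}_z\cap A|<|\mathcal{C}_z|/2$ is not captured by $|\mathcal{A}|$, so $|\mathcal{A}|$ can be arbitrarily smaller than $|A|/L_0^d$. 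You identify this as the ``principal obstacle'' but defer it to ``delicate bookkeeping'' without naming the mechanism that resolves it. The paper's resolution requires three additional ideas that your proposal does not supply. First, it uses the stricter criterion $\setg=\{x:\mathcal{C}_x\subseteq A\}$ (not ``at least half''). Second, and crucially, it introduces the companion set $\setd=\{x\in A:\exists\, y\in\cemax\setminus A,\ |x-y|_\infty\leq L_s\}$ (the $L_s$-blurred interior boundary of $A$) and proves the key decomposition $|A|\leq 2\cdot 3^d L_0^d\,|\setg|+|\setd|$; the point is that any $z\in A\setminus\setd$ has a full $L_s$-ball of $\cemax$ contained in $A$, which forces a definite density of $\setg$-boxes near $z$. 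Third, to convert $|\setd|$ into boundary edges one needs that $\cemax$ is \emph{locally connected} (Claim~\ref{cl:cemax:localconnected}: any two points of $\cemax$ at $\ell^\infty$-distance $\leq L_s$ are joined inside $\cemax\cap\ballZ(\cdot,15L_s)$), which requires part (b) of Definition~\ref{def:mainevent} and the enlarged perforation $\pl_{K+4,s,0}(x_s')$ (since $\cemax$ protrudes $2L_s$ beyond $Q_{K,s}(x_s)$); your use of $\pl_{K,s,0}(x_s)$ misses this. With both $|\setg|$ and $|\setd|$ in hand the conclusion follows by balancing $\max\{|\setg|^{(d-1)/d},|\setd|/L_s^d\}$ against $|A|^{(d-1)/d}$ in the two regimes $L_0^d|\setg|\gtrless|\setd|$; this is the step you have not sketched.
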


\medskip

In the applications, we will not use directly the result of Theorem~\ref{thm:isop:cemax}, but only the following corollary, 
which estimates from below the size of the boundary of {\it any} subset of $\cemax_{K,s,L_0}(x_s)$ with volume $\leq \frac 12\cdot |\cemax_{K,s,L_0}(x_s)|$.

\begin{corollary}\label{cor:isop:cemax}
Let $\den$ be as in \eqref{def:eta} and $\epsilon\in(0,\frac1d]$. 
Assume that the sequences $l_n$ and $r_n$ satisfy the conditions of Theorem~\ref{thm:isop:cemax}. 
Assume that 
\[
K\geq L_s^{d + \frac{d^2 - 1}{\epsilon d}}. 
\]
If $\mathcal H^\den_{K,s}(x_s)$ occurs, then for any $A\subseteq\cemax_{K,s,L_0}(x_s)$ with $|A|\leq \frac 12\cdot |\cemax_{K,s,L_0}(x_s)|$, 
\[
|\partial_{\cemax_{K,s,L_0}(x_s)}A| \geq \gamma_{\scriptscriptstyle \ref{thm:isop:cemax}}\cdot |A|^{\frac{d-1}{d} + \epsilon} \cdot ((K+4)L_s)^{-\epsilon d}.
\]
In particular, if $\epsilon = \frac1d$, then 
$|\partial_{\cemax_{K,s,L_0}(x_s)}A| \geq \gamma_{\scriptscriptstyle \ref{thm:isop:cemax}}\cdot \frac{|A|}{(K+4)L_s}$.
\end{corollary}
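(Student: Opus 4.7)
My plan is to split on whether $|A|$ exceeds the threshold $L_s^{d(d+1)}$ that appears in Theorem~\ref{thm:isop:cemax}, and to handle the two regimes separately. Two basic facts about $\cemax_{K,s,L_0}(x_s)$ will be used throughout. First, by Definition~\ref{def:cemax}, $\cemax_{K,s,L_0}(x_s) \subseteq x_s + [-2L_s,(K+2)L_s)^d$, so $|\cemax_{K,s,L_0}(x_s)| \leq ((K+4)L_s)^d$. Second, $\cemax_{K,s,L_0}(x_s)$ is connected: $\cmax_{K,s,L_0}(x_s)$ is connected by definition, and for any $y' \in \emax_{K,s,L_0}(x_s)$ the witnessing path in $\set \cap \ballZ(y,2L_s)$ from $y'$ to some $y \in \cmax_{K,s,L_0}(x_s)$ consists entirely of vertices of $\emax_{K,s,L_0}(x_s)$, since each intermediate vertex on it is itself connected to $y$ within the same ball.

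For the large regime $L_s^{d(d+1)} \leq |A| \leq \tfrac{1}{2}|\cemax_{K,s,L_0}(x_s)|$, Theorem~\ref{thm:isop:cemax} directly gives $|\partial_{\cemax_{K,s,L_0}(x_s)} A| \geq \gamma_{\ref{thm:isop:cemax}} |A|^{(d-1)/d}$. Combining with $|A|^\epsilon \leq ((K+4)L_s)^{\epsilon d}$ (a consequence of the volume bound above) and factoring $|A|^{(d-1)/d} = |A|^{(d-1)/d + \epsilon} \cdot |A|^{-\epsilon}$ immediately yields the claim. For the small regime $1 \leq |A| < L_s^{d(d+1)}$, the set $A$ is a non-empty proper subset of $\cemax_{K,s,L_0}(x_s)$ (properness from $|A| \leq \tfrac{1}{2}|\cemax_{K,s,L_0}(x_s)|$ together with the lower bound $|\cemax_{K,s,L_0}(x_s)| \geq \tfrac{1}{2}\den_2|Q_{K,s}|$ of Lemma~\ref{l:cmax:uniqueness}, whose hypotheses are implied by those of Theorem~\ref{thm:isop:cemax}). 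Connectedness of $\cemax_{K,s,L_0}(x_s)$ then yields the trivial bound $|\partial_{\cemax_{K,s,L_0}(x_s)} A| \geq 1$. It suffices to check $1 \geq \gamma_{\ref{thm:isop:cemax}} |A|^{(d-1)/d + \epsilon}((K+4)L_s)^{-\epsilon d}$; using $\gamma_{\ref{thm:isop:cemax}} < 1$ and $|A| \leq L_s^{d(d+1)}$, one simplifies $d(d+1)((d-1)/d + \epsilon) - \epsilon d = d^2-1 + d^2\epsilon$ to reduce this to $K + 4 \geq L_s^{d + (d^2-1)/(\epsilon d)}$, which is exactly the hypothesis on $K$. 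The case $A = \emptyset$ is trivial, and the special case $\epsilon = 1/d$ is immediate.

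The role of the exponent $d + (d^2-1)/(\epsilon d)$ in the hypothesis on $K$ is to guarantee that, evaluated at the threshold $|A| = L_s^{d(d+1)}$, the trivial small-set bound $|\partial A| \geq 1$ and the large-set bound of Theorem~\ref{thm:isop:cemax} match up to the constant $\gamma_{\ref{thm:isop:cemax}}$. All of the genuinely nontrivial content lives in Theorem~\ref{thm:isop:cemax}; the corollary is essentially a bookkeeping patch that removes the lower threshold on $|A|$ at the cost of a slightly weakened exponent, and I do not anticipate any real obstacle beyond verifying the exponent arithmetic in the small-set regime.
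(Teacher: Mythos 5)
Your proof is correct and follows essentially the same two-case split (on whether $|A|$ exceeds the Theorem~\ref{thm:isop:cemax} threshold $L_s^{d(d+1)}$) and the same exponent arithmetic as the paper's own proof. The only addition is your explicit justification that $\cemax_{K,s,L_0}(x_s)$ is connected and that $A$ is a non-empty proper subset, which the paper leaves implicit when invoking $|\partial_{\cemax_{K,s,L_0}(x_s)}A|\geq 1$.
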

\begin{proof}[Proof of Corollary~\ref{cor:isop:cemax}]
If $|A|\geq L_s^{d(d+1)}$, then we apply Theorem~\ref{thm:isop:cemax}, 
\[
|\partial_{\cemax_{K,s,L_0}(x_s)}A|\geq\gamma_{\scriptscriptstyle \ref{thm:isop:cemax}}\cdot |A|^{\frac{d-1}{d}}
\geq\gamma_{\scriptscriptstyle \ref{thm:isop:cemax}}\cdot |A|^{\frac{d-1}{d} + \epsilon} \cdot ((K+4)L_s)^{-\epsilon d}.
\]
If $|A|\leq L_s^{d(d+1)}$, then we use the trivial bound $|\partial_{\cemax_{K,s,L_0}(x_s)}A|\geq 1$. 
By the assumption on $K$, 
\[
((K+4)L_s)^{\epsilon d} \geq (L_s^{d(d+1)})^{\frac{d-1}{d} + \epsilon}, 
\]
which implies, using the assumption on $|A|$, that $|A|^{\frac{d-1}{d} + \epsilon} \leq ((K+4)L_s)^{\epsilon d}$. 
Thus, in this case, 
\[
|\partial_{\cemax_{K,s,L_0}(x_s)}A|\geq 1
\geq\gamma_{\scriptscriptstyle \ref{thm:isop:cemax}}\cdot |A|^{\frac{d-1}{d} + \epsilon} \cdot ((K+4)L_s)^{-\epsilon d}.
\]
The proof of corollary is complete.
\end{proof}

\bigskip

The proof of Theorem~\ref{thm:isop:cemax} is subdivided into several claims. 
In Claim~\ref{cl:cemax:localconnected} we prove that $\cemax_{K,s,L_0}$ is locally connected and
in Claims~\ref{cl:isopmain} and \ref{cl:isopmain2} we reduce the isoperimetric problem for subsets of $\cemax_{K,s,L_0}$ 
to the one for subsets of a perforated lattice.

\begin{claim}\label{cl:cemax:localconnected}
Any $x,y\in\cemax_{K,s,L_0}$ with $|x-y|_\infty\leq L_s$ are connected in $\cemax_{K,s,L_0}\cap\ballZ(x,15L_s)$.
\end{claim}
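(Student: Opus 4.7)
My plan is to connect $x$ and $y$ through a chain of intermediate anchors in $\cmax_{K,s,L_0}(x_s)$, combining property \textbf{(b)} of $\mathcal H^\den_{K,s}(x_s)$ with density of $\cmax$ in $Q_{K,s}(x_s)$.

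First I reduce to anchors in $\cmax$. If $x\in\cmax_{K,s,L_0}(x_s)$, set $x'=x$; otherwise $x\in\emax_{K,s,L_0}(x_s)$, and Definition~\ref{def:cemax} supplies $x'\in\cmax$ with $|x-x'|_\infty\leq 2L_s$ and $x$ connected to $x'$ in $\set\cap\ballZ(x',2L_s)$. Pick $y'\in\cmax$ analogously, so $|x'-y'|_\infty\leq 5L_s$. Every vertex $z$ on the chosen path from $x$ to $x'$ is connected to $x'\in\cmax$ inside $\set\cap\ballZ(x',2L_s)$, hence $z\in\emax\subseteq\cemax$; so $x$ is connected to $x'$ inside $\cemax\cap\ballZ(x,4L_s)$, and similarly $y$ to $y'$ inside $\cemax\cap\ballZ(x,5L_s)$.

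I then interpolate $x'$ and $y'$ by a chain of $\cmax$-anchors. Discretize the segment from $x'$ to $y'$ as $p_0=x',p_1,\ldots,p_m=y'\in Q_{K,s}(x_s)$ with $|p_i-p_{i-1}|_\infty\leq L_s/2$ (so $m\leq 10$), and for each $i$ choose $w_i\in\cmax\cap\ballZ(p_i,L_s/4)$ (with $w_0=x'$, $w_m=y'$); then $|w_i-w_{i-1}|_\infty\leq L_s$. Using condition \textbf{(a)} of $\mathcal H^\den_{K,s}(x_s)$ on the extended region $x_s+[-2L_s,(K+2)L_s)^d$, one verifies that the $\set$-component containing $\cmax$ has $\ell^1$-diameter at least $L_s$, so $\cmax\subseteq\set_{L_s}\cap Q_{K,s}(x_s)$. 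Property \textbf{(b)} then yields, for each $i$, a path from $w_{i-1}$ to $w_i$ inside $\set\cap\ballZ(w_{i-1},2L_s)$, every vertex of which belongs to $\cemax$ by the same window argument as in the reduction step. Concatenation produces a path from $x$ to $y$ inside $\cemax\cap\ballZ(x,10L_s)\subseteq\cemax\cap\ballZ(x,15L_s)$.

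The step that genuinely requires care is the density claim: every sub-box of $Q_{K,s}(x_s)$ of side $L_s/4$ contains a vertex of $\cmax$. Via the inclusion $\bigcup_{w\in\pl_{K,s,0}(x_s)}\mathcal C_w\subseteq\cmax$ from Corollary~\ref{cor:cmax:uniqueness}, this reduces to the same density statement for $\pl_{K,s,0}(x_s)$. I would establish it by descending over the scales in the construction of Section~\ref{sec:perforatedlattice:construction}: at each scale $j$ the removed blocks have side at most $4r_{j-1}L_{j-1}$, and the smallness of $\sum_j r_j/l_j$ required in Theorem~\ref{thm:isop:pl} precludes any $(L_s/4)$-sub-box from being swallowed by removed regions across all scales. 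This density step, together with the verification $\cmax\subseteq\set_{L_s}$, is the main obstacle; everything else is bookkeeping of radii, which is comfortably absorbed by the constant $15$.
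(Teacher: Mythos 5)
Your route is genuinely different from the paper's, and the difference matters. After producing anchors $x', y' \in \cmax_{K,s,L_0}(x_s)$ you connect them by a chain $w_0=x', w_1,\dots, w_m = y'$ of $\cmax$-anchors at $\leq L_s$ spacing, invoking property (b) of $\mathcal H^\den_{K,s}$ roughly a dozen times; this forces you to assert that $\cmax$ meets every $\ell^\infty$-ball of radius $L_s/4$ in $Q_{K,s}(x_s)$. The paper instead makes one more reduction: it finds $x'', y'' \in \bigcup_{z\in\pl_{K,s,0}}\mathcal C_z$ with $|x'-x''|_\infty, |y'-y''|_\infty \leq L_s$, using only the trivial fact that the perforation of any \emph{grid-aligned} $L_s$-box is nonempty (Lemma~\ref{l:rect}). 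Property (b) of $\mathcal H$ is then applied only twice, and the link from $x''$ to $y''$ is obtained in a completely different way: both lie in some box $z+[0,8L_s)^d$ in which all $L_s$-subboxes are $s$-good, so by Lemma~\ref{l:rect} the perforation $\pl_{8,s,0}(z)$ is connected, hence by Lemma~\ref{l:fromG0toZd} the set $\bigcup_{w\in\pl_{8,s,0}(z)}\mathcal C_w$ is connected in $\set\cap(z+[0,8L_s)^d)$, and by \eqref{eq:framecmax} this lies in $\cmax$. No density statement at any scale finer than $L_s$ is used anywhere.

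The gap in your proposal is exactly the density claim you flag. It is a strictly finer statement than the paper requires, and while it is plausibly true under \eqref{eq:isop:pl} (the removed fraction of any $L_s/4$-sub-box across all scales is bounded by a tiny quantity), it is not a consequence of anything proved in the paper, and it is delicate to establish: an $L_s/4$-box is not aligned with the $\GG_j$-grids, so at each scale $j$ you must account for the up to $2^d$ boundary $L_j$-boxes whose removed regions can overhang into the sub-box, and this volume bookkeeping is genuinely extra work. As you sketch, it should be doable, but it amounts to a new lemma that is harder than the target Claim itself, whereas the paper's internal-connectivity argument bypasses it entirely. The remainder of your write-up — the reduction to $\cmax$ via Definition~\ref{def:cemax}, the observation that every vertex on a local connecting path lands in $\emax\subseteq\cemax$, the verification that $\cmax\subseteq\set_{L_s}$, and the radius arithmetic landing in $\ballZ(x,10L_s)\subseteq\ballZ(x,15L_s)$ — is correct.
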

\begin{proof}
Fix $x,y\in\cemax_{K,s,L_0}$ with $|x-y|_\infty\leq L_s$, and take $x',y'\in\cmax_{K,s,L_0}$ such that 
$x$ and $x'$ are connected in $\cemax_{K,s,L_0}\cap\ballZ(x',2 L_s)$, $y$ and $y'$ are connected in $\cemax_{K,s,L_0}\cap\ballZ(y',2L_s)$. 
By the triangle inequality, $|x'-y'|_\infty\leq 5L_s$. 

\medskip

Since all the vertices in $\GG_s\cap Q_{K,s}$ are $s$-good, there exist $x'',y''\in\bigcup_{z\in\pl_{K,s,0}}\mathcal C_z$ such that 
$|x' -x''|_\infty\leq L_s$ and $|y'-y''|_\infty\leq L_s$. 
By the definitions of $\mathcal H^\den_{K,s}$ and $\cemax_{K,s,L_0}$, 
$x''$ is connected to $x'$ in $\cemax_{K,s,L_0}\cap\ballZ(x',2L_s)$ and $y''$ is connected to $y'$ in $\cemax_{K,s,L_0}\cap\ballZ(y',2L_s)$. 

\medskip

By the triangle inequality, $|x''-y''|_\infty\leq 7L_s$.
Let $(z + [0,8L_s)^d)$ be a box in $Q_{K,s}$ which contains both $x''$ and $y''$, where $z\in \GG_s$. 
Since all the vertices in $\GG_s\cap (z + [0,8L_s)^d)$ are $s$-good, 
the perforation $\pl_{8,s,0}(z) = \pl_{K,s,0}\cap(z+ [0,8L_s)^d)$ of $(z + [0,8L_s)^d)$ is 
connected in $\GG_0$ by Lemma~\ref{l:rect}. 
Thus, by Lemma~\ref{l:fromG0toZd}, the set $\bigcup_{w\in\pl_{8,s,0}(z)}\mathcal C_w$ 
is contained in a connected component of $\set\cap(z + [0,8L_s)^d)$. 
In particular, $x''$ and $y''$ are connected in $\set\cap(z + [0,8L_s)^d)$. 
By \eqref{eq:framecmax} and the fact that \eqref{eq:isop:cemax:ratio} implies \eqref{eq:cmax:uniqueness:ratio}, 
the set $\bigcup_{w\in\pl_{8,s,0}(z)}\mathcal C_w$ is contained in $\cmax_{K,s,L_0}$. 
Therefore, $x''$ is connected to $y''$ in $\cmax_{K,s,L_0}\cap(z + [0,8L_s)^d)\subset \cmax_{K,s,L_0}\cap\ballZ(x'',8L_s)$. 

\medskip

We conclude that $x$ is connected to $y$ in $\cemax_{K,s,L_0}\cap\ballZ(x,15L_s)$.
\end{proof}

\bigskip

Let 
\[
x_s' = (-2L_s,\dots,-2L_s)\in\GG_s\quad\text{and}\quad K' = K+4.
\]
Since all the vertices in $\GG_s\cap Q_{K',s}(x_s')$ are $s$-good, 
we can define its perforation $\pl_{K',s,0}(x_s')$ as in \eqref{def:pl}. 
By definition, $\pl_{K',s,0}(x_s')$ is a subset of $0$-good vertices in $\GG_0\cap Q_{K',s}(x_s')$, 
and by Lemma~\ref{l:rect}, $\pl_{K',s,0}(x_s')$ is connected in $\GG_0$. 

By the fact that \eqref{eq:isop:cemax:ratio} implies \eqref{eq:cmax:uniqueness:ratio}, 
Lemma~\ref{l:fromG0toZd}, \eqref{eq:framecmax}, and the definition of $\cemax_{K,s,L_0}$, 
\begin{equation}\label{eq:framecemax}
\bigcup_{x\in\pl_{K',s,0}(x_s')} \mathcal C_x \subseteq \cemax_{K,s,L_0}.
\end{equation}

The next two claims allow to reduce the isoperimetric problem for subsets of $\cemax_{K,s,L_0}$ 
to the isoperimetric problem for subsets of $\pl_{K',s,0}(x_s')$. 
The crucual step for the proof is the following definition of $\setg$ and $\setd$. 
\begin{definition}
For $A\subseteq\cemax_{K,s,L_0}$, 
let $\setg$ be the set of all $x\in\pl_{K',s,0}(x_s')$ such that $\mathcal C_x\subseteq A$, and  
$\setd$ the set of $x\in A$ such that there exists $y\in \cemax_{K,s,L_0}\setminus A$ with $|x-y|_\infty\leq L_s$. 
\end{definition}
\begin{claim}\label{cl:isopmain}
\begin{equation}\label{eq:isopmain:boundary}
|\partial_{\cemax_{K,s,L_0}} A| 
\geq \max\left\{\frac{1}{2d}\cdot |\partial_{\pl_{K',s,0}(x_s')}\setg|,\frac{|\setd|}{(31\cdot L_s)^d} \right\} 
\end{equation}
and
\begin{equation}\label{eq:isopmain:volume}
|A| \leq 2\cdot 3^d\cdot L_0^d\cdot|\setg| + |\setd| .\
\end{equation}
\end{claim}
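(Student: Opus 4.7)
My plan is to establish each of the three distinct inequalities---the two comprising the max in \eqref{eq:isopmain:boundary} and the volume estimate \eqref{eq:isopmain:volume}---by a separate assignment argument of bounded multiplicity. The two boundary bounds both map a ``source'' object (edges of $\partial_{\pl_{K',s,0}(x_s')}\setg$ in one case, vertices of $\setd$ in the other) into boundary edges of $A$ in $\cemax_{K,s,L_0}$; the volume bound classifies each $x\in A$ by the corner $z(x)\in\GG_0$ of the unique $L_0$-box containing it.

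For $|\partial_{\cemax_{K,s,L_0}}A|\geq\frac{1}{2d}|\partial_{\pl_{K',s,0}(x_s')}\setg|$, fix $\{z,z'\}\in\partial_{\pl_{K',s,0}(x_s')}\setg$ with $z\in\setg$ and $z'\in\pl_{K',s,0}(x_s')\setminus\setg$. Then $\mathcal C_z\subseteq A$, and since \eqref{eq:framecemax} gives $\mathcal C_{z'}\subseteq\cemax_{K,s,L_0}$ while $\mathcal C_{z'}\not\subseteq A$, some $w\in\mathcal C_{z'}\setminus A$ lies in $\cemax_{K,s,L_0}\setminus A$. Lemma~\ref{l:fromG0toZd}(b), applied to the $0$-good neighbors $z,z'$, supplies a $\set$-path from $\mathcal C_z$ to $w$ inside $(z+[0,L_0)^d)\cup(z'+[0,L_0)^d)$, and the first edge on this path that leaves $A$ is an element of $\partial_{\cemax_{K,s,L_0}}A$; I assign $\{z,z'\}$ to that edge. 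The map has multiplicity at most $2d$: any given target edge is contained in the union of at most one pair of adjacent $L_0$-boxes, and each of the associated $\GG_0$-vertices has at most $2d$ $\pl_{K',s,0}(x_s')$-neighbors. For $|\partial_{\cemax_{K,s,L_0}}A|\geq|\setd|/(31L_s)^d$ I use Claim~\ref{cl:cemax:localconnected}: each $x\in\setd$ comes with a witness $y\in\cemax_{K,s,L_0}\setminus A$ at $|x-y|_\infty\leq L_s$, and a connecting path inside $\cemax_{K,s,L_0}\cap\ballZ(x,15L_s)$ crosses $\partial_{\cemax_{K,s,L_0}}A$ at some edge within $\ballZ(x,15L_s)$, to which I map $x$. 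The fiber of a given edge is contained in the set of centers $x$ whose $15L_s$-ball contains it, of cardinality at most $(30L_s+1)^d\leq(31L_s)^d$.

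For the volume bound \eqref{eq:isopmain:volume}, I classify each $x\in A$ by $z(x)$. If $z(x)\in\setg$ then $x$ is absorbed directly into the $L_0^d|\setg|$ term. If $z(x)\in\pl_{K',s,0}(x_s')\setminus\setg$ then $\mathcal C_{z(x)}\not\subseteq A$ yields a witness in $\cemax_{K,s,L_0}\setminus A$ within distance $L_0\leq L_s$ of $x$, forcing $x\in\setd$. The remaining case, $z(x)\notin\pl_{K',s,0}(x_s')$---i.e.\ $x$ lies in a hole of the perforation---is the \emph{main obstacle} of the proof, because the nearest $\setg$-vertex can be as far as $r_{s-1}L_{s-1}\gg L_0$, whereas the stated bound demands an $L_0$-resolved accounting. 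The plan is to exploit that every hole is a union of sub-boxes of diameter at most $4r_{s-1}L_{s-1}\leq L_s/2$ (a consequence of $l_n>8r_n$), so every hole vertex lies within $L_s$ of some $\pl_{K',s,0}(x_s')$-vertex, and then to split: if some $\GG_0$-neighbor of $z(x)$ lies in $\pl_{K',s,0}(x_s')\setminus\setg$, then $\mathcal C$ of that neighbor supplies a $\cemax_{K,s,L_0}\setminus A$-witness within $3L_0\leq L_s$ of $x$, yielding $x\in\setd$; otherwise a $\GG_0$-neighbor of $z(x)$ belongs to $\setg$ and $x$ is charged to the $3^d L_0^d$-neighborhood of that $\setg$-vertex, while the remaining ``doubly interior'' hole vertices are handled by iterating the neighbor analysis until one reaches either the hole boundary (where the $\pl\setminus\setg$ alternative applies) or a $\setg$-vertex on it. The overall factor $2\cdot 3^d$ collects the direct ($z(x)\in\setg$) and the neighbor-shell contributions, with the factor $2$ allowing the charging to distinguish between a $\setg$-vertex's own $L_0$-box and its adjacent hole $L_0$-boxes. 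The principal technical difficulty is making this neighbor-shell covering uniform across the multiscale hole geometry produced by the recursive construction of $\pl_{K',s,0}(x_s')$.
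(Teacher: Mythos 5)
Your two boundary estimates follow the paper's argument essentially verbatim, and your multiplicity accounting (the $2d$ from pairing with at most one adjacent $\GG_0$-box when the crossing edge is interior to one box, and the $(31L_s)^d$ from the number of $\setd$-centers whose $15L_s$-ball can contain a fixed edge) is the same as what the paper uses implicitly. Those parts are fine.

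The proof of the volume bound \eqref{eq:isopmain:volume} has a genuine gap, which you yourself flag as a ``principal technical difficulty.'' Your plan classifies each $x\in A$ by the $L_0$-box corner $z(x)$ and then tries to resolve the hole case $z(x)\notin\pl_{K',s,0}(x_s')$ by iterating a nearest-neighbor charging argument. This cannot produce the stated $L_0^d$-resolved constant: the holes created at the top scale have diameter up to $4r_{s-1}L_{s-1}$, which is arbitrarily large compared to $L_0$, so a single $\setg$-vertex on a hole's boundary would have to absorb charge from $\Theta\bigl((r_{s-1}L_{s-1}/L_0)^d\bigr)\gg 1$ adjacent hole $L_0$-boxes along any chain of ``neighbor shells'', and the multiplicity of the charging map is not bounded by $O(3^d)$. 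Moreover, the dichotomy you use at each iteration step (``either the $\GG_0$-neighbor is in $\pl\setminus\setg$, giving a $\setd$-witness within $3L_0$, or it is in $\setg$'') does not cover the case where \emph{all} $\GG_0$-neighbors of $z(x)$ are also outside the perforation, which is precisely what happens for deep hole vertices.

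The paper avoids the hole problem entirely by working at the $L_s$-scale rather than vertex-by-vertex at the $L_0$-scale, and by exploiting what $z\notin\setd$ actually gives you. Namely: it suffices to bound $|A\setminus\setd|$ by $2\cdot 3^d L_0^d|\setg|$. For $z\in A\setminus\setd$, pick $z_s\in\GG_s\cap Q_{K',s}(x_s')$ with $z_s+[0,L_s)^d\subset\ballZ(z,L_s)$. Because $z\notin\setd$, \emph{every} vertex of $\cemax_{K,s,L_0}$ within $\ell^\infty$-distance $L_s$ of $z$ lies in $A$; combined with \eqref{eq:framecemax} this forces $\mathcal C_x\subseteq A$ for every $x\in\pl_{K',s,0}(x_s')\cap(z_s+[0,L_s)^d)$, i.e.\ that whole sub-perforation is contained in $\setg$. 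By Lemma~\ref{l:rect} and \eqref{eq:isop:cemax:ratio} that sub-perforation has at least $\frac12(L_s/L_0)^d$ vertices. A double count over pairs $(z,x)$ with $z\in A\setminus\setd$, $x\in\setg$, $x\in\ballZ(z,L_s)$ then gives
\[
\tfrac12(L_s/L_0)^d\cdot|A\setminus\setd|\;\leq\;|\ballZ(0,L_s)|\cdot|\setg|\;\leq\;(3L_s)^d|\setg|,
\]
which rearranges to the claim. The key point you are missing is that the condition $z\notin\setd$ is an $L_s$-scale hypothesis, so the natural counting argument is at the $L_s$-scale, where holes are automatically negligible (they occupy at most a $(4r_{s-1}/l_{s-1})^d$-fraction of an $L_s$-box). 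Trying to resolve the claim at the $L_0$-scale introduces the multiscale hole geometry as an obstruction that the claim was never meant to confront.
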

\begin{proof}
We begin with the proof of \eqref{eq:isopmain:boundary}.
For any $x\in\setg$ and $y\in \pl_{K',s,0}(x_s')\setminus \setg$ such that $|x-y|_1 = L_0$, 
$\mathcal C_x\subseteq A$ and $\mathcal C_y \nsubseteq A$.
By Lemma~\ref{l:fromG0toZd} and \eqref{eq:framecemax}, 
$\mathcal C_x$ and $\mathcal C_y$ are connected in $\cemax_{K,s,L_0}\cap((x+[0,L_0)^d)\cup(y+[0,L_0)^d))$. 
Each path in $\cemax_{K,s,L_0}$ connecting $\mathcal C_x$ and $\mathcal C_y\setminus A$ contains an edge from $\partial_{\cemax_{K,s,L_0}} A$. 
This implies that 
\begin{equation}\label{eq:isopmain:boundary:1}
|\partial_{\cemax_{K,s,L_0}} A| \geq \frac{1}{2d}\cdot |\partial_{\pl_{K',s,0}(x_s')}\setg| .
\end{equation}
Next, by the definition of $\setd$, for any $x\in \setd$, there exists $y\in \cemax_{K,s,L_0}\setminus A$ such that $|x-y|_\infty\leq L_s$.
By Claim~\ref{cl:cemax:localconnected}, $x$ and $y$ are connected in $\cemax_{K,s,L_0}\cap\ballZ(x,15L_s)$. 
In particular, the ball $\ballZ(x,15L_s)$ contains an edge from $\partial_{\cemax_{K,s,L_0}} A$. 
Since every edge from $\partial_{\cemax_{K,s,L_0}} A$ is within $\ell^\infty$ distance $15L_s$ from at most $(31L_s)^d$ vertices of $\setd$, 
\begin{equation}\label{eq:isopmain:boundary:2}
|\partial_{\cemax_{K,s,L_0}} A| \geq \frac{|\setd|}{(31\cdot L_s)^d} .
\end{equation}
Inequalities \eqref{eq:isopmain:boundary:1} and \eqref{eq:isopmain:boundary:2} imply \eqref{eq:isopmain:boundary}.

\bigskip

We proceed with the proof of \eqref{eq:isopmain:volume}.
We need to show that 
\begin{equation}\label{eq:aminussetd}
|A\setminus\setd| \leq 2\cdot 3^d\cdot L_0^d\cdot |\setg|.
\end{equation}
Let $z\in A\setminus\setd$. 
By the definition of $\cemax_{K,s,L_0}$, there exists $z_s\in\GG_s\cap Q_{K',s}(x_s')$ such that 
\[
z_s + [0,L_s)^d\subset \ballZ(z,L_s). 
\]
By the definition of $\setd$ and \eqref{eq:framecemax}, for any $x\in\pl_{K',s,0}(x_s')\cap(z_s + [0,L_s)^d)$, $\mathcal C_x\subset A$. 
Thus, $\pl_{K',s,0}(x_s')\cap(z_s + [0,L_s)^d)\subseteq \setg$. 
By Lemma~\ref{l:rect} and \eqref{eq:isop:cemax:ratio}, 
\[
|\pl_{K',s,0}(x_s')\cap (z_s + [0,L_s)^d)| = 
|\pl_{1,s,0}(z_s)| 
\geq \frac{1 + \den_2}{1+\frac{\den_2 + 2\den_1}{2}}\cdot \left(\frac{L_s}{L_0}\right)^d
\geq \frac 12\cdot \left(\frac{L_s}{L_0}\right)^d.
\]
Thus, 
\[
|\setg\cap \ballZ(z,L_s)| \geq \frac 12\cdot \left(\frac{L_s}{L_0}\right)^d, 
\]
and we conclude that 
\[
\frac12\cdot \left(\frac{L_s}{L_0}\right)^d\cdot |A\setminus\setd|\leq \left|\{z\in A\setminus\setd, x\in\setg~:~ x\in\ballZ(z,L_s)\}\right|
\leq |\ballZ(0,L_s)|\cdot |\setg|,
\]
which implies \eqref{eq:aminussetd}.
\end{proof}

\bigskip

Let $\gamma_{\scriptscriptstyle \ref{thm:isop:pl}}$ be the isoperimetric constant from Theorem~\ref{thm:isop:pl}:
\[
\gamma_{\scriptscriptstyle \ref{thm:isop:pl}} = \frac{1}{2d\cdot 32^d\cdot 27^d\cdot 10^6}\cdot \left(1 - \left(\frac 23\right)^{\frac 1d}\right)\cdot \left(1 - e^{-\frac{1}{16(d-1)}}\right).
\]
\begin{claim}\label{cl:isopmain2}
Let $c_\den = \frac{2\den_1 - \den_2}{4\den_1}$. Then
\begin{equation}\label{eq:ineqmax}
\max\left\{|\partial_{\pl_{K',s,0}(x_s')}\setg|,\frac{|\setd|}{L_s^d} \right\}
\geq c_\den\cdot \gamma_{\scriptscriptstyle \ref{thm:isop:pl}}\cdot \max\left\{|\setg|^{\frac{d-1}{d}},\frac{|\setd|}{L_s^d} \right\} .\ 
\end{equation} 
\end{claim}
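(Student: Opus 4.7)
The claim is really an identity between two maxima, and I would first split it into the two one-sided inequalities hidden inside. Since $c_\den \gamma_{\scriptscriptstyle \ref{thm:isop:pl}}\le 1$ and the left-hand side already contains the term $|\setd|/L_s^d$, the bound
\[
\max\bigl\{|\partial_{\pl_{K',s,0}(x_s')}\setg|,|\setd|/L_s^d\bigr\}\ge c_\den\gamma_{\scriptscriptstyle \ref{thm:isop:pl}}\cdot|\setd|/L_s^d
\]
is automatic. Hence the entire content of the claim is the estimate
\[
\max\bigl\{|\partial_{\pl_{K',s,0}(x_s')}\setg|,|\setd|/L_s^d\bigr\}\ \ge\ c_\den\gamma_{\scriptscriptstyle \ref{thm:isop:pl}}\cdot|\setg|^{\frac{d-1}{d}},
\]
which I plan to derive by applying the isoperimetric bound of Theorem~\ref{thm:isop:pl}, or rather its generalised form in Remark~\ref{rem:isop:pl} with $C=1-c_\den$, to the subset $\setg\subseteq\pl_{K',s,0}(x_s')$. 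That application requires two ingredients: a volume bound $|\setg|\le (1-c_\den)\,|\pl_{K',s,0}(x_s')|$, and a lower threshold $|\setg|\ge (L_s/L_0)^{d^2}$.

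The first ingredient is the heart of the proof. Since the clusters $\mathcal C_x$, $x\in\setg$, live in disjoint boxes $x+[0,L_0)^d$ and each has at least $\den_1 L_0^d$ vertices by $0$-goodness, I get $|A|\ge\den_1 L_0^d|\setg|$. Combining this with the hypothesis $|A|\le\tfrac12|\cemax_{K,s,L_0}(x_s)|$, the inclusion $\cemax_{K,s,L_0}(x_s)\subseteq\set_{L_0}\cap Q_{K',s}(x_s')$ (valid because any point in $\emax$ belongs to the same connected component of $\set$ as some point of $\cmax\subseteq\set_{L_0}$, a component automatically of $\ell^1$-diameter at least $L_0$), and the bound
\[
|\set_{L_0}\cap Q_{K',s}(x_s')|\le L_0^d\bigl[\den_2|\pl_{K',s,0}(x_s')|+(|Q_{K',s}\cap\GG_0|-|\pl_{K',s,0}(x_s')|)\bigr]
\]
taken verbatim from the proof of Lemma~\ref{l:cmax:uniqueness}, I reduce the problem to showing that $\alpha:=|\pl_{K',s,0}(x_s')|/|Q_{K',s}\cap\GG_0|$ satisfies $\alpha\ge 1/(1+\den_1-\den_2/2)$. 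A short algebraic computation shows that the calibrating ratio assumption \eqref{eq:isop:cemax:ratio} together with the constraint $\den_2\le 2\den_1$ from \eqref{def:eta} is exactly equivalent (after clearing denominators and cancelling) to the inequality $(1+\den_2)(1+\den_1-\den_2/2)\ge 1+\den_1+\den_2/2$, i.e. to $\den_2(\den_1-\den_2/2)\ge 0$. Substituting this value of $\alpha$ into the chain above yields precisely $|\setg|/|\pl_{K',s,0}(x_s')|\le (2\den_1+\den_2)/(4\den_1)=1-c_\den$, which is exactly the purpose of the specific value $c_\den=(2\den_1-\den_2)/(4\den_1)$.

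The second ingredient is a contingency step. If $|\setg|\ge(L_s/L_0)^{d^2}$, Remark~\ref{rem:isop:pl} is directly applicable with $C=1-c_\den$ and delivers $|\partial_{\pl_{K',s,0}(x_s')}\setg|\ge c_\den\gamma_{\scriptscriptstyle \ref{thm:isop:pl}}|\setg|^{(d-1)/d}$, finishing the proof. Otherwise $|\setg|<(L_s/L_0)^{d^2}$, and here I invoke the hypothesis $|A|\ge L_s^{d(d+1)}$ of Theorem~\ref{thm:isop:cemax} together with the volume bound \eqref{eq:isopmain:volume}: this forces $|\setd|\ge L_s^{d(d+1)}-2\cdot 3^d L_s^{d^2}/L_0^{d(d-1)}$, which (for $L_s$ above an explicit $d$-dependent constant) exceeds $L_s^d\cdot(L_s/L_0)^{d(d-1)}\ge L_s^d|\setg|^{(d-1)/d}$. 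Hence $|\setd|/L_s^d\ge|\setg|^{(d-1)/d}$, so the right-hand maximum equals $|\setd|/L_s^d$ and the inequality becomes trivial.

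The main technical obstacle is the algebraic bookkeeping in the first ingredient: the whole argument rests on \eqref{eq:isop:cemax:ratio} being sharp enough to make $\alpha$ large enough to force $|\setg|/|\pl|\le 1-c_\den$, and the delicate point is verifying that the particular threshold $R=(1+\den_2)/(1+\den_1+\den_2/2)$ chosen in \eqref{eq:isop:cemax:ratio} is precisely what one needs, no more and no less, to match the constant $c_\den=(2\den_1-\den_2)/(4\den_1)$ defined in the claim. Once this calibration is in place, everything else is straightforward routing between Theorem~\ref{thm:isop:pl}, Remark~\ref{rem:isop:pl}, and elementary comparisons of maxima.
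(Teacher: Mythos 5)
Your proposal is correct and follows essentially the same strategy as the paper: use $|A|\ge\den_1 L_0^d|\setg|$ together with $|A|\le\frac12|\cemax|\le\frac12|\set_{L_0}\cap Q_{K',s}(x_s')|$, the bound from the proof of Lemma~\ref{l:cmax:uniqueness}, and the calibration in \eqref{eq:isop:cemax:ratio} to show $|\setg|\le(1-c_\den)|\pl_{K',s,0}(x_s')|$, then invoke Theorem~\ref{thm:isop:pl} and Remark~\ref{rem:isop:pl}. The algebra in your ``first ingredient'' checks out and matches the paper's computation.

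The only genuine difference lies in how the small-$|\setg|$ case is dispatched. The paper first dismisses the case $|\setg|^{\frac{d-1}{d}}<|\setd|/L_s^d$ as trivial, and then --- under the complementary assumption --- feeds $|\setd|\le L_s^d|\setg|^{\frac{d-1}{d}}$ into \eqref{eq:isopmain:volume} to get $|A|\le 3^{d+1}L_s^d|\setg|$, hence $|\setg|\ge(L_s/L_0)^{d^2}$ from $|A|\ge L_s^{d(d+1)}$. You instead split on $|\setg|$ versus $(L_s/L_0)^{d^2}$ directly, and in the deficient case try to deduce $|\setd|/L_s^d\ge|\setg|^{\frac{d-1}{d}}$ from scratch. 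That step requires $L_s^dL_0^{d(d-1)}\ge 1+2\cdot3^d$, which you flag as ``$L_s$ above an explicit $d$-dependent constant'' --- but this is not a hypothesis of the claim. It can be rescued: for $s\ge1$ it holds because $l_0>8r_0\ge 8$ forces $L_s\ge 9$, and for $s=0$ the case $|\setg|<(L_s/L_0)^{d^2}=1$ means $|\setg|=0$ and the bound is vacuous. The paper's ordering of the case distinction avoids having to mention this at all. (To be fair, the paper's own inference ``$|A|\ge L_s^{d(d+1)}$ and $|A|\le3^{d+1}L_s^d|\setg|$ imply $|\setg|\ge(L_s/L_0)^{d^2}$'' also tacitly uses that $L_0^{d^2}\ge3^{d+1}$, so both routes lean on the scales being moderately large, which the eventual choice of $L_0$ in Proposition~\ref{prop:verygoodbox} guarantees.)
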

\begin{proof}
If $|\setg|^{\frac{d-1}{d}} < \frac{|\setd|}{L_s^d}$, then \eqref{eq:ineqmax} trivially holds. 
Thus, we assume that $|\setg|^{\frac{d-1}{d}} \geq \frac{|\setd|}{L_s^d}$. We will deduce \eqref{eq:ineqmax} from Theorem~\ref{thm:isop:pl}.
By \eqref{eq:isopmain:boundary:2}, 
\[
|A| \leq 2\cdot 3^d\cdot L_0^d\cdot|\setg| + L_s^d\cdot |\setg|^{\frac{d-1}{d}} \leq 3^{d+1}\cdot L_s^d\cdot|\setg| .\
\]
Since $|A|\geq L_s^{d(d+1)}$, we obtain that $|\setg|\geq \left(\frac{L_s}{L_0}\right)^{d^2}$.

Since $\setg\subseteq\pl_{K',s,0}(x_s')$, for all $x\in\setg$, $|\mathcal C_x| \geq \den_1 L_0^d$. 
Thus, $|A| \geq \den_1 L_0^d\cdot |\setg|$. 
Since also all the vertices in $\GG_s\cap Q_{K',s}(x_s')$ are $s$-good, we obtain as in \eqref{eq:sizecmax2} that 
\begin{multline*}
|A|\leq \frac 12\cdot |\cemax_{K,s,L_0}|
\leq \frac 12\cdot \left(\den_2 + 1 - \prod_{i=0}^\infty\left(1 - \left(\frac{4r_i}{l_i}\right)^d\right)\right)\cdot |Q_{K',s}(x_s')|\\
\stackrel{\eqref{eq:isop:cemax:ratio}}\leq \frac{\den_2 + 2\den_1}{4}\cdot \prod_{i=0}^\infty\left(1 - \left(\frac{4r_i}{l_i}\right)^d\right)\cdot |Q_{K',s}(x_s')|
\leq \frac{\den_2 + 2\den_1}{4}L_0^d\cdot |\pl_{K',s,0}(x_s')|,
\end{multline*}
where the last inequality follows from Lemma~\ref{l:rect}. Thus, $|\setg| \leq (1-c_\den)\cdot |\pl_{K',s,0}(x_s')|$.
By Theorem~\ref{thm:isop:pl} and Remark~\ref{rem:isop:pl}, 
\[
|\partial_{\pl_{K',s,0}(x_s')}\setg|\geq c_\den\cdot \gamma_{\scriptscriptstyle \ref{thm:isop:pl}}\cdot |\setg|^{\frac{d-1}{d}},
\]
completing the proof of \eqref{eq:ineqmax}.
\end{proof}

\bigskip

We are now ready to prove Theorem~\ref{thm:isop:cemax}. It easily follows from Claims~\ref{cl:isopmain} and \ref{cl:isopmain2}.
\begin{proof}[Proof of Theorem~\ref{thm:isop:cemax}] 
By \eqref{eq:isopmain:boundary}, \eqref{eq:isopmain:volume}, and \eqref{eq:ineqmax},
\[
\frac{|\partial_{\cemax_{K,s,L_0}} A|}{|A|^{\frac{d-1}{d}}}
\geq  \frac{\frac{1}{31^d}\cdot c_\den\cdot\gamma_{\scriptscriptstyle \ref{thm:isop:pl}}\cdot\max\left\{|\setg|^{\frac{d-1}{d}},\frac{|\setd|}{L_s^d} \right\}}
{(2\cdot 3^d\cdot L_0^d\cdot|\setg| + |\setd|)^{\frac{d-1}{d}}}\\
\geq  \frac{\frac{1}{31^d}\cdot c_\den\cdot\gamma_{\scriptscriptstyle \ref{thm:isop:pl}}\cdot \max\left\{|\setg|^{\frac{d-1}{d}},\frac{|\setd|}{L_s^d} \right\}}
{2\cdot 3^{d-1}\cdot L_0^{d-1}\cdot|\setg|^{\frac{d-1}{d}} + |\setd|^{\frac{d-1}{d}}} .\
\]
On the one hand, if $L_0^d\cdot |\setg| \geq |\setd|$, then 
\[
\frac{|\partial_{\cemax_{K,s,L_0}} A|}{|A|^{\frac{d-1}{d}}}
\geq \frac{\frac{1}{31^d}\cdot c_\den\cdot\gamma_{\scriptscriptstyle \ref{thm:isop:pl}}\cdot |\setg|^{\frac{d-1}{d}}}
{2\cdot 3^{d-1}\cdot L_0^{d-1}\cdot|\setg|^{\frac{d-1}{d}} + |\setd|^{\frac{d-1}{d}}}
\geq \frac{\frac{1}{31^d}\cdot c_\den\cdot\gamma_{\scriptscriptstyle \ref{thm:isop:pl}}}{3\cdot (3\cdot L_0)^{d-1}} .\
\]
On the other hand, if $L_0^d\cdot |\setg| \leq |\setd|$, then by \eqref{eq:isopmain:volume}, 
$|\setd| \geq \frac{1}{3^{d+1}}\cdot |A| \geq \frac{1}{3^{d+1}}\cdot L_s^{d(d+1)}\geq L_s^{d^2}$, and 
\[
\frac{|\partial_{\cemax_{K,s,L_0}} A|}{|A|^{\frac{d-1}{d}}}
\geq  \frac{\frac{1}{31^d}\cdot c_\den\cdot\gamma_{\scriptscriptstyle \ref{thm:isop:pl}}\cdot|\setd|^{\frac 1d}}
{3^d\cdot L_s^d}
\geq \frac{1}{93^d}\cdot c_\den\cdot\gamma_{\scriptscriptstyle \ref{thm:isop:pl}} .\
\]
The proof of Theorem~\ref{thm:isop:cemax} is complete with 
$\gamma_{\scriptscriptstyle \ref{thm:isop:cemax}} = \frac{1}{93^d\cdot L_0^{d-1}}\cdot c_\den\cdot\gamma_{\scriptscriptstyle \ref{thm:isop:pl}}$.
\end{proof}

\medskip

\begin{remark}
With a more careful analysis and assuming that Theorem~\ref{thm:isop:pl} holds for all subsets of size at least $\left(\frac{L_s}{L_0}\right)^{2d}$ 
(see Remark~\ref{rem:isopframe:conditions}), condition on $A$ in Theorem~\ref{thm:isop:cemax} can be relaxed to $|A|\geq L_s^{2d}$. 
Assuming that Theorem~\ref{thm:isop:pl} holds for all subsets (see Remark~\ref{rem:isopframe:conditions}), 
condition on $A$ in Theorem~\ref{thm:isop:cemax} can be relaxed to $|A|\geq L_s^d$. 
Since for our purposes the current statement of Theorem~\ref{thm:isop:cemax} suffices, we do not prove the stronger statement here. 
\end{remark}

\subsection{Graph distance}

In this section we study the graph distances $\dist_\set$ in $\set$ between vertices of $\cmax_{K,s,L_0}(x_s)$ for configurations in $\mathcal H^\den_{K,s}(x_s)$. 
As consequences, we prove that large enough balls centered at vertices of $\cmax_{K,s,L_0}(x_s)$ have 
regular volume growth (Corollary~\ref{cor:chemdist}) and allow for local extensions which satisfy an isoperimetric inequality (Corollary~\ref{cor:tildeC:existence}).
These results will be used in Section~\ref{sec:proof:main} to prove our main result. 

\begin{lemma}\label{l:chemdist}
Let $d\geq 2$ and $\den$ as in \eqref{def:eta}. Let $l_n$ and $r_n$, $n\geq 0$, be integer sequences such that for all $n$, $l_n>16r_n$ and 
$\prod_{n\geq 0}\left(1 + \frac{32r_n}{l_n}\right) \leq 2$. 
Let $L_0\geq 1$, $K\geq 1$, and $s\geq 0$ integers, $x_s\in\GG_s$. 
There exists $C_{\scriptscriptstyle \ref{l:chemdist}}= C_{\scriptscriptstyle \ref{l:chemdist}}(L_0)$ such that 
if $\mathcal H^\den_{K,s}(x_s)$ occurs, then for all $y,y'\in\cmax_{K,s,L_0}(x_s)$,
\[
\dist_{\set}(y,y') \leq C_{\scriptscriptstyle \ref{l:chemdist}}\cdot \max\left\{|y-y'|_\infty,L_s^d\right\}.
\]
\end{lemma}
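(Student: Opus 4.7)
The strategy is to transport paths from the multi-scale skeleton $\pl_{K,s,0}(x_s)$ of Section~\ref{sec:perforatedlattice:construction} into genuine $\set$-paths, and then to bound the resulting length by iterating the recursive construction scale-by-scale. Split into the regimes $|y-y'|_\infty \leq L_s$ and $|y-y'|_\infty > L_s$.

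In the short-range regime $|y-y'|_\infty \leq L_s$, I first note that $\cmax_{K,s,L_0}(x_s)$, being a connected subset of $\set$ of diameter at least $L_s$ (its volume is a constant fraction of $|Q_{K,s}|$, by an argument parallel to Lemma~\ref{l:cmax:uniqueness}), forces $y,y' \in \set_{L_s}$. Property (b) of $\mathcal H^\den_{K,s}(x_s)$ then supplies a connection in $\set\cap \ballZ(y,2L_s)$, and any path inside that ball has length at most $|\ballZ(y,2L_s)|\leq (4L_s+1)^d \leq CL_s^d$. This yields $\dist_\set(y,y') \leq CL_s^d \leq C_{\ref{l:chemdist}} \cdot \max\{|y-y'|_\infty,L_s^d\}$.

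In the long-range regime $|y-y'|_\infty>L_s$ I would proceed in three substeps. First, since the $\GG_0$-density of $\pl_{K,s,0}(x_s)$ in any $L_s$-sub-box of $Q_{K,s}(x_s)$ is at least $\prod_n(1-(4r_n/l_n)^d)$, which under the standing hypothesis on $\prod_n(1+32r_n/l_n)$ is bounded away from $0$, there exist $w,w' \in \pl_{K,s,0}(x_s)$ with $w+[0,L_0)^d\subseteq \ballZ(y,L_s)$ and $w'+[0,L_0)^d\subseteq \ballZ(y',L_s)$, in particular $\mathcal C_w\subseteq \ballZ(y,L_s)$ and $\mathcal C_{w'}\subseteq \ballZ(y',L_s)$. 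Second, a single (or, if needed, iterated) application of property (b) of $\mathcal H^\den_{K,s}(x_s)$ connects $y$ to a vertex $y_0 \in \mathcal C_w$ inside $\set\cap \ballZ(y,CL_s)$, contributing at most $C'L_s^d$ to $\dist_\set$; the same holds at $y'$. Third, since $\pl_{K,s,0}(x_s)$ is connected in $\GG_0$ (Lemma~\ref{l:rect}(a)), I pick a nearest-neighbor $\GG_0$-path $w=w_0,w_1,\ldots,w_k=w'$ inside $\pl_{K,s,0}(x_s)$; consecutive $w_i$'s are $0$-good, so Lemma~\ref{l:fromG0toZd}(b) connects $\mathcal C_{w_i}$ and $\mathcal C_{w_{i+1}}$ inside $\set\cap((w_i+[0,L_0)^d)\cup(w_{i+1}+[0,L_0)^d))$, a set of at most $2L_0^d$ vertices. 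Concatenating gives
\[
\dist_\set(y,y')\;\leq\; 2C'L_s^d \;+\; 2L_0^d\cdot k.
\]

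The main obstacle is to establish a bound $k\leq C_d\,|y-y'|_\infty/L_0$ with $C_d$ independent of $s$: intrinsic graph distances in the perforated lattice must be comparable to $\ell^\infty$-distances in $\Z^d$, despite the nested holes. I would prove this by induction on the scale $n$, showing that for $u,v\in \pl_{K,s,0}(x_s)$ lying in a common $L_n$-box the $\pl_{K,s,0}(x_s)$-distance between them is at most $\bigl(\prod_{j<n}(1+32r_j/l_j)\bigr)\cdot|u-v|_\infty/L_0$. The inductive step passes from scale $n-1$ to scale $n$ by noting that the only additional obstruction introduced at level $n$ is the removed region $\mathcal R_{z_n}$, which by Remark~\ref{rem:propRzi} is a disjoint union of at most $2^d$ axis-aligned sub-boxes of side $\leq 4r_{n-1}L_{n-1}$; a deterministic detour around such an obstruction inflates a straight traversal of an $L_n$-box by an additive $O(r_{n-1}L_{n-1})$, hence by the multiplicative factor $1+32r_{n-1}/l_{n-1}$. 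Telescoping across $n=1,\ldots,s$ and invoking the hypothesis $\prod_n(1+32r_n/l_n)\leq 2$ gives $k\leq 2\,|w-w'|_\infty/L_0\leq 2(|y-y'|_\infty+2L_s)/L_0$; plugging this back into the display above and absorbing all $L_0$- and dimension-dependent constants into $C_{\ref{l:chemdist}}(L_0)$ yields the claimed bound $\dist_\set(y,y')\leq C_{\ref{l:chemdist}}\cdot\max\{|y-y'|_\infty,L_s^d\}$.
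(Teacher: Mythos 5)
Your strategy coincides with the paper's: locate skeleton vertices $w,w'\in\pl_{K,s,0}(x_s)$ near $y,y'$, join them by a short $\GG_0$-path in the perforation, transport that path into $\set$ via Lemma~\ref{l:fromG0toZd}, and glue $y$ to $\mathcal C_w$ and $y'$ to $\mathcal C_{w'}$ using Definition~\ref{def:mainevent}(b). The short/long-range case split is cosmetic. The substantive difference is the source of the skeleton path-length bound: the paper simply invokes \cite[Lemma~5.3]{DRS12} (applied with $4r_n$ in place of $r_n$), which simultaneously produces $y_0\in\pl_{K,s,0}(x_s)\cap(y_s+[0,L_s)^d)$, $y_0'\in\pl_{K,s,0}(x_s)\cap(y_s'+[0,L_s)^d)$, and a $\GG_0$-path of length at most $\prod_{n\ge0}\bigl(1+\tfrac{32r_n}{l_n}\bigr)\cdot\tfrac{|y_s-y_s'|_1+L_s}{L_0}$; you instead sketch a self-contained induction, and that is where the proposal breaks down.

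Your inductive invariant --- that the graph distance within $\pl_{K,s,0}(x_s)$ between $u,v$ in a common $L_n$-box is at most $\prod_{j<n}\bigl(1+\tfrac{32r_j}{l_j}\bigr)\cdot|u-v|_\infty/L_0$, with no additive term --- is false. Going around $\mathcal R_{z_n}$ costs an \emph{additive} $\Theta(r_{n-1}L_{n-1})$ extra steps, which can only be converted into the multiplicative factor $1+32r_{n-1}/l_{n-1}$ when $|u-v|_1\gtrsim L_n$. Concretely, take $u,v$ just outside the centers of two opposite faces of a $4r_{n-1}L_{n-1}$-cube $\mathcal R_{z_n}$, so $|u-v|_1\approx4r_{n-1}L_{n-1}$; the shortest detour has $\ell^1$-length $\approx 8r_{n-1}L_{n-1}$, giving a ratio close to $2$. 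But the standing hypothesis $\prod_n(1+32r_n/l_n)\le2$ forces $r_n/l_n$ to be summably small, so the per-level factor $1+32r_{n-1}/l_{n-1}$ is strictly below $2$ and typically close to $1$; the inequality fails. In truth the per-level detours accumulate additively to $\sum_n O(r_{n}L_{n})=O(L_s)$, which is exactly the $+L_s$ appearing in the cited lemma's conclusion; the $+2L_s$ in your final display is imported only from $|w-w'|_\infty\le|y-y'|_\infty+2L_s$, not from the induction, so it does not fill this gap. (There is also a harmless $\ell^1$-versus-$\ell^\infty$ slip costing a factor $d$, which the paper absorbs in its last line via $|y_s-y_s'|_1\le d|y-y'|_\infty+2dL_s$.) Once the skeleton path-length bound is taken from \cite[Lemma~5.3]{DRS12}, the remainder of your argument is correct.
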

\begin{proof}
Let $y_s, y_s'\in Q_{K,s}(x_s)\cap\GG_s$ be such that $(y_s + [0,L_s)^d)\subset\ballZ(y,L_s)$ and $(y_s'+[0,L_s)^d)\subset\ballZ(y',L_s)$. 
By \cite[Lemma~5.3]{DRS12} (applied to sequences $l_n$ and $4r_n$), there exist $y_0\in \pl_{K,s,0}(x_s)\cap (y_s + [0,L_s)^d)$ and $y_0'\in \pl_{K,s,0}(x_s)\cap (y_s' + [0,L_s)^d)$ 
which are connected by a nearest neighbor path of $0$-good vertices $z_1 = y_0, z_2,\ldots, z_{k-1},z_k = y_0'$ in $\pl_{K,s,0}(x_s)$, where  
$k\leq \prod_{n\geq 0}\left(1 + \frac{32r_n}{l_n}\right)\cdot\frac{|y_s - y_s'|_1 + L_s}{L_0}$. 

\medskip

Let $\widetilde z_i$ be an arbitrary vertex in $\mathcal C_{z_i}$. (Recall the definition of $\mathcal C_z$ from Lemma~\ref{l:fromG0toZd}.)
By Lemma~\ref{l:fromG0toZd}, for all $1\leq i < k$, $\widetilde z_i$ is connected to $\widetilde z_{i+1}$ in 
$\set\cap((z_i + [0,L_0)^d)\cup(z_{i+1} + [0,L_0)^d)$. 
Therefore, any vertices $\widetilde y\in \mathcal C_{y_0}$ and $\widetilde y'\in\mathcal C_{y_0'}$ are connected by a nearest neighbor path in 
$\set\cap \cup_{i=1}^k (z_i + [0,L_0)^d)$. Any such path consists of at most 
$L_0^d\cdot \prod_{n\geq 0}\left(1 + \frac{32r_n}{l_n}\right)\cdot\frac{|y_s - y_s'|_1 + L_s}{L_0}$ vertices. 

\medskip

By Corollary~\ref{cor:cmax:uniqueness}, $\widetilde y\in \cmax_{K,s,L_0}(x_s)\cap \ballZ(y,L_s)$ and $\widetilde y' \in \cmax_{K,s,L_0}(x_s)\cap \ballZ(y',L_s)$. 
Thus, by the definition of $\mathcal H^\den_{K,s}(x_s)$, $y$ is connected to $\widetilde y$ in $\set\cap\ballZ(y,2L_s)$ and 
$y'$ is connected to $\widetilde y'$ in $\set\cap\ballZ(y',2L_s)$. 

\medskip

By putting all the arguments together, we obtain that $y$ is connected to $y'$ by a nearest neighbor path 
in $\set$ of at most $2\cdot |\ballZ(0,2L_s)| + L_0^d\cdot \prod_{n\geq 0}\left(1 + \frac{32r_n}{l_n}\right)\cdot\frac{|y_s - y_s'|_1 + L_s}{L_0}$ vertices.
Since $|y_s - y_s'|_1\leq d\cdot |y-y'|_\infty + 2dL_s$, the result follows. 
\end{proof}

\medskip

\begin{corollary}\label{cor:chemdist}
In the setup of Lemmas~\ref{l:cmax:uniqueness} and \ref{l:chemdist},
there exists $c_{\scriptscriptstyle \ref{cor:chemdist}} = c_{\scriptscriptstyle \ref{cor:chemdist}}(\den,L_0)>0$ such that 
for any $C_{\scriptscriptstyle \ref{l:chemdist}} L_s^d\leq r\leq KL_s$ and $y\in\cmax_{K,s,L_0}(x_s)$, 
\[
\mu(\ballS(y,r))\geq c_{\scriptscriptstyle \ref{cor:chemdist}}\cdot r^d.
\]
\end{corollary}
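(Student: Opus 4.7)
The plan is to reduce the lower bound on $\mu(\ballS(y,r))$ to a count of vertices of $\cmax_{K,s,L_0}(x_s)$ inside a Euclidean box near $y$, and then to lower bound that count using the hierarchical structure of the perforation $\pl_{K,s,0}(x_s)$.

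First, set $R = r/(2C_{\scriptscriptstyle \ref{l:chemdist}})$. The hypotheses $r \geq C_{\scriptscriptstyle \ref{l:chemdist}} L_s^d$ and $r \leq KL_s$ give $L_s^d \leq 2R$ and $R \leq KL_s/(2C_{\scriptscriptstyle \ref{l:chemdist}})$. For any $y'\in\cmax_{K,s,L_0}(x_s)$ with $|y-y'|_\infty\leq R$, Lemma~\ref{l:chemdist} yields
\[
\dist_\set(y,y') \leq C_{\scriptscriptstyle \ref{l:chemdist}}\cdot\max\{R,L_s^d\}\leq r,
\]
hence $\cmax_{K,s,L_0}(x_s)\cap\ballZ(y,R)\subseteq\ballS(y,r)$. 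Because every vertex of $\cmax_{K,s,L_0}(x_s)$ sits in a large connected component of $\set$ and therefore has $\mu$-mass at least one, it suffices to prove $|\cmax_{K,s,L_0}(x_s)\cap\ballZ(y,R)|\geq c(\den,L_0)\cdot R^d$.

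Next, since $y \in Q_{K,s}(x_s)$ and $KL_s \geq 2C_{\scriptscriptstyle \ref{l:chemdist}}R$, a coordinate-wise pigeonhole argument produces a $\GG_s$-aligned sub-box $Q^* = y_s^* + [0,K^*L_s)^d\subseteq Q_{K,s}(x_s)\cap\ballZ(y,R)$ with $K^*L_s \geq c_0 R$ for some absolute constant $c_0>0$. Because the recursive construction \eqref{def:pl} of $\pl_{K,s,0}(x_s)$ is intrinsic to each individual $L_s$-box and because $\mathcal H^\den_{K,s}(x_s)$ (which underlies the setup of Lemma~\ref{l:chemdist}) makes every $\GG_s$-vertex of $Q^*$ $s$-good, the restriction $\pl_{K,s,0}(x_s)\cap Q^*$ coincides with $\pl_{K^*,s,0}(y_s^*)$, and Lemma~\ref{l:rect}(b) supplies $|\pl_{K,s,0}(x_s)\cap Q^*|\geq c_1(K^*L_s/L_0)^d$. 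Each $L_0$-box $x+[0,L_0)^d$ with $x\in \pl_{K,s,0}(x_s)\cap Q^*$ sits inside $Q^*$ (since $L_0$ divides $L_s$), contains by Lemma~\ref{l:fromG0toZd} a unique component $\mathcal C_x$ of size at least $\den_1 L_0^d$, and by Corollary~\ref{cor:cmax:uniqueness} these $\mathcal C_x$ are pairwise disjoint subsets of $\cmax_{K,s,L_0}(x_s)$. Summing,
\[
|\cmax_{K,s,L_0}(x_s)\cap Q^*|\geq \den_1 L_0^d\cdot c_1(K^*L_s/L_0)^d = c_1\den_1(K^*L_s)^d \geq c(\den,L_0)\cdot r^d,
\]
which finishes the reduction.

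The only step that needs careful execution is the sub-box construction: if $y$ lies near a face of $Q_{K,s}(x_s)$, one must place $Q^*$ in the appropriate orthant around $y$, and the $\GG_s$-alignment requires $R$ to dominate $L_s$ by a fixed factor, which follows from $r \geq C_{\scriptscriptstyle \ref{l:chemdist}} L_s^d$ (using $d\geq 2$) after possibly enlarging $C_{\scriptscriptstyle \ref{l:chemdist}}$ by an absolute multiplicative constant. Residual small-$r$ cases where the pigeonhole would degenerate are covered trivially by $\mu(\ballS(y,r))\geq \mu_y\geq 1\geq c_{\scriptscriptstyle \ref{cor:chemdist}}\cdot r^d$ once $c_{\scriptscriptstyle \ref{cor:chemdist}}$ is chosen small enough. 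Every other ingredient is a direct aggregation of Lemmas~\ref{l:chemdist}, \ref{l:rect}, \ref{l:fromG0toZd}, and Corollary~\ref{cor:cmax:uniqueness}.
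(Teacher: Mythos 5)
Your proof is correct and follows essentially the same route as the paper's: reduce via Lemma~\ref{l:chemdist} to lower-bounding $|\cmax_{K,s,L_0}(x_s)\cap\ballZ(y,cr)|$, locate a $\GG_s$-aligned sub-box $Q_{K',s}(y_s)$ inside $\ballZ(y,cr)\cap Q_{K,s}(x_s)$, restrict the perforation to it, and count the disjoint components $\mathcal C_x$ (each of size $\geq\den_1 L_0^d$) via Lemma~\ref{l:rect} and Corollary~\ref{cor:cmax:uniqueness}. The only cosmetic difference is that you use the radius $r/(2C_{\scriptscriptstyle\ref{l:chemdist}})$ rather than the paper's $r/C_{\scriptscriptstyle\ref{l:chemdist}}$, and you include some unnecessary edge-case caveats (the hypothesis $r\geq C_{\scriptscriptstyle\ref{l:chemdist}}L_s^d$ with $d\geq 2$ already guarantees the pigeonhole sub-box is nondegenerate).
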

\begin{proof}
Let $K' = \max\{k~:~kL_s\leq \frac{r}{C_{\scriptscriptstyle \ref{l:chemdist}}}\}$. 
There exists $y_s\in Q_{K,s}(x_s)\cap \GG_s$ such that $Q_{K',s}(y_s)\subset\ballZ(y,\frac{r}{C_{\scriptscriptstyle \ref{l:chemdist}}})\cap Q_{K,s}(x_s)$. 
Since $\mathcal H^\den_{K,s}(x_s)$ occurs, we can define the perforation $\pl_{K,s,0}(x_s)$ of $Q_{K,s}(x_s)$ as in \eqref{def:pl}. 
Consider also the perforation $\pl_{K',s,0}(y_s) = \pl_{K,s,0}(x_s)\cap Q_{K',s}(y_s)$ of $Q_{K',s}(y_s)$. 
By \eqref{eq:framecmax}, 
\[
\bigcup_{x\in\pl_{K',s,0}(y_s)}\mathcal C_x\subset\cmax_{K,s,L_0}(x_s).
\]
Since also $\bigcup_{x\in\pl_{K',s,0}(y_s)}\mathcal C_x\subset\ballZ(y,\frac{r}{C_{\scriptscriptstyle \ref{l:chemdist}}})$,
Lemma~\ref{l:chemdist} implies that 
\[
\bigcup_{x\in\pl_{K',s,0}(y_s)}\mathcal C_x \subset \ballS(y,r).
\]
By applying Lemma~\ref{l:rect} to $\pl_{K',s,0}(y_s)$ and using the fact that $|\mathcal C_x|\geq \den_1L_0^d$, we conclude from the above inclusion that 
\begin{eqnarray*}
|\ballS(y,r)| 
&\geq &\den_1\cdot (K'L_s)^d\cdot \prod_{i\geq 0}\left(1 - \left(\frac{4r_i}{l_i}\right)^d\right)\\
&\stackrel{\eqref{eq:isop:cemax:ratio}}\geq & \den_1\cdot (K'L_s)^d\cdot\frac{1 + \den_2}{1+\frac{\den_2+2\den_1}{2}}\\
&\stackrel{\eqref{def:eta}}\geq &\frac12\den_2\cdot \left(\frac{r}{2C_{\scriptscriptstyle \ref{l:chemdist}}}\right)^d~.
\end{eqnarray*}
Since $\mu(\ballS(y,r))\geq |\ballS(y,r)|$, the result follows with $c_{\scriptscriptstyle \ref{cor:chemdist}} = \frac12\den_2\cdot \frac{1}{(2C_{\scriptscriptstyle \ref{l:chemdist}})^d}$. 
\end{proof}

\medskip

\begin{corollary}\label{cor:tildeC:existence}
Let $\epsilon\in(0,\frac1d]$. 
In the setup of Theorem~\ref{thm:isop:cemax} and Lemma~\ref{l:chemdist}, 
if $\mathcal H^\den_{5K,s}(x_s')$ occurs with $x_s' = x_s + (-2KL_s,\dots,-2KL_s)$, 
then for all $L_s^{d+1+\frac{d^2-1}{\epsilon d}}\leq r\leq KL_s$ and $y\in\cmax_{K,s,L_0}(x_s)$, there exists $\mathcal C_{\ballS(y,r)}$ such that 
$\ballS(y,r)\subseteq \mathcal C_{\ballS(y,r)}\subseteq \ballS(y,8C_{\scriptscriptstyle \ref{l:chemdist}} r)$ and for all $A\subset\mathcal C_{\ballS(y,r)}$ with 
$|A|\leq \frac 12\cdot |\mathcal C_{\ballS(y,r)}|$, 
\[
|\partial_{\mathcal C_{\ballS(y,r)}}A|\geq \gamma_{\scriptscriptstyle \ref{thm:isop:cemax}}\cdot |A|^{\frac{d-1}{d} + \epsilon}\cdot (8r)^{-\epsilon d}.
\]
In particular, if $\epsilon = \frac1d$, then $|\partial_{\mathcal C_{\ballS(y,r)}}A|\geq \gamma_{\scriptscriptstyle \ref{thm:isop:cemax}}\cdot\frac{|A|}{8r}$.
\end{corollary}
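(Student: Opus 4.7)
The plan is to set $\mathcal C_{\ballS(y,r)} := \cemax_{K'',s,L_0}(z)$ for a sub-box $Q_{K'',s}(z) \subseteq Q_{5K,s}(x_s')$ in which $y$ sits well inside and $(K''+4)L_s$ is comparable to $8r$, and then to read the isoperimetric bound directly off Corollary~\ref{cor:isop:cemax}. Concretely, I would take $K''$ to be the smallest integer with $K''L_s \geq 2r + 4L_s$ and pick $z \in \GG_s$ so that $y$ is at $\ell^\infty$-distance at least $2L_s$ from $\partial Q_{K'',s}(z)$ and $Q_{K'',s}(z) \subseteq Q_{5K,s}(x_s')$; since $r \leq KL_s$, such a choice exists. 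Elementary arithmetic then gives $(K''+4)L_s \leq 8r$, $K'' \geq r/L_s \geq L_s^{d + (d^2-1)/(\epsilon d)}$ (from the lower bound on $r$), and $\mathcal H^\den_{K'',s}(z)$ as an immediate consequence of $\mathcal H^\den_{5K,s}(x_s')$, so the hypotheses of Lemma~\ref{l:cmax:uniqueness}, Lemma~\ref{l:chemdist} and Corollary~\ref{cor:isop:cemax} are all in force for the box $Q_{K'',s}(z)$.

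The delicate step will be verifying $\ballS(y,r) \subseteq \cemax_{K'',s,L_0}(z)$. First I would show $y \in \cmax_{K'',s,L_0}(z)$: by construction of $\pl_{K'',s,0}(z)$, together with Lemma~\ref{l:rect}(b) applied to the $L_s$-sub-box of $Q_{K'',s}(z)$ containing $y$, there is a $0$-good $w \in \pl_{K'',s,0}(z)$ with $|w-y|_\infty \leq L_s$, and \eqref{eq:framecmax} gives $\mathcal C_w \subseteq \cmax_{K'',s,L_0}(z)$. Since $\cmax_{K'',s,L_0}(z)$ has $\ell^1$-diameter of order $K''L_s \gg L_s$ (by the volume bound in Lemma~\ref{l:cmax:uniqueness}) and $y$ lies in the analogously large cluster $\cmax_{K,s,L_0}(x_s)$, both $y$ and any vertex of $\mathcal C_w$ belong to $\set_{L_s} \cap Q_{5K,s}(x_s')$; as they are also at $\ell^\infty$-distance $\leq L_s$, Definition~\ref{def:mainevent}(b) produces a path in $\set \cap \ballZ(y, 2L_s) \subseteq \set \cap Q_{K'',s}(z)$ joining $y$ to $\cmax_{K'',s,L_0}(z)$, so $y \in \cmax_{K'',s,L_0}(z)$. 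Given this, any $v \in \ballS(y,r)$ is connected to $y$ by an $\set$-path of $\ell^\infty$-diameter $\leq r$ which stays in $Q_{K'',s}(z)$, so $v$ lies in the connected component of $y$ in $\set \cap Q_{K'',s}(z)$; that component contains $\cmax_{K'',s,L_0}(z)$ and so has $\ell^1$-diameter $\gtrsim K''L_s \gg L_0$, hence is contained in $\set_{L_0}$ and coincides with $\cmax_{K'',s,L_0}(z) \subseteq \cemax_{K'',s,L_0}(z)$.

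For the reverse inclusion $\cemax_{K'',s,L_0}(z) \subseteq \ballS(y, 8 C_{\scriptscriptstyle {\ref{l:chemdist}}} r)$, Lemma~\ref{l:chemdist} gives $\dist_\set(y, v') \leq C_{\scriptscriptstyle {\ref{l:chemdist}}} \max\{|y-v'|_\infty, L_s^d\} \leq 4 C_{\scriptscriptstyle {\ref{l:chemdist}}} r$ for every $v' \in \cmax_{K'',s,L_0}(z)$, using $|y-v'|_\infty \leq K''L_s \leq 4r$ and $L_s^d \leq r$; any $v \in \cemax_{K'',s,L_0}(z) \setminus \cmax_{K'',s,L_0}(z)$ is joined by Definition~\ref{def:cemax} to some $v' \in \cmax_{K'',s,L_0}(z)$ inside $\set \cap \ballZ(v', 2L_s)$, contributing only $O(L_s^d) = O(r)$ extra to the graph distance. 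The isoperimetric bound for $A \subseteq \cemax_{K'',s,L_0}(z)$ with $|A| \leq \tfrac12 |\cemax_{K'',s,L_0}(z)|$ then drops out of Corollary~\ref{cor:isop:cemax}, because $(K''+4)L_s \leq 8r$ gives $((K''+4)L_s)^{-\epsilon d} \geq (8r)^{-\epsilon d}$. The one genuinely delicate point is the identification $y \in \cmax_{K'',s,L_0}(z)$; the remainder is routine bookkeeping with box sizes and with the constants coming from Lemma~\ref{l:chemdist} and Corollary~\ref{cor:isop:cemax}.
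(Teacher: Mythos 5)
Your argument is correct and is essentially the paper's own proof: the paper likewise sets $\mathcal C_{\ballS(y,r)} = \cemax_{K',s,L_0}(y_s)$ for a sub-box $Q_{K',s}(y_s)$ of side comparable to $r$ satisfying $\ballZ(y,r)\subseteq Q_{K',s}(y_s)\subseteq Q_{5K,s}(x_s')$, and then reads off the two inclusions and the isoperimetric bound from Lemma~\ref{l:chemdist} and Corollary~\ref{cor:isop:cemax}. The only differences are bookkeeping ($K''$ versus $K'=\min\{k:kL_s\geq 2r+1\}+1$) and that you explicitly unpack the inclusion $\ballS(y,r)\subseteq\cmax_{K',s,L_0}(y_s)$ via \eqref{eq:framecmax}, Lemma~\ref{l:rect} and property (b) of $\mathcal H^\den$, where the paper leaves this implicit by citing Corollary~\ref{cor:cmax:uniqueness}.
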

\begin{proof}
Let $K' = \min\{k~:~kL_s\geq 2r+1\}+1$. (Note that $K'L_s\leq 4r$.) For $y\in\cmax_{K,s,L_0}(x_s)$, let $y_s\in\GG_s\cap Q_{5K,s}(x_s')$ be such that 
$\ballZ(y,r)\subseteq Q_{K',s}(y_s)\subseteq Q_{5K,s}(x_s')$. 
Since $\mathcal H^\den_{K',s}(y_s)$ occurs, by Corollary~\ref{cor:cmax:uniqueness}, $\ballS(y,r)\subseteq\cmax_{K',s,L_0}(y_s)\subseteq \cemax_{K',s,L_0}(y_s)$. 

By Lemma~\ref{l:chemdist}, for $r\geq L_s^d$, 
\[
\cemax_{K',s,L_0}(y_s)\subseteq \ballS(y,C_{\scriptscriptstyle \ref{l:chemdist}}(K'+4)L_s)\subseteq \ballS(y,8C_{\scriptscriptstyle \ref{l:chemdist}}r).
\]
By Corollary~\ref{cor:isop:cemax}, since $K' \geq L_s^{d+\frac{d^2-1}{\epsilon d}}$, for any $A\subset \cemax_{K',s,L_0}(y_s)$ with $|A|\leq \frac 12|\cemax_{K',s,L_0}(y_s)|$, 
\[
|\partial_{\cemax_{K',s,L_0}(y_s)}A|\geq \gamma_{\scriptscriptstyle \ref{thm:isop:cemax}}\cdot |A|^{\frac{d-1}{d} + \epsilon}\cdot ((K'+4)L_s)^{-\epsilon d}
\geq \gamma_{\scriptscriptstyle \ref{thm:isop:cemax}}\cdot |A|^{\frac{d-1}{d} + \epsilon}\cdot (8r)^{-\epsilon d}.
\]
The proof is complete by taking $\mathcal C_{\ballS(y,r)} = \cemax_{K',s,L_0}(y_s)$. 
\end{proof}

\section{Proof of Theorem~\ref{thm:vgb:main}}\label{sec:proof:main}

In this section we collect together the deterministic results that large enough balls have regular volume growth (Corollary~\ref{cor:chemdist}) 
and allow for local extensions satisfying an isoperimetric inequality (Corollary~\ref{cor:tildeC:existence})
to deduce Theorem~\ref{thm:vgb:main}. 
In fact, the result that we prove here is stronger. 
In Definition~\ref{def:vrb} we introduce the notions of regular and very regular balls, so that 
(very) regular ball is always (very) good (see Claim~\ref{cl:rb-gb}), and then prove in Proposition~\ref{prop:verygoodbox} that 
large balls are likely to be very regular. The main result is an immediate consequence of Proposition~\ref{prop:verygoodbox}. 

The following definition will only be used for the special choice of $\epsilon = \frac 1d$, see Claim~\ref{cl:rb-gb}. 
Nevertheless, we choose to work with the more general definition involving arbitrary $\epsilon \in(0,\frac1d]$, 
since smaller $\epsilon$'s give better isoperimetric inequalities, and could be used to prove stronger functional inequalities than 
the Poincar\'e inequality, as we learned from Jean-Dominique Deuschel (see, e.g., \cite[Section~3.2]{Nguyen}). 

\begin{definition}\label{def:vrb}
Let $C_V$, $C_P$, and $C_W\geq 1$ be fixed constants. Let $\epsilon\in(0,\frac1d]$. 
For $r\geq 1$ integer and $x\in V(G)$, we say that $\ballG(x,r)$ is $(C_V,C_P,C_W,\epsilon)$-{\it regular} if 
\[
\mu(\ballG(x,r))\geq C_V r^d
\]
and there exists a set $\mathcal C_{\ballG(x,r)}$ such that 
$\ballG(x,r)\subseteq\mathcal C_{\ballG(x,r)}\subseteq \ballG(x,C_Wr)$ and for any $A\subset \mathcal C_{\ballG(x,r)}$ with $|A|\leq \frac 12\cdot |\mathcal C_{\ballG(x,r)}|$, 
\[
|\partial_{\mathcal C_{\ballG(x,r)}}A|\geq \frac{1}{\sqrt{C_P}}\cdot |A|^{\frac{d-1}{d} + \epsilon} \cdot r^{-\epsilon d}.
\] 
We say $\ballG(x,R)$ is $(C_V,C_P,C_W,\epsilon)$-{\it very regular} if there exists $N_{\ballG(x,R)}\leq R^{\frac{1}{d+2}}$ such that 
$\ballG(y,r)$ is $(C_V,C_P,C_W)$-regular whenever $\ballG(y,r)\subseteq \ballG(x,R)$, and $N_{\ballG(x,R)} \leq r\leq R$. 

\medskip

In the special case $\epsilon = \frac 1d$, we omit $\epsilon$ from the notation and call $(C_V,C_P,C_W,\frac1d)$-(very) regular ball simply $(C_V,C_P,C_W)$-(very) regular.
\end{definition}

\medskip

\begin{claim}\label{cl:rb-gb}
If $\ballG(x,r)$ is $(C_V,C_P,C_W)$-{\it regular}, then it is $(C_V,C_P,C_W)$-{\it good}. 
\end{claim}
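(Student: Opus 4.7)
The plan is to verify the two clauses in Definition~\ref{def:vgb} for the ball $\ballG(x,r)$, namely the volume lower bound and the weak Poincar\'e inequality. The volume bound $\mu(\ballG(x,r))\geq C_V r^d$ is literally part of the definition of a $(C_V,C_P,C_W)$-regular ball, so there is nothing to prove on that side; the entire content of the claim is concentrated in the Poincar\'e inequality.

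First I would upgrade the edge-isoperimetric inequality provided by regularity to a strong Poincar\'e inequality on the enlarged set $\mathcal C:=\mathcal C_{\ballG(x,r)}$. With $\epsilon=\tfrac1d$, the regularity assumption reads $|\partial_{\mathcal C}A|\geq \tfrac{1}{\sqrt{C_P}\,r}\,|A|$ for every $A\subset \mathcal C$ with $|A|\leq \tfrac12|\mathcal C|$. This is a Cheeger-type bound of order $1/r$, and since the degrees are bounded by $C_0$ (so that $|A|$ and $\mu(A)$ are comparable up to a universal factor), the classical graph Cheeger inequality, cited in the paper via \cite[Proposition~3.3.10]{Kumagai} and \cite[Lemma~3.3.7]{SC97}, converts it into a strong Poincar\'e inequality on $\mathcal C$:
\[
\min_{a\in\R}\int_{\mathcal C}(f-a)^2\,d\mu \;\leq\; C_P\cdot r^2 \int_{E(\mathcal C)}|\nabla f|^2\,d\nu,
\]
valid for every $f:\mathcal C\to\R$. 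The $\tfrac{1}{\sqrt{C_P}}$ normalization in Definition~\ref{def:vrb} is precisely tailored so that the Cheeger-to-Poincar\'e conversion produces the constant $C_P$ (any universal degree-dependent factor picked up along the way is absorbed into $C_P$).

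Next I would pass from $\mathcal C$ down to $\ballG(x,r)$ and up to $\ballG(x,C_W r)$ by two elementary monotonicity steps using the sandwich $\ballG(x,r)\subseteq \mathcal C\subseteq \ballG(x,C_W r)$ that comes with the definition of a regular ball. On the left, restricting the integral to the smaller set only decreases the variance, so that for every $a\in\R$, $\int_{\ballG(x,r)}(f-a)^2\,d\mu\leq \int_{\mathcal C}(f-a)^2\,d\mu$, hence $\min_a\int_{\ballG(x,r)}(f-a)^2\,d\mu\leq \min_a\int_{\mathcal C}(f-a)^2\,d\mu$. On the right, the inclusion of edge sets $E(\mathcal C)\subseteq E(\ballG(x,C_W r))$ gives $\int_{E(\mathcal C)}|\nabla f|^2\,d\nu \leq \int_{E(\ballG(x,C_W r))}|\nabla f|^2\,d\nu$ for every $f$ defined on $\ballG(x,C_W r)$. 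Chaining these two monotone inequalities with the strong Poincar\'e inequality on $\mathcal C$ yields exactly the weak Poincar\'e inequality required by Definition~\ref{def:vgb}.

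There is no real obstacle in this argument; the only point where care is needed is the bookkeeping of the constant in the Cheeger-to-Poincar\'e passage, which is precisely what the square-root normalization in Definition~\ref{def:vrb} is designed to handle, possibly after relabeling $C_P$ up to a multiplicative factor depending only on $d$ and $C_0$.
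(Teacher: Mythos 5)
Your proof is correct and takes essentially the same route as the paper's: the isoperimetric inequality on $\mathcal C_{\ballG(x,r)}$ yields a strong Poincar\'e inequality there via \cite[Proposition~3.3.10]{Kumagai}, and the sandwich $\ballG(x,r)\subseteq\mathcal C_{\ballG(x,r)}\subseteq\ballG(x,C_W r)$ then produces the weak Poincar\'e inequality by monotonicity on both sides (restricting the domain of integration on the left, enlarging the edge set on the right). The paper does not relabel $C_P$; the $1/\sqrt{C_P}$ normalization in Definition~\ref{def:vrb} is set up so that the cited conversion yields exactly $C_P\cdot r^2$, so the claim holds with literally the same triple of constants.
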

\begin{proof}
By \cite[Proposition~3.3.10]{Kumagai} and Remark~\ref{rem:wpi:minimum}, 
\[
\min_a\int_{\mathcal C_{\ballG(x,r)}}(f-a)^2 d\mu = \int_{\mathcal C_{\ballG(x,r)}}\left(f-\overline f_{\mathcal C_{\ballG(x,r)}}\right)^2 d\mu 
\leq C_P\cdot r^2\cdot \int_{E(\mathcal C_{\ballG(x,r)})} |\nabla f|^2 d\nu.
\]
Thus, again by Remark~\ref{rem:wpi:minimum}, 
\begin{multline*}
\min_a\int_{\ballG(x,r)}(f-a)^2 d\mu \leq \int_{\ballG(x,r)}\left(f-\overline f_{\mathcal C_{\ballG(x,r)}}\right)^2 d\mu 
\leq \int_{\mathcal C_{\ballG(x,r)}}\left(f-\overline f_{\mathcal C_{\ballG(x,r)}}\right)^2 d\mu\\
\leq C_P\cdot r^2\cdot \int_{E(\mathcal C_{\ballG(x,r)})} |\nabla f|^2 d\nu 
\leq C_P\cdot r^2\cdot \int_{E(\ballG(x,C_W r))} |\nabla f|^2 d\nu.
\end{multline*}
\end{proof}

\bigskip

Theorem~\ref{thm:vgb:main} is immediate from Claim~\ref{cl:rb-gb} and the following proposition, in which one needs to take $\epsilon = \frac1d$.

\begin{proposition}\label{prop:verygoodbox}
Let $d\geq 2$, $u\in(a,b)$, and $\vgb\in(0,\frac{1}{d+2})$. Let $\epsilon\in(0,\frac1d]$. 
Assume that the family of measures $\mathbb P^u$, $u\in(a,b)$, satisfies assumptions \p{} -- \ppp{} and \s{} -- \sss{}. 
There exist constants $C_V$, $C_P$, and $C_W$, $c_{\scriptscriptstyle \ref{prop:verygoodbox}}$ and $C_{\scriptscriptstyle \ref{prop:verygoodbox}}$ 
depending on $u$, $\vgb$, and $\epsilon$, such that for all $R\geq 1$, 
\[
\mathbb P^u\left[
\begin{array}{c}\text{$\ballS(0,R)$ is $(C_V,C_P,C_W,\epsilon)$-very regular}\\
\text{with $N_{\ballS(0,R)}\leq R^\vgb$}
\end{array}~\Big|~0\in\set_\infty\right]
\geq 1 - C_{\scriptscriptstyle \ref{prop:verygoodbox}}\cdot e^{-c_{\scriptscriptstyle \ref{prop:verygoodbox}}(\log R)^{1+\constS}}.
\]
\end{proposition}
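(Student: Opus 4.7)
The plan is to combine the deterministic regularity criteria from Corollaries~\ref{cor:chemdist} and~\ref{cor:tildeC:existence} with a multi-scale probabilistic estimate for the events $\mathcal H^\den_{K,s}$. First I would fix integer sequences $l_n,r_n$ satisfying all the constraints of Theorems~\ref{thm:isop:pl} and~\ref{thm:isop:cemax} and Lemma~\ref{l:chemdist}, together with the extra convergence requirement $\sum_n L_n^{-\constP}<\infty$ needed for the decoupling telescope below (e.g.\ $l_n\equiv l$ large with $r_n$ geometrically decaying, so that $L_n$ grows geometrically). Using \sss{}, I would then fix $\den=(\den_1,\den_2)$ with $\den_1\leq \den_2<2\den_1$ and $\den_2$ slightly below $\mathbb P^u[0\in\set_\infty]>0$, so that the hypotheses of Lemma~\ref{l:cmax:uniqueness} and Theorem~\ref{thm:isop:cemax} hold simultaneously.

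The deterministic reduction goes as follows. Given a sub-ball $\ballS(y,r)\subseteq \ballS(0,R)$ with $R^\vgb\leq r\leq R$, I choose $s=s(r)$ so that $L_s\leq r^\alpha$ for some $\alpha=\alpha(d,\vgb,\epsilon)>0$ small enough to guarantee $L_s^{d+1+(d^2-1)/(\epsilon d)}\leq r$, and put $K=\lceil r/L_s\rceil$. By Theorem~\ref{thm:chd}(a,b), once $r\geq \Rchd(y)$, we have $\ballS(y,r)\subseteq \ballZ(y,Cr)$, and one can choose $x_s\in\GG_s$ with $\ballZ(y,8C_{\scriptscriptstyle \ref{l:chemdist}}r)\subseteq Q_{K,s}(x_s)\subseteq Q_{5K,s}(x_s')$, where $x_s'=x_s+(-2KL_s,\dots,-2KL_s)$. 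On the event $\mathcal H^\den_{5K,s}(x_s')$, Lemma~\ref{l:cmax:uniqueness} produces the unique largest cluster $\cmax_{K,s,L_0}(x_s)$ in $Q_{K,s}(x_s)$, and clause (b) of Definition~\ref{def:mainevent} together with Lemma~\ref{l:fromG0toZd} forces every vertex of $\set_\infty\cap Q_{K,s}(x_s)$ (in particular $y$) to belong to it; Corollary~\ref{cor:tildeC:existence} then furnishes $\mathcal C_{\ballS(y,r)}$ with $\ballS(y,r)\subseteq \mathcal C_{\ballS(y,r)}\subseteq \ballS(y,8C_{\scriptscriptstyle \ref{l:chemdist}}r)$ satisfying the required isoperimetric inequality, while Corollary~\ref{cor:chemdist} delivers $\mu(\ballS(y,r))\geq c_{\scriptscriptstyle \ref{cor:chemdist}}\,r^d$. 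Thus $\ballS(y,r)$ is $(C_V,C_P,C_W,\epsilon)$-regular with deterministic constants, and setting $N_{\ballS(0,R)}=\lceil R^\vgb\rceil$ (permitted because $\vgb<\tfrac{1}{d+2}$) yields very-regularity.

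The probabilistic input is the multi-scale estimate
\[
\mathbb P^u\bigl[\,x\text{ is }n\text{-bad}\,\bigr]\leq e^{-c(u)(\log L_n)^{1+\constS}},
\]
obtained from the cascade \eqref{def:seedcascade} by iterating the decoupling \ppp{} along a telescoping sequence of parameters $u_n\in(a,b)$ with $u_n\geq (1+L_n^{-\constP})u_{n+1}$, whose limit lies in $(a,u)$ thanks to \pp{} and \sss{}, and with \s{} supplying the seed bound at scale $0$. Combined with a union bound over the $O(K^d)$ many $s$-vertices of $Q_{5K+4,s}(x_s')$ and an instance of \s{} at scale $L_s$ covering clause (b) of Definition~\ref{def:mainevent}, this yields
\[
\mathbb P^u\bigl[\mathcal H^\den_{5K,s}(x_s')^c\bigr]\leq CK^d\,e^{-c(\log L_s)^{1+\constS}}.
\]
A final union bound over the $O(R^d)$ candidate centres $y\in\ballZ(0,CR)\cap\Z^d$, the $O(\log R)$ dyadic radii $r\in[R^\vgb,R]$, and the tail estimate for $\Rchd(y)$ from Theorem~\ref{thm:chd}(c), gives a total failure probability at most $R^{O(1)}e^{-c(\log R^\vgb)^{1+\constS}}\leq C\,e^{-c'(\log R)^{1+\constS}}$; conditioning on $\{0\in\set_\infty\}$ costs only a constant factor by \eqref{eq:C1:infty}.

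I expect the main obstacle to be the probability bound on $n$-badness itself: because \ppp{} only supplies the sub-stretched-exponential decoupling $\funcP(L)\geq e^{(\log L)^\epsP}$, the Peierls-type recursion along the cascade has to accommodate a parameter drift $u_n\downarrow u_\infty$ that converges only by virtue of $\sum L_n^{-\constP}<\infty$ and cannot beat the rate $e^{-(\log L_n)^{1+\constS}}$, which is precisely the source of the rate in \eqref{eq:vgb:main:R} flagged in Remark~\ref{rem:results}(1). Coordinating the slow scale growth required by the geometric perforation arguments (conditions \eqref{eq:isop:pl}, \eqref{eq:isop:cemax:ratio}, and Lemma~\ref{l:chemdist}) with the fast scale growth needed to absorb the parameter losses $L_n^{-\constP}$ in the decoupling telescope is the delicate technical step; once the scales and parameters are suitably coupled, the remainder is mechanical multi-scale bookkeeping from the tools already developed in Sections~\ref{sec:propertiesofclusters} and~\ref{sec:proof:main}.
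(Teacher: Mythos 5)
Your deterministic reduction (regularity via Corollaries~\ref{cor:chemdist} and~\ref{cor:tildeC:existence} on the event $\mathcal H^\den_{5K,s}(x_s')$, with the probabilistic input coming from the cascaded decoupling scheme) is exactly the strategy of the paper, and the final step of showing $(\log L_s)^{1+\constS}\gtrsim(\log R)^{1+\constS}$ is also the right conclusion. However, there is a concrete flaw that breaks the proposal as written.

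The proposed scales \emph{cannot} satisfy the hypotheses you invoke. You suggest $l_n\equiv l$ with $r_n$ ``geometrically decaying,'' but $r_n$ are required to be positive integers (with $l_n$ divisible by $r_n$), so necessarily $r_n\geq 1$; this forces $\sum_n r_n/l_n\geq\sum_n 1/l=\infty$, violating the second condition of \eqref{eq:isop:pl}, the ratio bound \eqref{eq:isop:cemax:ratio}, and the hypothesis $\prod_n\bigl(1+\tfrac{32r_n}{l_n}\bigr)\leq 2$ of Lemma~\ref{l:chemdist}. The geometric picture behind these conditions is that the perforated lattice $\pl_{K,s,0}$ must retain a positive density, which requires the removed volume $\sum(r_n/l_n)^d$ to be small, and the isoperimetric proof in Section~\ref{sec:pl:isop:proof} additionally needs $\sum r_n/l_n$ small. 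This is why the paper takes $l_n=l_0 4^{n^\scexp}$ and $r_n=r_0 2^{n^\scexp}$, so that $r_n/l_n=(r_0/l_0)2^{-n^\scexp}$ is super-geometrically summable and can be made arbitrarily small by increasing $l_0/r_0$, while simultaneously giving $L_n$ enough room to absorb the decoupling errors $e^{-\funcP(L_{n-1})}$. The constraint $\sum_n L_n^{-\constP}<\infty$ that you highlight for the telescoping sprinkling is certainly needed, but it is the weaker of the two requirements and is automatic once $\sum r_n/l_n<\infty$ forces $l_n\to\infty$.

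There are two further, non-fatal inefficiencies worth noting. First, you run a union bound over centres $y$ and dyadic radii $r$, attaching a separate event $\mathcal H^\den_{5K(r),s(r)}(x_s'(y,r))$ to each pair. This is not needed: Corollaries~\ref{cor:chemdist} and~\ref{cor:tildeC:existence} are uniform in $y\in\cmax_{K,s,L_0}(x_s)$ and in $r\in[L_s^{d+1+(d^2-1)/(\epsilon d)},KL_s]$, so fixing a \emph{single} $s$ with $L_s\approx R^{\vgb/D}$ (where $D=d+1+\tfrac{d^2-1}{\epsilon d}$) and a single $K\approx R/L_s$ makes the single event $\mathcal H^\den_{5K,s}(x_s')\cap\{0\in\set_\infty\}$ imply very-regularity of $\ballS(0,R)$ directly, via $\ballS(0,R)\subseteq\cmax_{K,s,L_0}(x_s)$. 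This eliminates the $O(R^d\log R)$ union bound entirely. Second, invoking Theorem~\ref{thm:chd}(a,b) for the inclusion $\ballS(y,r)\subseteq\ballZ(y,Cr)$ is unnecessary since $\dist_{\set_\infty}\geq\dist_{\Z^d}$ always, and on $\mathcal H^\den$ the deterministic Lemma~\ref{l:chemdist} already controls distances, so the tail bound $\Rchd(y)$ does not need to enter the union bound at all. Finally, a small sign slip: $\den_2$ must be \emph{above} $\eta(u)=\mathbb P^u[0\in\set_\infty]$ (and $\den_1$ below), as in the paper's choice $\den_1=\tfrac34\eta(u)$, $\den_2=\tfrac54\eta(u)$; otherwise the increasing seed event $\seedin^\den_{x,L_0}$ would be likely rather than unlikely, and the multi-scale recursion would not start.
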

\begin{proof}
We first make a specific choice of various parameters. 
Fix $u\in(a,b)$. We take 
\begin{equation}\label{def:eta:main}
\den_1 = \frac34\eta(u) \quad\text{and}\quad \den_2 = \frac54\eta(u),
\end{equation}
where $\eta(u)$ is defined in \sss{}. 
It is easy to see that $\den_1$ and $\den_2$ satisfy assumptions \eqref{def:eta}. 
We fix this choice of $\den=(\den_1,\den_2)$ throughout the proof. 

\medskip

Next we choose the scales for renormalization. For positive integers $l_0$, $r_0$, and $L_0$, we take
\begin{equation}\label{def:scales}
\scexp = \lceil 1/\epsP \rceil,\qquad l_n = l_0\cdot 4^{n^\scexp},\qquad r_n = r_0\cdot 2^{ n^\scexp},\qquad 
L_n = l_{n-1}\cdot L_{n-1},\quad n\geq 1,
\end{equation}
where $\epsP$ is defined in \ppp{}. By \cite[Lemmas~4.2 and 4.4]{DRS12}, under the assumptions \p{} -- \ppp{} and \s{} -- \sss{}, 
there exist $C_1 = C_1(u)<\infty$ and $C_2 = C_2(u,l_0)<\infty$ such that for all 
$l_0,r_0\geq C_1$, $L_0\geq C_2$, and $n\geq 0$, 
\begin{equation}\label{eq:nbad:proba}
\mathbb P^u\left[\text{$0$ is $n$-bad}\right] \leq 2\cdot 2^{-2^n} .\
\end{equation}
We choose $l_0,r_0\geq C_1$ so that the scales $l_n$ and $r_n$ defined in \eqref{def:scales} satisfy the conditions of 
Lemma~\ref{l:cmax:uniqueness}, Theorem~\ref{thm:isop:cemax}, and Lemma~\ref{l:chemdist}, 
and choose $L_0\geq C_2$. Thus, \eqref{eq:nbad:proba} is also satisfied. 

\medskip

Next we choose $s$ and $K$. 
Fix $R\geq 1$. Without loss of generality, we can assume that 
\[
R^\vgb\geq \max(C_{\scriptscriptstyle \ref{l:chemdist}}L_0^d,L_0^{d+1+\frac{d^2-1}{\epsilon d}}).
\]
Let
\[
s=\max \left\{s'~:~\max\{C_{\scriptscriptstyle \ref{l:chemdist}}L_{s'}^d,L_{s'}^{d+1+\frac{d^2-1}{\epsilon d}}\} \leq R^{\vgb}\right\}.
\]
With this choice of $s$, let $K = \min\{k : kL_s\geq 2R+1\}+1$, $x_s\in \GG_s$ such that $\ballZ(0,R)\subseteq Q_{K,s}(x_s)$, 
and $x_s' = x_s + (-2KL_s,\dots,-2KL_s)$.

\bigskip

We begin with the proof. 
If the event $\mathcal H^\den_{5K,s}(x_s')\cap\{0\in\set_\infty\}$ occurs, then $\ballS(0,R)\subseteq \cmax_{K,s,L_0}(x_s)$. 
Therefore, for all $y\in\ballS(0,R)$ and $R^\vgb\leq r\leq R$, by Corollaries~\ref{cor:chemdist} and \ref{cor:tildeC:existence}, 
the ball $\ballS(y,r)$ is $(c_{\scriptscriptstyle \ref{cor:chemdist}},\frac{64^{\epsilon d}}{\gamma_{\scriptscriptstyle \ref{thm:isop:cemax}}^2},8C_{\scriptscriptstyle \ref{l:chemdist}},\epsilon)$-regular. 
Thus, 
\begin{equation}\label{eq:Hu:veryregular}
\begin{array}{c}
\text{if the event $\mathcal H^\den_{5K,s}(x_s')\cap\{0\in\set_\infty\}$ occurs, then the ball $\ballS(0,R)$}\\[7pt]
\text{ is $(c_{\scriptscriptstyle \ref{cor:chemdist}},\frac{64^{\epsilon d}}{\gamma_{\scriptscriptstyle \ref{thm:isop:cemax}}^2},8C_{\scriptscriptstyle \ref{l:chemdist}},\epsilon)$-very regular 
with $N_{\ballS(0,R)}\leq R^\vgb$.}
\end{array}
\end{equation}
Let 
\[
C_V = c_{\scriptscriptstyle \ref{cor:chemdist}}, \quad 
C_P = \frac{64^{\epsilon d}}{\gamma_{\scriptscriptstyle \ref{thm:isop:cemax}}^2}, \quad C_W = 8C_{\scriptscriptstyle \ref{l:chemdist}}. 
\]
By \eqref{eq:Hu:veryregular}, it suffices to prove that there exist constants $c=c(u,\vgb,\epsilon,\epsP)>0$ and $C = C(u,\vgb,\epsilon,\epsP)<\infty$ such that for all $R\geq 1$, 
\begin{equation}\label{eq:HuKs:proba}
\mathbb P^u\left[\mathcal H^\den_{5K,s}(x_s')~|~0\in\set_\infty\right] \geq 1 - Ce^{-c(\log R)^{1+\constS}}.
\end{equation}
By Definition~\ref{def:mainevent}, \eqref{eq:nbad:proba}, and \s{}, there exists $C = C(u)<\infty$ such that 
\[
\mathbb P^u\left[\mathcal H^\den_{5K,s}(x_s')^c\right] \leq  
(5K+4)^d\cdot 2\cdot 2^{-2^s} + (5KL_s)^d\cdot C\cdot e^{-\funcS(u,2L_s)} ~.\
\]
Thus, it remains to show that for our choice of all the parameters, the right hand side of the above display is at most $Ce^{-c(\log R)^{1+\constS}}$. 

\medskip

Let $D = d+1 + \frac{d^2-1}{\epsilon d}$. 
By \eqref{def:scales} and the choice of $s$, for all $R\geq C_{\scriptscriptstyle \ref{l:chemdist}}\cdot L_0^{D/\vgb}$, 
\[
\left(\frac{R}{C_{\scriptscriptstyle \ref{l:chemdist}}}\right)^{\frac{\vgb}{D}} \leq L_{s+1} = l_s\cdot L_s \leq l_0\cdot 4 \cdot (L_s)^{1+2^\scexp},
\]
which implies that 
\begin{equation}\label{eq:Ls:lowerbound}
L_s \geq \frac{1}{4l_0} \left(\frac{R}{C_{\scriptscriptstyle \ref{l:chemdist}}}\right)^{\frac{\vgb}{D(1+2^\scexp)}} .\
\end{equation}
By \eqref{def:scales} and \eqref{eq:Ls:lowerbound}, there exists a constant $c = c(\vgb, \scexp, l_0, L_0,\epsilon)>0$ such that for all $R\geq C_{\scriptscriptstyle \ref{l:chemdist}}\cdot L_0^{D/\vgb}$, 
\begin{equation}\label{eq:bounds}
s\geq c\cdot(\log R)^{\frac{1}{1+\scexp}} - 1 .\
\end{equation}
Using \eqref{eq:funcS}, \eqref{eq:Ls:lowerbound}, and \eqref{eq:bounds},  
we deduce that there exist $c' = c'(u,\vgb,\scexp,\epsilon)>0$ and $C' = C'(u,\vgb, \scexp, l_0, L_0,\epsilon)<\infty$ such that for all $R\geq C'$, 
\[
2^s \geq (\log R)^{1+\constS} \quad\mbox{and}\quad \funcS(u,2L_s) \geq c'(\log R)^{1+\constS} ~.\
\]
By the choice of $K$, $KL_s\leq 4R$. 
Therefore, there exist $c''=c''(u,\vgb,\scexp,\epsilon)>0$ and $C'' = C''(u,\vgb,\scexp,l_0,L_0,\epsilon)<\infty$ such that for all $R\geq C''$,  
\begin{equation}\label{eq:eventH:proba}
\mathbb P^u\left[\mathcal H^\den_{5K,s}(x_s')^c \right]\leq C''e^{-c''(\log R)^{1+\constS}} .\
\end{equation}
Since $\mathbb P^u[0\in\set_\infty] = \eta(u)>0$, \eqref{eq:eventH:proba} implies \eqref{eq:HuKs:proba}. 
The proof is complete. 
\end{proof}

\medskip

\begin{remark}
The events $\seedde^\den_{x,L_0}$ and $\seedin^\den_{x,L_0}$ slightly differ 
from the corresponding events $\overline A^u_x$ and $\overline B^u_x$ in \cite{DRS12}, 
but only minor modifications are needed to adapt \cite[Lemmas~4.2 and 4.4]{DRS12} 
to our setting. 
 
There is room for flexibility in the choice of $\den$. 
For instance, if $\epsilon = \epsilon(u)\geq 0$ is chosen so that $\eta(u(1-\epsilon)) > \frac56\cdot \eta(u(1+\epsilon))$, 
Then $\den_1 = \frac34\eta(u(1-\epsilon))$ and $\den_2 = \frac54 \eta(u(1+\epsilon))$ satisfy \eqref{def:eta}, 
and \eqref{eq:nbad:proba} remains true for this choice of $\den$ by monotonicity. 
%
\end{remark}
%
%

\medskip

\begin{remark}\label{rem:ADS}
As we already mentioned in Remark~\ref{rem:results}(6), a new approach to the random conductance model 
on general graphs satisfying some regularity assumptions has been recently developed in \cite{ADS13,ADS14}. 
The main assumption on graphs there is \cite[Assumption~1.1]{ADS14}, which is reminiscent of Definition~\ref{def:vrb}, but stronger.
The main difference is that we do not require that an isoperimetric inequality is satisfied by subsets of a ball, but by those of a local extension of the ball. 
In fact, we do not know how to show (and if it is true) that subsets of balls satisfy the desired isoperimetric inequality of \cite[Assumption~1.1]{ADS14} in our setting.
It would be very interesting to see if the machinery developed in \cite{ADS13,ADS14} can be applied to 
graphs with all large balls being very regular in the sense of Definition~\ref{def:vrb}.
\end{remark}

\bigskip

\section{Proof of Theorem~\ref{thm:isop:pl}}\label{sec:pl:isop:proof}

The rough outline of the proof is the following. We first prove the isoperimetric inequality for all subsets of perforated lattices in two dimensions, 
see Lemma~\ref{l:isopineqG:2d}. In dimensions $d\geq 3$, we proceed in two steps. 
We first consider only macroscopic subsets $\mathcal A$ of the perforated lattice, i.e., those with the volume comparable with the volume of the perforated lattice. 
By applying a selection lemma, see Lemma~\ref{l:selection}, we identify a large number of disjoint two dimensional slices in the ambient box which on the one hand 
have a small non-empty intersection with $\mathcal A$, and on the other, all together contain a positive fraction of the volume of $\mathcal A$. 
We estimate the boundary of $\mathcal A$ in each of the slices using the two dimensional result, and conclude by estimating the boundary of $\mathcal A$ in the perforation 
by the sum of the boundaries of $\mathcal A$ in each of the slices. 
Finally, we treat the general case by constructing a suitable coarse graining of $\mathcal A$ 
from mesoscopic boxes in which $\mathcal A$ has positive density. 
The restriction of the boundary of $\mathcal A$ to such boxes is estimated by using the result from the first case. 
Both isoperimetric inequalities in $d\geq 3$ are stated in Theorem~\ref{thm:isopframe}.

%

\medskip

We begin with a number of auxiliary ingredients for the proof: (a) some general facts about isoperimetric inequalities (Section~\ref{sec:isop:generalfacts}) and 
(b) a combinatorial selection lemma (Section~\ref{sec:selection}). 

\subsection{Auxiliary results}

\subsubsection{General facts about isoperimetric inequalities}\label{sec:isop:generalfacts}

Here we collect some isoperimetric inequalities that we will frequently use. 
\begin{lemma}\label{l:boundaries}
Let $d\geq 2$, $n_1,\dots,n_d\geq 1$ integers with $\max_i n_i\leq N\cdot \min_i n_i$, 
and $C$ a positive real such that $N\cdot C^{\frac 1d}<1$. 
Then, for any subset $A$ of $G=\Z^d\cap[0,n_1)\times\dots\times[0,n_d)$ with $|A|\leq C\cdot |G|$, 
\[
|\partial_G A|\geq \max\left\{\left(1 + 2d\cdot (1 - N\cdot C^{\frac 1d})^{-1}\right)^{-1}\cdot |\partial_{\Z^d}A|,~ 
(1 - N\cdot C^{\frac 1d}) \cdot|A|^{\frac{d-1}{d}}\right\}.
\] 
\end{lemma}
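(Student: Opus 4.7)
The plan is to derive both bounds from a single projection/fiber analysis along each coordinate direction. For $i=1,\ldots,d$, I would use the projection $\pi_i:\Z^d\to\Z^{d-1}$ dropping the $i$-th coordinate, write $\partial^i_G A$ for the set of edges of $\partial_G A$ parallel to the $i$-th axis, consider for each $y\in\pi_i(G)$ the fiber $A^i_y\subseteq\{0,\ldots,n_i-1\}$, and let $E_i$ be the number of edges of $\Z^d$ in direction $i$ that leave $G$ from $A$. Two per-line observations drive everything. First, if $A^i_y$ is non-empty and not the full fiber then the line above $y$ contributes at least one edge to $\partial^i_G A$; since at most $|A|/n_i$ fibers can be entirely full, summing over $y$ yields
\[
|\partial^i_G A|\geq|\pi_i(A)|-|A|/n_i.
\]
Second, $E_i\leq 2|\pi_i(A)|$ directly, because $\pi_i$ is injective on each face $F_{i,k}$ of $G$; combined with the identity $|\partial_{\Z^d}A|-|\partial_G A|=\sum_i E_i$ this will let me pass between the two boundaries.

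For the volumetric bound, I would sum the first estimate over $i$ and invoke Loomis--Whitney, $|A|^{d-1}\leq\prod_i|\pi_i(A)|$, followed by AM--GM to obtain $\sum_i|\pi_i(A)|\geq d|A|^{(d-1)/d}$. The hypotheses $\max_j n_j\leq N\min_j n_j$ and $|A|\leq C|G|$ together give $|A|^{1/d}/\min_j n_j\leq NC^{1/d}$, so $d|A|/\min_j n_j\leq dNC^{1/d}\cdot|A|^{(d-1)/d}$, and subtracting yields $|\partial_G A|\geq d(1-NC^{1/d})|A|^{(d-1)/d}$, which (using $d\geq 2$) is even stronger than the claimed $(1-NC^{1/d})|A|^{(d-1)/d}$. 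For the relative bound, combining $E_i\leq 2|\pi_i(A)|\leq 2|\partial^i_G A|+2|A|/n_i$ gives $|\partial_{\Z^d}A|\leq 3|\partial_G A|+2d|A|/\min_j n_j$; feeding the volumetric bound back in the form $|A|/\min_j n_j\leq(NC^{1/d}/(1-NC^{1/d}))|\partial_G A|$ produces $|\partial_{\Z^d}A|\leq[3+2dNC^{1/d}/(1-NC^{1/d})]|\partial_G A|$, and a short algebraic check shows the bracket is at most $1+2d/(1-NC^{1/d})$.

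The only nontrivial ingredient is Loomis--Whitney; everything else is careful bookkeeping of the two kinds of boundary edges (internal to $G$ versus leaving $G$) direction by direction. The main point to watch is that the hypothesis $NC^{1/d}<1$ is used precisely to convert the absolute error term $|A|/\min_j n_j$ into a controlled fraction of $|A|^{(d-1)/d}$, which is in turn what governs the ratio between $\partial_G A$ and $\partial_{\Z^d}A$.
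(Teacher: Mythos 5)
Your proof is correct, and it takes a genuinely different route from the paper's. The paper works with a \emph{single} distinguished direction $i'$: it picks the coordinate maximizing $|\pi_i(A)|$, sets $P'=\pi_{i'}(A)$, observes that $|\partial_G A|$ dominates the number $|P''|$ of $i'$-columns meeting both $A$ and $G\setminus A$, bounds $|P'\setminus P''|\leq |A|/n_{i'}\leq NC^{1/d}|A|^{(d-1)/d}$, and invokes Loomis--Whitney only in the weak form $|A|^{(d-1)/d}\leq\max_i|\pi_i(A)|=|P'|$; the passage to $|\partial_{\Z^d}A|$ is then the crude bound $|\partial_{\Z^d}A|\leq|\partial_G A|+2d|P'|$. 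You instead do the bookkeeping direction by direction, decompose $\partial_G A=\bigsqcup_i\partial_G^i A$, use the per-direction fiber count $|\partial_G^i A|\geq|\pi_i(A)|-|A|/n_i$ and the exact identity $|\partial_{\Z^d}A|-|\partial_G A|=\sum_i E_i$, and then apply Loomis--Whitney in full followed by AM--GM. This buys you a factor of $d$ in the volumetric bound, $|\partial_G A|\geq d(1-NC^{1/d})|A|^{(d-1)/d}$, which of course implies the stated one; and the resulting relative constant $3+2dNC^{1/d}/(1-NC^{1/d})$ is $2(d-1)$ below the stated $1+2d/(1-NC^{1/d})$. The paper's single-direction argument is shorter; yours is a little more work but sharper, and the comparison makes clear that the constants in the lemma have slack. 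Both hinge on the same two ingredients — Loomis--Whitney and the observation that full columns are rare when $|A|\leq C|G|$ and the aspect ratio is bounded.
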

\begin{proof}
The proof is similar to that of \cite[Proposition~2.2]{DeuschelPisztora}. 
Let $\pi_i$ be the projection of $\Z^d$ onto the $(d-1)$ dimensional sublattice of vertices with $i$th coordinate equal to $0$.
Let $P_i = \pi_i(A)$, $i'$ be a coordinate corresponding to $P_i$ with the maximal size, and $P' = P_{i'}$.  
Let $P'' = P'\cap\pi_{i'}(G\setminus A)$, i.e., the projection of those $i'$-columns that contain vertices from both $A$ and $G\setminus A$. 
Note that $|\partial_G A| \geq |P''|$ and $|\partial_{\Z^d} A| \leq |\partial_G A| + 2d\cdot |P'|$. 
Also note that $|P'\setminus P''| \leq \frac{|A|}{n_{i'}}\leq N\cdot C^{\frac 1d}\cdot |A|^{\frac{d-1}{d}}$.
By the Loomis-Whitney inequality, $|A|^{\frac{d-1}{d}}\leq |P'|$. 
Thus, $|\partial_G A|\geq |P''|\geq (1 - N\cdot C^{\frac 1d}) \cdot |P'|\geq (1 - N\cdot C^{\frac 1d}) \cdot|A|^{\frac{d-1}{d}}$ and 
$|\partial_{\Z^d} A|\leq |\partial_G A| \cdot \left(1 + 2d\cdot (1 - N\cdot C^{\frac 1d})^{-1}\right)$. 
\end{proof}

\begin{remark}\label{rem:isop}
Let $G$ be a finite graph, and assume that for all $A\subseteq G$ with $c_1\cdot |G|\leq |A|\leq \frac 12\cdot |G|$, 
$|\partial_G A|\geq c_2\cdot |A|^{\frac{d-1}{d}}$. 
Then for any $A'\subset G$ with $\frac 12\cdot |G|\leq |A'|\leq (1-c_1)\cdot |G|$, 
$|\partial_G A'| = |\partial_G (G\setminus A')| \geq c_2\cdot |G\setminus A'|^{\frac{d-1}{d}} \geq (c_1c_2)\cdot |A'|^{\frac{d-1}{d}}$. 
Thus, any such $A'$ also satisfies an isoperimetric inequality, but possibly with a smaller constant. 
\end{remark}

\subsubsection{Selection lemma}\label{sec:selection}

The aim of this section is to prove the following combinatorial lemma. 
Its Corollaries~\ref{cor:selection1} and \ref{cor:selection2} together with the two dimensional isoperimetric inequality of Lemma~\ref{l:isopineqG:2d} will be crucially used in 
the proof of the isoperimetric inequality for macroscopic subsets of perforated lattices in any dimension $d\geq 3$ in Theorem~\ref{thm:isopframe}.
\begin{lemma}\label{l:selection}
Let $\frac 67\leq C_2<1$, and for $d\geq 2$, let 
\[
C_d = \frac{C_2^{d-1}}{\prod_{j=1}^{d-2}\left(1 + \frac{3}{9^j}\right)},\qquad
\delta_d = \frac{1}{9^{d-2}} ~.
\]
Let $R_1,\dots,R_d$ be positive integers. Then, for any subset $A$ of $Q = [0,R_1)\times\dots\times[0,R_d)\cap\Z^d$ satisfying 
\[
1\leq |A|\leq C_d\cdot |Q|,
\]
there exist $S_1,\dots S_k$, disjoint two dimensional subrectangles of $Q$ such that 
\[
|A\cap \cup_i S_i| \geq \delta_d\cdot |A|,
\]
and for all $1\leq i\leq k$, 
\[
1\leq |A\cap S_i| \leq C_2\cdot |S_i|.
\]
\end{lemma}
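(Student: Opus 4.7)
The plan is to prove the lemma by induction on $d\ge 2$.

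The base case $d=2$ is immediate: the product defining $C_2$ is empty, $\delta_2=1$, and the single rectangle $S_1=Q$ satisfies every condition by hypothesis on $A$.

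For the induction step I would fix $A\subseteq Q$ with $1\le |A|\le C_d|Q|$ and, for each coordinate direction $i\in\{1,\dots,d\}$, partition $Q$ into the $R_i$ hyperplane slices $T^{(i)}_j=\{x\in Q:x_i=j\}$, each a $(d-1)$-dimensional box of volume $|Q|/R_i$. Call $T^{(i)}_j$ \emph{acceptable} if $|A\cap T^{(i)}_j|\le C_{d-1}|T^{(i)}_j|$, and write $G_i$, $B_i$ for the unions of acceptable and non-acceptable slices in direction $i$. If, for some direction $i^*$, the acceptable slices capture at least $|A|/9$ of $A$, then the induction hypothesis applied to each acceptable slice (a $(d-1)$-dimensional box in which $A$ has density at most $C_{d-1}$) produces disjoint $2$-dimensional subrectangles inside that slice satisfying the required density bound and capturing at least $\delta_{d-1}|A\cap T^{(i^*)}_j|$ of $A\cap T^{(i^*)}_j$. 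Since slices in a fixed direction are pairwise disjoint, the union of all these subrectangles across acceptable slices is a disjoint family capturing at least $\delta_{d-1}\cdot|A|/9=\delta_d|A|$, as required.

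The heart of the proof is therefore a combinatorial selection claim: some direction $i^*$ satisfies $|A\cap G_{i^*}|\ge |A|/9$. The starting point is that each non-acceptable slice has $A$-density exceeding $C_{d-1}$, so $|B_i|<|A|/C_{d-1}$ and $|J_i|/R_i\le C_d/C_{d-1}=C_2/(1+3/9^{d-2})$, where $J_i$ is the set of non-acceptable slice indices in direction $i$. Suppose for contradiction that $|A\cap B_i|>(8/9)|A|$ for every $i$. Then inclusion--exclusion yields $|A\cap\bigcap_i B_i|>(1-d/9)|A|$, while the sub-box $\bigcap_i B_i=J_1\times\cdots\times J_d$ has volume at most $(C_d/C_{d-1})^d|Q|$. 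The plan is to close this into a contradiction using the calibrated factor $1+3/9^{d-2}$.

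The main obstacle is that in larger dimensions the one-shot inclusion--exclusion bound $(1-d/9)|A|$ becomes too weak to contradict the volume bound on $\bigcap_i B_i$ directly; one is then forced to recurse into the smaller sub-box $\bigcap_i B_i$, where the restricted density of $A$ has been amplified by a factor essentially $(1+3/9^{d-2})^d$, and to match this amplification against the telescoping factor $\prod_{j=1}^{d-2}(1+3/9^j)$ in the definition of $C_d$. The hypothesis $C_2\ge 6/7$ and the ratio $\delta_d/\delta_{d-1}=1/9$ provide precisely the quantitative slack needed to carry the recursion through, and verifying this telescoping is the delicate technical heart of the argument.
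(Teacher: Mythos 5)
Your proposal has a genuine gap at the step you yourself flag as ``the delicate technical heart of the argument,'' and the gap is not just technical --- it reflects a missing idea.

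Your plan requires proving: for some direction $i^*$, the acceptable slices (those with $(d-1)$-dimensional $A$-density at most $C_{d-1}$) capture at least $|A|/9$ of $A$. You propose to show this by contradiction: if $|A\cap B_i|>\tfrac{8}{9}|A|$ for all $i$, then $|A\cap\bigcap_i B_i|>(1-d/9)|A|$, while $\bigcap_i B_i = J_1\times\cdots\times J_d$ has volume at most $(C_d/C_{d-1})^d\,|Q|$. But even for $d=3$ with $C_2=\tfrac 67$ this does not yield a contradiction: $(C_3/C_2)^3=(9/14)^3\approx 0.266$ and $1-d/9=\tfrac 23$, so one only learns $|A|<\tfrac 32\cdot 0.266\,|Q|\approx 0.4\,|Q|$, which is compatible with the hypothesis $|A|\le C_3|Q|\approx 0.551\,|Q|$ (and indeed compatible with much larger $|A|$ relative to $|Q|$ --- nothing forces $|A|$ near $C_d|Q|$). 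For $d\ge 10$ the union bound is vacuous outright. The proposed fix --- recurse into the sub-box $\bigcap_i B_i$ where the density is ``amplified'' --- is not worked out: the amplified density need not exceed $1$ or any threshold, the sub-box need not shrink by a bounded factor, and there is no clear measure of progress guaranteeing the recursion terminates. Moreover, your recursion changes the ambient box, so the induction hypothesis (stated for $(d-1)$-dimensional boxes with density at most $C_{d-1}$) does not apply to it as is. As stated, the key combinatorial claim remains unproved, and I do not see a route to close it along these lines.

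The paper's proof takes a genuinely different and more subtle route that your outline misses. It first tests whether the 2D slices $[0,R_1)\times[0,R_2)\times x$ already work; if not, it splits these 2D slices into a dense family $\mathcal S_1$ (density $>C_2$) and a sparse family $\mathcal S_2$. Because we are ``not done,'' almost all of $A$ lies over $\mathcal S_1$, and $|\mathcal S_1|\le |A|/(C_2R_1R_2)$. The crucial observation is then made when passing to $(d-1)$-dimensional slabs $M=x\times[0,R_2)\times\cdots\times[0,R_d)$: each $M$ meets each 2D slice in at most $R_2$ points, so each $M$ meets $A\cap\bigcup_{S\in\mathcal S_1}S$ in at most $|A|/(C_2R_1)$ points. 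This gives a \emph{uniform} upper bound on $|A\cap M|$ over the chosen slabs $M$ that is proportional to $|A|/R_1$ --- hence a density bound of $C_{d-1}$ in $M$ --- without ever needing to argue that ``most'' slabs are acceptable; a simple counting (Markov-type) argument then picks $\tfrac 13 R_1$ slabs carrying at least $|A|/(3R_1)$ mass each, so at least $|A|/9$ in total. This interplay between the 2D slices (used to bound the density) and the $(d-1)$-dimensional slabs (used for the recursion) is the missing ingredient in your proposal.
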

\begin{corollary}\label{cor:selection1}
Note that $\prod_{j=1}^{d-2}\left(1 + \frac{3}{9^j}\right) \leq e^{\sum_{j\geq 1}\frac{3}{9^j}} = e^{\frac 38}$.
Thus, if we take $C_2 = e^{-\frac{1}{8(d-1)}} > \frac 67$, then $C_d > e^{-\frac 12} > \frac 12$, and Lemma~\ref{l:selection} implies that 
for any $A\subset Q$ with $|A|\leq \frac 12\cdot |Q|$, there exist disjoint two dimensional rectangles $S_1,\dots,S_k$ such that 
$|A\cap \cup_i S_i| \geq \frac{1}{9^{d-2}}\cdot |A|$ and $1\leq |A\cap S_i| \leq e^{-\frac{1}{8(d-1)}}\cdot |S_i|$.
\end{corollary}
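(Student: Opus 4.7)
The plan is to verify that the specific choice $C_2 = e^{-1/(8(d-1))}$ meets the hypothesis $C_2 \in [6/7,1)$ of Lemma~\ref{l:selection}, to bound the resulting constant $C_d$ from below by $1/2$, and then to invoke Lemma~\ref{l:selection} directly. There is no new combinatorics: the corollary is a purely quantitative specialization, and the only question is whether the chosen value of $C_2$ is admissible and yields $C_d > 1/2$.

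First I would check $C_2 > 6/7$. Since $d \geq 2$, the exponent satisfies $\tfrac{1}{8(d-1)} \leq \tfrac 18$, so $C_2 = e^{-1/(8(d-1))} \geq e^{-1/8}$. A direct numerical comparison then gives $e^{-1/8} > 6/7$, which also forces $C_2 < 1$. Next I would bound the product $\prod_{j=1}^{d-2}(1 + 3/9^j)$: using $1+x \leq e^x$, it is at most $\exp\bigl(\sum_{j=1}^{d-2} 3/9^j\bigr) \leq \exp\bigl(\sum_{j\geq 1} 3/9^j\bigr) = e^{3/8}$, where the geometric series evaluates to $3/8$.

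Combining these two estimates,
\[
C_d = \frac{C_2^{d-1}}{\prod_{j=1}^{d-2}(1+3/9^j)} \geq \frac{e^{-(d-1)/(8(d-1))}}{e^{3/8}} = \frac{e^{-1/8}}{e^{3/8}} = e^{-1/2} > \tfrac{1}{2},
\]
which is the promised lower bound. Finally, since the hypothesis $|A| \leq \tfrac 12 |Q|$ implies $|A| \leq \tfrac 12 |Q| < C_d |Q|$, and since trivially $|A| \geq 1$ (we may assume $A \neq \emptyset$), Lemma~\ref{l:selection} applies with our constants and produces disjoint two-dimensional subrectangles $S_1,\ldots,S_k$ of $Q$ with $|A \cap \bigcup_i S_i| \geq \delta_d \cdot |A| = 9^{-(d-2)} |A|$ and $1 \leq |A \cap S_i| \leq C_2 |S_i| = e^{-1/(8(d-1))} |S_i|$, exactly as stated.

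There is no hard step here — this is really a calculation to record the admissible constants in a form that will be convenient later (the factor $9^{-(d-2)}$ controls the mass captured by the slices, and the factor $e^{-1/(8(d-1))}$ will supply the slack needed in the two-dimensional isoperimetric inequality applied slice by slice).
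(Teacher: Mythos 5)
Your verification is correct and follows exactly the route the paper intends: bound $C_2^{d-1} = e^{-1/8}$ from below, bound the product by $e^{3/8}$ via $1+x\leq e^x$ and the geometric series, conclude $C_d \geq e^{-1/2} > 1/2$, and then apply Lemma~\ref{l:selection} directly. The paper essentially embeds this same computation inside the corollary's statement, so there is nothing to add.
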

\begin{corollary}\label{cor:selection2}
If $R_1 = \dots = R_d = R$, and $|A|\geq c_d\cdot R^d$ for some $c_d>0$, 
then at least $\frac{\delta_d c_d}{2} R^{d-2}$ of the $S_i$'s contain at least $\frac{\delta_d c_d}{2} R^2$ vertices from $A$. 
Indeed, if such a choice did not exist, then we would have 
\[
\delta_d c_d R^d \leq \delta_d\cdot |A|\leq |A\cap\cup_i S_i| < R^2\cdot \frac{\delta_d c_d}{2} R^{d-2} + \frac{\delta_d c_d}{2}R^2\cdot \left(k - \frac{\delta_d c_d}{2}R^{d-2}\right)
\leq \delta_d c_d R^d.
\]
\end{corollary}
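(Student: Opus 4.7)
The corollary is a short pigeonhole consequence of Lemma~\ref{l:selection}, and I will prove it by contradiction. Suppose that strictly fewer than $m^\ast := \frac{\delta_d c_d}{2} R^{d-2}$ of the rectangles $S_i$ produced by the lemma satisfy $|A \cap S_i| \geq \frac{\delta_d c_d}{2} R^2$; call these $S_i$ \emph{dense} and the remaining ones \emph{sparse}. We are in the regime where Lemma~\ref{l:selection} applies (the corollary implicitly requires $|A|\leq C_d|Q|$ as well, i.e.\ $c_d\leq C_d$), so it produces disjoint two-dimensional subrectangles $S_1,\dots,S_k$ of $Q=[0,R)^d$ with
\[
|A \cap \bigcup_i S_i| \;\geq\; \delta_d\,|A| \;\geq\; \delta_d\, c_d\, R^d .
\]

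I then decompose $|A\cap \bigcup_i S_i| = \sum_i|A\cap S_i|$ according to whether $S_i$ is dense or sparse and estimate each part trivially. For a dense $S_i$, I use the coarse bound $|A\cap S_i|\leq |S_i|\leq R^2$ (each $S_i$ is a two-dimensional subrectangle of $Q$, hence contains at most $R\cdot R = R^2$ vertices); together with the contradiction hypothesis that the number of dense $S_i$'s is strictly less than $m^\ast$, this yields a dense contribution strictly less than $m^\ast R^2 = \frac{\delta_d c_d}{2}R^d$. For each sparse $S_i$, by definition $|A\cap S_i|<\frac{\delta_d c_d}{2}R^2$, so the sparse contribution is at most $\frac{\delta_d c_d}{2}R^2 \cdot (k-m^\ast)$.

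The key auxiliary fact needed to close the argument is the bound $k \leq R^{d-2}$, which is built into the construction in the proof of Lemma~\ref{l:selection}: the selected rectangles are pairwise disjoint full 2D coordinate slices of $Q$, each of size exactly $R^2$, so $k R^2 \leq |Q| = R^d$. Consequently the sparse contribution is at most $\frac{\delta_d c_d}{2}R^2 \cdot R^{d-2} = \frac{\delta_d c_d}{2}R^d$, and combining the two estimates,
\[
\delta_d c_d R^d \;\leq\; |A \cap \bigcup_i S_i| \;<\; \frac{\delta_d c_d}{2}R^d + \frac{\delta_d c_d}{2}R^d \;=\; \delta_d c_d R^d ,
\]
which is the desired contradiction. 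There is no substantive obstacle here; the only point requiring care is the bookkeeping that confirms $|S_i|=R^2$ and hence $k\leq R^{d-2}$, which is transparent from the form of the rectangles produced by the selection lemma.
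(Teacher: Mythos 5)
Your argument is correct and is essentially the paper's own proof: the same contradiction/pigeonhole count splitting $|A\cap\bigcup_i S_i|$ into dense and sparse contributions, with the displayed chain of inequalities matching the paper's. The only difference is that you make explicit the bound $k\leq R^{d-2}$ (via disjointness and the fact that the construction in Lemma~\ref{l:selection} yields full two-dimensional slices with $|S_i|=R^2$), which the paper uses implicitly in its final inequality.
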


\begin{proof}[Proof of Lemma~\ref{l:selection}]
The proof is by induction on $d$. For $d=2$ the statement is obvious. We assume that $d\geq 3$.  

Consider all two dimensional slices of the form $[0,R_1)\times [0,R_2)\times x$, $x\in [0,R_3)\times\dots\times[0,R_d)$. 
If among them there exist slices $S_1,\dots,S_k$ such that $|A\cap\cup_iS_i|\geq \delta_d\cdot |A|$ and for all $i$, $1\leq |A\cap S_i|\leq C_2\cdot R_1R_2$, 
then we are done. 

Thus, assume the contrary. 
Let $\mathcal S_1$ be the subset of those slices that contain $> C_2\cdot R_1R_2$ vertices from $A$, and 
$\mathcal S_2$ the rest. By definition, $|\mathcal S_1| \leq \frac{|A|}{C_2\cdot R_1R_2}$, and by assumption, 
$|A\cap\cup_{S\in\mathcal S_2}S| < \delta_d \cdot |A|$.

\begin{figure}[!tp]
\centering
\resizebox{7cm}{!}{\input 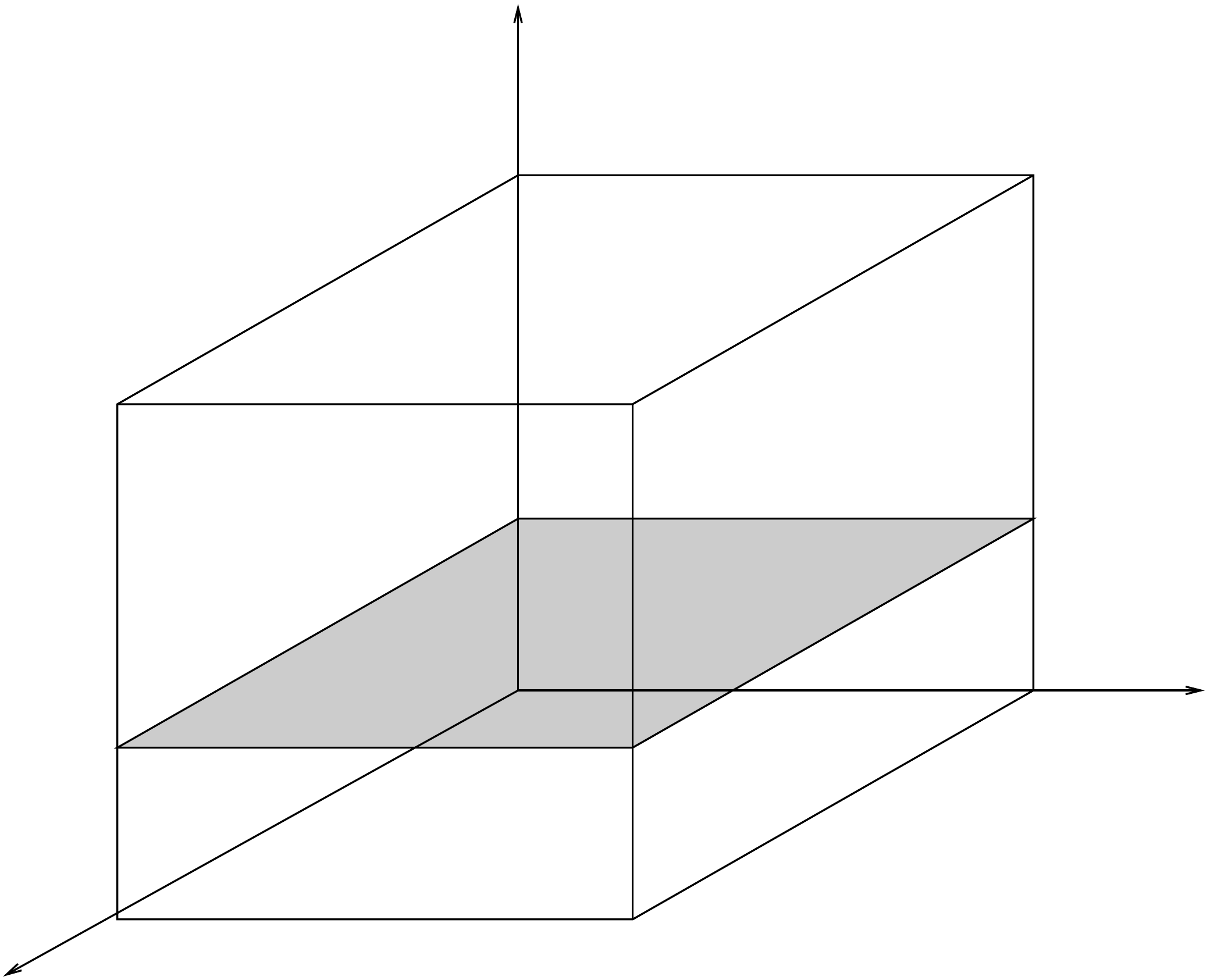_t}~\resizebox{7cm}{!}{\input 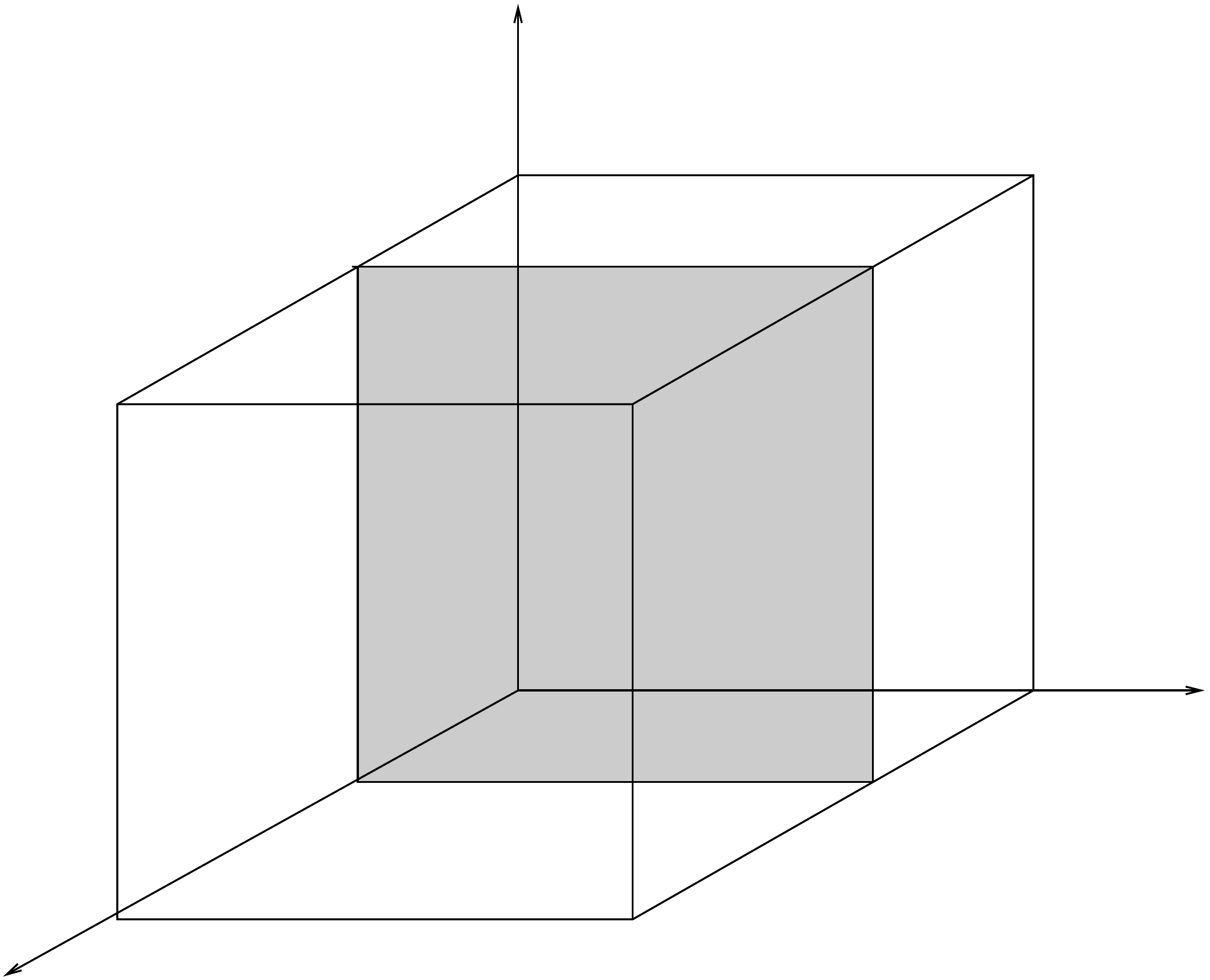_t}
\caption{An illustration of a slice $[0,R_1)\times [0,R_2)\times z$, $z\in [0,R_3)$ (left), and 
a rectangle $x\times[0,R_2)\times[0,R_3)$, $x\in[0,R_1)$ from $\mathcal M$ (right) in $3$ dimensions.}
\label{fig:select}
\end{figure}

\medskip

Consider $(d-1)$ dimensional rectangles 
\[
\mathcal M = \left\{x\times[0,R_2)\times\dots\times[0,R_d),\quad x\in[0,R_1)\right\},
\]
and consider separately their intersections with $\mathcal S_1$ and $\mathcal S_2$. 

First, consider intersections with $\mathcal S_1$. 
Each of the rectangles from $\mathcal M$ intersects $\cup_{S\in\mathcal S_1}S$ in at most $R_2\cdot \frac{|A|}{C_2\cdot R_1R_2} = \frac{|A|}{C_2\cdot R_1}$ vertices. 
Since $|A\cap\cup_{S\in\mathcal S_1}S| \geq (1 - \delta_d)\cdot |A|$, 
the number of rectangles $M\in\mathcal M$ with $|M\cap A|\geq \frac{|A|}{3\cdot R_1}$ is at least $\frac 23  R_1$.
Indeed, if not, then at least $\frac 13 R_1$ of rectangles from $\mathcal M$ contain $< \frac{|A|}{3\cdot R_1}$ vertices from $A$, and 
\[
|A\cap\cup_{S\in\mathcal S_1}S| < \frac 13 R_1\cdot \frac{|A|}{3\cdot R_1} + \frac 23 R_1\cdot \frac{|A|}{C_2\cdot R_1}
= \left(\frac 19 + \frac{2}{3\cdot C_2}\right)\cdot |A|
\leq \frac 89\cdot |A| \leq (1 - \delta_d)\cdot |A|,
\]
which is a contradiction. 

Next, consider intersections with $\mathcal S_2$. 
Since $|A\cap\cup_{S\in\mathcal S_2}S| \leq \delta_d\cdot |A|$, 
the number of rectangles $M\in\mathcal M$ with $|A\cap M\cap\cup_{S\in\mathcal S_2}S|\leq 3\delta_d\cdot \frac{|A|}{R_1}$ is 
at least $\frac 23 R_1$.
Indeed, if not, then for at least $\frac 13 R_1$ of them,  $|A\cap M\cap\cup_{S\in\mathcal S_2}S|>3\delta_d\cdot \frac{|A|}{R_1}$, and 
\[
|A\cap\cup_{S\in\mathcal S_2}S|> \frac 13 R_1 \cdot 3\delta_d\cdot \frac{|A|}{R_1} = \delta_d \cdot |A|,
\]
which is a contradiction. 

Therefore, we can choose $M_1,\dots,M_{\frac 13R_1}\in\mathcal M$ 
such that for each $1\leq i\leq \frac 13R_1$, 
\[
|A\cap M_i|\geq \frac{|A|}{3\cdot R_1}, \quad 
|A\cap M_i\cap\cup_{S\in\mathcal S_1}S| \leq \frac{|A|}{C_2\cdot R_1}, 
\quad
|A\cap M_i\cap\cup_{S\in\mathcal S_2}S| \leq 3\delta_d\cdot \frac{|A|}{R_1}.
\]
In particular, for each $1\leq i\leq \frac 13R_1$, 
\begin{eqnarray*}
|A\cap M_i|
&= 
&|A\cap M_i\cap\cup_{S\in\mathcal S_1}S| + |A\cap M_i\cap\cup_{S\in\mathcal S_2}S|
\leq 
\frac{|A|}{C_2\cdot R_1} + 3\delta_d\cdot \frac{|A|}{R_1}\\
&\leq &\frac{C_d}{C_2}\cdot \prod_{j=2}^dR_j\cdot \left(1 + \frac{3}{9^{d-2}}\right)
= C_{d-1}\cdot\prod_{j=2}^dR_j
\end{eqnarray*}
and 
\[
|A\cap\cup_i M_i| = \sum_i|A\cap M_i| \geq \frac 13 R_1 \cdot \frac{|A|}{3\cdot R_1} = \frac{|A|}{9}.
\]
If $d=3$, then $M_i$ are disjoint two dimensional rectangles satisfying all the requirements of the lemma. 
If $d>3$, consider the sets $A_i = A\cap M_i$, $1\leq i\leq \frac 13R_1$.  
They satisfy assumption of the lemma with $d$ replaced by $d-1$.
Therefore, there exist disjoint two dimensional rectangles $(S_{ij})_{1\leq j\leq k_j}$ in $M_i$ such that 
for all $1\leq j\leq k_i$, 
\[
|A_i\cap S_{ij}|\leq C_2\cdot |S_{ij}|,
\]
and
\[
|A_i\cap\cup_j S_{ij}| \geq \delta_{d-1}\cdot |A_i|.
\]
It is now easy to conclude that the two dimensional rectangles $(S_{ij})_{1\leq j\leq k_i, 1\leq i\leq \frac 13R_1}$ 
satisfy all the requirements of the lemma. 
Indeed, they are disjoint, 
\[
|A\cap\cup_{ij} S_{ij}| = \sum_i |A_i\cup_j S_{ij}|\geq \frac 13R_1\cdot \delta_{d-1}\cdot |A_i|
\geq \frac 13R_1\cdot \delta_{d-1}\cdot\frac{|A|}{3\cdot R_1} = \delta_d\cdot |A|,
\]
and for each $i$ and $j$, 
\[
|A\cap S_{ij}|= |A_i\cap S_{ij}| \leq C_2 \cdot |S_{ij}|.
\]
The proof is complete. 
\end{proof}

\subsection{Isoperimetric inequality in two dimensions}

The main goal of this section is to prove the following lemma. 
It immediately implies Theorem~\ref{thm:isop:pl} in the case $d=2$, but actually gives 
an isoperimetric inequality which holds for {\it all} $\mathcal A\in \pl_{K,s,0}(x_s)$ with $1\leq |\mathcal A|\leq \frac 12\cdot |Q_{K,s}\cap\GG_0|$.
\begin{lemma}\label{l:isopineqG:2d}
Let $d=2$. Let $l_n$ and $r_n$, $n\geq 0$, be integer sequences such that for all $n$, $l_n>8r_n$, $l_n$ is divisible by $r_n$, and 
\begin{equation}\label{eq:isopineqG:2d:ratio}
\prod_{j=0}^\infty \left(1 - \left(\frac{4r_j}{l_j}\right)^2\right) \geq \frac{15}{16}\qquad\text{and}\qquad
3456\cdot \sum_{j=0}^\infty\frac{r_j}{l_j}\leq \frac{1}{10^6}. 
\end{equation}
Then for any integers $s\geq 0$, $L_0\geq 1$, and $K\geq 1$, $x_s\in\GG_s$, 
and two families of events $\seedde$ and $\seedin$,  
if all the vertices in $\GG_s\cap Q_{K,s}(x_s)$ are $s$-good, then
for any $\mathcal A\subseteq \pl_{K,s,0}(x_s)$ such that 
$1 \leq |\mathcal A|\leq \frac 12\cdot |Q_{K,s}(x_s)\cap\GG_0|$, 
\[
|\partial_{\pl_{K,s,0}(x_s)} \mathcal A|\geq \frac{1}{10^6}\cdot |\mathcal A|^{\frac 12}.
\]
\end{lemma}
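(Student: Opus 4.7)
The plan is to reduce the isoperimetric inequality on $G:=\pl_{K,s,0}(x_s)$ to the classical two-dimensional edge-isoperimetric inequality on the ambient box $Q:=\GG_0\cap Q_{K,s}(x_s)$, by introducing a ``filled-in'' companion set $\widetilde{\mathcal A}\subseteq Q$ whose $Q$-boundary is comparable with $|\partial_G\mathcal A|$, and then invoking Lemma~\ref{l:boundaries} on $\widetilde{\mathcal A}$.

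By Remark~\ref{rem:propRzi} and the recursive construction \eqref{def:pl}, the removed set $R:=Q\setminus G$ decomposes into a disjoint union of rectangular sub-boxes $\{\mathcal R_\alpha\}_\alpha$ across scales $j=1,\dots,s$. For $\mathcal A\subseteq G$, I would form $\widetilde{\mathcal A}\subseteq Q$ by adding to $\mathcal A$ every rectangle $\mathcal R_\alpha$ whose outer vertex boundary $\partial^{\mathrm{out}}\mathcal R_\alpha\subseteq G$ is more than half occupied by $\mathcal A$, i.e.\ $|\mathcal A_\alpha|>\tfrac12|\partial^{\mathrm{out}}\mathcal R_\alpha|$, where $\mathcal A_\alpha:=\mathcal A\cap\partial^{\mathrm{out}}\mathcal R_\alpha$. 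A direct case-split on edges of $\GG_0$---those internal to $G$ contributing $|\partial_G\mathcal A|$; those between $G$ and a \emph{filled} $\mathcal R_\alpha$ contributing $|\partial^{\mathrm{out}}\mathcal R_\alpha|-|\mathcal A_\alpha|<\tfrac12|\partial^{\mathrm{out}}\mathcal R_\alpha|$; those between $G$ and an \emph{unfilled} $\mathcal R_\alpha$ contributing $|\mathcal A_\alpha|\le\tfrac12|\partial^{\mathrm{out}}\mathcal R_\alpha|$; plus a small remainder $\mathcal D'\ge 0$ from the rare edges joining two adjacent removed rectangles---yields the identity
\[
|\partial_Q\widetilde{\mathcal A}|\ =\ |\partial_G\mathcal A|\ +\ \mathcal D\ +\ \mathcal D',\qquad \mathcal D:=\sum_\alpha\min\!\bigl(|\mathcal A_\alpha|,\,|\partial^{\mathrm{out}}\mathcal R_\alpha|-|\mathcal A_\alpha|\bigr).
\]

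Using the first part of \eqref{eq:isopineqG:2d:ratio}, I have $|R|\le\bigl(1-\prod_j(1-(4r_j/l_j)^2)\bigr)|Q|\le|Q|/16$, hence $|\widetilde{\mathcal A}|\le|\mathcal A|+|R|\le 9|Q|/16$. Lemma~\ref{l:boundaries} (with $d=2$, $n_1=n_2=KL_s/L_0$, $N=1$, $C=9/16$) then gives $|\partial_Q\widetilde{\mathcal A}|\ge(1-\sqrt{9/16})|\widetilde{\mathcal A}|^{1/2}=\tfrac14|\widetilde{\mathcal A}|^{1/2}\ge\tfrac14|\mathcal A|^{1/2}$; Remark~\ref{rem:isop} handles the alternative $|\widetilde{\mathcal A}|>|Q|/2$ with only a constant loss. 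Combining these gives $|\partial_G\mathcal A|\ge\tfrac14|\mathcal A|^{1/2}-\mathcal D-\mathcal D'$, so the entire problem reduces to establishing the \emph{defect bound} $\mathcal D+\mathcal D'\le\bigl(\tfrac14-10^{-6}\bigr)|\mathcal A|^{1/2}$.

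The hard part will be this defect bound. The na\"ive global estimate $\mathcal D\le\tfrac12\sum_\alpha|\partial^{\mathrm{out}}\mathcal R_\alpha|$ via a scale-by-scale count gives a bound of order $|Q|\cdot\sum_j r_j/l_j$, which can dominate $|\mathcal A|^{1/2}$ when $|\mathcal A|\ll|Q|$; a pure global perimeter budget therefore does not suffice. The remedy is to \emph{localize}: $\mathcal A$ interacts only with those $\mathcal R_\alpha$ for which $\mathcal A_\alpha\ne\emptyset$, and by Lemma~\ref{l:rect} combined with two-dimensional topology, each $\partial^{\mathrm{out}}\mathcal R_\alpha$ is (up to small corrections caused by finer-scale perforations intruding onto the ring) a simple closed curve in $G$. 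Consequently, every rectangle with $0<|\mathcal A_\alpha|<|\partial^{\mathrm{out}}\mathcal R_\alpha|$ contributes at least two edges to $\partial_G\mathcal A$ via interface transitions along the ring, allowing one to charge the local contribution to $\mathcal D$ against local edges of $\partial_G\mathcal A$. Interleaving this cycle-charging with the multi-scale perimeter bookkeeping---while avoiding double-counting between scales and correctly absorbing the corrections needed when finer-scale removed rectangles sit on the ring of a coarser one---is the technically delicate part of the argument. The explicit constant $3456=2^7\cdot 27$ in the hypothesis is tailored precisely to absorb the accumulated overhead of this multi-scale charging, so that the residual loss fits within the $(\tfrac14-10^{-6})|\mathcal A|^{1/2}$ allowance.
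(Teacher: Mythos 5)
Your reduction is structurally different from the paper's and the difference matters. The paper's proof first reduces to the case where both $\mathcal A$ and $\pl_{K,s,0}(x_s)\setminus\mathcal A$ are connected, and then fills in \emph{all} components of $\pl_{K,s,s}\setminus\mathcal A$ that are completely surrounded by $\mathcal A$ (not just the removed rectangles). That fill rule is exact: $\partial_{\pl_0'}\mathcal A'=\partial_{\pl_0}\mathcal A$ with no defect, and, crucially, it forces the exterior vertex boundary $\mathcal E$ of $\mathcal A'$ in $\pl_s$ to be $*$-connected. The whole rest of the argument leans on that $*$-connectivity to compare the per-scale defects $\delta_i$ to $|\partial_{\pl_s}\mathcal A'|$ (Claim~\ref{cl:relationdelta}). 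Your rule (fill a removed rectangle if more than half of its outer ring lies in $\mathcal A$) yields neither the exact boundary preservation nor the $*$-connectivity of $\mathcal E$, and in exchange you inherit a nonnegative defect $\mathcal D+\mathcal D'$ that you must bound from above. That bound is where the proposal stops: you correctly observe that a global perimeter budget fails, and you propose a local cycle-charging, but you neither carry it out nor could you as stated.

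The concrete gap is the following. Take a removed rectangle $\mathcal R_\alpha$ at some scale $j$ (outer ring of $\GG_0$-length about $8\cdot\frac{r_{j-1}L_{j-1}}{L_0}$), and let $\mathcal A$ be a thin arc occupying roughly half that ring and nothing else. Then $\min\bigl(|\mathcal A_\alpha|,\,|\partial^{\mathrm{out}}\mathcal R_\alpha|-|\mathcal A_\alpha|\bigr)\asymp\frac{r_{j-1}L_{j-1}}{L_0}$, but the ``interface transitions along the ring'' contribute only a bounded number of edges to $\partial_G\mathcal A$; so the defect from this single rectangle cannot be charged locally against $\partial_G\mathcal A$ at any bounded rate. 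Meanwhile $|\mathcal A|^{1/2}\asymp\bigl(\frac{r_{j-1}L_{j-1}}{L_0}\bigr)^{1/2}\ll\mathcal D$, so the needed inequality $\mathcal D+\mathcal D'\leq(\tfrac14-10^{-6})|\mathcal A|^{1/2}$ is actually \emph{false} for this $\mathcal A$; the reason the lemma is nonetheless true is that in this regime $|\partial_Q\widetilde{\mathcal A}|$ is far larger than $\tfrac14|\mathcal A|^{1/2}$, so your reduction throws away exactly the slack that saves the argument. This is precisely the regime the paper handles with its scale parameter $t$ (defined so $|\partial_{\pl_s}\mathcal A'|\geq\tfrac{1}{12}\tfrac{L_t}{L_0}$), which guarantees that all defects at scales $\leq t$ are a negligible fraction of $|\partial_{\pl_s}\mathcal A'|$, and with a separate density argument on $r_tL_t$-boxes in the case $|\partial_{\pl_s}\mathcal A'|\leq C\cdot\tfrac{r_tL_t}{L_0}$ (proof of \eqref{eq:plt}). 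Neither the scale selection nor the density argument appears in your sketch; without some analogue of both, the defect bound cannot be established, and ``tailored constants'' do not rescue it.
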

\begin{remark}
\begin{itemize}\itemsep0pt
\item[(1)]
Assumptions \eqref{eq:isopineqG:2d:ratio} and the constant $\frac{1}{10^6}$ in the result of Lemma~\ref{l:isopineqG:2d} are not optimal for our proof, 
but rather chosen to simplify calculations. 
\item[(2)]
We believe that an analogue of Lemma~\ref{l:isopineqG:2d} holds for all $d\geq 2$, but cannot prove it. 
There is only one place in the proof where the assumption $d=2$ is used, see Remark~\ref{rem:extensiond>2}.
\end{itemize}
\end{remark}
\begin{proof}
Fix $s\geq 0$ and $K\geq 1$ integers, $x_s\in\GG_s$. 
Recall the definition of $\pl_{K,s,i}(x_s)$ from \eqref{def:pl}, and write $\pl_i$ for $\pl_{K,s,i}(x_s)$ throughout the proof. 
Note that $\pl_s = Q_{K,s}(x_s)\cap\GG_0$ and for all $i$, $\pl_{i-1}\subseteq\pl_i$. 

\medskip

Let $\mathcal A$ be a subset of $\pl_0$ such that $1\leq |\mathcal A|\leq \frac 12\cdot |\pl_s|$. 
We need to prove that $|\partial_{\pl_0} \mathcal A|\geq \frac{1}{10^6}\cdot |\mathcal A|^{\frac 12}$.
First of all, without loss of generality we can assume that both $\mathcal A$ and $\pl_0\setminus \mathcal A$ are connected in $\GG_0$. 
(For the proof of this claim, see page 112 in \cite[Section~3.1]{MathieuRemy}.)

\medskip

\begin{figure}[!tp]
\centering
\resizebox{7cm}{!}{\input 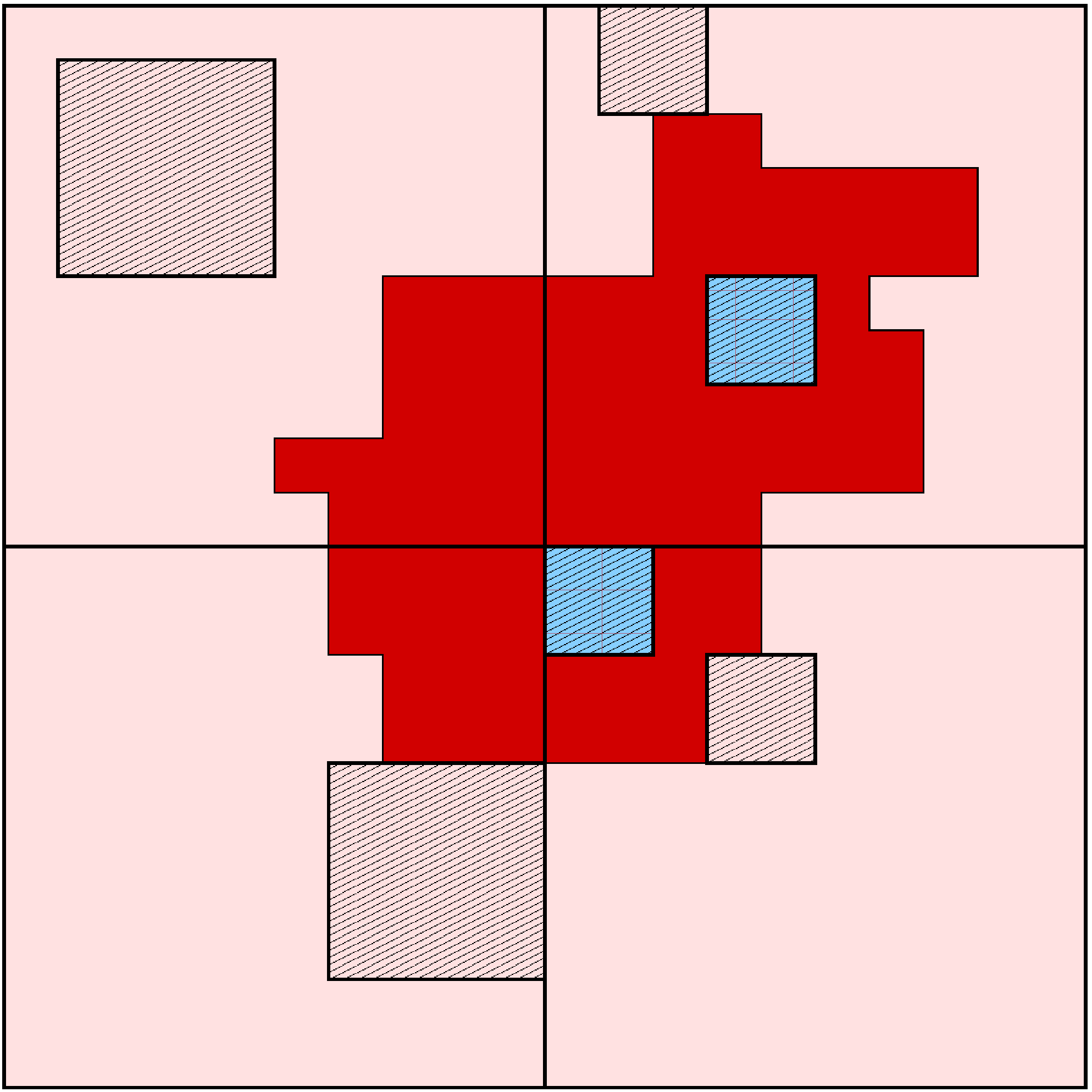_t}~\resizebox{7cm}{!}{\input 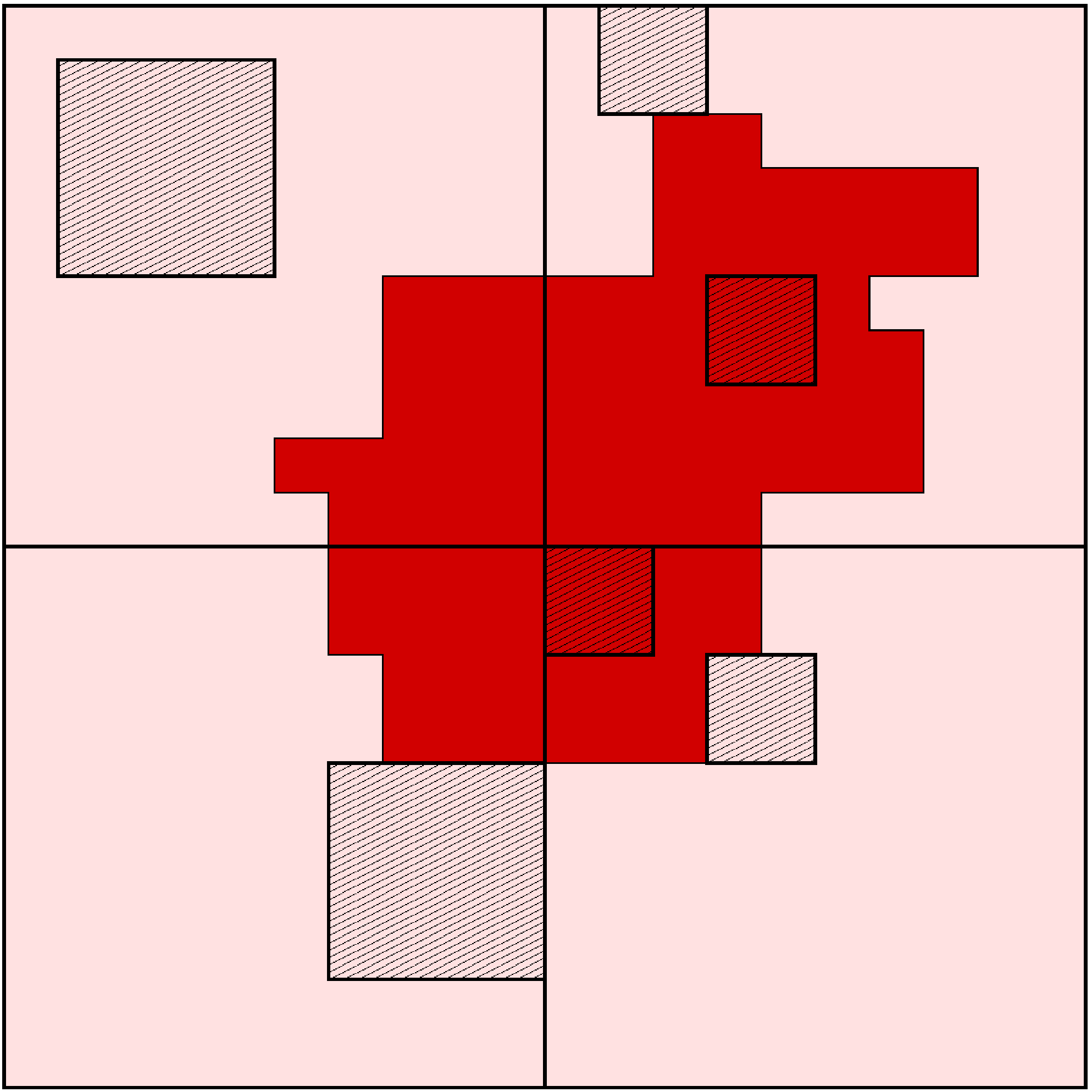_t}~
\caption{The set $\mathcal A'$ is obtained from $\mathcal A$ by adding to it all the holes $B_i$ completely surrounded by $\mathcal A$. 
This operation does not change the boundary of $\mathcal A$ in $\pl_0$.}
\label{fig:mathcalA}
\end{figure}

Let $B,B_1,\dots, B_m$ be all the connected components (in $\GG_0$) of $\pl_s\setminus\mathcal A$, of which 
$B$ is the unique component intersecting $\pl_0$, and $B_i$'s are the ``holes'' in $\pl_s$ completely surrounded by $\mathcal A$. 
(See Figure~\ref{fig:mathcalA}.) 
The boundary of $\mathcal A$ in $\pl_0$ does not contain any edges adjacent to $B_i$'s. 
It is convenient to absorb all the holes $B_i$'s into $\mathcal A$ to get the set $\mathcal A'$ 
with the same boundary in $\pl_0$, but with an important feature that its exterior vertex boundary in $\pl_s$ is $*$-connected.
More precisely, let 
\[
\mathcal A' = \mathcal A\cup\bigcup_{i=1}^m B_i \quad \text{and} \quad \pl_i' = \pl_i\cup \bigcup_{i=1}^m B_i.
\] 
Then, (a) $\partial_{\pl_0'}\mathcal A' = \partial_{\pl_0}\mathcal A$, (b) $|\mathcal A'| \geq |\mathcal A|$, 
(c) $\mathcal A'$ is connected in $\GG_0$, (d) $\pl_0'\setminus \mathcal A' = \pl_0\setminus\mathcal A$ (in particular, connected in $\GG_0$), 
and (e) for any $x,x'\in \mathcal E = \{y\in\pl_s~:~\{x,y\}\in\partial_{\pl_s}\mathcal A'\text{ for some }x\in\mathcal A'\}$ 
(the exterior vertex boundary of $\mathcal A'$ in $\pl_s$) there exist 
$z_0 = x,z_1,\dots,z_m = x'\in\mathcal E$ such that $|z_k - z_{k+1}|_\infty = L_0$ for all $k$ 
(i.e., $\mathcal E$ is $*$-connected).
Properties (a-d) are immediate from the definition of $\mathcal A'$, and property (e) follows from \cite[Lemma~2.1(ii)]{DeuschelPisztora} 
and the facts that $\mathcal A'$ and $\pl_s\setminus \mathcal A'$ are connected in $\GG_0$.
 
\medskip

By properties (a-b) of $\mathcal A'$, it suffices to prove that
\[
|\partial_{\pl_0'} \mathcal A'|\geq \frac{1}{10^6}\cdot |\mathcal A'|^{\frac 12}.
\]
By Lemma~\ref{l:rect} and the first part of \eqref{eq:isopineqG:2d:ratio},
$|\mathcal A'| \leq |\mathcal A| + |\pl_s\setminus\pl_0| \leq \frac{9}{16}\cdot |\pl_s|$.
Thus, by Lemma~\ref{l:boundaries}, 
\begin{equation}\label{eq:isopineqG:2d:boundaryGs}
|\partial_{\pl_s} \mathcal A'|\geq \frac 14 \cdot |\mathcal A'|^{\frac 12}.
\end{equation}
Therefore, it suffices to prove that 
\begin{equation}\label{eq:mathcalA':boundaryQsQ0}
|\partial_{\pl_0'} \mathcal A'|\geq \frac{2}{5\cdot 10^5}\cdot |\partial_{\pl_s} \mathcal A'|. 
\end{equation}
The proof of \eqref{eq:mathcalA':boundaryQsQ0} is done by partitioning $\partial_{\pl_s} \mathcal A'\setminus \partial_{\pl_0'} \mathcal A'$ 
into the sets $\delta_i$ of edges with one end vertex in $\mathcal A'$ and the other in $\pl_i \setminus\pl_{i-1}$ 
and comparing the cardinality of $\delta_i$'s with that of $\partial_{\pl_s} \mathcal A'$. 
If $\partial_{\pl_s} \mathcal A'$ is very large (macroscopic), then all $\delta_i$ are negligibly small in comparison to $\partial_{\pl_s} \mathcal A'$. 
It is more delicate to estimate the size of $\delta_i$'s if $\partial_{\pl_s} \mathcal A'$ is small, as the contribution of some $\delta_i$'s 
to the boundary $\partial_{\pl_s} \mathcal A'$ may be quite significant. In this case, we will introduce a suitable scale on which $\partial_{\pl_s} \mathcal A'$ 
is large, and view $\mathcal A'$ as a disjoint union of subsets of boxes on the new scale. Let
\[
\delta_i = \partial_{\pl_i'}\mathcal A'\setminus \partial_{\pl_{i-1}'}\mathcal A'.
\]
Then, $\delta_j$'s are disjoint and for any $1\leq i\leq s$, 
\begin{equation}\label{eq:boundaries:i}
|\partial_{\pl_0'}\mathcal A'| = |\partial_{\pl_i'}\mathcal A'| - \sum_{j=1}^i|\delta_j|.
\end{equation}
Let
\[
t = \max\left\{0\leq i\leq s~:~|\partial_{\pl_s}\mathcal A'|\geq \frac{1}{12}\cdot\frac{L_i}{L_0}\right\}.
\]
The scale $L_t$ is the correct scale to study $\partial_{\pl_s}\mathcal A'$. As we will see below in \eqref{eq:relationdelta}, 
the intersection of $\partial_{\pl_s}\mathcal A'$ with $\pl_i\setminus\pl_{i-1}$, $i\leq t$ (holes of size significantly smaller than $L_t$),
is negligible in comparison to  $\partial_{\pl_s}\mathcal A'$. 
In particular, it will be enough to conclude \eqref{eq:mathcalA':boundaryQsQ0} in the case $t=s$, see \eqref{eq:isopineqG:2d:boundaryG0:1}. 
If $t<s$, then $\partial_{\pl_s}\mathcal A'$ is small and may have a significant intersection with $\pl_{t+1}\setminus\pl_t$. 
An additional argument will be used to deal with this case, see below \eqref{eq:caset<s}. 

\medskip

We begin with an estimation of the part of $\partial_{\pl_s}\mathcal A'$ adjacent to ``small holes''. 
\begin{claim}\label{cl:relationdelta}
For all $1\leq i\leq t$,
\begin{equation}\label{eq:relationdelta}
|\delta_i|\leq 3456\cdot \frac{r_{i-1}}{l_{i-1}}\cdot |\partial_{\pl_s}\mathcal A'|.
\end{equation}
\end{claim}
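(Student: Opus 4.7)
The edges of $\delta_i$ are exactly those edges of $\partial_{\pl_i'}\mathcal A'$ whose non-$\mathcal A'$ endpoint lies in $\pl_i'\setminus\pl_{i-1}'$, and this latter set is contained in the union over $z_i\in\mathcal G_{K,s,i}(x_s)$ of the scale-$(i-1)$ removed regions $\mathcal R_{z_i}$. By Remark~\ref{rem:propRzi}, in $d=2$ each $\mathcal R_{z_i}$ is a disjoint union of at most $2^d=4$ boxes of side $2r_{i-1}L_{i-1}$, so its $\GG_0$-perimeter is at most $16\,r_{i-1}L_{i-1}/L_0=16\,(r_{i-1}/l_{i-1})\cdot L_i/L_0$. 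Consequently
\[
|\delta_i|\;\le\;16\,(r_{i-1}/l_{i-1})\cdot (L_i/L_0)\cdot N_i,
\]
where $N_i$ counts the $L_i$-tiles whose hole $\mathcal R_{z_i}$ meets the exterior vertex boundary $\mathcal E$. Since each vertex of $\mathcal E$ is incident to at least one edge of $\partial_{\pl_s}\mathcal A'$, we have $|\mathcal E|\le|\partial_{\pl_s}\mathcal A'|$, so the target \eqref{eq:relationdelta} reduces to the combinatorial bound $N_i\lesssim (L_0/L_i)\cdot |\mathcal E|$.

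To establish this bound, my plan is to use the $*$-connectedness of $\mathcal E$ (property~(e) established in the proof) together with the regular tiling of $\GG_i$. Split $\GG_i$ into a bounded number of colour classes (say $3^d=9$ classes in $d=2$) so that distinct same-colour $L_i$-tiles are at $\ell^\infty$ set-distance $\ge 2L_i$; then two $\mathcal E$-vertices lying in holes of distinct same-colour tiles are at pairwise $\ell^\infty$-distance at least $2L_i-4r_{i-1}L_{i-1}\ge L_i$, using the assumption $l_{i-1}>8r_{i-1}$. A standard Steiner-tree estimate in $\Z^2$ then gives that any $*$-connected subset of $\GG_0$ containing $k$ points at pairwise $\ell^\infty$-distance $\ge L_i$ has at least $c\,(k-1)\,L_i/L_0$ vertices (the minimum spanning tree of the $k$ terminals has total length $\ge (k-1)L_i$ and the planar Steiner ratio is bounded below by a positive constant). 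Applied to $\mathcal E$ separately in each colour class, this bounds the number of active tiles per colour by $C_1\,|\mathcal E|\cdot L_0/L_i+1$; summing over colour classes and absorbing the additive terms using the a-priori lower bound $|\mathcal E|\ge\frac14|\partial_{\pl_s}\mathcal A'|\ge L_i/(48L_0)$ (valid for $i\le t$ by the definition of $t$ and the trivial $|\partial_{\pl_s}\mathcal A'|\le 2d|\mathcal E|$) yields $N_i\le C_2\,|\mathcal E|\cdot L_0/L_i$ for an explicit constant $C_2$. Combined with the per-hole perimeter bound and $|\mathcal E|\le|\partial_{\pl_s}\mathcal A'|$ this gives \eqref{eq:relationdelta}.

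The main obstacle will be the bookkeeping of constants along this chain of reductions: the factor $16$ from the per-hole perimeter, the factor coming from the number of colour classes, the Steiner-ratio constant, and the factor needed to absorb the additive ``$+1$'' from each colour class. None of the individual steps are subtle, but one must verify that their product stays below the claimed $3456$. A secondary technical point is that $L_i$-tiles near the boundary of $Q_{K,s}(x_s)$ have to be treated slightly differently (the separation $2L_i$ to other same-colour tiles need not hold once one tile would fall outside $Q_{K,s}(x_s)$), but this affects only $O(KL_s/L_i)$ tiles and can again be absorbed using the lower bound on $|\partial_{\pl_s}\mathcal A'|$ valid in the regime $i\le t$.
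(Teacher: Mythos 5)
Your proposal follows the paper's argument in its essentials: bound $|\delta_i|$ by a per-hole perimeter times the number of active holes, use the $*$-connectedness of the exterior vertex boundary $\mathcal E$ to lower-bound $|\mathcal E|$ (hence $|\partial_{\pl_s}\mathcal A'|$) when many well-separated holes are active, and fall back on the defining property of $t$ (i.e.\ $|\partial_{\pl_s}\mathcal A'|\geq\frac1{12}\frac{L_i}{L_0}$ for $i\leq t$) to handle the small-$N_i$ regime. The implementation differs: the paper tiles $\pl_i\setminus\pl_{i-1}$ by the constituent $2r_{i-1}L_{i-1}$-boxes $S_j$, splits on $N_i\leq36$ versus $N_i>36$, and in the latter case extracts $\geq N_i/36$ pairwise $L_i$-separated $S_j$'s whose disjoint $\frac13L_i$-neighbourhoods each contain $\geq\frac13\frac{L_i}{L_0}$ vertices of $\mathcal E$; you instead count active $L_i$-tiles, introduce a $9$-colouring of $\GG_i$ so same-colour tiles are $\geq 2L_i$ apart, and invoke a Steiner-tree/MST estimate per colour class. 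These routes are structurally isomorphic, although the Steiner-ratio machinery is unnecessary: once the same-colour tiles are far apart, the paper's disjoint-neighbourhood argument gives the lower bound on $|\mathcal E|$ directly, with ``$k$'' rather than a Steiner-ratio times ``$k-1$''. The one genuine gap is the constant: the claim is for exactly $3456\cdot\frac{r_{i-1}}{l_{i-1}}$, and you explicitly leave the product of your factors ($16$ per hole, $9$ colours, the Steiner-ratio constant, and a factor $\approx 48$ to absorb the ``$+1$'' remainders) unchecked. With a loose Steiner constant this product can easily exceed $3456$; replacing the Steiner estimate by the disjoint-neighbourhood bound and tracking constants carefully does bring the total under $3456$ (splitting as the paper does gives roughly $432$ from the main term and $1728$ from the remainder), so the approach works, but that verification must actually be carried out. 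Two small corrections: the pairwise $\ell^\infty$-distance between $\mathcal E$-vertices in holes of distinct same-colour tiles is $\geq 2L_i$, not $2L_i-4r_{i-1}L_{i-1}$, since the holes $\mathcal R_{z_i}$ sit inside the $L_i$-tiles; and your ``$\pl_i'\setminus\pl_{i-1}'$'' should simply be $\pl_i\setminus\pl_{i-1}$, as the absorbed holes $B_j$ cancel in the difference.
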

\begin{proof}[Proof of Claim~\ref{cl:relationdelta}]
By the definition of $\pl_i$'s, 
the set $\pl_i \setminus\pl_{i-1}$ can be expressed as the disjoint union of boxes $S_j = \GG_0\cap(y_j + [0,2r_{i-1}L_{i-1})^2)$, 
for some $y_1,\dots,y_k\in(r_{i-1}L_{i-1})\cdot \Z^2$, such that 
every box $S_j$ is within $\ell^\infty$ distance $L_i$ from at most $36$ $S_{j'}$'s. (By Remark~\ref{rem:propRzi}, each $L_i$-box contains at most $4$ $S_j$'s, 
and it is adjacent to at most $8$ other $L_i$-boxes, hence $4\cdot 9 = 36$.)

\medskip

To estimate the size of $\delta_i$, we consider two cases: (a) $\partial_{\pl_s}\mathcal A'$ is adjacent to few $S_j$'s, in 
which case $\delta_i$ is very small, (b) $\partial_{\pl_s}\mathcal A'$ is adjacent to many $S_j$'s, in which case many of 
the $S_j$'s will be well-separated and $\mathcal A'$ will be spread out. To handle this case we will use the fact that the exterior vertex boundary of 
$\mathcal A'$ is $*$-connected, thus the majority of edges in $\partial_{\pl_s}\mathcal A'$ will be ``in between'' $S_j$'s. (See Figure~\ref{fig:deltai}.)

\begin{figure}[!tp]
\centering
\resizebox{8cm}{!}{\input 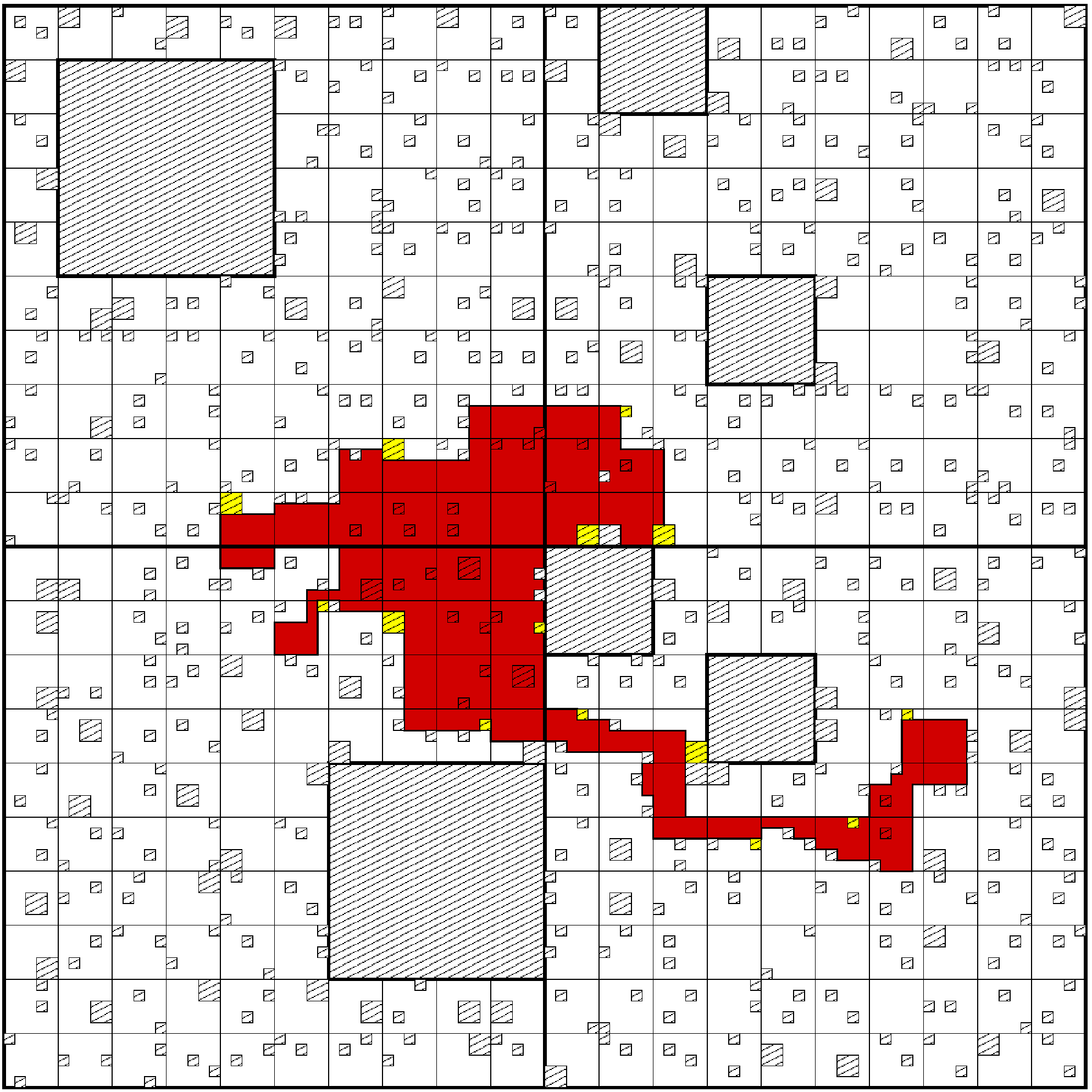_t}
\caption{Since every $S_j$ is within $L_i$ distance from at most $35$ other $S_j$'s, 
if the set $\mathcal A'$ is adjacent to many $S_j$'s then it must be adjacent to 
some sufficiently separated $S_j$ (drawn in yellow), and its boundary is thus stretched between these $S_j$'s. 
In two (and only two) dimensions, this is sufficient to conclude that the boundary of $\mathcal A'$ 
is much larger than its part adjacent to all the $S_j$'s, which we call $\delta_i$.}
\label{fig:deltai}
\end{figure}

Let $N_i$ be the total number of those $S_j$'s which are adjacent (in $\GG_0$) to $\mathcal A'$.
Since for each $j$, $|\partial_{\pl_s} S_j|\leq 8\frac{r_{i-1}L_{i-1}}{L_0}$, 
it follows that $|\delta_i| \leq N_i\cdot 8 \frac{r_{i-1}L_{i-1}}{L_0}$. 
We consider separately the cases $N_i\leq 36$ and $N_i> 36$. 

If $N_i\leq 36$, then 
\begin{equation}\label{eq:deltai:Ni<100}
|\delta_i| \leq N_i\cdot 8 \frac{r_{i-1}L_{i-1}}{L_0}
\leq 36\cdot 8\cdot \frac{r_{i-1}}{l_{i-1}}\cdot \frac{L_i}{L_0}
\leq 36\cdot 8\cdot 12\cdot\frac{r_{i-1}}{l_{i-1}}\cdot |\partial_{\pl_s}\mathcal A'|,
\end{equation}
where the last inequality follows from the definition of $t$ and the fact that $i\leq t$. 

If $N_i>36$, then $\mathcal A'$ is adjacent to at least $\lceil \frac{N_i}{36}\rceil (\geq 2)$ of $S_j$'s which are 
pairwise at $\ell^\infty$ distance at least $L_i$ from each other. 
Recall from property (e) of $\mathcal A'$ that $\mathcal E$ is the exterior vertex boundary of $\mathcal A'$, which is $*$-connected. 
Since $\mathcal E$ intersects each of the $\lceil \frac{N_i}{36}\rceil$ well separated $S_j$'s, 
the intersections of $\mathcal E$ with $\frac13 L_i$-neighborhoods of the $S_j$'s are disjoint sets of vertices of cardinality $\geq \frac 13\frac{L_i}{L_0}$ each.
Therefore, $|\mathcal E|\geq \frac 13\frac{L_i}{L_0}\cdot \frac{N_i}{36}$, and we obtain that  
\begin{equation}\label{eq:deltai:Ni>100}
|\delta_i| \leq N_i\cdot 8 \frac{r_{i-1}L_{i-1}}{L_0}
\leq 36\cdot 3\cdot 8\cdot \frac{r_{i-1}}{l_{i-1}}\cdot |\mathcal E|
\leq 36\cdot 3\cdot 8\cdot 4\cdot \frac{r_{i-1}}{l_{i-1}}\cdot |\partial_{\pl_s}\mathcal A'|, 
\end{equation}
where the last inequality follows from the fact that each vertex of $\mathcal E$ is adjacent to at most $4$ edges from $\partial_{\pl_s}\mathcal A'$.

Combining \eqref{eq:deltai:Ni<100} and \eqref{eq:deltai:Ni>100} we get \eqref{eq:relationdelta}.
\end{proof}

\medskip

If the boundary $\partial_{\pl_s}\mathcal A'$ is macroscopic, namely, if $t=s$, then 
the intersection of $\partial_{\pl_s}\mathcal A'$ with any hole is negligible, and Claim~\ref{cl:relationdelta} immediately implies \eqref{eq:mathcalA':boundaryQsQ0}. 
Indeed, by \eqref{eq:boundaries:i} and \eqref{eq:relationdelta},
\begin{equation}\label{eq:isopineqG:2d:boundaryG0:1}
|\partial_{\pl_0'}\mathcal A'| = |\partial_{\pl_t'}\mathcal A'| - \sum_{j=1}^t|\delta_j|
\geq 
\left(1 - 3456\cdot \sum_{j=0}^\infty\frac{r_j}{l_j}\right)\cdot|\partial_{\pl_s}\mathcal A'|, 
\end{equation}
and \eqref{eq:mathcalA':boundaryQsQ0} follows from \eqref{eq:isopineqG:2d:boundaryG0:1} and the second part of \eqref{eq:isopineqG:2d:ratio}.

\medskip

In the rest of the proof we consider the case of small $\partial_{\pl_s}\mathcal A'$, namely $t<s$. 
In this case,  
\begin{equation}\label{eq:caset<s}
\frac{1}{12}\cdot\frac{L_t}{L_0}\leq |\partial_{\pl_s}\mathcal A'|< \frac{1}{12}\cdot\frac{L_{t+1}}{L_0}\leq \frac{1}{12}\cdot\frac{L_s}{L_0}.
\end{equation}
As already mentioned, this case is more delicate, since $\partial_{\pl_s}\mathcal A'$ may have large intersection with big holes, 
for instance, $\delta_{t+1}$ is generally not negligible in comparison to $\partial_{\pl_s}\mathcal A'$. 

\medskip

We first consider the case when $\partial_{\pl_s}\mathcal A'$ is still relatively big in comparison to the boundary of holes in $\pl_{t+1}\setminus \pl_t$. 
Assume that $|\partial_{\pl_s}\mathcal A'|> 14\cdot 36\cdot 8\cdot \frac{r_tL_t}{L_0}$. 
In this case we will show that 
\begin{equation}\label{eq:deltat+1plt+1}
|\delta_{t+1}| \leq \frac{1}{14}\cdot |\partial_{\pl_s}\mathcal A'|\quad \text{and}\quad
|\partial_{\pl_{t+1}'}\mathcal A'|\geq \frac{1}{7}\cdot |\partial_{\pl_s}\mathcal A'|.
\end{equation}
Together with Claim~\ref{cl:relationdelta}, 
\eqref{eq:deltat+1plt+1} is sufficient for \eqref{eq:mathcalA':boundaryQsQ0}. Indeed, by \eqref{eq:boundaries:i}, 
\begin{equation}\label{eq:isopineqG:2d:boundaryG0:2}
|\partial_{\pl_0'}\mathcal A'| 
= |\partial_{\pl_{t+1}'}\mathcal A'| - \sum_{j=1}^{t+1}|\delta_j|
\geq \left(\frac{1}{14} - 3456\cdot \sum_{j=0}^\infty\frac{r_j}{l_j}\right)\cdot |\partial_{\pl_s}\mathcal A'|, 
\end{equation}
and \eqref{eq:mathcalA':boundaryQsQ0} follows from \eqref{eq:isopineqG:2d:boundaryG0:2} and the second part of \eqref{eq:isopineqG:2d:ratio}.

\begin{proof}[Proof of \eqref{eq:deltat+1plt+1}]
To estimate the size of $\delta_{t+1}$, we proceed as in the proof of \eqref{eq:relationdelta}. 
The set $\pl_{t+1}\setminus\pl_t$ can be 
expressed as a disjoint union of boxes $S_j = \GG_0\cap(y_j + [0,2r_tL_t)^2)$, 
for some $y_1,\dots,y_k\in(r_tL_t)\cdot \Z^2$, such that 
every box is within $\ell^\infty$ distance $L_{t+1}$ from at most $36$ of the boxes. 
Since $|\partial_{\pl_s}\mathcal A'|< \frac{1}{12}\cdot\frac{L_{t+1}}{L_0}$ and the exterior vertex boundary of $\mathcal A'$ is $*$-connected, 
the set $\mathcal A'$ can be adjacent (in $\GG_0$) to at most $36$ such boxes 
(in fact, to at most $4\cdot 4 = 16$), 
which implies that 
\begin{equation}\label{eq:Gsdeltat+1}
|\delta_{t+1}| \leq 36\cdot8\frac{r_tL_t}{L_0} \leq \frac{1}{14}\cdot |\partial_{\pl_s}\mathcal A'|,
\end{equation}
where the last inequality follows from the assumption on $|\partial_{\pl_s}\mathcal A'|$. 

To estimate $|\partial_{\pl_{t+1}'}\mathcal A'|$ from below, 
we view $\mathcal A'$ as a disjoint union of subsets $\mathcal A_j'$ of $L_{t+1}$-boxes, and 
estimate from below the relative boundary of each $\mathcal A_j'$ in the corresponding box. 
By definition, $\pl_{t+1}$ is the disjoint union of boxes $\GG_0\cap(z_j + [0,L_{t+1})^2)$, $z_j\in\mathcal G_{K,s,t+1}(x_s)$. 
Let $\mathcal A_j'$ be the restriction of $\mathcal A'$ to the box $(z_j + [0,L_{t+1})^2)$.
By \eqref{eq:isopineqG:2d:boundaryGs} and \eqref{eq:caset<s}, for every $j$,
\[
|\mathcal A_j'|\leq |\mathcal A'| \leq 16\cdot |\partial_{\pl_s}\mathcal A'|^2
\leq \frac{1}{9} \cdot |\GG_0\cap[0,L_{t+1})^2|.
\]
By applying Lemma~\ref{l:boundaries} in each of $\GG_0\cap(z_j + [0,L_{t+1})^2)$, 
\begin{equation}\label{eq:boundaryt+1}
|\partial_{\pl_{t+1}'}\mathcal A'|\geq \sum_j|\partial_{\GG_0\cap(z_j+[0,L_{t+1})^d)}\mathcal A_j'|
\geq \frac{1}{7}\cdot \sum_j|\partial_{\GG_0}\mathcal A_j'|
\geq \frac{1}{7}\cdot |\partial_{\pl_s}\mathcal A'|.
\end{equation}
The combination of \eqref{eq:Gsdeltat+1} and \eqref{eq:boundaryt+1} gives \eqref{eq:deltat+1plt+1}.
\end{proof}

\bigskip

It remains to consider the case $|\partial_{\pl_s}\mathcal A'|\leq 14\cdot 36\cdot 8\cdot \frac{r_tL_t}{L_0}$. 
In this case $\partial_{\pl_s}\mathcal A'$ is comparable to the boundary of holes in $\pl_{t+1}\setminus \pl_t$.
We will show that 
\begin{equation}\label{eq:plt}
|\partial_{\pl_t'}\mathcal A'|\geq \frac{1}{2\cdot 10^5}\cdot |\partial_{\pl_s}\mathcal A'|.
\end{equation}
Together with Claim~\ref{cl:relationdelta}, 
\eqref{eq:plt} is sufficient for \eqref{eq:mathcalA':boundaryQsQ0}. Indeed, by \eqref{eq:boundaries:i}, 
\begin{equation}\label{eq:isopineqG:2d:boundaryG0:3}
|\partial_{\pl_0'}\mathcal A'| 
= |\partial_{\pl_t'}\mathcal A'| - \sum_{j=1}^t|\delta_j|
\geq \left(\frac{1}{2\cdot 10^5} - 3456\cdot \sum_{j=0}^\infty\frac{r_j}{l_j}\right)\cdot |\partial_{\pl_s}\mathcal A'|,
\end{equation}
and \eqref{eq:mathcalA':boundaryQsQ0} follows from \eqref{eq:isopineqG:2d:boundaryG0:3} and the second part of \eqref{eq:isopineqG:2d:ratio}.
\begin{proof}[Proof of \eqref{eq:plt}]
Since $\partial_{\pl_s}\mathcal A'$ is comparable to the boundary of holes in $\pl_{t+1}\setminus \pl_t$, 
this time we will look at $\mathcal A'$ on the scale $r_tL_t$. 
By Lemma~\ref{l:rect} and the assumption that $l_t$ is divisible by $r_t$, 
$\pl_t$ can be expressed as a disjoint union of boxes $(z_j + [0,r_tL_t)^2)$, $z_j\in(r_tL_t)\cdot\Z^2$. 
Let $\mathcal A_j'$ be the restriction of $\mathcal A'$ to the box $(z_j + [0,r_tL_t)^2)$. 
We will compare the boundary $\partial_{\pl_s}\mathcal A'$ to the relative boundary of $\mathcal A_j'$'s in the respective boxes. 

\medskip

If for all $j$, $|\mathcal A_j'|\leq \frac{1}{4}\cdot |\GG_0\cap [0,r_tL_t)^2|$, 
then by Lemma~\ref{l:boundaries} applied in each of $\GG_0\cap(z_j+[0,r_tL_t)^2)$,
\[
|\partial_{\GG_0\cap(z_j+[0,r_tL_t)^2)}\mathcal A_j'|
\geq \frac{1}{9}\cdot |\partial_{\GG_0}\mathcal A_j'|.
\]
Since the sets $\partial_{\GG_0\cap(z_j+[0,r_tL_t)^2)}\mathcal A_j'$ are disjoint subsets of $\partial_{\pl_t'}\mathcal A'$,  
\[
|\partial_{\pl_t'}\mathcal A'| \geq \sum_j|\partial_{\GG_0\cap(z_j+[0,r_tL_t)^2)}\mathcal A_j'|
\geq \frac{1}{9}\cdot \sum_j|\partial_{\GG_0}\mathcal A_j'|
\geq \frac{1}{9}\cdot |\partial_{\pl_s}\mathcal A'|,
\]
which implies \eqref{eq:plt}.

\begin{figure}[!tp]
\centering
\resizebox{7cm}{!}{\input 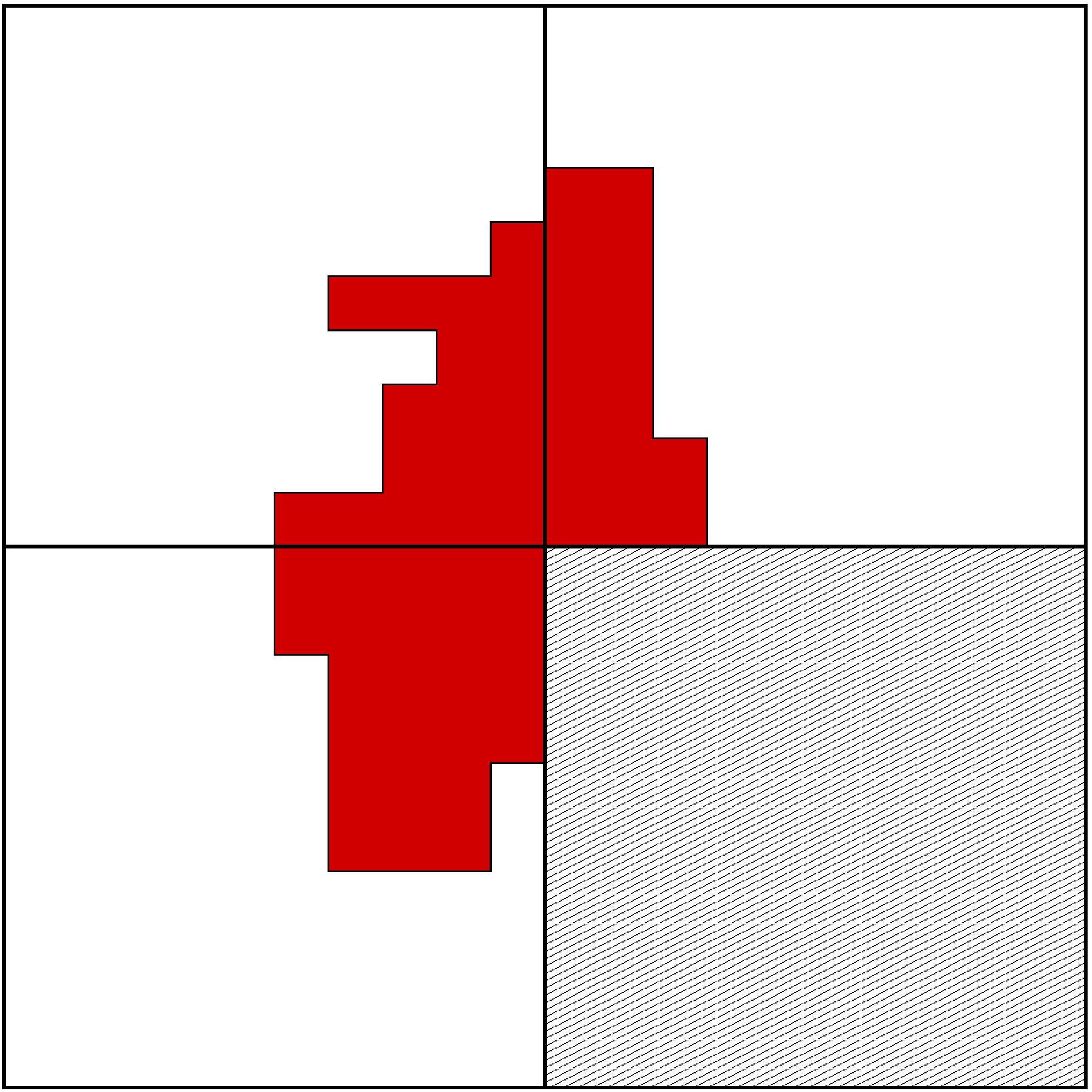_t}~\resizebox{7cm}{!}{\input 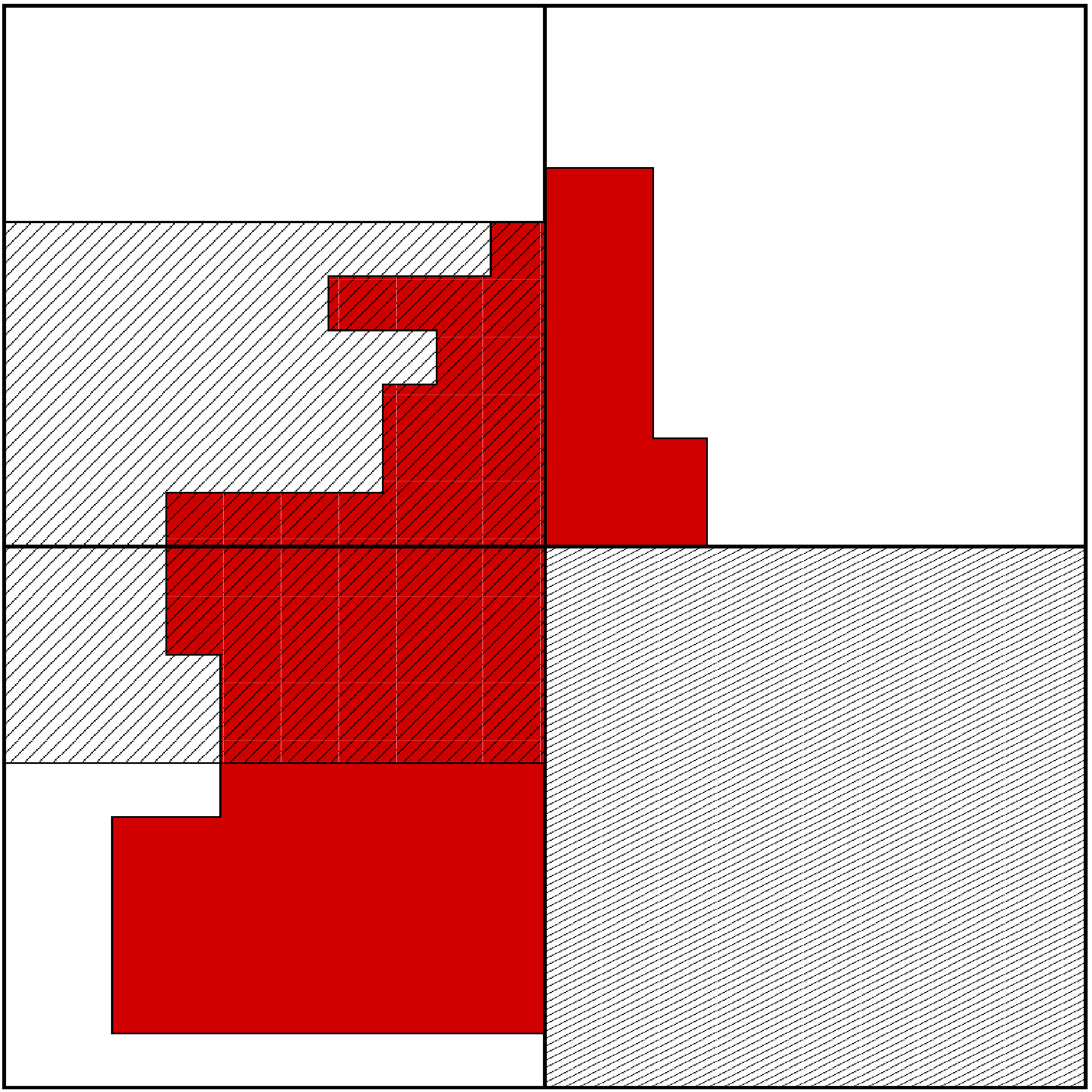_t}
\caption{The case when the boundary of $\mathcal A'$ is comparable to the boundary of holes on the scale of $\mathcal A'$. 
Two subcases: $\mathcal A'$ has small intersection with every $r_tL_t$-box (left) or 
large intersection with some $r_tL_t$-box (right). In the second subcase we can identify a box $(\widetilde z + [0,r_tL_t)^d)$ 
in which $\mathcal A'$ has non-trivial density.}
\label{fig:smallA}
\end{figure}

\medskip

On the other hand, if $|\mathcal A_j'|> \frac{1}{4}\cdot |\GG_0\cap [0,r_tL_t)^2|$ for at least one $j$, then 
there exists $\widetilde z\in\GG_t$ such that 
\begin{itemize}\itemsep0pt
\item 
$\GG_0\cap(\widetilde z + [0,r_tL_t)^2)\subset \pl_t$ and
\item
$\frac 14\cdot |\GG_0\cap [0,r_tL_t)^2|\leq |\mathcal A'\cap(\widetilde z + [0,r_tL_t)^2)|\leq \frac 34\cdot |\GG_0\cap [0,r_tL_t)^2|$.
\end{itemize}
Indeed, if none of $z_j$'s satisfies the two requirements, then there exist
$j_1$ and $j_2$ such that $|z_{j_1} - z_{j_2}|_\infty = r_tL_t$, 
$|\mathcal A_{j_1}'|> \frac{3}{4}\cdot |\GG_0\cap [0,r_tL_t)^2|$ and 
$|\mathcal A_{j_2}'|\leq \frac{1}{4}\cdot |\GG_0\cap [0,r_tL_t)^2|$. 
Then, $\widetilde z = \lambda\cdot z_{j_1} + (1-\lambda)\cdot z_{j_2}$ satisfies the two requirements for some $\lambda\in(0,1)$. 
(If $r_t$ is divisible by $2$, then one can take $\lambda = \frac 12$.)

By applying Lemma~\ref{l:boundaries} to $\GG_0\cap(\widetilde z + [0,r_tL_t)^2)$, 
\begin{multline*}
|\partial_{\pl_t'}\mathcal A'| \geq |\partial_{\GG_0\cap(\widetilde z + [0,r_tL_t)^2)}(\mathcal A'\cap(\widetilde z + [0,r_tL_t)^2)|
\geq \left(1 - \frac{\sqrt{3}}{2}\right)\cdot |\mathcal A'\cap(\widetilde z + [0,r_tL_t)^2)|^{\frac 12} \\
\geq \left(1 - \frac{\sqrt{3}}{2}\right)\cdot \frac{1}{2}\cdot \frac{r_tL_t}{L_0}
\geq\frac{1}{16}\cdot \frac{r_tL_t}{L_0}
\geq \frac{1}{16\cdot 14\cdot 36\cdot 8}\cdot |\partial_{\pl_s}\mathcal A'|,
\end{multline*}
where the last inequality follows from the assumption on $|\partial_{\pl_s}\mathcal A'|$. 
This inequality completes the proof of \eqref{eq:plt}. 
\end{proof}

\medskip

To summarize, the desired relation \eqref{eq:mathcalA':boundaryQsQ0} between $\partial_{\pl_0'}\mathcal A'$ and $\partial_{\pl_s}\mathcal A'$ follows from 
the three inequalities \eqref{eq:isopineqG:2d:boundaryG0:1} (the boundary $\partial_{\pl_s}\mathcal A'$ is macroscopic), \eqref{eq:isopineqG:2d:boundaryG0:2} 
(the boundary $\partial_{\pl_s}\mathcal A'$ is small, but much bigger than the boundaries of holes on the given scale), 
and \eqref{eq:isopineqG:2d:boundaryG0:3} (the boundary $\partial_{\pl_s}\mathcal A'$ is small and comparable to the boundaries of holes on the given scale).
The proof of Lemma~\ref{l:isopineqG:2d} is complete. 
\end{proof}

\medskip

\begin{remark}\label{rem:extensiond>2}
The only step in the proof of Lemma~\ref{l:isopineqG:2d} that uses (crucially!) the assumption $d=2$ is 
the derivation of \eqref{eq:deltai:Ni>100}. More precisely, the 
fact that the boundary of a set is well approximated by simple paths. 
In higher dimensions this is clearly not the case (the dimension of the boundary is generally bigger than 
the dimension of a simple path), and the above argument breaks down. See Figure~\ref{fig:deltai}.
\end{remark}

\subsection{Isoperimetric inequality in any dimension for large enough subsets}

In this section we prove the following theorem, which includes Theorem~\ref{thm:isop:pl} as a special case. 
\begin{theorem}\label{thm:isopframe}
Let $d\geq 2$, $c>0$. Let $l_n$ and $r_n$, $n\geq 0$, be integer sequences satisfying assumptions of Lemma~\ref{l:isopineqG:2d} and 
such that 
\begin{equation}\label{eq:isopframe:ratio}
\prod_{i=0}^\infty\left(1 - \left(\frac{4r_i}{l_i}\right)^2\right) \geq e^{-\frac{1}{16(d-1)}}\quad\text{and}\quad
\prod_{i=0}^\infty\left(1 - \left(\frac{4r_i}{l_i}\right)^d\right) \geq \frac{1 - \frac{1}{2^{d+2}}}{1 - \frac{1}{2^{d+3}}}.
\end{equation}
Then for any integers $s\geq 0$, $L_0\geq 1$, and $K\geq 1$, $x_s\in\GG_s$, 
and two families of events $\seedde$ and $\seedin$,  
if all the vertices in $\GG_s\cap Q_{K,s}(x_s)$ are $s$-good, then
any $\mathcal A\subseteq \pl_{K,s,0}(x_s)$ with
\[
\min\left\{ c\cdot |Q_{K,s}\cap\GG_0|,~\left(\frac{L_s}{L_0}\right)^{d^2}\right\} \leq |\mathcal A|\leq \frac 12\cdot |Q_{K,s}\cap \GG_0|
\]
satisfies
\[
|\partial_{\pl_{K,s,0}(x_s)} \mathcal A|\geq 
\frac{c^2}{2d\cdot 32^d\cdot 27^d\cdot 10^6}\cdot \left(1 - \left(\frac 23\right)^{\frac 1d}\right)\cdot \left(1 - e^{-\frac{1}{16(d-1)}}\right)\cdot |\mathcal A|^{\frac{d-1}{d}}.
\]
\end{theorem}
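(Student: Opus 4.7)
The plan is to handle three cases in succession: $d = 2$; the dimension $d \geq 3$ with $\mathcal A$ ``macroscopic'' in $Q_{K,s}(x_s)$; and $d \geq 3$ with $\mathcal A$ only ``mesoscopic''. The case $d = 2$ is immediate, since Lemma~\ref{l:isopineqG:2d} already supplies the inequality (with universal constant $1/10^6$) for every $\mathcal A$ with $1 \leq |\mathcal A| \leq \tfrac{1}{2}|Q_{K,s}(x_s) \cap \GG_0|$, which strictly contains the range prescribed here.

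In the macroscopic subcase $c|Q_{K,s}(x_s)\cap\GG_0| \leq |\mathcal A| \leq \tfrac{1}{2}|Q_{K,s}(x_s)\cap\GG_0|$, I would regard $\GG_0 \cap Q_{K,s}(x_s)$ as a $d$-dimensional discrete cube of side $R = KL_s/L_0$ and invoke Corollary~\ref{cor:selection1} to produce disjoint two-dimensional axis-aligned subrectangles $S_1,\dots,S_k$ with $|\mathcal A \cap \bigcup_i S_i| \geq \delta_d |\mathcal A|$ and $1 \leq |\mathcal A \cap S_i| \leq e^{-1/(8(d-1))}|S_i \cap \GG_0|$ for each $i$. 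The key observation is that the intersection of $\pl_{K,s,0}(x_s)$ with any two-dimensional axis-aligned subrectangle is itself a two-dimensional perforated lattice of the form treated in Section~\ref{sec:perforatedlattice:construction}: slicing a $d$-dimensional removed sub-box of side $2r_{i-1}L_{i-1}$ produces a $2$-dimensional sub-box of the same side, and the scale assumptions of Lemma~\ref{l:isopineqG:2d} are implied by~\eqref{eq:isopframe:ratio}. In each slice, Lemma~\ref{l:isopineqG:2d} (with Remark~\ref{rem:isop} handling the borderline range $|\mathcal A \cap S_i| > \tfrac{1}{2}|\pl_{K,s,0}(x_s) \cap S_i|$ by passing to the complement, where the needed density of the perforation in the slice comes from $\prod_i(1 - (4r_i/l_i)^2) \geq e^{-1/(16(d-1))}$) yields a relative boundary of size at least a constant multiple of $|\mathcal A \cap S_i|^{1/2}$. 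Since the $S_i$ are disjoint, these boundaries are edge-disjoint subsets of $\partial_{\pl_{K,s,0}(x_s)}\mathcal A$ and sum. Applying Corollary~\ref{cor:selection2} with $c_d = c$, at least $\tfrac{\delta_d c}{2} R^{d-2}$ slices hold at least $\tfrac{\delta_d c}{2} R^2$ vertices of $\mathcal A$ apiece, leading to $|\partial_{\pl_{K,s,0}(x_s)}\mathcal A| \gtrsim c^{3/2}(1 - e^{-1/(16(d-1))})R^{d-1} \geq c^{3/2}(1 - e^{-1/(16(d-1))})|\mathcal A|^{(d-1)/d}$, of the asserted order.

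For the mesoscopic subcase $(L_s/L_0)^{d^2} \leq |\mathcal A| < c|Q_{K,s}(x_s)\cap\GG_0|$, the aim is to reduce to the macroscopic case by a coarse-graining argument. I would partition $Q_{K,s}(x_s)$ into mesoscopic sub-boxes at an intermediate scale $L_t$ (with $t \leq s$) and choose $t$ so that some sub-box $Q_{K',t}(x'_t) \subseteq Q_{K,s}(x_s)$ has volume comparable to $|\mathcal A|/c$ and contains a fraction at least $c$ of its $L_0$-vertices from $\mathcal A$, while still satisfying $|\mathcal A \cap Q_{K',t}(x'_t)| \leq \tfrac{1}{2}|Q_{K',t}(x'_t) \cap \GG_0|$. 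Because the multi-scale perforation construction is compatible with restriction to coarser sub-boxes, $\pl_{K,s,0}(x_s) \cap Q_{K',t}(x'_t)$ coincides with $\pl_{K',t,0}(x'_t)$ for the same sequences $(l_n,r_n)_{n \leq t}$, so the macroscopic case applies there and produces a relative boundary of size $\gtrsim c^{3/2}|\mathcal A \cap Q_{K',t}(x'_t)|^{(d-1)/d}$; all edges involved lie inside $Q_{K',t}(x'_t)$ and hence contribute to $\partial_{\pl_{K,s,0}(x_s)}\mathcal A$. Since the choice of $t$ makes $|\mathcal A \cap Q_{K',t}(x'_t)|$ a constant (of order $c^{1/(d-1)}$) fraction of $|\mathcal A|$, the final bound comes out of order $c^2|\mathcal A|^{(d-1)/d}$.

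The main obstacle is executing the mesoscopic subcase: one must simultaneously pick the scale $L_t$ and sub-box $Q_{K',t}(x'_t)$ so that the density of $\mathcal A$ there is bounded below by a fixed constant (to trigger the macroscopic bound), so that a positive fraction of $|\mathcal A|$ is captured (to get the correct order on the right-hand side), and so that $t \leq s$ (to keep the sub-box within the multi-scale perforation skeleton). The assumption $|\mathcal A| \geq (L_s/L_0)^{d^2}$ with the exponent $d^2$ rather than the naive $d$ is exactly what makes all three of these requirements achievable at some $t \leq s$; it plays the same role here as the analogous hypothesis in Theorem~\ref{thm:isop:cemax}, and it is the reason the proof cannot be pushed to arbitrarily small $\mathcal A$ in dimensions $d \geq 3$ without a separate (and currently unknown) argument.
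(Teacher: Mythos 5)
Your handling of the case $d=2$ and of the macroscopic regime $|\mathcal A|\geq c|Q_{K,s}\cap\GG_0|$ matches the paper's argument in structure: select disjoint two-dimensional slices via Corollaries~\ref{cor:selection1} and~\ref{cor:selection2}, note that $\pl_{K,s,0}(x_s)\cap S_i$ is itself a two-dimensional perforation, apply Lemma~\ref{l:isopineqG:2d} together with Remark~\ref{rem:isop} in each slice, and sum over the disjoint slice boundaries.

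The mesoscopic regime $\left(\frac{L_s}{L_0}\right)^{d^2}\leq|\mathcal A|<c|Q_{K,s}\cap\GG_0|$ is where your proposal has a genuine gap. You propose to locate a sub-box $Q_{K',t}(x_t')$ at some intermediate scale $t\leq s$ in which $\mathcal A$ has density at least a fixed constant while still capturing a constant fraction of $|\mathcal A|$, and you assert this is always possible thanks to the hypothesis $|\mathcal A|\geq\left(\frac{L_s}{L_0}\right)^{d^2}$. This existence claim is false: if $\mathcal A$ is spread approximately uniformly over $Q_{K,s}(x_s)$ at density $\varepsilon\ll c$ (which is compatible with $|\mathcal A|\geq\left(\frac{L_s}{L_0}\right)^{d^2}$ when $K$ is large), then \emph{every} sub-box, at every scale, has $\mathcal A$-density about $\varepsilon$, so no sub-box meeting your density requirement exists. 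The missing idea is a dichotomy: either $\mathcal A$ is \emph{concentrated}, in which case your reduction is morally correct, or $\mathcal A$ is \emph{scattered}, in which case a separate argument is needed showing that scatteredness itself forces a large boundary. The paper implements exactly this via the coarse graining $\mathbb A_s$ (occupied $L_s$-boxes) and $\widetilde{\mathbb A}_s$ (densely occupied $L_s$-boxes) and splitting on $|\widetilde{\mathbb A}_s|\gtrless\frac12|\mathbb A_s|$. In the concentrated subcase it never changes scale at all: it applies Lemma~\ref{l:boundaries} to $\widetilde{\mathbb A}_s$ inside $Q_{K,s}(x_s)\cap\GG_s$ to find many boundary pairs of adjacent dense/sparse $L_s$-boxes, then invokes the macroscopic case with $K=2$ in each corresponding $2L_s$-box (where $\mathcal A$ automatically has density pinned between $\frac{3}{2^{d+2}}$ and $1-\frac{1}{2^{d+2}}$). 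In the scattered subcase, each $L_s$-box in $\mathbb A_s\setminus\widetilde{\mathbb A}_s$ contributes at least one boundary edge because its connected perforation $\pl_{1,s,0}(x)$ is large enough (by \eqref{eq:isopframe:ratio}) to meet both $\mathcal A$ and its complement, and $\frac12|\mathbb A_s|\geq\frac12|\mathcal A|(L_0/L_s)^d\geq\frac12|\mathcal A|^{(d-1)/d}$ by the volume hypothesis. A secondary problem with your sketch is that even in the concentrated case, a sub-box $Q_{K',t}(x_t')$ at scale $t<s$ need not satisfy $\pl_{K,s,0}(x_s)\cap Q_{K',t}(x_t')=\pl_{K',t,0}(x_t')$, because the perforation at levels $t+1,\dots,s$ may have removed $L_j$-boxes ($t\leq j<s$) that overlap $Q_{K',t}(x_t')$; this obstruction disappears in the paper's argument precisely because it stays at scale $s$ and only shrinks $K$ to $2$.
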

\begin{proof}[Proof of Theorem~\ref{thm:isopframe}]
Fix $s\geq 0$ and $K\geq 1$ integers, $x_s\in\GG_s$, 
and assume that all the vertices in $\GG_s\cap Q_{K,s}(x_s)$ are $s$-good. 
Take $\mathcal A \subseteq \pl_{K,s,0}(x_s)$ such that $|\mathcal A|\leq \frac 12\cdot |Q_{K,s}\cap\GG_0|$. 

\medskip

We consider separately the cases $|\mathcal A|\geq c\cdot |Q_{K,s}\cap\GG_0|$ and $|\mathcal A|\geq \left(\frac{L_s}{L_0}\right)^{d^2}$. 
In fact, we will use the result for the first case to prove the result for the second. 

In the first case, we use Corollaries~\ref{cor:selection1} and \ref{cor:selection2} to the selection lemma from Section~\ref{sec:selection} 
to identify a large number of disjoint two dimensional slices in the ambient box $Q_{K,s}\cap\GG_0$ which  
on the one hand have a small non-empty intersection with $\mathcal A$, and on the other, all together contain a positive fraction of the volume of $\mathcal A$. 
We estimate the boundary of $\mathcal A$ in each of the slices using the two dimensional isoperimetric inequality of Lemma~\ref{l:isopineqG:2d}. 
Since the slices are pairwise disjoint, we can estimate the boundary of $\mathcal A$ by the sum of the boundaries of $\mathcal A$ in each of the slices. 

In the second case, we consider a coarse graining of $\mathcal A$ by densely occupied $L_s$-boxes. 
If the number of densely occupied $L_s$-boxes is small, then $\mathcal A$ is scattered in $Q_{K,s}\cap\GG_0$ and 
has big boundary. If, on the other hand, the number of densely occupied $L_s$-boxes is big, then 
the set of such boxes has large boundary (the poorly occupied boxes adjacent to some densely occupied ones). 
Each pair of adjacent densely and poorly occupied $L_s$-boxes are contained in a $2L_s$-box. 
Vertices from $\mathcal A$ occupy a non-trivial fraction of vertices in this $2L_s$-box. 
Thus, we can estimate the boundary of $\mathcal A$ restricted to this box using the first part of the theorem. 
By summing over all pairs of adjacent densely and poorly occupied $L_s$-boxes we obtain a desired lower bound on the size of the boundary of $\mathcal A$. 

\bigskip

\begin{figure}[!tp]
\centering
\resizebox{15cm}{!}{\input 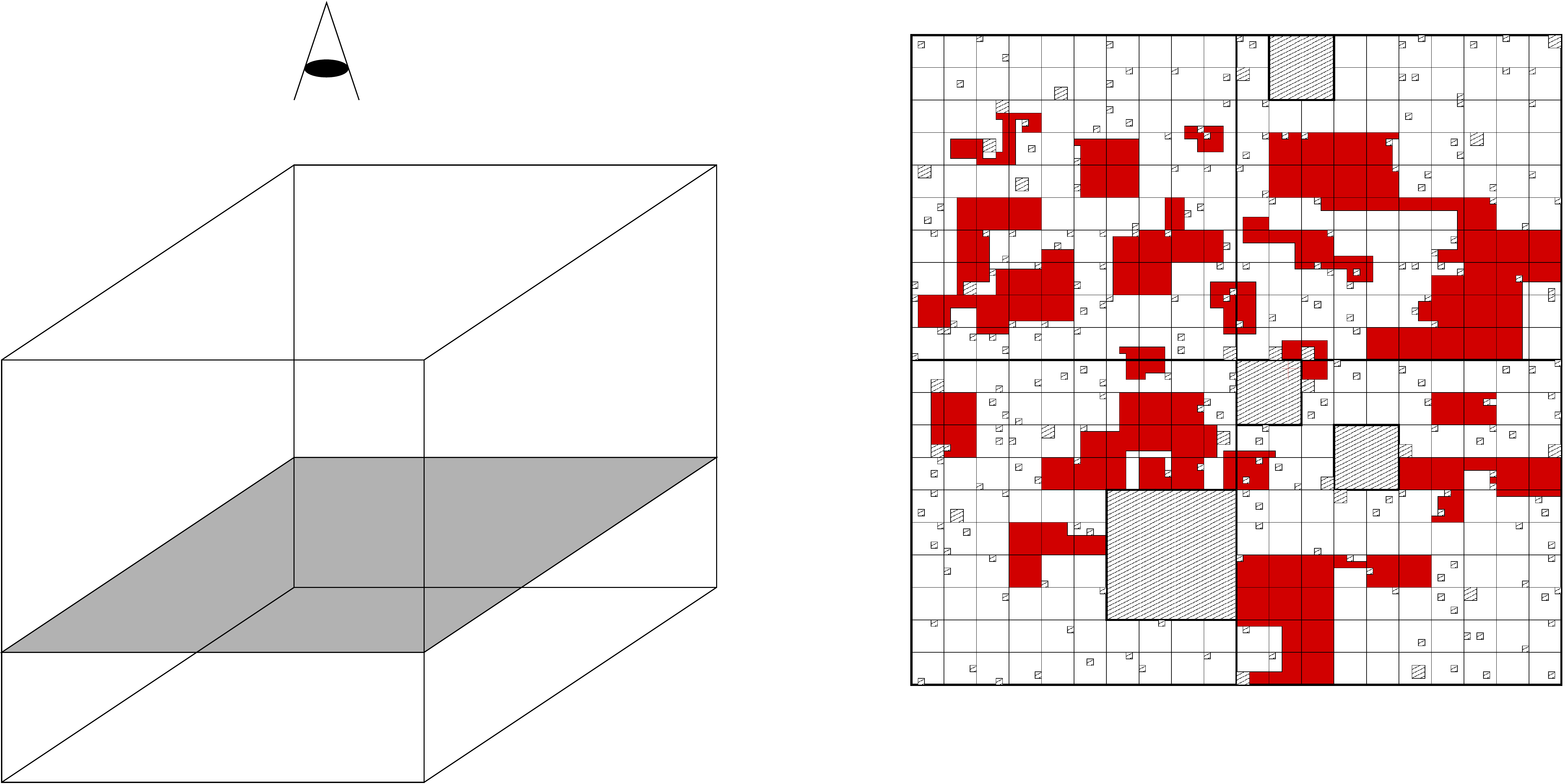_t}
\caption{Left: a two dimensional slice $S_i$. Right: perforation $\pl_{K,s,0}(x_s)\cap S_i$ of $S_i$ and the intersection of $\mathcal A$ with $S_i$.}
\label{fig:slice}
\end{figure}

We first consider the case $|\mathcal A|\geq c\cdot |Q_{K,s}\cap\GG_0|$. 
By Corollaries~\ref{cor:selection1} and \ref{cor:selection2}, 
there exist 
\[
\geq \frac{c}{2\cdot 9^{d-2}}\cdot \left(\frac{KL_s}{L_0}\right)^{d-2}
\] 
two dimensional subrectangles $S_i$ in $Q_{K,s}\cap\GG_0$ (see Figure~\ref{fig:slice}) such that for all $i$, 
\[
|\mathcal A\cap S_i|\geq \frac{c}{2\cdot 9^{d-2}}\cdot \left(\frac{KL_s}{L_0}\right)^2\quad\text{and}\quad
|\mathcal A\cap S_i|\leq e^{-\frac{1}{8(d-1)}}\cdot \left(\frac{KL_s}{L_0}\right)^2.
\]
By Lemma~\ref{l:rect} (applied to the perforation $\pl_{K,s,0}(x_s)\cap S_i$ of $S_i$) and the first part of \eqref{eq:isopframe:ratio}, 
$|\pl_{K,s,0}(x_s)\cap S_i|\geq e^{-\frac{1}{16(d-1)}}\cdot \left(\frac{KL_s}{L_0}\right)^2$, which implies that for all $i$, 
\[
|\mathcal A\cap S_i|\leq e^{-\frac{1}{16(d-1)}}\cdot|\pl_{K,s,0}(x_s)\cap S_i|.
\]
We apply the two dimensional isoperimetric inequality of Lemma~\ref{l:isopineqG:2d} and Remark~\ref{rem:isop} 
to each of the sets $\mathcal A\cap S_i$ in $\pl_{K,s,0}(x_s)\cap S_i$, and obtain that for all $i$, 
\begin{eqnarray*}
|\partial_{\pl_{K,s,0}(x_s)\cap S_i}(\mathcal A\cap S_i)| &\geq &\frac{1}{10^6}\cdot \left(1 - e^{-\frac{1}{16(d-1)}}\right)\cdot |\mathcal A\cap S_i|^{\frac 12}\\
&\geq &\frac{1}{10^6}\cdot \left(1 - e^{-\frac{1}{16(d-1)}}\right)\cdot\frac{c}{2\cdot 3^{d-2}}\cdot \frac{KL_s}{L_0}.
\end{eqnarray*}
Since all $\partial_{\pl_{K,s,0}(x_s)\cap S_i}(\mathcal A\cap S_i)$ are disjoint subsets of $\partial_{\pl_{K,s,0}(x_s)} \mathcal A$, 
\begin{eqnarray}
|\partial_{\pl_{K,s,0}(x_s)} \mathcal A| 
&\geq &\sum_i |\partial_{\pl_{K,s,0}(x_s)\cap S_i}(\mathcal A\cap S_i)|\nonumber\\
&\geq
&\frac{c}{2\cdot 9^{d-2}}\cdot \left(\frac{KL_s}{L_0}\right)^{d-2}\cdot 
\frac{1}{10^6}\cdot \left(1 - e^{-\frac{1}{16(d-1)}}\right)\cdot\frac{c}{2\cdot 3^{d-2}}\cdot \frac{KL_s}{L_0}\nonumber\\ 
&\geq
&\frac{c^2}{4\cdot 27^{d-2}\cdot 10^6}\cdot \left(1 - e^{-\frac{1}{16(d-1)}}\right)\cdot|\mathcal A|^{\frac{d-1}{d}}.\label{eq:boundaryA1}
\end{eqnarray}
This completes the proof of Theorem~\ref{thm:isopframe} for sets with $|\mathcal A|\geq c\cdot |Q_{K,s}\cap\GG_0|$. 

\bigskip

Next, we consider the case $|\mathcal A|\geq \left(\frac{L_s}{L_0}\right)^{d^2}$. 
Let 
\[
\mathbb A_s = \left\{x\in \GG_s ~:~\mathcal A\cap (x + [0,L_s)^d)\neq\emptyset\right\}
\]
be the set of bottom-left corners of $L_s$-boxes which contain a vertex from $\mathcal A$. 
Note that $|\mathbb A_s| \geq |\mathcal A|\cdot \left(\frac{L_0}{L_s}\right)^d$. 
We also define the subset $\widetilde{\mathbb A}_s$ of $\mathbb A_s$ corresponding to the densely occupied boxes, 
\[
\widetilde {\mathbb A}_s = \left\{x\in\GG_s~:~|\mathcal A\cap (x + [0,L_s)^d)|\geq \frac 34\cdot \left(\frac{L_s}{L_0}\right)^d\right\}.
\]

We consider separately the cases when $|\widetilde {\mathbb A}_s|\geq \frac 12\cdot |\mathbb A_s|$ and $|\widetilde {\mathbb A}_s|\leq \frac 12\cdot |\mathbb A_s|$. 

\medskip

We first consider the case $|\widetilde {\mathbb A}_s|\geq \frac 12\cdot |\mathbb A_s|$, i.e., the number of densely occupied boxes is large. 

Since
$\frac 34\cdot \left(\frac{L_s}{L_0}\right)^d\cdot |\widetilde {\mathbb A}_s| \leq |\mathcal A|\leq \frac 12\cdot |Q_{K,s}\cap\GG_0|$, 
\[
|\widetilde {\mathbb A}_s| \leq \frac 23\cdot |Q_{K,s}\cap\GG_0|\cdot \left(\frac{L_0}{L_s}\right)^d = \frac 23\cdot |Q_{K,s}(x_s)\cap\GG_s|.
\]
By applying Lemma~\ref{l:boundaries} to $\widetilde {\mathbb A}_s\subset Q_{K,s}(x_s)\cap\GG_s$, we get
\begin{equation}\label{eq:isop:widetildeAs}
|\partial_{Q_{K,s}(x_s)\cap\GG_s}\widetilde {\mathbb A}_s| \geq \left(1 - \left(\frac 23\right)^{\frac 1d}\right)\cdot |\widetilde {\mathbb A}_s|^{\frac{d-1}{d}}. 
\end{equation}
Next, we zoom in onto the boundary $\partial_{Q_{K,s}(x_s)\cap\GG_s}\widetilde {\mathbb A}_s$. 
Take any pair $x\in\widetilde {\mathbb A}_s$ and $y\in (Q_{K,s}(x_s)\cap\GG_s)\setminus\widetilde{\mathbb A}_s$ from $\partial_{Q_{K,s}(x_s)\cap\GG_s}\widetilde {\mathbb A}_s$. 
Note that 
\[
|\mathcal A\cap (x + [0,L_s)^d)| \geq \frac 34\cdot \left(\frac{L_s}{L_0}\right)^d 
\quad\text{and}\quad
|\mathcal A\cap (y + [0,L_s)^d)| < \frac 34\cdot \left(\frac{L_s}{L_0}\right)^d.
\]
Take a box $(z + [0,2L_s)^d)$ in $Q_{K,s}(x_s)\cap\GG_0$ containing both $(x+[0,L_s)^d)$ and $(y + [0,L_s)^d)$, 
where $z\in(Q_{K,s}(x_s)\cap\GG_s)$.
Note that $\mathcal A$ occupies a non-trivial fraction of vertices in $(z + [0,2L_s)^d)$. More precisely, 
\[
\frac{3}{2^{d+2}}\cdot |(z+[0,2L_s)^d)\cap\GG_0|\leq  |\mathcal A\cap (z+[0,2L_s)^d)|\leq \left(1 - \frac{1}{2^{d+2}}\right)\cdot |(z+[0,2L_s)^d)\cap\GG_0|.
\]
Moreover, all the vertices in $(z+[0,2L_s)^d\cap\GG_s$ are $s$-good. 
We are in a position to apply the first part of the theorem to $\mathcal A\cap (z+[0,2L_s)^d)$ in $(z+[0,2L_s)^d)$. 
Combining the upper bound on $|\mathcal A\cap (z+[0,2L_s)^d)|$ with 
the lower bound on the volume of the perforation $\pl_{2,s,0}(z) = \pl_{K,s,0}(x_s)\cap (z+[0,2L_s)^d)$ given by Lemma~\ref{l:rect} 
and the second part of the assumption \eqref{eq:isopframe:ratio}, 
we obtain that 
\[
|\mathcal A\cap (z+[0,2L_s)^d)|\leq \left(1 - \frac{1}{2^{d+3}}\right)\cdot |\pl_{K,s,0}(x_s)\cap (z+[0,2L_s)^d)|.
\] 
Therefore, by the first part of the theorem (with $c = \frac{3}{2^{d+2}}$) 
applied to the subset $\mathcal A\cap (z+[0,2L_s)^d)$ of $\pl_{2,s,0}(z)$ 
and Remark~\ref{rem:isop},
\begin{multline*}
|\partial_{\pl_{K,s,0}(x_s)\cap (z+[0,2L_s)^d)}(\mathcal A\cap (z+[0,2L_s)^d)|\\ 
\geq 
\frac{1}{2^{d+3}}\cdot
\frac{9}{4\cdot 4^{d+2}\cdot 27^{d-2}\cdot 10^6}\cdot \left(1 - e^{-\frac{1}{16(d-1)}}\right)\cdot|\mathcal A\cap (z+[0,2L_s)^d)|^{\frac{d-1}{d}}\\
\geq \frac{3}{4}\cdot \frac{9}{8^{d+3}\cdot 27^{d-2}\cdot 10^6}\cdot \left(1 - e^{-\frac{1}{16(d-1)}}\right)\cdot \left(\frac{L_s}{L_0}\right)^{d-1}.
\end{multline*}
This inequality gives us an estimate on the part of the boundary $\partial_{\pl_{K,s,0}(x_s)} \mathcal A$ contained in $(z + [0,2L_s)^d)$ 
for each $z\in (Q_{K,s}(x_s)\cap\GG_s)$ such that the cube $(z + [0,2L_s)^d)$ contains an overcrowded and undercrowded adjacent $L_s$-boxes 
$(x + [0,L_s)^d)$ and $(y+[0,L_s)^d)$ with $x\in\widetilde {\mathbb A}_s$ and $y\in (Q_{K,s}(x_s)\cap\GG_s)\setminus\widetilde{\mathbb A}_s$. 
By \eqref{eq:isop:widetildeAs}, the total number of such $z$'s is 
\[
\geq \frac{1}{d2^{d-1}}\cdot |\partial_{Q_{K,s}(x_s)\cap\GG_s}\widetilde {\mathbb A}_s|
\geq \frac{1}{d2^{d-1}}\cdot  \left(1 - \left(\frac 23\right)^{\frac 1d}\right)\cdot |\widetilde {\mathbb A}_s|^{\frac{d-1}{d}}, 
\]
where the factor $\frac{1}{d2^{d-1}}$ counts for possible overcounting, since 
every cube $(z+[0,2L_s)^d)$, $z\in\GG_s$, contains at most $d2^{d-1}$ pairs $x,y$ with $\{x,y\}\in\partial_{Q_{K,s}(x_s)\cap\GG_s}\widetilde {\mathbb A}_s$.

Moreover, every edge from $\partial_{\pl_{K,s,0}(x_s)} \mathcal A$ belongs to at most $2^d$ cubes $(z+[0,2L_s)^d)$, $z\in\GG_s$. 
Thus, 
\[
|\partial_{\pl_{K,s,0}(x_s)} \mathcal A|\geq \frac{1}{2^d}\cdot\sum_{z\in\GG_s}|\partial_{\pl_{K,s,0}(x_s)\cap (z+[0,2L_s)^d)}(\mathcal A\cap (z+[0,2L_s)^d)|.
\]
By putting all the estimates together, we obtain that 
\begin{multline}\label{eq:isopframe:gamma}
|\partial_{\pl_{K,s,0}(x_s)} \mathcal A|\geq
\frac{1}{2^d}\cdot\sum_{z\in\GG_s}|\partial_{\pl_{K,s,0}(x_s)\cap (z+[0,2L_s)^d)}(\mathcal A\cap (z+[0,2L_s)^d)|\\
\geq \frac{1}{2^d}\cdot 
\frac{1}{d2^{d-1}}\cdot  \left(1 - \left(\frac 23\right)^{\frac 1d}\right)\cdot |\widetilde {\mathbb A}_s|^{\frac{d-1}{d}}\cdot 
\frac{3}{4}\cdot \frac{9}{8^{d+3}\cdot 27^{d-2}\cdot 10^6}\cdot \left(1 - e^{-\frac{1}{16(d-1)}}\right)\cdot \left(\frac{L_s}{L_0}\right)^{d-1}\\
\geq \frac{1}{2d\cdot 32^d\cdot 27^d\cdot 10^6}\cdot \left(1 - \left(\frac 23\right)^{\frac 1d}\right)\cdot \left(1 - e^{-\frac{1}{16(d-1)}}\right)\cdot
|\mathcal A|^{\frac{d-1}{d}},
\end{multline}
where the last inequality follows from the case assumption $|\widetilde {\mathbb A}_s|\geq \frac 12\cdot |\mathbb A_s|\geq\frac12\cdot|\mathcal A|\cdot \left(\frac{L_0}{L_s}\right)^d$. 

\bigskip

It remains to consider the case $|\widetilde {\mathbb A}_s|\leq \frac 12\cdot |\mathbb A_s|$. 
In this case, $\mathcal A$ is scattered in $Q_{K,s}(x_s)\cap\GG_0$, and should have big boundary. 
Indeed, for each $x\in \mathbb A_s\setminus \widetilde{\mathbb A}_s$, 
\[
1\leq |\mathcal A\cap(x+[0,L_s)^d|< \frac 34\cdot \left(\frac{L_s}{L_0}\right)^d.
\] 
By the lower bound on the volume of the perforation $\pl_{1,s,0}(x) = \pl_{K,s,0}(x_s)\cap (x+[0,L_s)^d)$ given in Lemma~\ref{l:rect} and the second part of \eqref{eq:isopframe:ratio}, 
\[
|\pl_{K,s,0}(x_s)\cap (x+[0,L_s)^d|
\geq \frac{1 - \frac{1}{2^{d+2}}}{1 - \frac{1}{2^{d+3}}}\cdot \left(\frac{L_s}{L_0}\right)^d
\geq\frac 34\cdot \left(\frac{L_s}{L_0}\right)^d.
\] 
Thus, $(x+[0,L_s)^d)$ contains vertices from both $\mathcal A$ and $\pl_{K,s,0}(x_s)\setminus\mathcal A$. 
By Lemma~\ref{l:rect}, $\pl_{1,s,0}(x) = \pl_{K,s,0}(x_s)\cap (x+[0,L_s)^d)$ is connected in $\GG_0$, thus it contains an edge from $\partial_{\pl_{K,s,0}(x_s)}\mathcal A$. 
Since all $(x+[0,L_s)^d)$, $x\in \mathbb A_s\setminus \widetilde{\mathbb A}_s$ are disjoint, we conclude that 
\begin{equation}\label{eq:isopframe:gamma2}
|\partial_{\pl_{K,s,0}(x_s)}\mathcal A|\geq |\mathbb A_s\setminus \widetilde{\mathbb A}_s|\geq \frac 12\cdot |\mathbb A_s|
\geq \frac 12\cdot |\mathcal A|\cdot \left(\frac{L_0}{L_s}\right)^d
\geq \frac 12\cdot |\mathcal A|^{\frac{d-1}{d}},
\end{equation}
where the last inequality follows from the case assumption. 

The proof of Theorem~\ref{thm:isopframe} in the case $|\mathcal A|\geq \left(\frac{L_s}{L_0}\right)^{d^2}$ is complete by \eqref{eq:isopframe:gamma} and \eqref{eq:isopframe:gamma2}. 
\end{proof}

\begin{remark}\label{rem:isopframe:conditions}
We believe that Theorem~\ref{thm:isopframe} holds for all $\mathcal A$ with $|\mathcal A|\leq \frac12\cdot |Q_{K,s}\cap\GG_0|$. 
With a more involved proof, we can relax the assumption $|\mathcal A|\geq \left(\frac{L_s}{L_0}\right)^{d^2}$ of Theorem~\ref{thm:isopframe} 
to $|\mathcal A|\geq \left(\frac{L_s}{L_0}\right)^{2d}$. 
Since this does not give us the result for all $\mathcal A$, and the current statement of Theorem~\ref{thm:isopframe} suffices for the applications in this paper, 
we do not include this proof here. 
\end{remark}

\bigskip

\appendix

\section{Proofs of Theorems~\ref{thm:hk:grad}--\ref{thm:gf:asymp}}\label{sec:proofs}

In this section we give proof sketches of Theorems~\ref{thm:hk:grad}, \ref{thm:gf:bounds}, \ref{thm:hf:d+1}, \ref{thm:localclt}, and \ref{thm:gf:asymp}.
Their proofs are straightforward adaptations of main results in \cite{BH09,BDCKY14} from Bernoulli percolation to our setup. 

\begin{proof}[Proof of Theorem~\ref{thm:hk:grad}]
The proof is essentially the same as that of \cite[Theorem~6]{BDCKY14}. 
The only minor care that is required comes from the fact that the bound \eqref{eq:hk:T0} is not stretched exponential. 
Since this fact is used several times, we provide a general outline of the proof. 
As in the proof of \cite[Theorem~6]{BDCKY14}, by stationarity \p{} and the ergodicity of $\set_\infty$ with respect to the shift by $X_1$ (see, e.g., \cite[Theorem~3.1]{BergerBiskup}),
it suffices to prove that 
\[
\mathbb E^u\left[\left(p_{2n}(0,x) - p_{2n-1}(X_1,x)\right)^2\cdot \mathds{1}_{x\in\set_\infty}\right]\leq \frac{C}{n^{d+1}}\cdot e^{-c\frac{\dist_{\Z^d}(0,x)^2}{n}},
\]
where $C$ and $c$ only depend on $d$ and $u$. 
If $\dist_{\Z^d}(0,x)\geq n^{\frac12}(\log n)^{\frac{1 + \constS}{2}}$, where $\constS$ is defined in \eqref{eq:funcS}, 
then by the general upper bound on the heat kernel (see, e.g., \cite[(1.5)]{Barlow}), 
\[
\mathbb E^u\left[\left(p_{2n}(0,x) - p_{2n-1}(X_1,x)\right)^2\cdot \mathds{1}_{x\in\set_\infty}\right]\leq
C\cdot e^{-c\frac{\dist_{\Z^d}(0,x)^2}{n}}
\leq \frac{C'}{n^{d+1}}\cdot e^{-c'\frac{\dist_{\Z^d}(0,x)^2}{n}}.
\]
Thus, we can assume that $\dist_{\Z^d}(0,x)\leq n^{\frac12}(\log n)^{\frac{1 + \constS}{2}}$.

Let $N = N(\omega) = \max\left\{T_{\scriptscriptstyle {\mathrm{hk}}}(y):y\in\ballZ_{\Z^d}(0,n)\right\}$.
By \eqref{eq:hk:T0},  
\begin{multline*}
\mathbb E^u\left[\left(p_{2n}(0,x) - p_{2n-1}(X_1,x)\right)^2\cdot \mathds{1}_{x\in\set_\infty}\cdot \mathds{1}_{N(\omega)\geq n}\right]\leq
\mathbb P^u\left[N(\omega)\geq n\right]\\
\leq Cn^d\cdot e^{-c \cdot (\log n)^{1+\constS}}
\leq \frac{C'}{n^{d+1}}\cdot e^{-c' \cdot (\log n)^{1+\constS}}
\leq \frac{C'}{n^{d+1}}\cdot e^{-c'\frac{\dist_{\Z^d}(0,x)^2}{n}}.
\end{multline*}
It remains to bound $\mathbb E^u\left[\left(p_{2n}(0,x) - p_{2n-1}(X_1,x)\right)^2\cdot \mathds{1}_{x\in\set_\infty}\cdot \mathds{1}_{N(\omega)\leq n}\right]$.
As in \cite[Section~2]{BDCKY14}, define the quenched entropy of the simple random walk on $\set_\infty$ by 
$\mathbf H_n = \sum_x\phi(p_{\set_\infty,n}(0,x))$, where $\phi(0) = 0$ and $\phi(t) =-t\log t$ for $t>0$, 
and the mean entropy by $H_n = \mathbb E^u[\mathbf H_n]$.
By a general argument in the proof of \cite[Theorem~6]{BDCKY14}, the heat kernel upper bound \eqref{eq:hk:ub} implies that
\[
\mathbb E^u\left[\left(p_{2n}(0,x) - p_{2n-1}(X_1,x)\right)^2\cdot \mathds{1}_{x\in\set_\infty}\cdot \mathds{1}_{N(\omega)\leq n}\right]
\leq \left(H_n-H_{n-1}\right)\cdot \frac{C}{n^d}\cdot e^{-c\frac{\dist_{\Z^d}(0,x)^2}{n}}.
\]
The proof of \cite[Theorem~6]{BDCKY14} is completed by showing in \cite[Lemma~20]{BDCKY14} that $H_n -H_{n-1}\leq \frac{C}{n}$.  
Thus, in order to finish the proof of Theorem~\ref{thm:hk:grad}, it suffices to prove that $H_n -H_{n-1}\leq \frac{C}{n}$ in our setting too. 
This is a simple consequence of Theorem~\ref{thm:hk:dZd}. 
Indeed, by writing $\mathbf H_n$ as the sums over $x$ with $\dist_{\Z^d}(0,x)^{\frac 32}\leq n$ and $\dist_{\Z^d}(0,x)^{\frac 32}\geq n$, 
applying \eqref{eq:hk:ub} and \eqref{eq:hk:lb} to the summands in the first sum, and showing smallness of the second sum by using, for instance, the general upper bound on the heat kernel 
(see, e.g., \cite[(1.5)]{Barlow}), we prove that for all $n\geq T_{\scriptscriptstyle {\mathrm{hk}}}(0)$, $\mathbf H_n = \frac d2\log n + O(1)$. 
For $n\leq T_{\scriptscriptstyle {\mathrm{hk}}}(0)$, we use the crude bound $\mathbf H_n \leq d\log (2n)$ (see the proof below \cite[(25)]{BDCKY14}). 
By integrating $\mathbf H_n$ and using \eqref{eq:hk:T0}, we get that $H_n = \frac d2\log n+ O(1)$, 
which implies that $H_n - H_{\lfloor n/2\rfloor}\leq C$ for some $C$. 
Since $H_n-H_{n-1}$ is decreasing by \cite[Corollary~10]{BDCKY14}, we conclude that $H_n-H_{n-1}\leq \frac{2C}{n}$, 
finishing the proof of Theorem~\ref{thm:hk:grad}.
\end{proof}

\medskip

\begin{proof}[Proof of Theorem~\ref{thm:gf:bounds}]
The proof of Theorem~\ref{thm:gf:bounds} is literally the same as the proof of \cite[Theorem~1.2(a)]{BH09}. 
For the upper bound, one splits the Green function into the integrals over $[0,\min\{T_{\scriptscriptstyle {\mathrm{hk}}}(x),T_{\scriptscriptstyle {\mathrm{hk}}}(y)\}]$ 
and $[\min\{T_{\scriptscriptstyle {\mathrm{hk}}}(x),T_{\scriptscriptstyle {\mathrm{hk}}}(y)\},\infty)$. 
Using general bounds on the heat kernel (see \cite[(6.4) and (6.5)]{BH09}), one shows that the first integral is $o(\dist_{\Z^d}(x,y)^{2-d})$, 
and by \eqref{eq:hk:ub}, the second integral is bounded by $C\dist_{\Z^d}(x,y)^{2-d}$. 
For the lower bound, one estimates the Green function from below by the integral of heat kernel over $[\dist_{\Z^d}(x,y)^2,\infty)$, 
applies \eqref{eq:hk:lb}, and arrives at the desired bound. 
\end{proof}

\medskip

\begin{proof}[Proof of Theorem~\ref{thm:hf:d+1}]
The proof of Theorem~\ref{thm:hf:d+1} is identical to the one of \cite[Theorem~5]{BDCKY14}. 
The constant functions and the projections of $x+\chi(x)$ (see Theorem~\ref{thm:qip}(a)) on coordinates of $\Z^d$ 
are independent harmonic functions with at most linear growth. Thus, the dimension of such functions is at least $(d+1)$. 
It remains to show that the above functions form a basis.
Let $h$ be a harmonic function $h$ on $\set_\infty$ with at most linear growth and $h(0) = 0$, and 
assume that it is extended on $\R^d$ (see above \cite[Proposition~19]{BDCKY14}). 
By Theorem~\ref{thm:vgb:main} and the upper bound on the heat kernel \eqref{eq:hk:ub}, the proof of \cite[Proposition~19]{BDCKY14} goes through without any changes in our setting, implying that 
the sequence $h_n(\cdot) = \frac 1n h(n\cdot)$ is uniformly bounded and equicontinuous on compacts. 
Thus, there exists a sequence $n_k$ such that $h_{n_k}$ converges uniformly on compact sets to a continuous function $\widetilde h$. 
By using the quenched invariance principle of Theorem~\ref{thm:qip}, one obtains by repeating the proof of \cite[Theorem~5]{BDCKY14} that 
$\widetilde h$ is harmonic in $\R^d$. Since $\widetilde h$ has at most linear growth and $\widetilde h(0) = 0$, it is linear. 
Therefore, the function $f(x) = h(x) - \widetilde h(x+\chi(x))$ is harmonic on $\set_\infty$ and for every $\varepsilon>0$ and all large enough $k$, 
$|f(x)|\leq \varepsilon n_k$ for all $x\in\ballZ_{\set_\infty}(0,n_k/\varepsilon)$. 
By \eqref{eq:hk:ub}, $\mathrm{E}_{\set_\infty,0}\left[f(X_{n_k^2})^2\right]\leq \varepsilon n_k^2$ for all large $k$. 
The proof of \cite[Theorem~5]{BDCKY14} is finished by applying \cite[Corollary~21]{BDCKY14} which states that $f$ must be constant. 
The proof of \cite[Corollary~21]{BDCKY14} is rather general and only uses the fact that the mean entropy $H_n$ (see the proof of Theorem~\ref{thm:hk:grad}) 
satisfies $H_n-H_{n-1}\leq \frac{C}{n}$. 
We already proved this bound in the proof of Theorem~\ref{thm:hk:grad}. 
Thus, \cite[Corollary~21]{BDCKY14} holds in our setting, and we conclude that $f$ must be constant. 
The proof is complete. 
\end{proof}

\medskip

\begin{proof}[Proof of Theorem~\ref{thm:localclt}]
Theorem~\ref{thm:localclt} was proved in the case of supercritical Bernoulli percolation in \cite[Theorem~1.1]{BH09} 
by first providing general assumptions \cite[Assumption~4.4]{BH09} for the local limit theorem on infinite subgraphs of $\Z^d$ 
(see \cite[Theorems~4.5 and 4.6]{BH09}), and then verifying these assumptions for the infinite cluster of Bernoulli percolation. 
\cite[Assumption~4.4]{BH09} is tailored for random subgraphs of $\Z^d$ with laws invariant under reflections with respect to coordinate axes and 
rotations by $\frac \pi2$. These assumptions only simplify the expression for the heat kernel of the limiting Brownian motion, 
and can be naturally extended to the case without such symmetries. 

\medskip

We only consider the case of discrete time random walk (the continuous time case is the same). 
As in \cite[Theorem~4.5]{BH09}, to prove Theorem~\ref{thm:localclt} it suffices to show that 
there exist an event $\Omega'\in\mathcal F$ with $\mathbb P^u[\Omega']=1$, positive constants $\delta$, $C_i$, and $C_H$, and a covariance matrix $\Sigma$, 
such that for all $\omega\in\Omega'\cap\{0\in\set_\infty\}$, 
\begin{itemize}\itemsep0pt
\item[(a)]
for any $y\in\R^d$ and $r>0$, as $n\to\infty$, $\mathrm{P}_{\set_\infty,0}\left[\widetilde B_n(t)\in (y+[-r,r]^d)\right]$ converges to 
$\int_{y+[-r,r]^d}k_{\Sigma,t}(y') dy'$ uniformly over compact subsets of $(0,\infty)$ 
($\widetilde B_n(t)$ is as in \eqref{def:widetildeBn}),
\item[(b)]
there exists $T_1 = T_1(\omega)<\infty$ such that for all $n\geq T_1$ and $x\in\set_\infty$, 
$p_n(0,x)\leq C_1\cdot n^{-\frac d2}\cdot e^{-C_2\cdot \frac{\dist_{\set_\infty}(0,x)^2}{n}}$,
\item[(c)]
for each $y\in\set_\infty$, there exists $R_H(y) = R_H(y,\omega)<\infty$ such that the parabolic Harnack inequality holds with constant $C_H$ 
in $(0,R^2]\times\ballZ_{\set_\infty}(y,R)$ for all $R\geq R_H(y)$,
\item[(d)]
for $h(r) = \max\{r':\exists y\in[-r,r]^d\text{ such that $\set_\infty\cap(y+[-r',r']^d)=\emptyset$}\}$, the ratio $\frac{h(r)}{r}$ 
tends to $0$ as $r\to\infty$, 
\item[(e)]
for any $x\in\Z^d$ and $r>0$, $\lim_{n\to\infty}\frac{\mu(\set_\infty\cap(\sqrt{n}x + [-\sqrt{n}r,\sqrt{n}r]^d))}{(2\sqrt{n}r)^d} = \mathbb E^u[\mu_0\cdot\mathds{1}_{0\in\set_\infty}]$,
\item[(f)]
for each $x\in\Z^d$ and $r>0$, there exists $T_2(x) = T_2(x,\omega)<\infty$ such that for all $n\geq T_2$, and $x',y'\in\set_\infty\cap(\sqrt{n}x + [-\sqrt{n}r,\sqrt{n}r]^d)$, 
$\dist_{\set_\infty}(x',y')\leq C_3\cdot\max\{\dist_{\Z^d}(x',y'),n^{\frac 12 - \delta}\}$, 
\item[(g)]
for $x\in\Z^d$ and $R_H$ as in (c), $\lim_{n\to\infty}n^{-\frac 12}R_H(g_n(x)) = 0$. 
\end{itemize}
It is easy to see that the above assumptions are satisfied in our setting: 
\begin{itemize}\itemsep0pt
\item[(a)]
follows from Theorem~\ref{thm:qip},
\item[(b)]
follows from \eqref{eq:hk:ub},
\item[(c)]
follows from Theorems~\ref{thm:phi:vgb} and \ref{thm:vgb:main},
\item[(d)]
follows from stationarity, \eqref{eq:C1:infty}, and the Borel-Cantelli lemma,
\item[(e)]
follows from a spatial ergodic theorem \cite[Theorem~2.8 in Chapter~6]{Krengel}, 
since the sequence of boxes $(\sqrt{n}x + [-\sqrt{n}r,\sqrt{n}r]^d)_{n\geq 1}$ is 
regular in the sense of \cite[Definition~2.4 in Chapter~6]{Krengel} (see \cite[Lemma~5.1]{ADS14}), 
\item[(f)]
follows from Theorem~\ref{thm:chd},
\item[(g)]
follows from \eqref{eq:vgb:main:R}, Theorem~\ref{thm:phi:vgb}, and the Borel-Cantelli lemma.
\end{itemize}
The proof of Theorem~\ref{thm:localclt} is complete.
\end{proof}

\medskip

\begin{proof}[Proof of Theorem~\ref{thm:gf:asymp}]
Statement (a) follows from Theorem~\ref{thm:localclt} and \eqref{eq:hk:ub} by repeating the proof of \cite[Theorem~1.2(b)]{BH09} 
without any changes. 
For the statement (c) we use bounds \cite[(6.30) and (6.31)]{BH09} and \eqref{eq:hk:T0}, to get 
\begin{multline*}
\frac{(1-\varepsilon)\mathrm{G}_\Sigma(x)}{m}\mathbb P^u\left[M\leq |x|~\Big|~0\in\set_\infty\right]
\leq \mathbb E^u\left[g_{\set_\infty}(0,x)~\Big|~0\in\set_\infty\right]\\
\leq \frac{(1+\varepsilon)\mathrm{G}_\Sigma(x)}{m}+\frac{C'\mathbb P^u\left[M> |x|~\Big|~0\in\set_\infty\right]}{|x|^{d-2}} \\
+ C'\left(\mathbb E^u\left[g_{\set_\infty}(0,x)^2~\Big|~0\in\set_\infty\right]\right)^{\frac 12}\cdot e^{-c'(\log |x|)^{1+\constS}},
\end{multline*}
where $M$ is defined in the statement of Theorem~\ref{thm:gf:asymp}.
As in \cite[(6.17)]{BH09}, by \eqref{eq:hk:ub}, 
\[
g_{\set_\infty}(0,x)\leq g_{\set_\infty}(0,0)\leq T_0(0) + \int_{T_{0}(0)}^\infty C't^{-\frac d2}dt \leq (1+2C')T_0(0).
\]
Combining this bound with \eqref{eq:hk:T0}, we obtain that $\mathbb E^u\left[g_{\set_\infty}(0,x)^2~\Big|~0\in\set_\infty\right]<C''$. 
Let $x=ky$. Since $\mathrm{G}_\Sigma(ky) = k^{2-d}\mathrm{G}_\Sigma(y)$, by taking limits $k\to\infty$ and then $\varepsilon\to 0$, we compete the proof of statement (c).
\end{proof}

\paragraph{Acknowledgements.}
The author thanks Takashi Kumagai and Alain-Sol Sznitman for encouragements to write this paper and for valuable comments, 
Jean-Dominique Deuschel for sending him the master thesis of Tuan Anh Nguyen \cite{Nguyen} and for a discussion about 
isoperimetric and Sobolev inequalities, and Ji\v r\'\i \, \v Cern\'y for a suggestion to use general densities $\den_1$ and $\den_2$ in Section~\ref{sec:propertiesofclusters}. 
Special thanks go to the anonymous referee for helpful suggestions and comments.

\end{document}